
\documentclass[a4paper,twoside,11pt]{article}
\usepackage{a4wide}
\usepackage[utf8]{inputenc}
\usepackage[T1]{fontenc}
\usepackage[english]{babel} 
\usepackage{amsmath,amssymb,amsthm,amsfonts}
\usepackage{mathabx} 
\usepackage{mathrsfs}
\usepackage{bbm}
\usepackage{graphicx}
\usepackage{hyperref}
\usepackage{fancyhdr}
\usepackage{enumitem}


\theoremstyle{plain}
\newtheorem{theorem}{Theorem}[section]
\newtheorem{lemma}[theorem]{Lemma}
\newtheorem{corollary}[theorem]{Corollary}
\newtheorem{proposition}[theorem]{Proposition}

\theoremstyle{definition}

\theoremstyle{remark}
\newtheorem{remark}[theorem]{Remark}

\numberwithin{equation}{section}

\newcommand{\tconst}[1]{\ensuremath{t_{\textnormal{\ref{#1}}}}}
\newcommand{\const}[1]{\ensuremath{c_{\textnormal{\ref{#1}}}}}
\newcommand{\Const}[1]{\ensuremath{C_{\textnormal{\ref{#1}}}}}


\def\C{\ensuremath{\mathbf{C}}}
\def\N{\ensuremath{\mathbf{N}}}
\def\Q{\ensuremath{\mathbf{Q}}}
\def\R{\ensuremath{\mathbf{R}}}
\def\Z{\ensuremath{\mathbf{Z}}}

\def\Znn{\ensuremath{\Z_{\ge 0}}}

\def\ep{\varepsilon}

\def\E{\ensuremath{\mathbf{E}}}
\def\P{\ensuremath{\mathbf{P}}}
\DeclareMathOperator{\Var}{Var}
\def\F{\ensuremath{\mathscr{F}}}

\def\Ind{\ensuremath{\mathbf{1}}}

\renewcommand\Re{\operatorname{Re}}
\renewcommand\Im{\operatorname{Im}}

\def\to{\rightarrow}

\def\tand{\ensuremath{\text{ and }}}
\def\tif{\ensuremath{\text{ if }}}
\def\tas{\ensuremath{\text{ as }}}
\def\ton{\ensuremath{\text{ on }}}
\def\tor{\ensuremath{\text{ or }}}

\newcommand{\dd}{\mathrm{d}} 



\pagestyle{plain}

\author{\textsc{Pascal Maillard}\thanks{Laboratoire de Probabilités et Modèles Aléatoires, UMR~7599, Université~Paris~VI, Case~courrier~188, 4~Place~Jussieu, 75252~PARIS~Cedex~05, FRANCE, e-mail: \texttt{pascal.maillard@upmc.fr}}}
\title{Branching Brownian motion with selection of the $N$ right-most particles: An approximate model}
\date{February 3, 2012}

\def\G{\ensuremath{\mathscr{G}}}
\def\H{\ensuremath{\mathscr{H}}}
\def\p{\ensuremath{\mathbf{p}}}
\def\Ptilde{\widetilde{\P}}
\def\Etilde{\widetilde{\E}}

\def\Phat{\widehat{\P}}
\def\Ehat{\widehat{\E}}
\def\Qhat{\widehat{\Q}}
\def\Varhat{\widehat{\Var}}

\begin{document}

\maketitle

{\leftskip=2truecm \rightskip=2truecm \baselineskip=15pt \small

\noindent{\bfseries Abstract.} We present an approximation to the Brunet--Derrida model of supercritical branching Brownian motion on the real line with selection of the $N$ right-most particles, valid when the population size $N$ is large. It consists of introducing a random space-time barrier at which particles are instantaneously killed in such a way that the population size stays almost constant over time. We prove that the suitably recentered position of this barrier converges at the $\log^3 N$ timescale to a L\'evy process, which we identify. This validates the physicists' predictions about the fluctuations in the Brunet--Derrida model.

\bigskip

\noindent{\bfseries Keywords.} Branching Brownian motion, Brunet--Derrida particle system.

%

}


\section{Introduction}

Consider one-dimensional supercritical branching Brownian motion (BBM): particles diffuse on the real line according to standard Brownian motions and split independently with rate 1 into a random number of particles distributed according to the reproduction law $(q(k))_{k\ge 0}$, with mean greater than 1 and finite second moment. The physicists Brunet and Derrida have introduced a model of BBM with \emph{selection}: Fix a (large) parameter $N$, and as soon as the number of particles exceeds $N$, instantaneously kill the \emph{left-most}, in order to have at most $N$ particles at any time. This model, which we call the $N$-BBM, has been studied by them and coauthors in extraordinary detail \cite{Brunet1997,Brunet2001,Brunet2006,Brunet2006a}.
In a first approximation, they model the system by a deterministic traveling wave of an FKPP-type equation with \emph{cutoff} \cite{Brunet1997} (FKPP stands for Fisher, Kolmogorov, Petrovskii and Piskounov after \cite{Fisher1937,KPPshort}). Assuming the validity of this approximation, they find that the linear speed of $N$-BBM differs from the speed of the right-most particle in BBM without selection by a quantity of the order of $\log^2 N$. In a subsequent work \cite{Brunet2006}, they introduce a better, semi-deterministic approximation, which does not only yield a better asymptotic for the speed, but also the complete set of cumulants of the position of the front, all of them scaling as $\log^{-3}N$. Moreover, this approximate model together with numerical simulations suggests \cite{Brunet2006a} that the genealogy of the system can be described on a timescale $\log^3 N$ by the celebrated Bolthausen--Sznitman coalescent \cite{Bolthausen1998}.

In order to explain the presence of the Bolthausen--Sznitman coalescent, Berestycki, Beres\-tycki and Schweinsberg \cite{Berestycki2010} approximate the $N$-BBM by BBM with a linear space-time barrier at which particles are instantaneously killed and whose slope is exactly the conjectured speed of the $N$-BBM (this idea was also present in \cite{Berard2010} and indeed already in \cite{Brunet2006}). They show for their system that the number of particles and the genealogy of the system converge in the $\log^3 N$ timescale to Neveu's continuous-state branching process and the Bolthausen--Sznitman coalescent, respectively.
Here, we will push their results further and {\em present an approximation of the $N$-BBM by BBM with a random absorbing space-time barrier (the model is defined properly in Section \ref{sec:moving_barrier}) with the following properties.
\begin{enumerate}
 \item The number of particles in the approximate model stays almost constant over the time\-scale $\log^3 N$.
 \item The position of the random barrier, suitably recentered, converges at the $\log^3 N$ timescale to a L\'evy process $(L_t)_{t\ge 0}$, given by
\begin{equation}
\label{eq:laplace_levy}
 \log E[e^{i\lambda L_1}] = i\lambda c + c_0\int_0^\infty e^{i\lambda x} - 1 - i\lambda x\Ind_{(x\le 1)}\,\Lambda(\dd x).
\end{equation}
Here, $c_0 = \sqrt{2\sum_k(k-1)q(k)}$, $c\in\R$ is a constant and $\Lambda(\dd x)$ is the image of the measure $x^{-2}\dd x$ by the map $x\mapsto c_0^{-1}\log(1+x)$.
\end{enumerate}
}
These properties are stated in precise form in Section \ref{sec:moving_barrier}.

In a next work, we plan to show that this approximation can be coupled with the $N$-BBM in such a way that our results can be transferred to that model.

\subsection{Related work}

The author is aware of only two mathematically rigorous articles on the $N$-BBM or the $N$-BRW (branching random walk): B\'erard and Gou\'er\'e \cite{Berard2010} prove the $\log^2 N$ correction of the linear speed of $N$-BRW, thereby showing the validity of the approximation by a deterministic traveling wave with cutoff. Durrett and Remenik \cite{Durrett2009} study the empirical distribution of $N$-BRW and show that it converges to a system of integro-differential equations with moving boundary. BBM with absorption at a linear space-time barrier however is a well-studied process (see for example \cite{Derrida2007,Gantert2011,HHK2006,Kesten1978,Neveu1988}) and is much more tractable than $N$-BBM due to the greater independence between the particles and its connection with some differential equations \cite{HHK2006,Maillard2010,Neveu1988}.

In addition to its intrinsic interest, the $N$-BBM is believed to be representative for general \emph{noisy traveling waves} (see \cite{Brunet2001} or the review articles \cite{VanSaarloos2003}, Chapter 7, or \cite{Panja2004}). There is indeed an exact duality relation between the FKPP equation with Gaussian white noise and a system of branching and coalescing Brownian motions, discovered by Shiga \cite{Shiga1988} in the context of stepping stone models. Recently, an estimate for the speed of a traveling wave of the noisy FKPP equation was established \cite{Mueller2010} which partly confirms the physicists' predictions. We believe that the present paper will we useful in the study of its dual branching-coalescing system, which could potentially lead to an improvement of the results for the noisy FKPP equation.

Let us also note that branching Brownian motion \emph{without} selection has a long history: Starting with \cite{Sevastyanov1958} it has been studied by many authors and under various aspects, along with its discrete counterpart, the branching random walk. Since \cite{McKean1975}, its connection to the FKPP equation has raised very fruitful interactions between analysis and probability theory (see for example \cite{Kyprianou2004} and the references therein). BBM has been used in applications, for example to model ecological and epidemic spread \cite{Mollison1977} or directed polymers on disordered trees \cite{Derrida1988}. During the last years, there has been renewed interest in the behavior of its extremal particles, be it the right-most only \cite{Addario-Berry2009,Hu2009,Aidekon2011b} or the whole point process formed by the particles at the right edge \cite{Aidekon2011c,Arguin2011, Arguin2011a,Madaule2011}. The extremal statistics of several other models have actually been shown or are conjectured to belong to the same universality class as BBM, such as the Gaussian Free Field on a two-dimensional lattice \cite{Bolthausen2001,Bolthausen2010,Bramson2010}, or the cover time of a 2D box by a random walk (see e.g.\ \cite{Ding2011b} and the references therein). 

\subsection{Heuristic ideas and overview of the results}
\label{sec:heuristic}

We recall the heuristic semi-deterministic description of $N$-BBM established in \cite{Brunet2006}:
\begin{enumerate}
 \item Most of the time, the particles are in a meta-stable state, in which the diameter of the cloud of particles (also called the \emph{front}) is approximately $L = c_0^{-1} \log N$, the empirical density of the particles proportional to $e^{-c_0 x} \sin(\pi x/L)$, and the system moves at a linear speed $v_{\mathrm{cutoff}} = c_0 - c_0\pi^2/(2\log^2N)$. This is the description provided by the \emph{cutoff} approximation from \cite{Brunet1997}.
 \item This meta-stable state is perturbed from time to time by particles moving far to the right and thus spawning a big number of descendants, causing a shift of the front to the right after a relaxation time which is of the order of $\log^2 N$. To make this precise, we fix a point in the bulk, for example the barycenter of the cloud of particles, and shift our coordinate system such that this point becomes its origin. Playing with the initial conditions of the FKPP equation with cutoff, the authors of \cite{Brunet2006} found that a particle moving up to the point $L + \delta$ causes a shift of the front by
\[
 R(\delta) = \frac 1 {c_0} \log \Big(1+\frac{Ce^{c_0\delta}}{L^3}\Big),
\]
for some constant $C>0$. In particular, in order to have an effect on the position of the front, a particle has to reach $L_{\mathrm{eff}} + O(1)$, where $L_{\mathrm{eff}} = L + c_0^{-1} 3 \log \log N$.
\item Assuming that such an event, where a particle ``escapes'' to the point $L + \delta$, happens with rate $C e^{-c_0 \delta}$, one sees that the time it takes for a particle to come close to $L_{\mathrm{eff}}$, and thus causing shifts of the front, is of the order of $\log^3 N$, which is much longer than the relaxation time when $N$ is large.
\item With this information, the full statistics of the position of front (the speed $v$ and the cumulants of order $n\ge 2$) were found to be
\begin{equation}
 \label{eq:statistics}
\begin{split}
 v - v_{\mathrm{cutoff}} &\approx \pi^2 c_0\frac{3\log\log N}{\log^3 N}\\
\frac{\text{[$n$-th cumulant]}}{t} &\approx \frac{\pi^2 c_0^{2-n} n! \zeta(n)}{\log^3 N},\quad n\ge 2,
\end{split}
\end{equation}
where $\zeta$ denotes the Riemann zeta-function.
\end{enumerate}

Berestycki, Berestycki and Schweinsberg \cite{Berestycki2010} put this description onto a rigorous foundation. They study BBM with absorption at the origin and with drift $-\mu$, where $\mu = \sqrt{c_0^2 - \pi^2/L_{\mathrm{eff}}^2} = v + o(L^{-3})$. Their starting point is to introduce a second barrier at the point $L_A = L_{\mathrm{eff}} - c_0^{-1} A$, for some large positive constant $A$, and divide the particles into two parts: One the one hand those that stay inside the interval $(0,L_A)$ or get absorbed at $0$, on the other hand those that eventually hit the second barrier. This corresponds roughly to the division of the process into a deterministic and a stochastic part. Indeed, killing the particles at the right barrier prevents the number of particles to grow fast and thus permits to calculate expectations and variances of various quantities. The quantities one is interested in, for example the number of particles at the time $\log^3 N$, will then have variances of order $e^{-A}$, such that for large $A$, this system behaves almost deterministically at the macroscopic scale. Moreover, the shape of the front predicted by the physicists, with a density proportional to $e^{-c_0 x} \sin(\pi x/L)$, follows simply from the transition density of Brownian motion with drift killed at the border of the interval $(0,L_A)$. As for the particles that hit the right barrier, the number of descendants of such a particle will be at a later time of the order of $e^{-A}NW$, where $W$ is a random variable with tail $P(W > x) \sim 1/x$, as $x$ goes to infinity. Moreover, the rate at which particles hit the right barrier is of the order of $e^A/\log^3 N$. Putting the pieces together, the authors of \cite{Berestycki2010} then show that the process which counts the number of particles of the system converges in the $\log^3 N$ timescale to Neveu's continuous-state branching process and its genealogy to the Bolthausen--Sznitman coalescent.

In this article, we validate the physicists' predictions concerning the fluctuations of the position of the $N$-BBM. Similar to \cite{Berestycki2010}, we approximate the $N$-BBM by BBM with negative drift and absorption at a barrier, but instead of keeping the barrier fixed at the origin, \emph{we will make the barrier move along with process, in such a way that the number of particles stays almost constant}. The movement of the barrier is very simple. Most of the time it does not move at all. Only when a particle hits a point $a>0$ \emph{and} spawns a lot of descendants, we move the barrier to the right in order to kill particles and thus make the population size stay almost constant. After this system has relaxed (which takes a time of order $a^2$), the barrier stays fixed again and we repeat this process, with the point $a$ shifted by the amount the barrier has moved.

Let us go into the details. Our system is defined in terms of the three positive parameters $a$, $A$ and $\kappa$, and we define $N$ by $a = c_0^{-1} (\log N + 3\log \log N - A)$.\footnote{We use the letter $a$ instead of $L_A$ for typographical reasons.} Initially, we have a barrier located at the origin and a set of particles in the interval $(0,a)$, such that $Z_0 \approx \kappa e^A$, where
\[
 Z_t = \sum ae^{\mu(X_u(t)-a)}\sin \frac{\pi x}{a}.
\]
Here, we sum over all the particles $u$ alive at time $t$ and $X_u(t)$ denotes the distance of the particle $u$ from the barrier at time $t$. We then let the particles evolve as branching Brownian motions with branching rate 1, reproduction law $q(k)$ and drift $-\mu$, where $\mu = \sqrt{c_0^2 - \pi^2/a^2}$. Furthermore, particles are killed as soon as they touch the barrier. We recall that by hypothesis the reproduction law $q(k)$ has mean greater than $1$ and finite second moment.\footnote{This last condition is only technical and we believe our results to be true for more general reproduction laws. In fact, in Section \ref{sec:critical_line}, we show that the asymptotic results on the random variable $W$ obtained in \cite{Berestycki2010} still hold if $\sum_{k\ge 1} k\log^3 kq(k)$ is finite, but we don't know whether this condition is sufficient for Theorems \ref{th:barrier} and \ref{th:barrier2}.}

The process $(Z_t)_{t\ge 0}$ is important for two reasons: Firstly, when $a$ is large, the number of particles at a time $t+s$, where $a^2 \ll s \ll a^3$, is approximately $(2\pi c_0) N e^{-A} Z_t$ \cite{Berestycki2010}. Hence, the initial condition is chosen in such a way that the number of particles is proportional to $N$. Secondly, if we kill particles at $0$ and $a$, then $Z_t$ is a martingale and therefore very easy to handle.

When a particle hits the right barrier at the time $T$, say, we absorb its descendants at the space-time line $\mathscr L = a-y + (c_0-\mu)(t-T)$, where $y$ is a large constant depending on $A$ only (this idea comes from \cite{Berestycki2010}). In doing so, the number of particles absorbed at the barrier has the same law as in BBM with absorption at a \emph{critical} line, i.e.\ a line with slope $c_0$. Defining then $Z'$ as $Z_t$, but summing only over these descendants, we know that at a later time the number of descendants of this particle will be of the order of $e^{-A} N Z'$. Consequently, we say that a \emph{breakout} occurs, whenever $Z' > \ep e^A$, where $\ep$ will be chosen such that in particular $\ep \ll 1/A$. Looking at the definition of $Z_t$, it is easy to guess by which amount $\Delta$ we have to move the barrier in order to counterbalance the breakout: Choosing $\Delta = c_0^{-1}\log(1+Z'/(\kappa e^A))$, the value of $Z_t$ is approximately divided by $1+Z'/(\kappa e^A)$, such that after the relaxation time, the value of $Z_t$ and the number of particles should again be approximately $\kappa e^A$ and $(2\pi c_0\kappa)N$. This is basically true, but we also have to take into account the fluctuations of $Z_t$ between the times $0$ and $T$, which are mostly due to the particles hitting the point $a$ without producing a breakout. For this reason, the actual definition of $\Delta$ in \eqref{eq:Delta_def} differs from the one given here. Nevertheless, the above considerations already explain the convergence of the barrier to the L\'evy process given by \eqref{eq:laplace_levy}: One the one hand, we have $Z' \approx (\pi/c_0) W$, where $W$ is the random variable mentioned above, such that the law of $e^{-A}Z'$ conditioned on $Z'>\ep e^A$ is approximately $\ep x^{-2}\Ind_{(x\ge\ep)}\,\dd x$ for large $A$ and $a$.\footnote{The statement ``for large $A$ and $a$'' means that we let first $a$, then $A$ go to infinity, see Section \ref{sec:before_breakout_definitions}.} On the other hand, we will show that breakouts occur at a rate proportional to $\ep^{-1}a^{-3}$. Together with the definition of $\Delta$, this explains the L\'evy measure $\Lambda(\dd x)$ of \eqref{eq:laplace_levy}. One easily checks that the cumulants of this L\'evy process coincide with \eqref{eq:statistics}.

We want to stress two more points. First, in \cite{Berestycki2010}, the authors cut the interval $[0,a^3]$ into tiny pieces of size $\theta a^3$, with $\theta \ll e^{-A}$, in order to make sure that with high probability at most one breakout occurs during a single piece. In adapting this approach to our system with the moving barrier, we found it however difficult to control the fluctuations of the process $Z_t$ over the whole interval of time $[0,a^3]$. We therefore chose another approach, which also has the advantage of giving more information about the history of the particle that causes the breakout. Namely, we will classify the particles into \emph{tiers}, according to the number of times they have hit the point $a$ and come back to the space-time line $\mathscr L$ mentioned above. Thus, when a tier $0$ particle hits the point $a$, it advances to tier $1$, and its descendants have a second chance to break out after having come back to $\mathscr L$. We can then define \emph{the time $T$ of the first breakout} and will indeed show that $T$ is approximately exponentially distributed with rate proportional to $\ep^{-1}a^{-3}$. Interestingly, we will see that with high probability breakouts only occur from particles of tier $0$ or $1$, the number of breakouts occuring from particles of tier $1$ between the times $0$ and $a^3$ being approximately proportional to $A$ (and the remaining $\approx \ep^{-1}$ breakouts occurring from particles of tier $0$). In order to study the system \emph{up to the time $T$}, we will then study BBM \emph{conditioned to break out at time $t$} for every $t\ge 0$, which can be formulated in terms of a Doob transform of the process.

The second point concerns the shape of the barrier. If we were only interested in the state of the system \emph{after} is has relaxed, we could shift the barrier instantaneously by $\Delta$. However, since in a second work we plan to couple the model of this article with the $N$-BBM, we need to move the barrier continuously and over the timescale $a^2$, which is the relaxation time of the system. In this article, we will allow the shape of the barrier to be given by an arbitrary family of ``barrier functions'' $(f_\Delta)_{\Delta\ge0}$, which are non-decreasing, twice differentiable functions with $f_\Delta(0) = 0$ and $f_\Delta(+\infty) = \Delta$, plus some uniformity conditions. First-moment calculations then suggest that the right barrier function to choose for the coupling with the $N$-BBM is
\[
 f_\Delta(t) = c_0^{-1} \log\Big(1+(e^{c_0\Delta}-1)\pi^{-2}e^{\pi^2 t/2}\frac{\dd}{\dd t}\theta(1,t)\Big),
\]
where $\theta(x,t)$ is defined in \eqref{eq:theta}.


We finally remark that although parts of this article (mostly in Sections \ref{sec:critical_line} and \ref{sec:interval}) are heavily based on \cite{Berestycki2010}, it is entirely self-contained. This means that we will reprove some results of \cite{Berestycki2010}, often because we need stronger or different versions, but also sometimes because we found simpler proofs. We think that this is for the benefit of the reader.

\tableofcontents
\newpage

\section{Brownian motion in an interval}
\label{sec:bm}

In this section, we recall some explicit formulas concerning real-valued Brownian motion killed upon exiting an interval. These formulae naturally involve Jacobi theta functions, since these are fundamental solutions of the heat equation with periodic boundary conditions. We will therefore first review their definition and some of their properties.

\subsection{Jacobi theta functions}

In \cite{BatemanII}, p355, the Jacobi theta function $\theta_3(v|\tau)$ is defined for $v\in\C$, $\tau\in\C$ with $\Im \tau > 0$ as 
\begin{equation}
\label{eq:theta3}
\theta_3(v|\tau) = \sum_{n\in\Z} \exp\Big(i \pi (n^2\tau + 2 n v)\Big) = 1 + 2 \sum_{n=1}^\infty e^{i \pi n^2 \tau} \cos(2 \pi n v).
\end{equation}
For our purposes, the following definition will be handier: For $x\in\C$, $t\in\C$ with $\Re t>0$, we define
\begin{equation}
\label{eq:theta}
\begin{split}
\theta(x,t) = \theta_3\Big(\frac x 2\Big|\frac{i \pi t}{2}\Big) &= \sum_{n\in\Z} \exp\Big(-\tfrac{\pi^2}{2} n^2 t + i \pi n x\Big)\\
&= 1 + 2 \sum_{n=1}^\infty e^{- \tfrac{\pi^2}{2} n^2 t} \cos(\pi n x).
\end{split}
\end{equation}

The definition \eqref{eq:theta} is a representation of $\theta$ as a Fourier series, which is particularly well suited for large $t$, but which does not reveal its behaviour as $\Re t\to 0$. This is where the following representation comes in, which is related to \eqref{eq:theta} by the Poisson summation formula (see \cite{Bellman}, §9):
\begin{equation}
\label{eq:theta_gaussian}
\theta(x,t) = \sum_{n\in\Z} \frac{1}{\sqrt{2\pi t}} \exp\Big(-\frac{(x-2n)^2}{2t}\Big).
\end{equation}
One recognizes immediately that for real $x$ and $t$, $\theta(x,t)$ is the probability density at time $t$ of Brownian motion on the circle $\R/2\Z$ started at $0$. In other words, $\theta(x,t)$ is the unique solution to the PDE
\begin{equation*}
\label{eq:pde_theta}
\begin{cases}
\frac{\partial}{\partial t} u(x,t) = \frac 1 2 \left(\frac{\partial}{\partial x}\right)^2 u(x,t) & \text{(PDE)}\\
u(x,t) = u(x+2,t) & \text{(BC)}\\
u(x,0+) = \sum_{n\in\Z} \delta(x-2n) & \text{(IC)},
\end{cases}
\end{equation*}
where $\delta(x)$ denotes the Dirac Delta-function. This is the heat equation with periodic boundary condition and the Dirac comb as initial condition. Note that (PDE) and (BC) also follow directly from \eqref{eq:theta}.

\subsection{Brownian motion killed upon exiting an interval}

Various quantities of Brownian motion killed upon exiting an interval can be expressed by theta functions. For $x\in\R$, let $W^x$ be the law of Brownian motion started at $x$, let $(X_t)_{t\ge0}$ be the canonical process and let $H_y = \inf\{t\ge0: X_t = y\}$. For $a>0$ and $x\in[0,a]$, denote by $W^x_{\text{killed},a}$ the law of Brownian motion started at $x$ and killed upon leaving the interval $(0,a)$. Let $p^a_t(x,y)$ be its transition density, i.e.
\begin{equation}
\label{eq:def_p}
p^a_t(x,y) = W^x_{\text{killed},a}(X_t\in\dd y) = W^x(X_t \in \dd y,\ H_0\wedge H_a < t)/\dd y,\quad x,y\in [0,a].
\end{equation}
Then $p^a_t(x,y)$ is the fundamental solution to the heat equation (PDE) with boundary condition
\[u(0,t) = u(a,t) = 0,\quad t\ge0.\]
Hence (see also \cite{ItoMcKean}, Problem 1.7.8 or \cite{BorodinSalminen}, formula 1.1.15.8),
\begin{equation}
\label{eq:p_theta}
p^a_t(x,y) = a^{-1}\left(\theta\Big(\frac{x-y}{a},\frac t {a^2}\Big) - \theta\Big(\frac{x+y}{a},\frac t {a^2}\Big)\right).
\end{equation}
Equation \eqref{eq:theta} then yields
\begin{equation}
\label{eq:p_sin}
p^a_t(x,y) = \frac{2}{a} \sum_{n=1}^\infty e^{-\frac{\pi^2}{2a^2} n^2 t} \sin(\pi n \tfrac x a) \sin(\pi n \tfrac y a).
\end{equation}
This representation is particularly useful for large $t$: Define 
\begin{equation}
\label{eq:def_E}
 E_t = \sum_{n=2}^{\infty} n^2e^{-\pi^2/2\,(n^2-1) t}.
\end{equation}
By \eqref{eq:p_sin} and the inequality $|\sin nx| \le n \sin x$, $x\in [0,\pi]$, one sees that
\begin{equation}
 \label{eq:p_estimate}
 p^a_t(x,y) = \frac 2 a \sin(\pi x/a) \sin(\pi y/a)e^{-\frac{\pi^2}{2a^2} t}(1+ O(1) E_{t/a^2}).
\end{equation}
Note that the potential kernel is given by
\begin{equation}
\label{eq:p_potential}
 \int_0^\infty p^a_t(x,y)\,\dd t = W^x\Big(\int_0^{H_0\wedge H_a} \Ind_{(X_t\in\dd y)}\,\dd t\Big)/\dd y = 2 a^{-1} (x\wedge y)(a-x\vee y),
\end{equation}
by the formula for the Green function of Brownian motion (see e.g.\ \cite{Kallenberg1997}, Lemma 20.10, p379).

Set $H=H_0\wedge H_a$ and define
\begin{equation}
\label{eq:def_r}
r^a_t(x) = W^x(H \in \dd t,\ X_H = a)/\dd t.
\end{equation}
Then (see \cite{BorodinSalminen}, formula 1.3.0.6),
\begin{equation}
\label{eq:r_theta}
r^a_t(x) = \frac{1}{2a^2} \theta'\left(\frac{x}{a}-1,\frac t {a^2}\right),
\end{equation}
where $\theta'$ denotes the derivative of $\theta$ with respect to $x$.

The following two integrals are going to appear several times throughout the article, which is why we give some useful estimates here. For a measurable subset $S\subset \R$, define
\begin{equation}
 \label{eq:Ia} 
  I^a(x,S) = W^x\Big(e^{\frac{\pi^2}{2a^2}H_a} \Ind_{(H_0 > H_a\in S)}\Big) = \int_{S\cap (0,\infty)} e^{\frac{\pi^2}{2a^2} s} r_s^a(x)\dd s,
\end{equation}
and
\begin{equation}
 \label{eq:Ja}
  J^a(x,y,S) = \int_{S\cap (0,\infty)} e^{\frac{\pi^2}{2a^2} s} p_s^a(x,y)\dd s,
\end{equation}
which satisfy the scaling relations 
\begin{equation}
 \label{eq:IJ_scaling}
 I^a(x,S) = I(x/a,S/a^2),\quad J^a(x,y,S) = a J(x/a,y/a,S/a^2),
\end{equation}
with $I = I^1$ and $J=J^1$. The following lemma provides estimates on $I(x,S)$ and $J(x,y,S)$.

\begin{lemma}
 \label{lem:I_estimates}
There exists a universal constant $C$, such that for every $x\in[0,1]$ and every measurable $S\subset \R_+$, we have
\[
\begin{split}
 |I(x,S) - \pi \lambda(S)\sin(\pi x)| &\le C\Big(1 \wedge E_{\inf S} (1\wedge \lambda(S))\sin(\pi x)\Big),\quad\tand\\
|J(x,y,S) - 2 \lambda(S)\sin(\pi x)\sin(\pi y)| &\le C\Big([(x\wedge y)(1-(x\vee y))] \wedge E_{\inf S} \sin(\pi x) \sin(\pi y)\Big),
\end{split}
\]
where $\lambda(S)$ denotes the Lebesgue measure of $S$ and $E_{\inf S}$ is defined in \eqref{eq:def_E}.
\end{lemma}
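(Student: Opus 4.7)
The plan is to expand both quantities as sine series in the spatial variable(s), isolate the $n=1$ mode as the main term, and bound the tail. From \eqref{eq:p_sin} and by differentiating the Fourier series \eqref{eq:theta} in \eqref{eq:r_theta}, one obtains
\[
 r_s^1(x) = \pi\sum_{n\ge1} n(-1)^{n+1} e^{-\pi^2 n^2 s/2}\sin(\pi n x).
\]
Multiplying by $e^{\pi^2 s/2}$ exactly cancels the $n=1$ exponent, so integrating termwise over $S$ yields
\[
 I(x,S) = \pi\lambda(S)\sin(\pi x) + \pi\sum_{n\ge2} n(-1)^{n+1}\sin(\pi n x)\int_S e^{-\pi^2(n^2-1)s/2}\,\dd s,
\]
and an entirely analogous identity for $J(x,y,S)-2\lambda(S)\sin(\pi x)\sin(\pi y)$, with $\sin(\pi n x)\sin(\pi n y)$ in place of $n(-1)^{n+1}\sin(\pi n x)$.

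First I would prove the $E_{\inf S}$-type part of the bounds. For the terms $n\ge 2$ I would use the elementary pointwise inequality $|\sin(\pi n x)|\le n\sin(\pi x)$ valid for $x\in[0,1]$, which follows by a short induction on $n$ from the sine addition formula together with $\sin(\pi x)\ge 0$. For the integral, a case split on whether $\lambda(S)\le 1$, combined with $1/(n^2-1)\le 1/3$ for $n\ge 2$ in the other case, yields $\int_S e^{-\pi^2(n^2-1)s/2}\,\dd s\le (1\wedge\lambda(S))\,e^{-\pi^2(n^2-1)\inf S/2}$ up to an absolute constant. Multiplying by the $n^2$-factor coming from the two sine bounds and summing over $n\ge 2$ reproduces $E_{\inf S}$ by \eqref{eq:def_E}, giving $C(1\wedge\lambda(S))E_{\inf S}\sin(\pi x)$ for $I$ and $C(1\wedge\lambda(S))E_{\inf S}\sin(\pi x)\sin(\pi y)$ for $J$.

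For the uniform part of the bounds (the ``$1$'' for $I$ and the ``$(x\wedge y)(1-(x\vee y))$'' for $J$), the sine-series estimate alone is insufficient because $E_{\inf S}\to\infty$ as $\inf S\to 0$. To remedy this, I would split $S=S_1\cup S_2$ with $S_1=S\cap[0,1]$ and $S_2=S\cap(1,\infty)$ and use the additivity of both $I(x,\cdot)-\pi\lambda(\cdot)\sin(\pi x)$ and $J(x,y,\cdot)-2\lambda(\cdot)\sin(\pi x)\sin(\pi y)$. On $S_2$, $\inf S_2\ge 1$, so the previous bound together with $E_1<\infty$ already gives a constant multiple of $\sin(\pi x)$ (resp.\ $\sin(\pi x)\sin(\pi y)$), and for $J$ one uses $\sin(\pi x)\sin(\pi y)\le \pi^2(x\wedge y)(1-(x\vee y))$, which follows from $\sin(\pi x)\le \pi\min(x,1-x)$. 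On $S_1$, $e^{\pi^2 s/2}\le e^{\pi^2/2}$, so $I(x,S_1)\le e^{\pi^2/2}W^x(H_0>H_1)\le e^{\pi^2/2}$ and, via the potential kernel identity \eqref{eq:p_potential}, $J(x,y,S_1)\le 2e^{\pi^2/2}(x\wedge y)(1-(x\vee y))$; the main terms $\pi\lambda(S_1)\sin(\pi x)$ and $2\lambda(S_1)\sin(\pi x)\sin(\pi y)$ are absorbed the same way. Taking the minimum of the two bounds gives the statement.

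The only mildly tricky point, and therefore the ``hard'' part, is recognizing that the sine-series estimate degenerates at small $\inf S$ and has to be supplemented by the splitting argument above; everything else is bookkeeping.
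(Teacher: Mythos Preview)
Your proposal is correct and follows essentially the same approach as the paper: expand $I$ and $J$ via the sine/Fourier series, isolate the $n=1$ mode, bound the tail by $|\sin(\pi n x)|\le n\sin(\pi x)$ together with the two estimates on $\int_S e^{-\pi^2(n^2-1)s/2}\,\dd s$ to produce the $E_{\inf S}$ bound, and handle the uniform alternative by splitting $S$ at $1$ and invoking $W^x(H_0>H_1)\le 1$ for $I$ and the potential-kernel identity \eqref{eq:p_potential} for $J$. The only cosmetic difference is that the paper states the reduction (``it suffices to prove the $E_{\inf S}$ bound for all $S$'') before doing the series computation, whereas you do the series computation first and the splitting afterwards.
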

\begin{proof}
First note that $I(x,\cdot)$ is a positive measure on $\R_+$ for every $x\in[0,1]$, such that
\[
 0\le I(x,S\cap[0,1]) \le I(x,[0,1]) \le W^x\Big(e^{\frac{\pi^2}{2}H_1} \Ind_{(H_1 \le 1)}\Big) \le e^{\frac{\pi^2}{2}},
\]
by \eqref{eq:Ia}. Furthermore, decomposing $I(x,S)$ into
\[
  I(x,S) = I(x,S\cap[0,1]) + I(x,S\cap (1,\infty)),
\]
it is enough to prove that $|I(x,S) - \pi \lambda(S)\sin(\pi x)| \le C (1\wedge \lambda(S))E_{\inf S} \sin(\pi x)$ for all $S$. Now, by \eqref{eq:r_theta} and \eqref{eq:theta},
\[
\begin{split}
 I(x,S) &= \frac 1 2 \int_S e^{\frac{\pi^2}{2} s} \theta'(x,s) \dd s\\
  &= \pi \int_S \sum_{n=1}^\infty e^{-\frac{\pi^2}{2}(n^2-1)s} (-1)^{n-1} n \sin(\pi n x)\dd s\\
  &= \pi \lambda(S) \sin(\pi x) + \pi \sum_{n=2}^\infty\Big(\int_S e^{-\frac{\pi^2}{2}(n^2-1)s} \dd s\Big) n(-1)^{n-1} \sin(\pi n x),
\end{split}
\]
where the exchange of integral and sum is justified by the uniform convergence of the sum for $s\ge 1$.
We now have for $n\ge 2$,
\[
  \int_S e^{-\frac{\pi^2}{2}(n^2-1)s}\dd s \le \int_{\inf S}^\infty e^{-\frac{\pi^2}{2}(n^2-1)s}\dd s = \frac{2}{\pi^2(n^2-1)} e^{-\frac{\pi^2}{2}(n^2-1)\inf S},
\]
as well as
\[
 \int_S e^{-\frac{\pi^2}{2}(n^2-1)s}\dd s \le \lambda(S)e^{-\frac{\pi^2}{2}(n^2-1)\inf S}
\]
Furthermore, we have for $n\ge 2$,
\[
 |n(-1)^{n-1} \sin(\pi n x)| \le n^2 \sin(\pi x) \le 2 (n^2 - 1) \sin(\pi x).
\]
It follows that
\[
 |I(x,S) - \pi \lambda(S)\sin(\pi x)| \le (\frac{4}{\pi} \wedge \pi \lambda(S)) E_{\inf S} \sin(\pi x).
\]
This proves the statement about $I$. The proof of the statement about $J$ is similar, drawing on \eqref{eq:p_sin} instead and on the following estimate:
\[
 J(x,y,[0,1]) = \int_0^1 e^{\frac{\pi^2}{2} t} p_t(x,y)\,\dd t \le e^{\frac{\pi^2}{2}} \int_0^\infty p_t(x,y)\,\dd t = e^{\frac{\pi^2}{2}} (x\wedge y)(1-(x\vee y)),
\]
by \eqref{eq:p_potential}.
\end{proof}

\subsection{The Brownian taboo process}
\label{sec:taboo}
The Markov process on $(0,a)$ with infinitesimal generator
\[\frac 1 2 \left(\frac \dd {\dd x}\right)^2 + \frac \pi a \cot \frac {\pi x} a \frac \dd {\dd x}\]
is called the \emph{Brownian taboo process} on $(0,a)$. It is a diffusion with scale function $s(x)$ and speed measure $m(\dd x)$, where
\[
s(x) = \frac \pi a \cot \frac {\pi x} a \quad\quad\tand\quad\quad m(\dd x) = \frac {2 a^2} {\pi^2} \sin^2\left(\frac {\pi x} a\right)\ \dd x.
\]
The singular points $0$ and $a$ are therefore entrance-not-exit. For $x\in [0,a]$ we denote the law of the Brownian taboo process on $(0,a)$ started from $x$ by $W^x_{\text{taboo},a}$. Often we will drop the $a$ if its value is clear from the context.

The name of this process was coined by F.\ Knight \cite{Knight1969} who showed that it can be interpreted as Brownian motion conditioned to stay inside the interval $(0,a)$ (hence, $0$ and $a$ are \emph{taboo states}). When $a=\pi$, the Brownian taboo process is also known as the \emph{three-dimensional Legendre process}, because of its relation to Brownian motion on the 3-sphere (see \cite{ItoMcKean}, p270). Readers familiar with the 3-dimensional Bessel process will notice that it can be obtained from the Brownian taboo process as the limit in law when $a\to\infty$. Note that the normalisation of the scale function and speed measure from the last paragraph was chosen in such a way that they converge, respectively, to the scale function and speed measure of the 3-dimensional Bessel process, as $a\to\infty$.

Below we list some useful properties of the Brownian taboo process:
\begin{enumerate}[nolistsep]
 \item It satisfies the following \emph{scaling relation}: If $X_t$ is a Brownian taboo process on $(0,1)$, then $a X_{t/a^2}$ is a Brownian taboo process on $(0,a)$.
 \item It is the Doob transform of Brownian motion killed at $0$ and $a$, with respect to the space-time harmonic function $h(x,t) = \sin(\pi x/a) \exp(\pi^2t/(2a^2))$. In other words, for $x\in(0,a)$, $W^x_{\text{taboo}}$ is obtained from $W^x_{\text{killed}}$ by a Cameron--Martin--Girsanov change of measure with the martingale
\[
 Z_t = \left(\sin\frac {\pi x} a\right)^{-1} \sin\frac {\pi X_t} a \ \exp\frac {\pi^2}{2a^2} t.
\]
 \item As a consequence, its transition probabilities are given by
\begin{equation}
\label{eq:taboo_transition}
 p^{\text{taboo}(0,a)}_t(x,y) = W^x_{\text{taboo},a}\left(X_t \in \dd y\right)/\dd y = \frac {\sin (\pi y/a)} {\sin (\pi x/a)}e^{\frac {\pi^2}{2a^2} t}\, p^a_t(x,y).
\end{equation}
Equation \eqref{eq:p_estimate} now implies that
\begin{equation}
 \label{eq:ptaboo_estimate}
 p^{\text{taboo}(0,a)}_t(x,y) = \frac 2 a \sin^2(\pi y/a)(1+ O(1) E_{t/a^2}),\quad \text{for all }x,y\in[0,a],
\end{equation}
 \item As can be seen from above or directly, it admits the stationary probability measure
\[
(m(0,a))^{-1} m(\dd x) = 2/a \sin^2(\pi x/a)\ \dd x. 
\]
 \item It is self-dual in the sense that for a measurable functional $F$ and $t>0$, we have
\[
 W^{x,t,y}_{\text{taboo}}[F((X_s;0\le s\le t))] = W^{y,t,x}_{\text{taboo}}[F((X_{t-s};0\le s\le t))].
\]
Here $W^{x,t,y}_{\text{taboo}}$ denotes the taboo bridge from $x$ to $y$ of length $t$. This follows from the self-duality of killed Brownian motion.
\end{enumerate}

The following lemma will be needed in Sections \ref{sec:before_breakout} and \ref{sec:moving_barrier}.

\begin{lemma}
\label{lem:k_integral}
Define $k(x)=e^{-c x}$. There exists a constant $C$, depending only on $c$, such that we have for every $x,y\in[0,a]$,
\begin{equation}
 \label{eq:taboo_integral_error_1}
 W^{x}_{\mathrm{taboo}}\Big[\int_0^t k(X_s)\,\dd s\Big] \le C \Big(t/a^3 + \operatorname{err}(x)\Big),
\end{equation}
and for $t\ge a^2$, 
\begin{equation}
 \label{eq:taboo_integral_error_2}
 W^{x,t,y}_{\mathrm{taboo}}\Big[\int_0^t k(X_s)\,\dd s\Big] \le C \Big(t/a^3 + \operatorname{err}(x) + \operatorname{err}(y)\Big),
\end{equation}
with $\operatorname{err}(z) = (1\wedge z^{-1}) + (1+z)e^{-cz}$. If $t\le a^2$, we still have for $x,y\le a/2$,
\begin{equation}
 \label{eq:taboo_integral_error_3}
 W^{x,t,y}_{\mathrm{taboo}}\Big[\int_0^t k(X_s)\,\dd s\Big] \le C.
\end{equation}
\end{lemma}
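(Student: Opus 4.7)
The plan is to reduce all three bounds to explicit integrals of Brownian transition densities by exploiting the Doob-transform representation of the taboo process from Section~\ref{sec:taboo}, and then to apply Lemma~\ref{lem:I_estimates}. A recurring technical point is that the direct Doob-transform estimate is sharp for starting points $x,y$ of order $1$ or larger, but breaks down for $x$ close to $0$ (and, by symmetry, close to $a$); this is what accounts for the $1\wedge x^{-1}$ piece of $\operatorname{err}$ and must be recovered by a separate strong-Markov argument.

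For \eqref{eq:taboo_integral_error_1}, Fubini and \eqref{eq:taboo_transition} give
\[
 W^x_{\mathrm{taboo}}\Big[\int_0^t k(X_s)\,\dd s\Big] = \frac{1}{\sin(\pi x/a)}\int_0^a e^{-cy}\sin(\pi y/a)\,J^a(x,y,[0,t])\,\dd y,
\]
and Lemma~\ref{lem:I_estimates} (with the scaling \eqref{eq:IJ_scaling}, and noting that $E_0=+\infty$ so that only the $(x\wedge y)(1-x\vee y)$ piece of the min is available) bounds $J^a(x,y,[0,t])$ by $2(t/a)\sin(\pi x/a)\sin(\pi y/a) + C(x\wedge y)(a-x\vee y)/a$. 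The first contribution integrates to $\le Ct/a^3$ using $\sin(\pi y/a)\le \pi y/a$ and $\int_0^\infty y^2 e^{-cy}\,\dd y<\infty$. For the second, assuming $x\le a/2$ (the range $x\ge a/2$ follows from the $x\mapsto a-x$ symmetry of the taboo law), I would split the $y$-integral at $y=x$ and $y=a/2$ and use $\sin(\pi y/a)\le \pi\min(y,a-y)/a$; this yields a bound of $C/x + C(1+x)e^{-cx}$ plus an $e^{-ca/2}$ remainder, which matches $C\operatorname{err}(x)$ as soon as $x\ge 1$. To close the gap $x<1$, the strong Markov property at $H_1$ together with $k\le 1$ yield
\[
 W^x_{\mathrm{taboo}}\Big[\int_0^t k(X_s)\,\dd s\Big]\le W^x_{\mathrm{taboo}}[H_1] + W^1_{\mathrm{taboo}}\Big[\int_0^t k(X_s)\,\dd s\Big],
\]
and the already-treated case $x=1$ handles the second summand. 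The Doob transform back to Brownian motion gives $W^x_{\mathrm{taboo}}[H_1] = (\sin(\pi/a)/\sin(\pi x/a))\,W^x[H_1 e^{\pi^2 H_1/(2a^2)}\Ind_{(H_1<H_0)}]$, which is bounded by a constant using the explicit formula $W^x[H_1\Ind_{(H_1<H_0)}]=x(1-x)(1+x)/3$ together with Cauchy--Schwarz to absorb the exponential factor (valid for $a$ large).

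For \eqref{eq:taboo_integral_error_2}, I would split $[0,t]=[0,a^2/2]\cup[a^2/2,t-a^2/2]\cup[t-a^2/2,t]$. On the middle piece both $p^{\mathrm{taboo}(0,a)}_s(x,\cdot)$ and $p^{\mathrm{taboo}(0,a)}_{t-s}(\cdot,y)$ that build the bridge density are within a constant factor of the stationary density $(2/a)\sin^2(\pi\cdot/a)$ by \eqref{eq:ptaboo_estimate}, since $E_{1/2}$ is a small universal constant; the contribution is thus $\le Ct/a^3$. On $[0,a^2/2]$, the bridge is a Doob transform of the free taboo process with density $p^{\mathrm{taboo}(0,a)}_{t-s}(X_s,y)/p^{\mathrm{taboo}(0,a)}_t(x,y)$, and by the same estimate this density is uniformly bounded when $t-s\ge a^2/2$; so the initial piece is dominated by $C\,W^x_{\mathrm{taboo}}[\int_0^{a^2/2}k(X_s)\,\dd s]\le C(\operatorname{err}(x)+1/a)$ via \eqref{eq:taboo_integral_error_1}. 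The final piece reduces to the initial one with $x$ and $y$ swapped by the self-duality property~5 of Section~\ref{sec:taboo}.

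Finally, for \eqref{eq:taboo_integral_error_3} a key simplification is that the Doob-transform factors cancel in the bridge density, so the taboo bridge $W^{x,t,y}_{\mathrm{taboo}}$ coincides with the bridge of Brownian motion killed at $\{0,a\}$. For $t\le a^2$ and $x,y,z\le a/2$, the reflection-principle series \eqref{eq:theta_gaussian} for $p^a_s$ shows that it differs from the transition density of Brownian motion killed only at $0$ by corrections of order $e^{-a^2/(2s)}/\sqrt s$, which are negligible in this regime; hence the bridge is within a bounded factor of the Bessel-$3$ bridge from $x$ to $y$ of length $t$. Splitting the time interval at $t/2$ and dominating the bridge density on each half by the free Bessel-$3$ transition density (the endpoint conditioning being far), the expectation reduces to an estimate of the Bessel-$3$ Green's-function integral $\int_0^\infty e^{-cz} g^{\mathrm{Bes}}(x,z)\,\dd z$ with $g^{\mathrm{Bes}}(x,z)=2\min(z,z^2/x)$, which a direct calculation shows is bounded by a universal constant uniformly in $x\ge 0$. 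The contribution of $z\ge a/2$ is bounded by $a^2 e^{-ca/2}$ and is negligible. The main obstacle throughout will be maintaining uniform control of the constants in $x$, $y$ and $a$, and in particular the boundary-correction estimate underlying the third bound will be the most delicate step.
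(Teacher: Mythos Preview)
Your routes for \eqref{eq:taboo_integral_error_1} and \eqref{eq:taboo_integral_error_2} are different from the paper's and essentially correct. The paper proves \eqref{eq:taboo_integral_error_1} by a hitting-time decomposition: it introduces an independent stationary point $Y$, splits $\int_0^t k(X_s)\,\dd s$ at $H_Y$, and bounds the post-$H_Y$ piece by $Ct/a^3$ (stationarity) and the pre-$H_Y$ piece by explicit Green-function integrals against the scale and speed of the taboo process. Your approach---writing the occupation integral directly via $J^a$ and invoking Lemma~\ref{lem:I_estimates}---is more elementary and stays closer to the tools already developed. One correction: the map $x\mapsto a-x$ preserves the taboo law but \emph{not} the integrand $k(X_s)=e^{-cX_s}$, so your symmetry claim for $x\ge a/2$ is wrong as stated. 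This is harmless, however: carrying out the same $y$-splitting for $x\ge a/2$ (now at $a/2$ and at $x$, using $\sin(\pi y/a)\le\pi(a-y)/a$ on the upper range) yields a bound of order $C/a+Ca^2e^{-ca/2}$, and both terms are $\le C/x\le C\,\operatorname{err}(x)$ uniformly in $a$. Your reduction of \eqref{eq:taboo_integral_error_2} to \eqref{eq:taboo_integral_error_1} via time-splitting and \eqref{eq:ptaboo_estimate} is the same idea as the paper's, phrased with three pieces instead of two.

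For \eqref{eq:taboo_integral_error_3} there is a genuine gap. Reducing the taboo bridge to the Bessel-3 bridge is fine (and matches the paper). But your next step---``dominating the bridge density on each half by the free Bessel-3 transition density''---asks for a uniform bound on the ratio $p^{\mathrm{Bes}}_{t-s}(z,y)/p^{\mathrm{Bes}}_t(x,y)$ over all $z>0$, and this fails: when $x,y\ll\sqrt t$ the denominator behaves like $2xy/t$ (from the $\sinh$ expansion), while the numerator is of order $1$ for $z$ near $y$, so the ratio is $\sim t/(xy)$ and is unbounded in the regime $t\le a^2$, $x,y$ small. Consequently the bridge occupation measure is \emph{not} dominated by the free Bessel-3 Green kernel, and your Green-function integral $\int e^{-cz}g^{\mathrm{Bes}}(x,z)\,\dd z$ does not control the bridge expectation. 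The paper closes this gap with a different idea: the drift of the Bessel bridge $R^{x,t,y}$ is $\tfrac1z-\tfrac{z}{t-s}+\tfrac{y}{t-s}\coth\tfrac{zy}{t-s}$, which is \emph{increasing} in $y$; comparison theorems for one-dimensional diffusions then give pathwise monotonicity of $R^{x,t,y}$ in $y$ (and, by self-duality, in $x$). Since $k$ is decreasing, this yields $R^{x,t,y}[\int_0^t k(X_s)\,\dd s]\le R^{0,t,0}[\int_0^t k(X_s)\,\dd s]$, and the right-hand side is a single explicit Brownian-excursion computation bounded uniformly in $t$. You should replace your domination step by this monotone coupling.
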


\begin{proof}
We first show that \eqref{eq:taboo_integral_error_1} implies \eqref{eq:taboo_integral_error_2}. By the self-duality of the taboo bridge, we have
\[
 W^{x\to y}_{\mathrm{taboo}}\Big[\int_0^t k(X_s)\,\dd s\Big] = W^{x,t,y}_{\mathrm{taboo}}\Big[\int_0^{t/2} k(X_s)\,\dd s\Big] + W^{y,t,x}_{\mathrm{taboo}}\Big[\int_0^{t/2} k(X_s)\,\dd s\Big].
\]
It therefore remains to prove that
\[
 E(x,y) = W^{x,t,y}_{\mathrm{taboo}}\Big[\int_0^{t/2} k(X_s)\,\dd s\Big] \le C(t/a^3 + \operatorname{err}(x)).
\]
Conditioning on $\sigma(X_s; 0\le t\le t/2)$, this integral equals
\[
 E(x,y) = W^x_{\mathrm{taboo}}\Big[\frac{p^{\mathrm{taboo}}_{t/2}(X_{t/2},y)}{p^{\mathrm{taboo}}_T(x,y)}\int_0^{t/2} k(X_s)\,\dd s\Big].
\]
By \eqref{eq:ptaboo_estimate}, there exists a universal constant $C$, such that for $t\ge a^2$,
\[
 E(x,y) \le C\, W^x_{\mathrm{taboo}}\Big[\int_0^{t/2} k(X_s)\,\dd s\Big].
\]
Equation \eqref{eq:taboo_integral_error_1} therefore implies \eqref{eq:taboo_integral_error_2}.

Heuristically, one can estimate the left side of \eqref{eq:taboo_integral_error_1} in the following way: Since $k(x)$ is decreasing very fast, only the times at which $X_s$ is of order $1$ contribute to the integral. When started from the stationary distribution, the process takes a time of order $a^3$ to reach a point at distance $O(1)$ from $0$ \cite{Lambert2000} and it stays there for a time of order $1$, hence the integral is of order $t/a^3$. When started from the point $x$, an additional error is added, which is of order $1$, when $x$ is at distance of order $1$ away from $0$. Adding both terms gives the bound appearing in the statement of the lemma.

The exact calculations are most easily performed in the following way. Let $Y$ be a random variable with values in $(0,a)$ distributed according to $\widetilde{m}(\dd x) := 2/a \sin^2(\pi x/a)\,\dd x$, which is the stationary probability measure of the taboo process. Let $H_Y = \inf\{t>0: X_s = Y\}$. We then have
\[
\begin{split}
 W^x_{\mathrm{taboo}}\Big[\int_0^t k(X_s)\,\dd s\Big] &= W^x_{\mathrm{taboo}}\Big[\int_0^{H_Y} k(X_s)\,\dd s+\int_{H_Y}^t k(X_s)\,\dd s\Big]\\
&\le W^x_{\mathrm{taboo}}\Big[\int_0^{H_Y} k(X_s)\,\dd s\Big] + W^{\widetilde m}_{\mathrm{taboo}}\Big[\int_0^t k(X_s)\,\dd s\Big]\\
& =: I_1+I_2.
\end{split}
\]
The second term is simply equal to
\[
 I_2 = t \int_0^a \widetilde{m}(\dd y)\, k(y)\,\dd y \le 2 \pi^2 t/a^3 \int_0^\infty e^{-c y}(1+y) y^2\,\dd y \le CT/a^3,
\]
for some constant $C$ depending only on $c$.

The first term is equal to (see e.g.\ \cite{Revuz1999} Chapter 3, Corollary 3.8)
\begin{equation}
 \label{eq:proof_k_integral_1}
\begin{split}
 I_1 &= \int_0^x \widetilde{m}(\dd y) \int_y^a m(\dd z) G_{y,a}(x,z) k(z) + \int_x^a \widetilde{m}(\dd y) \int_0^y m(\dd z) G_{0,y}(x,z) k(z)\\
 &=: I_{11}+I_{12},
\end{split}
\end{equation}
where the Green functions are defined by
\[
 G_{y,a}(x,z) = s(x\wedge z) - s(y)\quad\tand\quad G_{0,y}(x,z) = s(y) - s(x\vee z).
\]
By Fubini's theorem, the first term in \eqref{eq:proof_k_integral_1} is easily bounded by
\[
 I_{11} \le \int_0^a m(\dd z) k(z) \int_0^z \widetilde{m}(\dd y) [s(z) - s(y)],
\]
and noticing that $\operatorname{sign}(s(z)) = -\Ind_{(z<a/2)} + \Ind_{(z>a/2)}$, we get
\[
\begin{split}
 I_{11} &\le \int_{a/2}^a m(\dd z) s(z) k(z)\int_0^z \widetilde{m}(\dd y) + \int_0^a m(\dd z) k(z) \int_0^z \widetilde{m}(\dd y) (-s(y))\\
 &\le C/a^3\Big(\int_{a/2}^a z^4 k(z)\,\dd z + \int_0^a z^4 k(z)\,\dd z\Big)\\
 &\le C/a^3,
\end{split}
\]
where again we made use of the inequality $\sin x \le x$ for $x\in[0,\pi]$.

For the term $I_{12}$ a little bit more care is needed. Using the fact that $\int_x^a \widetilde{m}(\dd y) \le 1$, we have
\[
\begin{split}
 I_{12} &\le \int_x^a \widetilde{m}(\dd y) s(y) \int_0^y m(\dd z) k(z) + (-s(x)\vee 0) \int_0^x m(\dd z) k(z)+ \int_x^{a/2} m(\dd z) |s(z)| k(z)\\
&=: I_{121} + I_{122} + I_{123}.
\end{split}
\]
To estimate the first two terms, note that
\[
 \int_0^y m(\dd z) k(z) \le C(1\wedge y^3),\quad\tand\quad\int_x^a \widetilde{m}(\dd y) s(y) \le C/a.
\]
such that
\[
 I_{121} + I_{122} \le C\Big(1/a + (1\wedge x^3)(-s(x)\vee 0)\Big) \le C\Big(1/a + (1\wedge x^{-1})\Big),
\]
because $-s(x) \le 1/x$ for $x\in[0,a]$. The third term is seen to be bounded by
\[
 I_{123} \le C \int_x^\infty z k(z)\,\dd z \le C (1+x)e^{-cx}.
\]
Altogether, we get
\[
 W^x_{\mathrm{taboo}}\Big[\int_0^t k(X_s)\,\dd s\Big] \le C\Big(t/a^3 + 1/a + \operatorname{err}(x)\Big),
\]
and the $1/a$ term can be dropped, because $t \ge a^2$ by hypothesis. This proves \eqref{eq:taboo_integral_error_2} and therefore \eqref{eq:taboo_integral_error_1}.

When $t\le a^2$ a different method of proof is needed. First we note that for $0 < x,y \le a/2$, the transition density of the taboo bridge can be written
\[
 W^{x,t,y}_{\mathrm{taboo}}\Big[X_s\in \dd z\Big] = \frac{p_s^a(x,z)p_{t-s}^a(z,y)}{p_t^a(x,y)}\,\dd z.
\]
If we denote by $p_t^0(x,y) = (2\pi t)^{-1/2} \exp(-(z^2+x^2)/2t)2\sinh(zx/t)$ the transition density of Brownian motion killed at $0$, then we have the trivial inequality $p_t^a(x,y) \le p_t^0(x,y)$ and furthermore by scaling we see that $p_t^a(x,y) \ge C p_t^0(x,y)$, since $x,y\le a/2$ and $t\le a^2$. It follows that
\[
 W^{x,t,y}_{\mathrm{taboo}}\Big[\int_0^t k(X_s)\,\dd s\Big] \le C R^{x,t,y}\Big[\int_0^t k(X_s)\,\dd s\Big],
\]
where $R^{x,t,y}$ denotes the law of the Bessel bridge of dimension 3. This Bessel bridge is the Doob transform of the Bessel process started at $x$ with respect to the space-time harmonic function $h_y(z,s) = p_{t-s}^0(z,y)/p_t^0(x,y)$. By the standard theory of Doob transforms, this is the Bessel process with additional drift
\[
 \frac{\dd}{\dd z}(\log h_y(z,s)) = -\frac{z^2}{t-s}+\frac{\dd}{\dd z}\log \sinh\frac{zy}{t-s} = -\frac{z^2}{t-s} + \frac{y}{t-s} \coth \frac{zy}{t-s}.
\]
Now, this in an increasing function in $y$, and standard comparison theorems for diffusions (see e.g.\ \cite{Revuz1999}, Theorem IX.3.7) now yield that for $y_1\le y_2$, we have
\[
 R^{x,t,y_2}[k(X_s)] \le R^{x,t,y_1}[k(X_s)],
\]
since $k$ is a decreasing function. This is true in particular for $y_1 = 0$. Using the self-duality of the Bessel bridge, we can repeat the same reasoning with $x$. We thus have altogether
\[
 W^{x,t,y}_{\mathrm{taboo}}\Big[\int_0^t k(X_s)\,\dd s\Big] \le C R^{0,t,0}\Big[\int_0^t k(X_s)\,\dd s\Big],
\]
for any $x,y\le a/2$. This calculation can be done explicitly and yields \eqref{eq:taboo_integral_error_3}.
\end{proof}

\section{Preliminaries on branching Markov processes}
\label{sec:preliminaries}

In this section we recall some known results about branching Brownian motion and branching Markov processes in general.

\subsection{Definition and notation}
\label{sec:preliminaries_definition}

Branching Brownian motion can be formally defined using Neveu's \emph{marked trees} \cite{Neveu1986} as in \cite{Chauvin1988} and \cite{Chauvin1991}. We will follow this path here, but with slight differences, because we will need to consider more general branching Markov processes and the definition of branching Brownian motion in \cite{Chauvin1991} formally relied on the translational invariance of Brownian motion. 

We first define the space of Ulam--Harris labels, or \emph{individuals},
\[
 U=\{\emptyset\} \cup \bigcup_{n\ge 1} {\N^*}^n,
\]
where we use the notation $\N^* = \{1,2,3,\ldots\}$ and $\N = \{0\}\cup \N^*$. This space is endowed with the ordering relations $\preceq$ and $\prec$ defined by
\[
 u \preceq v \iff \exists w\in U: v = uw\quad\tand\quad u\prec v \iff u\preceq v \tand u \ne v.
\]
A \emph{tree} is by definition a subset $\mathfrak t\subset U$, such that $\emptyset \in \mathfrak t$, $v\in \mathfrak t$ if $v\prec u$ and $u\in \mathfrak t$ and for every $u$ there is a number $k_u\in\N$, such that for all $j\in\N^*$, we have $uj\in \mathfrak t$ if and only if $j\le k_u$. Thus, $k_u$ is the number of children of the individual $u$. We denote the space of trees by $\mathscr T$ and endow it with the sigma-field $\mathscr A$ generated by the subsets $\mathscr T_u = \{\mathfrak t\in\mathscr T: u\in \mathfrak t\}$.

For a tree $\mathfrak t\in \mathscr T$ and $u\in \mathfrak t$, we define the \emph{subtree rooted at $u$} by
\[
 \mathfrak t^{(u)} = \{v\in U: uv \in \mathfrak t\}.
\]

Given a measurable space $\mathscr M$, a \emph{marked tree} (with space of marks $\mathscr M$) is a pair
\[
 \mathfrak t^{\mathscr M} = (\mathfrak t,(\eta_u;u\in\mathfrak t)),
\]
where $\mathfrak t\in \mathscr T$ and $\eta_u\in \mathscr M$ for all $u\in\mathfrak t$. The space of marked trees is denoted by $\mathscr T^\mathscr M$, and is endowed with the sigma-field $\mathscr A^{\mathscr M} = \pi^{-1}(\mathscr A)$, where $\pi: \mathscr T^\mathscr M\to \mathscr T$ is the canonical projection. Accordingly, we also define $\mathscr T^{\mathscr M}_u = \pi^{-1}(\mathscr T_u)$. The definition of a subtree extends as well to marked trees: For $u\in\mathfrak t$, we define
\[
 (\mathfrak t^{\mathscr M})^{(u)} = (\mathfrak t^{(u)},(\eta_{uv};v\in\mathfrak t^{(u)})).
\]

For our purposes, the space of marks $\mathscr M$ is always going to be a function space, namely, for a Polish space $\mathscr E$ and a cemetary symbol $\Delta \notin \mathscr E$, we define the Skorokhod space $D(\mathscr E)$ of functions $\Xi:[0,\infty)\to\mathscr E\cup \{\Delta\}$ which are right-continuous with left limits, with $\Xi(0)\ne \Delta$ and for which $\Xi(t) = \Delta$ implies $\Xi(s) = \Delta$ for all $s\ge t$. Then we define $\zeta(\Xi) = \inf\{t\ge 0: \Xi(t) = \Delta\}$. For an individual $u\in U$, its mark is denoted by $\Xi_u$ and we define $\zeta_u = \zeta(\Xi_u)$. The branching Markov process will then be defined on the space (we suppress the superscript $D(\mathscr E)$)
\[
 \Omega = \{\omega = (\mathfrak t,(\Xi_u;u\in\mathfrak t))\in\mathscr T^{D(\mathscr E)}: \forall u\in U\ \forall 1\le i\le k_u: \zeta_u < \infty \Rightarrow \Xi_u(\zeta_u-) = \Xi_{ui}(0)\},
\]
endowed with the sigma-field $\F = \Omega \cap \mathscr A^{D(\mathscr E)}$ generated by the sets $\Omega_u = \Omega \cap \mathscr T^{\mathscr M}_u$. We define for $u\in U$ the random variables
\[
 b_u = \sum_{v\prec u} \zeta_v,\quad d_u = b_u + \zeta_u = \sum_{v\preceq u} \zeta_v,
\]
which are the \emph{birth} and \emph{death} times of the individual $u$, respectively. We then define the set of individuals alive at time $t$ by 
\[
 \mathscr N(t) = \{u\in \mathfrak t: b_u\le t < d_u\}.
\]
The \emph{position} of $u$ at time $t$ is defined for $u\in \mathfrak t$ by
\[
 X_u(t) = \begin{cases}
           \Xi_v(t-b_v), &\tif v\in \mathscr N(t)\tand v\preceq u\\
           \Delta, &\tif d_u \le t.
          \end{cases}
\]

Now suppose we are given a defective strong Markov process $\overline{X} = (\overline{X}_t)_{t\ge 0}$ on $\mathscr E$, with paths in $D(\mathscr E)$. The law of $\overline{X}$ started in $x\in \mathscr E$ will be denoted by $\overline{P}^x$. For simplicity, we will assume that for every $x\in \mathscr E$, we have $\zeta(\overline{X}) < \infty$, $\overline{P}^x$-almost surely. Furthermore, let $((q(x,k))_{k\in \N})_{x\in\mathscr E}$ be a family of probability measures on $\N$, measurable with respect to $x$. Then we define the branching Markov process with particle motion $\overline{X}$ and reproduction law $q$ as the (unique) family of probability measures $(\P^x)_{x\in\mathscr E}$ on $\Omega$ which satisfies
\begin{equation}
 \label{eq:bmp_renewal}
\P^x(\dd \omega) = \overline{P}^x(\dd X_\emptyset) q(X_\emptyset(\zeta_\emptyset-),k_\emptyset) \prod_{i=1}^{k_\emptyset} \P^{X_\emptyset(\zeta_\emptyset-)}(\dd \omega^{(i)}).
\end{equation}
Note that by looking at the space-time process $(X_t,t)_{t\ge 0}$, we can (and will) extend this definition to the time-inhomogeneous case.

\subsection{Stopping lines}
\label{sec:stopping_lines}

The analogon to stopping times for branching Markov processes are \emph{(optional) stopping lines}, for which several definitions exist. For branching Brownian motion, they have first been defined by Chauvin \cite{Chauvin1991}. The definition we are giving below is equivalent to the definition there, although there are formal differences. Note that Jagers \cite{Jagers1989} has given a more general definition of stopping lines for discrete-time branching processes, and our definition of stopping lines is partly inspired by the exposition there. Note also that Biggins and Kyprianou \cite{Biggins2004} build up on Jagers' definition of stopping lines and define the subclasses of \emph{simple} and \emph{very simple} stopping lines (again for discrete-time processes). Chauvin's definition (and therefore ours as well) then corresponds to the class of very simple stopping lines.

We first define a \emph{(random) line} to be a set $\ell = \ell(\omega) \subset U\times [0,\infty)$, such that
\begin{enumerate}[nolistsep]
 \item $u\in \mathscr N(t)$ for all $(u,t)\in \ell$, and
 \item $(u,t)\in\ell$ implies $(v,s)\notin \ell$ for all $v\preceq u$ and $s< t$.
\end{enumerate}
Note that a line is at most a countable set. For a pair $(u,t)\in U\times [0,\infty)$ and a line $\ell$, we write $\ell \preceq (u,t)$ if there exists $(v,s)\in \ell$, such that $v\preceq u$ and $s\le t$. For a subset $A\subset U\times [0,\infty)$, we write $\ell \preceq A$ if $\ell \preceq (u,t)$ for all $(u,t)\in A$. If $\ell_1$ and $\ell_2$ are two lines, we define the line $\ell_1 \wedge \ell_2$ to be the maximal line (with respect to $\preceq$), which is smaller than both lines.

We now define for each $u\in U$ two filtrations on $\Omega_u$ by
\begin{align*}
 \F_u(t) &= (\Omega_u \cap \sigma(\Xi_u(s);0\le s\le t-b_u)) \vee \bigvee_{v\prec u}(\Omega_v \cap \sigma(\Xi_v))\\ \F^{\mathrm{pre}}_u(t) &= (\Omega_u \cap \sigma(\Xi_u(s);0\le s\le t-b_u))\vee \bigvee_{v\nsucceq u} (\Omega_v \cap \sigma(\Xi_v)).
\end{align*}
Informally, $\F_u(t)$ contains the information on the path from $u$ to the root between the times $0$ and $t$, and $\F^{\mathrm{pre}}_u(t)$ contains this information and the one concerning the descendants of $u$ after the time $t$. In particular, we have $\F_u(t) \subset \F^{\mathrm{pre}}_u(t)$ The filtration $\F_u(t)$ is denoted by $\mathscr A_u(t)$ in Chauvin's paper \cite{Chauvin1991}, and $\F^{\mathrm{pre}}_u(t)$ corresponds to the \emph{pre-$(u,t)$-sigma-algebra} as defined by Jagers \cite{Jagers1989}.

We can now define a \emph{stopping line} $\mathscr L$ to be a random line with the additional property
\begin{itemize}
 \item[3.] $\forall (u,t)\in U\times [0,\infty):\{\omega\in \Omega_u:\mathscr L\preceq(u,t)\} \in \F_u(t).$
\end{itemize}
The sigma-algebra $\F_{\mathscr L}$ of the past of $\mathscr L$ is defined to be the set of events $E \in \F$, such that for all $(u,t)\in U\times [0,\infty)$,
\[
 E \cap \{\omega \in \Omega_u : \mathscr L\preceq(u,t)\} \in \F^{\mathrm{pre}}_u(t).
\]
For example, for any $t\ge0$, the set $\mathscr N(t)\times\{t\}$ is a stopping line. If $T = T(\overline{X})$ is a stopping time for the strong Markov process $\overline{X}$, then
\[
 \mathscr L_T = \{(u,t)\in U\times [0,\infty): u\in \mathscr N(t)\tand t = T(X_u)\}\}
\]
is a stopping line as well.

The first important property of stopping lines is the strong branching property. In order to state it, we define for $t\ge0$, $u\in \mathscr N(t)$,
\[
 \omega^{(u,t)} = (\mathfrak t^{(u)},(\Xi'_{uv};v\in\mathfrak t^{(u)})),
\]
with $\Xi'_u(\cdot) = \Xi_u(\cdot+t-b_u)$ and $\Xi'_{uv} = \Xi_{uv}$ for $v\in \mathfrak t^{(u)}\backslash \{\emptyset\}$. The \emph{strong branching property} (\cite{Chauvin1991}, \cite{Jagers1989}) then states that for every stopping line $\mathscr L$, conditioned on $\F_\mathscr L$, the subtrees $\omega^{(u,t)}$, for $(u,t)\in \mathscr L$, are independent with respective distributions $\P^{X_u(t)}$.

\subsection{Many-to-few lemmas and spines}
\label{sec:spine}

Another important tool in the theory of branching processes is the so-called Many-to-one lemma, and its recently published extension, the Many-to-few lemma \cite{Harris2011} along with the \emph{spine decomposition} technique which comes along with it and has its origins in \cite{LPP1995}. Here we state stopping line versions of these lemmas, which to the knowledge of the author have not yet been stated in this generality in the literature, although they belong to the common folklore. We will therefore only sketch how they can be derived from the existing literature.

We assume for simplicity that the strong Markov process $\overline{X}$ admits a representation as a conservative strong Markov process $X$ with paths in $D(\mathscr E)$, which is killed at a rate $R(x)$, where $R:\mathscr E\to [0,\infty)$ is measurable. The law of $X$ started at $x$ is denoted by $P^x$ and the time of killing by $\zeta$. Given a stopping time $T$ for $X$, we can then define a stopping time $\overline T$ for $\overline X$ by setting $\overline T = T$, if $T < \zeta$ and $\overline T = \infty$ otherwise. For simplicity, we write $\mathscr L_T$ for $\mathscr L_{\overline T}$. Finally, for every $x\in\mathscr E$, define $m(x) = \sum_{k\ge 0} (k-1) q(x,k)$, $m_1(x) = \sum_{k\ge 0} k q(x,k)$ and $m_2(x) = \sum_{k\ge 0} k(k-1) q(x,k)$.

We are now going to present the spine decomposition technique, following \cite{Hardy2006}. They assume that $q(x,0) \equiv 0$, but this restriction is actually not necessary, as noted in \cite{Harris2011}. Given a tree $\mathfrak t$, a \emph{spine} of $\mathfrak t$ is formally an element of the \emph{boundary} of $\mathfrak t$, i.e.\ it is a line of descent $\xi = (\xi_0 = \emptyset,\xi_1,\xi_2,\ldots)$ from the tree, which is finite if and only if the last element is a leaf of the tree. We augment our space $\Omega$ to the space $\Omega^*$ by
\[
 \Omega^* = \{(\omega,\xi): \omega\in\Omega,\ \xi \text{ is a spine of the tree underlying $\omega$}\}
\]
We are going to denote by $\xi_t$ the individual $u\in U$ that satisfies $u \in \mathscr N(t)$ and $u\in \xi$, if it exists\footnote{If $R(x)$ is bounded from above, which will always be the case in this paper, this individual exists with probability one.}, and $\xi_t = \emptyset$ otherwise. Instead of writing $X_{\xi_t}(t)$, we are going to write for short $X_\xi(t)$. We also note that the definition of stopping lines can be extended to $\Omega^*$ by projection.

Now, for every $x\in \mathscr E$, one can define a probability measure $\P_x^*$ on $\Omega^*$ in the following way:
\begin{itemize}[nolistsep, label=$-$]
\item Initially, $X_\xi(0) = x$.
\item The individuals on the spine move according to the strong Markov process $X$ and die at the rate $m_1(y)R(y)$, when at the point $y\in\mathscr E$.
\item When an individual on the spine dies at the point $y\in\mathscr E$, it leaves $k$ offspring at the point where it has died, with probability $(m_1(x))^{-1} kq(x,\cdot)$ (this is also called the size-biased distribution of $q(x,\cdot)$\footnote{The size-biased distribution of the Dirac-mass at $0$ is again the Dirac-mass at $0$}).
\item Amongst those offspring, the next individual on the spine is chosen uniformly. This individual repeats the behaviour of its parent (started at the point $y$).
\item The other offspring initiate independent branching Markov processes according to the law $\P^y$, independently of the spine.
\end{itemize}
This decomposition first appeared in \cite{Chauvin1988}. We now have

\begin{lemma}[Many-to-one]
\label{lem:many_to_one}
 Let $\mathscr L$ be a stopping line, such that $\P_x^*$-almost surely, there exists $t\ge 0$, such that $(\xi_t,t)\in \mathscr L$. Denote this time by $T$. Let $Y$ be a random variable of the form
\[
 Y = \sum_{(u,t)\in \mathscr L} Y_u \Ind_{(u\in \xi)},
\]
where $Y_u$ an $\F_\mathscr L$-measurable random variable for every $u\in U$. Then
\begin{equation}
 \E_x^*\Big[Ye^{\int_0^T R(X_\xi(t))m(X_\xi(t))\,\dd t}\Big] = \E^x\Big[\sum_{(u,t)\in \mathscr L} Y_u\Big].
\end{equation}
\end{lemma}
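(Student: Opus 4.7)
The plan is to invoke the classical spine change-of-measure argument, adapted from deterministic times to stopping lines. I would introduce on $\Omega^*$ an auxiliary probability measure $\Ptilde^x$ under which the underlying tree has law $\P^x$ and, conditional on it, the spine is constructed by choosing, at each birth event on the spine, one offspring uniformly at random. Under $\Ptilde^x$, the spine particle $X_\xi$ then evolves as $\overline X$: it branches at rate $R$ and reproduces according to $q(y,\cdot)$, the spine continuing through a uniformly chosen child.

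The key computation is the Radon--Nikodym derivative of $\P^*_x$ with respect to $\Ptilde^x$ on the information available up to the time $T$ at which the spine crosses $\mathscr L$ (restricted to $\{T<\infty\}$, which carries the full $\P^*_x$-mass by hypothesis). Comparing the two marked point processes describing the spine: the branching rate changes from $R$ to $m_1 R$, producing an exponential tilt $\exp(-\int_0^T m(X_\xi(t))\, R(X_\xi(t))\,\dd t)$; at each branching event producing $k_v$ offspring, size-biasing the offspring law while retaining uniform spine continuation contributes a factor $k_v$. Thus
\begin{equation*}
\frac{\dd \P^*_x}{\dd \Ptilde^x} = \exp\Bigl(-\int_0^T m(X_\xi(t))\, R(X_\xi(t))\,\dd t\Bigr) \prod_{v \prec \xi_T} k_v.
\end{equation*}

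Applying this to the left-hand side of the lemma the exponential factors cancel, yielding
\begin{equation*}
\E^*_x\bigl[Y\, e^{\int_0^T R(X_\xi)\, m(X_\xi)\,\dd t}\bigr] = \Etilde^x\Bigl[\sum_{(u,t) \in \mathscr L} Y_u\, \Ind_{(u \in \xi)} \prod_{v \prec u} k_v\Bigr].
\end{equation*}
Conditioning on the entire underlying tree, which under $\Ptilde^x$ has law $\P^x$, and using that the uniform offspring-selection procedure assigns conditional probability $\prod_{v \prec u} k_v^{-1}$ to the event that a given $u$ lies on the spine, the factor $\prod_{v \prec u} k_v$ in each summand is cancelled and one recovers exactly $\E^x\bigl[\sum_{(u,t) \in \mathscr L} Y_u\bigr]$, as desired.

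The one non-routine point is the justification of the change of measure at the random time $T$ rather than a deterministic one. The standard remedy is to perform the change of measure first at deterministic truncation times $t_n \uparrow \infty$ and then take limits: property (3) in the definition of stopping lines guarantees that $\{\mathscr L \preceq (u,t)\}\in\F_u(t)$, so each summand indexed by $(u,t) \in \mathscr L$ is appropriately adapted, and monotone convergence yields the identity. The spine construction and size-biasing machinery already appear in \cite{Chauvin1988, Hardy2006, Harris2011}; the only novelty here is the repackaging through the stopping-line filtration $\F_\mathscr L$ from Section \ref{sec:stopping_lines}.
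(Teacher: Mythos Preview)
Your proposal is correct and follows the same spine change-of-measure approach that the paper cites from \cite{Hardy2006,Harris2011}; the paper does not actually write out a proof but only points to these references and notes that the extension from fixed times to stopping lines requires the relevant martingales to be uniformly integrable, which is guaranteed by the dissecting hypothesis on $\mathscr L$ (via the arguments of \cite{Kyprianou2000,Biggins2004}). Your truncation-plus-monotone-convergence justification is a valid alternative route to the same limit, though strictly speaking it needs $Y_u\ge 0$ (or a splitting into positive and negative parts), whereas the paper's uniform-integrability phrasing sidesteps this.
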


Proofs of this result can be found for fixed time in \cite{Kyprianou2004}, \cite{Hardy2006} or \cite{Harris2011}. The proofs in \cite{Hardy2006} and \cite{Harris2011} can be extended to stopping lines once the martingales that appear in the proof are still uniformly integrable when stopped at the stopping line $\mathscr L$. Adapting the arguments of \cite{Kyprianou2000} or \cite{Biggins2004} to the continuous-time setting, one sees that this is true by the hypothesis we have placed on $\mathscr L$. This hypothesis is also referred to as the stopping line $\mathscr L$ being \emph{dissecting}.

Often, we will use a simpler version of the Many-to-one lemma, which is the following
\begin{lemma}[Simple Many-to-one]
\label{lem:many_to_one_simple}
 Let $T = T(X)$ be a stopping time for the strong Markov process $X$ which satisfies $P^x(T<\infty) = 1$ for every $x\in\mathscr E$. Let $f:\mathscr E\to [0,\infty)$ be measurable. Then we have
\[
 \E^x\Big[\sum_{(u,t)\in \mathscr L_T} f(X_u(t))\Big] = E^x\Big[e^{\int_0^T R(X_t) m(X_t)\,\dd t} f(X_T)\Big]
\]
\end{lemma}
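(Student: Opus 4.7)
The plan is to derive the simple version as a direct corollary of the general Many-to-one lemma by making the canonical choice $Y_u = f(X_u(t))$ for $(u,t) \in \mathscr L_T$, and then identifying the law of the spatial trajectory of the spine.

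First I would check that $\mathscr L_T$ satisfies the dissecting hypothesis of Lemma \ref{lem:many_to_one}. Under $\P_x^*$, the spine position $(X_\xi(t))_{t\ge 0}$ evolves as $X$ between successive branching events on the spine, and at each such event the newly chosen spine individual starts exactly where its parent died. Consequently the process $(X_\xi(t))_{t\ge 0}$ has the same law under $\P_x^*$ as $(X_t)_{t\ge 0}$ does under $P^x$ (the size-biasing and the choice of the next spine individual affect only the tree structure, not the spatial trajectory). Writing $T = T(X_\xi)$ for the stopping time $T$ applied to the spine, the hypothesis $P^x(T<\infty)=1$ therefore yields $\P_x^*(T<\infty) = 1$, and on this event exactly one element of $\mathscr L_T$ lies on the spine, namely $(\xi_T, T)$.

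Next, applying Lemma \ref{lem:many_to_one} to $\mathscr L = \mathscr L_T$ with $Y_u = f(X_u(t))$ for $(u,t)\in \mathscr L_T$, the random variable
\[
 Y = \sum_{(u,t)\in \mathscr L_T} f(X_u(t))\Ind_{(u\in \xi)}
\]
reduces to $Y = f(X_\xi(T))$. The conclusion of the general lemma then reads
\[
 \E^x\Big[\sum_{(u,t)\in \mathscr L_T} f(X_u(t))\Big] = \E_x^*\Big[f(X_\xi(T))\, e^{\int_0^T R(X_\xi(t))m(X_\xi(t))\,\dd t}\Big].
\]
Using the identification of the law of $(X_\xi(t))_{t\ge 0}$ under $\P_x^*$ with that of $(X_t)_{t\ge 0}$ under $P^x$, the right-hand side equals $E^x\big[e^{\int_0^T R(X_t) m(X_t)\,\dd t} f(X_T)\big]$, which is the desired formula.

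The only genuine point requiring care is the identification of the spatial law of the spine, which is a standard consequence of the construction of $\P_x^*$ but should be noted explicitly; the rest is formal bookkeeping and the invocation of the dissecting-stopping-line version of the Many-to-one lemma already stated above.
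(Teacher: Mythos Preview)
Your proposal is correct and is exactly the intended derivation: the paper states this lemma as an immediate simplification of Lemma~\ref{lem:many_to_one} without giving a separate proof, and your argument---specializing $Y_u = f(X_u(t))$ and using that the spatial trajectory of the spine under $\P_x^*$ has the law of $X$ under $P^x$---is precisely how one passes from the general statement to the simple one. The only remark is that the identification of the spine's spatial law with that of $X$ is indeed standard from the construction in Section~\ref{sec:spine} (the spine moves according to $X$ and at each branching event continues from the same point), so your caveat at the end is well placed but not a gap.
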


The next lemma tells us about second moments of sums of the previous type. To state it, we define for a stopping time $T$ for $X$, the density kernel of the branching Markov process before $\mathscr L_T$, by
\begin{equation}
\label{eq:def_density}
 p_T(x,\dd y,t) = \E^x\Big[\sum_{u\in \mathscr N(t)} \Ind_{(X_u(t)\in \dd y,\ t < T(X_u))}\Big].
\end{equation} 
\begin{lemma}
\label{lem:many_to_two}
 Let $H$ be the hitting time functional of a closed set $F\subset \mathscr E$ on $D(\mathscr E)$ which satisfies $P^x(H<\infty) = 1$ for every $x\in\mathscr E$. Let $f:\mathscr E\to [0,\infty)$ be measurable. Then we have
\begin{multline}
\label{eq:many_to_two}
 \E^x\Big[\Big(\sum_{(u,t)\in \mathscr L_H} f(X_u(t))\Big)^2\Big] = \E^x\Big[\sum_{(u,t)\in \mathscr L_H} (f(X_u(t)))^2\Big]\\
 + \int_0^\infty \int_{\mathscr E} p_H(x,\dd y,t) R(y)m_2(y) \Big(\E^y\Big[\sum_{(u,t)\in \mathscr L_H} f(X_u(t))\Big]\Big)^2\,\dd t
\end{multline}
\end{lemma}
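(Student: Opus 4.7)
The plan is to expand the square into a diagonal part and an off-diagonal part, and then decompose the off-diagonal part according to the most recent common ancestor (MRCA) of each ordered pair. Writing $S = \sum_{(u,t)\in\mathscr L_H} f(X_u(t))$, one has $S^2 = \sum_{(u,t)\in\mathscr L_H} f(X_u(t))^2 + \Sigma$, where $\Sigma$ is the sum over ordered pairs of distinct $(u,t),(v,s)\in\mathscr L_H$. The first piece is treated immediately by the simple Many-to-one reasoning (it is itself of the form handled in Lemma \ref{lem:many_to_one_simple}) and gives the first term of the right-hand side.

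For $\Sigma$, I would observe that any ordered pair of distinct points in $\mathscr L_H$ admits a unique MRCA $w\in\mathfrak t$ with branching time $d_w$, such that one point descends from child $wi$ and the other from child $wj$ for some $i\ne j$. Setting
\[
T_{wi} = \sum_{(u,t)\in\mathscr L_H,\ u\succeq wi} f(X_u(t)),
\]
this gives the almost-sure identity $\Sigma = \sum_{w\in U} \Ind_{\{w\in\mathfrak t\}} \sum_{1\le i\ne j\le k_w} T_{wi}T_{wj}$. Here implicitly only those $w$ matter for which the trajectory from the root to $w$ does not hit $F$ before $d_w$ (otherwise $T_{wi}\equiv 0$).

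The next step uses the strong branching property applied to the natural stopping line made of the branching events of $w$: conditionally on the $\sigma$-field generated by $\Xi_v$ for $v\preceq w$, on the event $\{w\in\mathfrak t,\ d_w<H(X_w)\}$ and given $k_w=k$ and $X_w(d_w-)=y$, the subtrees rooted at $w1,\dots,wk$ are i.i.d.\ branching Markov processes with law $\P^y$. In particular, for $i\ne j$,
\[
\E[T_{wi}T_{wj}\mid \ldots] = g(y)^2, \qquad g(y) := \E^y[S],
\]
and averaging over $k_w$ under $q(y,\cdot)$ yields $\sum_{i\ne j}\E[T_{wi}T_{wj}\mid y] = m_2(y) g(y)^2$. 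Hence
\[
\E^x[\Sigma] = \E^x\Big[\sum_{w\in U}\Ind_{\{w\in\mathfrak t,\ d_w<H(X_w)\}}\, m_2(X_w(d_w-))\,g(X_w(d_w-))^2\Big].
\]

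The final step is to convert the sum over branching events into the integral in the statement. Along each surviving particle's trajectory, branching events form an inhomogeneous Poisson process with rate $R(\cdot)$ killed at time $H$, so the above expectation equals
\[
\E^x\Big[\int_0^\infty \sum_{u\in\mathscr N(t),\ H(X_u)>t} R(X_u(t))\, m_2(X_u(t))\, g(X_u(t))^2\,\dd t\Big],
\]
which by the definition \eqref{eq:def_density} of $p_H$ and Fubini equals $\int_0^\infty\!\int_{\mathscr E} p_H(x,\dd y,t) R(y) m_2(y) g(y)^2\,\dd t$, completing the identity. The main obstacle is the third step: making the MRCA decomposition and the conditional independence of subtrees fully rigorous requires a careful application of the strong branching property at the (non-deterministic) branching time $d_w$ along each ancestor line, combined with the dissecting hypothesis on $\mathscr L_H$ that ensured Lemma \ref{lem:many_to_one} applied (here one essentially applies it ancestor by ancestor and relies on Fubini/monotone convergence to sum over $w\in U$).
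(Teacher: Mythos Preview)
Your proposal is correct and follows essentially the same route the paper sketches: split the square into diagonal and off-diagonal parts, then evaluate the off-diagonal part by conditioning on the branching event of the most recent common ancestor and invoking the strong branching property, which is precisely how one unpacks the paper's hint to apply Lemma~\ref{lem:many_to_one} to the expression $\sum_{(u,t)}f(X_u(t))\sum_{(v,s)\ne(u,t)}f(X_v(s))$. The paper also notes, as an alternative, that the identity follows directly from the Many-to-few lemma of Harris--Roberts.
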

This lemma can be proven using the Many-to-few lemma from \cite{Harris2011} (which is valid for stopping lines as well by the same argument as the one above) or with Lemma \ref{lem:many_to_one}, by noting that
\[
 \Big(\sum_{(u,t)\in \mathscr L_H} f(X_u(t))\Big)^2 = \sum_{(u,t)\in \mathscr L_H} (f(X_u(t)))^2 + \sum_{(u,t)\in \mathscr L_H} \Big(f(X_u(t)) \sum_{(v,s)\in \mathscr L_H,\ v\ne u} f(X_v(s))\Big).
\]
For an intuitive explanation of the terms appearing in \eqref{eq:many_to_two}, see the proof of Proposition 18 in \cite{Berestycki2010}.

Taking for $X$ the space-time process $(Y_t,t)_{t\ge 0}$ of a possibly non-homogeneous strong Markov process $(Y_t)_{t\ge 0}$ with paths in $D(\mathscr E)$ and the closed set $F = \mathscr E\times \{t\}$, for some $t\ge 0$, we obtain the following useful corollary, which appeared already in \cite{Sawyer1976} and \cite{Watanabe1967} in the homogeneous case.
\begin{lemma}
\label{lem:many_to_two_fixedtime}
Let $f:\mathscr E\times \R_+\to [0,\infty)$ be measurable and let $t\ge 0$. Then we have
\begin{multline}
\label{eq:many_to_two_fixedtime}
 \E^{(x,0)}\Big[\Big(\sum_{u\in \mathscr N(t)} f(Y_u(t),t)\Big)^2\Big] = \E^{(x,0)}\Big[\sum_{u\in \mathscr N(t)} (f(Y_u(t),t))^2\Big]\\
 + \int_0^t \int_{\mathscr E} p(x,\dd y,s) R(y,t)m_2(y,t) \Big(\E^{(y,s)}\Big[\sum_{u\in \mathscr N(t)} f(Y_u(t),t)\Big]\Big)^2\,\dd s
\end{multline}
\end{lemma}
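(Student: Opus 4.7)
My plan is to deduce Lemma \ref{lem:many_to_two_fixedtime} directly from Lemma \ref{lem:many_to_two} by a specialization: take the driving strong Markov process to be the space-time process $X_s = (Y_s,s)$ on the augmented state space $\mathscr E\times \R_+$, and take the closed set to be $F = \mathscr E\times\{t\}$. Then the hitting time $H$ of $F$ by $X$ is the deterministic constant $H\equiv t$ under $P^{(x,0)}$, so that in particular $P^{(x,0)}(H<\infty)=1$ as required.

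Under this specialization, the stopping line $\mathscr L_H$ is exactly the set of pairs $(u,t)$ with $u\in \mathscr N(t)$, so the sum $\sum_{(u,s)\in\mathscr L_H} g(X_u(s))$ becomes $\sum_{u\in\mathscr N(t)} g(Y_u(t),t)$ for any measurable $g$ on $\mathscr E\times \R_+$. Applied to $g=f$ and $g=f^2$ this identifies the left-hand side of \eqref{eq:many_to_two_fixedtime} with that of \eqref{eq:many_to_two}, and the first term on the right-hand side likewise. For the second term, one checks that the density kernel of the space-time branching process before $\mathscr L_H$, namely
\begin{equation*}
p_H((x,0),\dd(y,s'),s) = \E^{(x,0)}\Big[\sum_{u\in\mathscr N(s)} \Ind_{(Y_u(s)\in\dd y,\ s'=s,\ s<t)}\Big],
\end{equation*}
reduces for $s<t$ to $p(x,\dd y,s)\,\delta_s(\dd s')$, because the space-time process hits $F$ exactly at time $t$ with probability one. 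Integrating out the trivial $s'$ variable turns the double integral of Lemma \ref{lem:many_to_two} into the displayed integral $\int_0^t\int_{\mathscr E} p(x,\dd y,s)\cdots\dd s$ of Lemma \ref{lem:many_to_two_fixedtime}. The branching-rate and offspring-variance factors attached to the spatial point $y$ at time $s$ are carried over verbatim, giving the stated formula.

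There is no real obstacle in this argument; it is a pure change of notation. The only small point worth verifying is that Lemma \ref{lem:many_to_two}, which is stated for the hitting time of a closed set by a conservative strong Markov process, indeed applies when one artificially enlarges the state space to include the time coordinate — this is legitimate because the space-time process is strong Markov whenever $(Y_t)$ is, and the killing rate $R(y,s)$ and offspring law $q((y,s),\cdot)$ inherit measurability from their original versions. With these identifications, no further calculation is needed.
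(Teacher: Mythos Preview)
Your proposal is correct and follows exactly the paper's own approach: the paper explicitly introduces Lemma~\ref{lem:many_to_two_fixedtime} as the corollary of Lemma~\ref{lem:many_to_two} obtained by ``taking for $X$ the space-time process $(Y_t,t)_{t\ge 0}$ \ldots\ and the closed set $F = \mathscr E\times \{t\}$''. Your write-up spells out the identifications (stopping line, density kernel, etc.) in slightly more detail than the paper does, but there is no difference in substance.
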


\subsection{Doob transforms}
\label{sec:doob}
As in the previous subsection, we assume for simplicity that the strong Markov process $\overline{X}$ admits a representation as a conservative strong Markov process $X$ with paths in $D(\mathscr E)$, which is killed at a rate $R(x)$, where $R:\mathscr E\to [0,\infty)$ is measurable. Let $H$ be the hitting time functional of a closed set $F\subset \mathscr E$ on $D(\mathscr E)$. Furthermore, let $h:F\to [0,1]$ be a measurable function. We can then extend the function $h(x)$ to $\mathscr E$ by setting
\[
 h(x) = \E^x\Big[\prod_{(u,t)\in \mathscr L_H} h(X_u(t))\Big],
\]
%
We are going to assume that $h(x) > 0$ for all $x\in \mathscr E\backslash F$. Then for all such $x$ we can define a law $\P_h^x$ on $\Omega$ by
\[
 \P_h^x(\dd \omega) = (h(x))^{-1} \prod_{(u,t)\in \mathscr L_H} h(X_u(t))\times \P^x(\dd \omega),
\]
where the multiplication is in the sense of a Radon--Nikodym derivative. Now define
\[
 Q(x) = \sum_{k\ge 0} q(x)h(x)^{k-1},\quad\tand\quad q_h(x,k) = \frac{q(x)h(x)^{k-1}}{Q(x)}.
\]
By \eqref{eq:bmp_renewal}, we now have (dropping the symbol $\emptyset$ for better reading and setting $H = H(X_\emptyset)$)
\begin{multline*}
 h(x) \P_h^x(\dd \omega) = \overline{P}^x(\dd X)
\Big(
    \Ind_{(H < \zeta)}
	h(X(H))q(X(H),k)
	\prod_{i=1}^{k} \P^{X(H)}(\dd \omega^{(i)})\\
    +\Ind_{(\zeta \le H)}
	h(X(\zeta-))^{k}q(X(\zeta-),k)
	\prod_{i=1}^{k} \P^{X(\zeta-)}(\dd \omega^{(i)})
	\prod_{(u,t)\in \mathscr L_H(\omega^{(i)})}h(X_u(t))
\Big).
\end{multline*}
If we denote by $X^H$ the process $X$ stopped at $H$, and the law of $X^H$ under $\overline{P}^x$ by $(\overline{P}^x)^H$, then the last equation and the strong Markov property give
\begin{equation}
\label{eq:bmp_conditioned_renewal}
 \begin{split}
h(x) \P_h^x(\dd \omega) = (\overline{P}^x)^H(\dd X^H)
  \Big(
    \Ind_{(H < \zeta)} h(X(H))
  + \Ind_{(\zeta \le H)} h(X(\zeta-))Q(X(\zeta-))
  \Big)\\
\times
  \Big(
    \Ind_{(H < \zeta)} \P^{X(H)}(\dd \omega^{(\emptyset,H)})
    +\Ind_{(\zeta \le H)}
	q_h(X(\zeta-),k)
	\prod_{i=1}^{k} \P_h^{X(\zeta-)}(\dd \omega^{(i)})
  \Big).
 \end{split}
\end{equation}
In particular, integrating over $k$, $\omega^{(i)}$, $i=1,2,\ldots$, and $X_\emptyset(t)$ for $t\in [H,\zeta)$, we get that
\[
 h(x) = (\overline{E}^x)^H\Big(\Ind_{(H < \zeta)} h(X(H)) + \Ind_{(\zeta \le H)} h(X(\zeta-))Q(X(\zeta-))\Big).
\]
We can therefore define a law $\overline{P}_h^x$ on the paths in $D(\mathscr E)$ stopped at $H$ by
\[
 \overline{P}_h^x(\dd X) = (h(x))^{-1} \Big(
    \Ind_{(H(X) < \zeta)} h(X(H))
  + \Ind_{(\zeta \le H(X))} h(X(\zeta-))Q(X(\zeta-))
  \Big)\times (\overline{P}^x)^H(\dd X),
\]
where the multiplication is again in the sense of a Radon--Nikodym derivative. Then \eqref{eq:bmp_conditioned_renewal} yields the following decomposition of the law $\P_h^x$:
\begin{itemize}[nolistsep,label=$-$]
 \item As long as a particle has not hit the set $F$ yet, it moves according to the law $\overline{P}_h^x$, and, when it gets killed at the point $y$, spawns $k$ offspring according to the law $q_h(y,\cdot)$, which initiate independent branching Markov processes according to the law $\P_h^y$.
 \item When a particle hits the set $F$ at the point $y$, it continues as a branching Markov process according to the law $\P^y$.
\end{itemize}
If $R(x) \equiv R$, one gets a simpler characterization of the law $\overline{P}_h^x$: In this case, $h(x)$ is a harmonic function for the law of the stopped process $X^H$ under $P^x$, whence we can define the Doob transform
\[
 P_h^x(\dd X) = (h(x))^{-1} \Big(\Ind_{(H = \infty)} + \Ind_{(H < \infty)} h(X(H))\Big)P^x(\dd X^H).
\]
Then the law $\overline{P}_h^x$ is obtained from the law $P_h^x$ by killing the process at the time-dependent rate $R Q(x)\Ind_{(t < H)}$.

\section{Branching Brownian motion with absorption at a critical line}
\label{sec:critical_line}
From this section on, $q(k)$ will denote a law on $\{0,1,2,\ldots\}$. We define $m = \sum_k (k-1)q(k)$ and $m_2 = \sum_k k(k-1)q(k)$ and suppose that $m>0$ and $m_2 < \infty$. We let $\sigma > 0$ and define $c_0 = \sigma\sqrt{2m}$. In this section we are studying the branching Markov process where, starting with a single particle at the origin, particles move according to Brownian motion with variance $\sigma^2>0$ and drift $-c_0$ and branch at rate 1 into $k$ particles according to the reproduction law $q(k)$. At the point $-x$, we add an \emph{absorbing barrier} to the process, i.e.\ particles hitting this barrier are instantly killed. Formally, we are considering the process up to the stopping line $\mathscr L_{H_{-y}}$, where $H_{-y}$ is the hitting time functional of the point $-y$. It is well-known since Kesten \cite{Kesten1978} that this process gets extinct almost surely. As a consequence, the number of particles absorbed at the barrier, i.e.\ the random variable
\[
Z_y = \#\mathscr L_{H_{-y}},
\]
is almost surely finite. By the strong branching property and the translational invariance of Brownian motion, one sees that the process $(Z_y)_{y\ge 0}$ is a continuous-time Galton--Watson process, a fact which was first noticed by Neveu \cite{Neveu1988} (see \cite{Athreya1972}, Chapter III or \cite{Harris1963}, Chapter V for an introduction to continuous-time Galton--Watson processes). Neveu also stated that the infinitesimal generating function $u(s)$ of this process has the representation $u = \psi'\circ \psi^{-1}$, where $\psi$ is a so-called traveling wave of the FKPP (Fisher--Kolmogorov--Petrovskii--Piskounov) equation: Write $f(s) = \sum_k s^k q(k)$. Then $\psi$ is a solution of the equation
\begin{equation}
 \label{eq:traveling_wave}
 \frac {\sigma^2} 2 \psi'' - c_0\psi' = \psi - f\circ \psi,
\end{equation}
with $\psi(-\infty) = 1$ and $\psi(+\infty)$ is the extinction probability of the process, i.e.\ the smaller root of $f(s) = s$. For a proof of these results, see \cite{Maillard2010}, Section 3.

In the same paper \cite{Neveu1988}, Neveu introduced his \emph{multiplicative martingales}, which he used to derive the Seneta-Heyde norming for the martingale $e^{-c_0 y}Z_y$. He proved that in the case of binary branching, one has
\begin{equation}
 \label{eq:def_Wy}
 W_y := c_0 y e^{-c_0 y} Z_y \to W,
\end{equation}
as $y\to\infty$, where $W > 0$ almost surely. His proof relied on a known asymptotic for the traveling wave $\psi$, namely that
\begin{equation}
 \label{eq:tw_asymptotics}
 1-\psi(-x) \sim K x e^{-c_0 x},\quad\tas x \to \infty,
\end{equation}
for some constant $K> 0$.
It was recently shown \cite{Yang2011} that this asymptotic is true if and only if $E[L\log^2 L] <\infty$ and the proof of \eqref{eq:def_Wy} works in this case as well. We also still have in this case, for every $x\in \R$,
\begin{equation}
 \label{eq:W_laplace}
 E[e^{-e^{c_0 x}W}] = \psi(x),
\end{equation}
a fact which was already proven by Neveu \cite{Neveu1988} for dyadic branching.

In \cite{Berestycki2010}, further properties of the limit $W$ have been established under the hypothesis of dyadic branching, namely
\begin{equation}
\label{eq:W_tail}
 \P(W > x) \sim \frac{1}{x},\quad \tas x\to\infty,
\end{equation}
and
\begin{equation}
 \label{eq:W_expec}
 \E[W\Ind_{(W\le x)}] - \log x \to \const{eq:W_expec},\quad \tas x\to\infty,
\end{equation}
for some constant $\const{eq:W_expec}\in\R$. Equation \eqref{eq:W_tail} has been proven in Propositions 27 and 40 of \cite{Berestycki2010}, and \eqref{eq:W_expec} appears in the proof of Proposition 39 of the same paper. Their arguments were very ingenious but indirect and although they could be extended to general reproduction laws with finite variance, we will reprove them here directly under (probably) minimal assumptions, based on methods of \cite{Maillard2010}. The main result in this section is
\begin{proposition}
 \label{prop:W}
If $E[L\log^2L] < \infty$, then \eqref{eq:W_tail} holds. If $E[L\log^3L]<\infty$, then \eqref{eq:W_expec} holds.
\end{proposition}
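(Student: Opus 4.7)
The key identity is
\begin{equation*}
 \phi(s) := E[e^{-sW}] = \psi(c_0^{-1}\log s),\qquad s>0,
\end{equation*}
which is \eqref{eq:W_laplace} rewritten via $x = c_0^{-1}\log s$; thus the behaviour of $1-\phi$ near $0$ is controlled by that of $1-\psi$ near $-\infty$. Both parts of the proposition will be proved by (i) extracting a sufficiently refined expansion of $1-\psi(x)$ as $x\to-\infty$ from the traveling wave equation \eqref{eq:traveling_wave}, (ii) substituting $s=e^{c_0 x}$ to convert this into an expansion of $1-\phi(s)$ as $s\to 0^+$, and (iii) applying a Tauberian theorem.

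For \eqref{eq:W_tail}, the asymptotic \eqref{eq:tw_asymptotics}, valid under $E[L\log^2 L]<\infty$ by \cite{Yang2011}, directly gives $1-\phi(s)\sim (K/c_0)\,s\log(1/s)$ as $s\to 0^+$. A standard Karamata-type Tauberian theorem for Laplace transforms with logarithmic correction then converts this into $\P(W>x)\sim (K/c_0)\,x^{-1}$ as $x\to\infty$; the normalisation in $W_y = c_0 y e^{-c_0 y} Z_y$ is chosen precisely so that $K/c_0 = 1$.

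For \eqref{eq:W_expec}, I would sharpen \eqref{eq:tw_asymptotics} to the two-term expansion
\begin{equation*}
 1-\psi(-x) = K x e^{-c_0 x} + K'\, e^{-c_0 x} + o(e^{-c_0 x})\qquad(x\to+\infty).
\end{equation*}
The linearisation of \eqref{eq:traveling_wave} about $\psi\equiv 1$ has $c_0/\sigma^2$ as a \emph{double} characteristic root, so the decaying linear solutions are spanned by $e^{c_0 x/\sigma^2}$ and $x\, e^{c_0 x/\sigma^2}$: the coefficient $K$ of the $x\,e^{c_0 x/\sigma^2}$ mode is pinned down by \eqref{eq:tw_asymptotics}, and the coefficient $K'$ is read off by integrating the nonlinear right-hand side $f(\psi)-\psi$ against the Green function of the linearised operator, in the spirit of the contraction-type analysis of \cite{Maillard2010}. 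Because of the double root, this Green function carries an extra factor of $x$, and $E[L\log^3 L]<\infty$ is exactly the sharp moment condition ensuring convergence of the resulting integral, the extra logarithm offsetting the extra factor of $x$.

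Given the refined expansion, substituting $s=e^{c_0 x}$ produces $1-\phi(s) = (K/c_0)\,s\log(1/s) + K' s + o(s)$ as $s\to 0^+$. Differentiating in $s$ (justified by concavity of $1-\phi$ together with monotonicity of $s\mapsto E[W e^{-sW}]$) yields $E[W e^{-sW}] = (K/c_0)\log(1/s) + \widetilde K + o(1)$. The left-hand side is the Laplace--Stieltjes transform of the measure $x\,\P(W\in\dd x)$, whose distribution function is $F(x) := \E[W\Ind_{(W\le x)}]$, so a de Haan-type Tauberian theorem with constant correction gives $F(x) = (K/c_0)\log x + \mathrm{const} + o(1)$, which with $K/c_0 = 1$ is \eqref{eq:W_expec}. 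The main obstacle is the refined expansion of $\psi$: carrying out the nonlinear Duhamel analysis rigorously at the double root, and showing that $E[L\log^3 L]<\infty$ is exactly the threshold at which the integral defining $K'$ converges.
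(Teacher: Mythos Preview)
Your strategy---convert the traveling-wave asymptotics into Laplace-transform asymptotics via \eqref{eq:W_laplace}, then invoke a Tauberian theorem---is exactly the paper's route. But there are two genuine gaps in the execution, and the paper avoids both by working with the \emph{second} derivative $\chi''(\lambda)=E[W^2e^{-\lambda W}]$ rather than with $1-\chi(\lambda)$ itself.

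\textbf{Gap in part \eqref{eq:W_tail}.} From $1-\chi(s)\sim s\log(1/s)$ you cannot conclude $\P(W>x)\sim 1/x$ by ``standard Karamata''. At the boundary index $\alpha=1$, the Karamata Tauberian theorem (e.g.\ \cite{Bingham1987}, Theorem~8.1.6) only yields the integrated tail $\int_0^x\P(W>u)\,\dd u\sim\log x$; the monotone density theorem fails to upgrade this to $\P(W>x)\sim 1/x$ because the derivative of a slowly varying function need not be regularly varying. The paper instead shows $\chi''(\lambda)\sim 1/\lambda$ (Lemma~\ref{lem:chi}), which by Karamata at index $\rho=1$ gives $E[W^2\Ind_{(W\le x)}]\sim x$, and then the equivalence between truncated second moments and tails (\cite{Feller1971}, VIII.9.2, or \cite{Bingham1987}, 8.1.2) yields $\P(W>x)\sim 1/x$ cleanly.

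\textbf{Gap in part \eqref{eq:W_expec}.} Your two-term expansion $\phi(x)=Kxe^{-c_0x}+K'e^{-c_0x}+o(e^{-c_0x})$ is correct and does follow from the Duhamel analysis you sketch (indeed the condition $E[L\log^3L]<\infty$ is exactly $\int_0^\infty ze^zg(\phi(z))\,\dd z<\infty$ in the paper's notation). But ``differentiating in $s$, justified by concavity'' does not work: an asymptotic expansion cannot in general be differentiated, and concavity of $1-\chi$ alone does not rescue this. The paper again goes through $\chi''$: it shows $\chi''(\lambda)=1/\lambda-r(\lambda)$ with $r\ge0$ and $\int_0^1 r<\infty$ under $E[L\log^3L]<\infty$, and then \emph{integrates} (not differentiates) to obtain $\chi'(\lambda)-\log\lambda\to c$, after which the de~Haan Tauberian theorem (\cite{Bingham1987}, Theorem~3.9.1) gives \eqref{eq:W_expec}. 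If you want to stay closer to your own route, you could instead observe that the ODE gives you $\phi'(x)$ directly via $\rho=\phi+\phi'$, so that $\chi'(\lambda)=\lambda^{-1}\phi'(-\log\lambda)$ can be expanded without any differentiation of asymptotics---but that is essentially the paper's computation in disguise.

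Finally, your claim $K/c_0=1$ is correct but needs the multiplicative-martingale argument the paper gives at the end of Lemma~\ref{lem:chi}; it does not follow from the normalisation of $W_y$ alone.
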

See also \cite{Buraczewski2009} for a proof of \eqref{eq:W_tail} in the case of branching random walk.
Before proving this result in the next subsection, we state a lemma which is immediate from \eqref{eq:def_Wy} and the fact that $Z_y$ is almost surely finite (see also Corollary 25 in \cite{Berestycki2010}):
\begin{lemma}
\label{lem:y_zeta}
 Suppose $E[L\log^2 L]<\infty$. For any $\eta > 0$, there exist $y$ and $\zeta$, such that
\[
 y \ge \eta^{-1}\quad\tand\quad P(|W_y - W| > \eta) + P(\mathscr L_{H_{-y}} \nsubseteq U\times [0,\zeta]) \le \eta.
\]
\end{lemma}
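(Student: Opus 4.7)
The plan is a two-step selection: first pick $y$ large using the almost-sure convergence of $W_y$ to $W$, and then, for this fixed $y$, pick $\zeta$ large using the almost-sure extinction of the process.

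By \eqref{eq:def_Wy} and the hypothesis $E[L\log^2 L]<\infty$ (which, as noted after \eqref{eq:def_Wy}, suffices for the almost-sure convergence), $W_y \to W$ almost surely as $y\to\infty$, hence in probability. Thus there exists $y_1 = y_1(\eta)$ such that $P(|W_y-W|>\eta) \le \eta/2$ for every $y \ge y_1$. Fixing any $y \ge \max(y_1,\eta^{-1})$ simultaneously ensures $y\ge\eta^{-1}$ and controls the first summand by $\eta/2$.

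Next, for this fixed $y$, I would invoke Kesten's theorem (recalled at the start of Section \ref{sec:critical_line}): the BBM with drift $-c_0$ killed at the barrier $-y$ goes extinct almost surely. Since $|\mathscr{N}(t)|$ is integer-valued and, once it reaches $0$, remains at $0$ forever, almost sure extinction is equivalent to $P(T^*<\infty)=1$, where $T^* = \inf\{t\ge 0 : \mathscr{N}(t)=\emptyset\}$. Every $(u,t)\in\mathscr{L}_{H_{-y}}$ satisfies $t\le T^*$, so
\[
P(\mathscr{L}_{H_{-y}} \nsubseteq U\times[0,\zeta]) \;\le\; P(T^*>\zeta) \;\xrightarrow[\zeta\to\infty]{}\; 0.
\]
Choosing $\zeta=\zeta(y,\eta)$ so that this last probability is at most $\eta/2$ and summing with the first bound gives the claim.

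No substantive obstacle is present; the argument is a combination of two almost-sure facts already recorded in the paper, with the only minor point being the identification of almost-sure extinction with almost-sure finiteness of $T^*$, handled by the monotonicity observation above.
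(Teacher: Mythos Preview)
Your proof is correct and follows exactly the approach the paper indicates: the lemma is stated as ``immediate from \eqref{eq:def_Wy} and the fact that $Z_y$ is almost surely finite,'' and you have simply spelled out these two steps, first choosing $y$ via the almost-sure convergence $W_y\to W$ and then choosing $\zeta$ via almost-sure extinction. There is nothing to add.
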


\subsection{Proof of Proposition \ref{prop:W}}

Rescaling space by $c_0^{-1}$, we can always come down to the case $c_0 = 1$, which we will assume to hold for the rest of the proof. Define $\chi(\lambda) = E[e^{-\lambda W}]$. Our first result is:

\begin{lemma}
\label{lem:chi}
 Suppose that $E[L\log^2 L] < \infty$. Then, 
\begin{equation}
\label{eq:chi_asymptotic}
 \chi''(\lambda) \sim \frac{1}{\lambda},
\end{equation} 
as $\lambda \to 0$. If furthermore $E[L\log^3 L] < \infty$, then
\begin{equation}
\label{eq:chi_asymptotic_2}
 \chi''(\lambda) = \frac{1}{\lambda} - r(\lambda),
\end{equation} 
where $r(\lambda) \ge 0$ and $\int_0^1 r(\lambda)\,\dd\lambda < \infty$.
\end{lemma}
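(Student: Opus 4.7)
The plan is to convert the claim about $\chi=\E[e^{-\lambda W}]$ into the analysis of a singular ODE. Starting from \eqref{eq:W_laplace} and the normalization $c_0=1$, one has $\chi(\lambda)=\psi(\log\lambda)$, whence
\begin{equation*}
\psi'(\log\lambda)=\lambda\chi'(\lambda),\qquad \psi''(\log\lambda)=\lambda^2\chi''(\lambda)+\lambda\chi'(\lambda).
\end{equation*}
Substituting into the traveling-wave equation \eqref{eq:traveling_wave} and setting $\phi=1-\chi$, one expands $\chi-f(\chi)=m\phi-\tfrac{m_2}{2}\phi^2+O(\phi^3)$ to obtain a perturbed Euler-type ODE for $\phi$ near $\lambda=0$. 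The criticality relation $c_0=\sigma\sqrt{2m}$ forces the indicial polynomial of the linear part to have a repeated root, so the two formal local solutions are a pure power and that power times $\log\lambda$.

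Under $\E[L\log^2L]<\infty$ the asymptotic \eqref{eq:tw_asymptotics} holds; transporting it through $\chi=\psi\circ\log$ singles out the logarithmic branch, $\phi(\lambda)\sim -\lambda\log\lambda$ as $\lambda\to 0^+$, with coefficient exactly $1$ by the specific choice of $W_y=c_0 y e^{-c_0 y}Z_y$. Differentiating an asymptotic is not automatically legitimate, so I then bootstrap via the ODE itself: I solve algebraically for $\lambda^2\phi''(\lambda)$ in terms of $\phi$, $\lambda\phi'$ and the nonlinear remainder, use a priori monotonicity/convexity bounds on $\chi'(\lambda)=-\E[We^{-\lambda W}]$ to control $\lambda\phi'$, and thereby extract $-\phi''(\lambda)\sim 1/\lambda$, which is \eqref{eq:chi_asymptotic}.

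For the refined statement \eqref{eq:chi_asymptotic_2}, set $r(\lambda):=1/\lambda-\chi''(\lambda)$. Non-negativity should come from a sharper pointwise bound extracted from the ODE, using that the nonlinear contribution $\tfrac{m_2}{2}\phi^2\ge 0$ carries a definite sign. Integrability reduces to the question of whether $\chi'(\lambda)+\log\lambda$ admits a finite limit as $\lambda\to 0^+$, since
\begin{equation*}
\int_\epsilon^1 r(\lambda)\,\dd\lambda \;=\; -\log\epsilon+\chi'(\epsilon)-\chi'(1).
\end{equation*}
This in turn corresponds to the second-order expansion $\phi(\lambda)=-\lambda\log\lambda+O(\lambda)$, which is precisely what becomes available under $\E[L\log^3L]<\infty$ by pushing the analysis of the perturbed Euler equation one order further, in the spirit of the methods of \cite{Maillard2010}.

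The main obstacle is the rigorous passage from the first-order asymptotic of $\phi$ to the corresponding asymptotic of $\phi''$: mere pointwise convergence of the ratio $\phi(\lambda)/(-\lambda\log\lambda)$ does not imply the analogous statement for $\phi''$. The ODE provides the needed rigidity, but the bootstrap requires genuine a priori control on $\phi'$, which I expect to obtain by combining convexity/monotonicity of the Laplace transform with the integral form of the ODE.
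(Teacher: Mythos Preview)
Your starting point---translating \eqref{eq:W_laplace} into an ODE for $\chi$ via the traveling-wave equation---is the same as the paper's, but the execution diverges exactly at the obstacle you yourself flag: passing from the first-order asymptotic of $\phi$ to that of $\phi''$. Your proposed bootstrap (solve the second-order equation for $\lambda^2\phi''$ in terms of $\phi$ and $\lambda\phi'$, then control $\lambda\phi'$ by monotonicity/convexity of the Laplace transform) is not sufficient as stated. Monotonicity of $\chi'$ gives no quantitative rate; what you would actually need is $\lambda\chi'(\lambda)=\lambda\log\lambda+o(\lambda\log\lambda)$, and this is essentially equivalent (via one integration) to the conclusion $\chi''\sim 1/\lambda$ you are trying to prove, so the argument is circular without an additional idea. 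The paper avoids this entirely by working in the variable $x=-\log\lambda$ with $\phi(x)=1-\psi(-x)$ and introducing $\rho=\phi+\phi'$, which satisfies the \emph{first-order} linear equation $\rho'=-\rho+g(\phi)$ with $g(s)=2[f(1-s)-1+f'(1)s]\ge 0$. The explicit integral formula for $\rho$ then yields $\chi''(\lambda)=-\lambda^{-2}\rho'(-\log\lambda)$ with no differentiation of asymptotics, and the moment hypotheses enter precisely through integrability of $e^y g(\phi(y))$ and $y e^y g(\phi(y))$, which are the Bingham--Doney criteria for $E[L\log^2 L]$ and $E[L\log^3 L]$.

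Two further gaps. First, your argument for $r(\lambda)\ge 0$ via the sign of $\tfrac{m_2}{2}\phi^2$ relies on a Taylor expansion with remainder $O(\phi^3)$, which presupposes a finite third moment---strictly stronger than $E[L\log^3 L]<\infty$---and even granting that, the several terms in the Euler equation do not combine into a manifest sign. The paper's first-order reduction gives instead the explicit formula $r(\lambda)=-\lambda^{-1}\int_{-\log\lambda}^\infty e^y\phi'(y)g'(\phi(y))\,\dd y$, which is nonnegative because $\phi'<0$ and $g'\ge 0$ (the latter by convexity of $f$); integrability on $(0,1)$ is then one application of Fubini and the $E[L\log^3 L]$ criterion. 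Second, the identification of the leading constant as $1$ (rather than the a priori unknown $K$ of \eqref{eq:tw_asymptotics}) does not follow merely from the normalization in the definition of $W_y$; the paper closes this by a separate multiplicative-martingale argument showing $\chi(K\lambda)=\chi(\lambda)$ for all $\lambda$, hence $K=1$.
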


\begin{proof}
Define $\phi(x) = 1-\psi(-x)$, such that $u(s) = \phi'(\phi^{-1}(s))$. By \eqref{eq:traveling_wave}, we have
\begin{equation}
\label{eq:phi} 
\frac 1 2 \phi''(x) + \phi'(x) = f(1-\phi(x)) - (1-\phi(x)).
\end{equation}
Then by \eqref{eq:tw_asymptotics}, we have
\begin{equation}
 \label{eq:phi_asymptotic}
 \phi(x) \sim K x e^{-x},\quad\tas x\to \infty.
\end{equation}
Setting $g(s) = 2[f(1-s) - 1 + f'(1)s] \ge 0$ and $\rho = \phi + \phi'$, we get from \eqref{eq:phi},
\begin{equation}
 \label{eq:rho}
 \rho'(x) = -\rho(x) + g(\phi(x)).
\end{equation}
As in the proof of Theorem 1.1 in \cite{Maillard2010}, we will study the function $\rho$ through the integral equation corresponding to \eqref{eq:rho}, namely
\begin{equation}
 \label{eq:rho_integral}
 \rho(x) = e^{-x}\Big(\rho(0) + \int_0^x e^yg(\phi(y))\,\dd y\Big) = e^{-x}\Big(\rho(0) + \int_{\phi(x)}^{\phi(0)} \frac{e^{\phi^{-1}(s)}g(s)}{-u(s)}\ \dd s\Big).
\end{equation}
Now, by Theorem B of \cite{Bingham1974} (see also Theorem 8.1.8 in \cite{Bingham1987}) we have for every $d\ge 0$,
\begin{equation}
 \label{eq:bingham}
 \int_0^1 \frac{\log^d \frac 1 s}{s^2}g(s)\,\dd s < \infty\quad\Longleftrightarrow\quad\int_0^1 \frac{\log^d \frac 1 s}{s}g'(s)\,\dd s < \infty\quad\Longleftrightarrow\quad E[L\log^{1+d}L]<\infty.
\end{equation}
Furthermore, by Proposition 3.2 in \cite{Maillard2010}, we have $-u(s) \sim s$, as $s\to 0$, and by \eqref{eq:phi_asymptotic}, we have $e^{\phi^{-1}(s)} \sim (\log 1/s)/s$, as $s\to 0$. Under the hypothesis $E[L\log^2 L]<\infty$, we therefore have
\[
 \int_0^{\phi(0)} \frac{e^{\phi^{-1}(s)}g(s)}{-u(s)}\ \dd s < \infty,
\]
whence, by \eqref{eq:rho_integral},
\begin{equation}
\label{eq:rho_asymptotic}
\rho(x) \sim K e^{-x},\quad\tas x\to\infty,
\end{equation}
where the constant $K$ is actually the same as the one in \eqref{eq:phi_asymptotic}, see the proof of Theorem 1.1 in \cite{Maillard2010}. Now, from \eqref{eq:W_laplace}, we get $\chi(\lambda) = 1-\phi(-\log \lambda)$, whence, by \eqref{eq:rho} and \eqref{eq:rho_integral},
\begin{equation}
 \label{eq:chi_rho}
\begin{split}
\chi''(\lambda) = -\frac 1 {\lambda^2}\rho'(-\log \lambda) &= \frac K {\lambda} + \frac 1 {\lambda^2}\Big(-\lambda\int_{-\log\lambda}^\infty e^yg(\phi(y))\,\dd y - g(\phi(-\log\lambda))\Big)\\
& = \frac K {\lambda} + \frac 1 \lambda\Big(\int_{-\log\lambda}^\infty e^y\phi'(y)g'(\phi(y))\,\dd y\Big),
\end{split}
\end{equation}
where the last equation follows from integration by parts. This proves \eqref{eq:chi_asymptotic}, with the constant $K$ instead of $1$, since the last integral vanishes as $\lambda \to 0$. Now, setting
\[
 r(\lambda) = - \frac 1 \lambda\Big(\int_{-\log\lambda}^\infty e^y\phi'(y)g'(\phi(y))\,\dd y\Big),
\]
we first remark that $r(\lambda) \ge 0$, since the integrand is negative for $y\in\R$. By the Fubini--Tonelli theorem, we then have
\begin{align*}
 \int_0^1r(\lambda)\dd \lambda & = \int_0^1 -\frac 1 \lambda \int_{-\log \lambda}^\infty e^y\phi'(y)g'(\phi(y))\,\dd y\,\dd \lambda\\
& = \int_0^\infty - ye^y\phi'(y)g'(\phi(y))\,\dd y\\
& = \int_0^{\phi(0)}e^{\phi^{-1}(y)} \phi^{-1}(y) g'(y)\,\dd y,
\end{align*}
which is finite if and only if $E[L\log^3 L]<\infty$, by \eqref{eq:bingham} and the fact that $e^{\phi^{-1}(y)} \phi^{-1}(y) \sim (\log^2 1/s)/s.$ This proves \eqref{eq:chi_asymptotic_2}, again with the constant $K$ instead of $1$.

The previous arguments worked for every traveling wave $\psi$. In order to show that that the constant $K$ is equal to 1 in our case, we use Neveu's multiplicative martingale (see also \cite{Liu2000}, Theorem 2.5). We know \cite{Neveu1988,Chauvin1991}, $((1-\phi(x+y))^{Z_y})_{y\ge 0}$ is a martingale for every $x\in\R$, bounded by $1$. By \eqref{eq:def_Wy} and \eqref{eq:phi_asymptotic}, we get by dominated convergence, for every $x\in\R$,
\[
 \chi(Ke^x) = \lim_{y\to \infty} E[e^{-Kye^{x-y}Z_y}] = \lim_{y\to\infty}E[(1-\phi(y-x))^{Z_y}] = 1-\phi(-x) = \chi(e^x).
\]
This yields $K=1$.
\end{proof}

\begin{remark}
 Choosing arbitrary initial points $x_0,x_1\in\R$ instead of $0$ in \eqref{eq:rho_integral}, one sees that
\[
 e^{x_0}\rho(x_0) + \int_{x_0}^\infty e^yg(\phi(y))\,\dd y = e^{x_1}\rho(x_1) + \int_{x_1}^\infty e^yg(\phi(y))\,\dd y.
\]
In particular, since $\rho$ is bounded, letting $x_0\to -\infty$ and $x_1\to +\infty$ yields
\[
 \int_{-\infty}^\infty e^yg(\phi(y))\,\dd y = 1.
\]
One could hope (see the proof of Proposition \ref{prop:W} below) that this helps in determining the constant \const{eq:W_expec}, but apparently this does not seem to be the case.
\end{remark}

\begin{proof}[Proof of Proposition \ref{prop:W}]
 We define the function
\[
 V_n(x) = \int_0^y x^n P(W\in\dd x) = E[W^n\Ind_{(W\le x)}],
\]
such that with $\chi^{(n)}$ denoting the $n$-th derivative of $\chi$, we have for $\lambda > 0$,
\[
 \chi^{(n)}(\lambda) = (-1)^n \int_0^\infty e^{-\lambda x} \dd V_n(x).
\]
If $E[L\log^2L]<\infty$, Proposition \ref{prop:W} and Karamata's Tauberian theorem (\cite{Feller1971}, Theorem XIII.5.2 or \cite{Bingham1987}, Theorem 1.7.1) now yields
\begin{equation}
 \label{eq:V2}
 V_2(x) \sim x,\quad\tas x\to\infty.
\end{equation}
By an integration by parts argument (see also \cite{Feller1971}, Theorem VIII.9.2 or \cite{Bingham1987}, Theorem 8.1.2), we get \eqref{eq:W_tail}. Now suppose that $E[L\log^3L] < \infty$. By Lemma \ref{lem:chi}, we have $\chi'(\lambda) - \log \lambda \to c\in \R$, as $\lambda \to 0$. By Theorem 3.9.1 from \cite{Bingham1987} (with $\ell(x) \equiv 1$), this yields
\[
 V_1(x) - \log x \to \gamma - c,\quad\tas x\to \infty,
\]
where $\gamma$ is the Euler--Mascheroni constant. This is exactly \eqref{eq:W_expec}.


\end{proof}

\section{Branching Brownian motion in an interval}
\label{sec:interval}

In this section we study branching Brownian motion killed upon exiting an interval. Most ideas in this section (except for Section \ref{sec:weakly_conditioned}) stem from Sections 2 and 3 of \cite{Berestycki2010} and for completeness, we will reprove some of their results with streamlined proofs. However, we will also extend their results to the case of Brownian motion with variable drift.

\subsection{Notation}
\label{sec:interval_notation}
During the rest of the paper, the symbol $C$ stands for a positive constant, which may only depend on the reproduction law $q$, except in Section \ref{sec:moving_barrier}, where it may also depend on some other constants which will be specified. Its value may change from line to line. If a subscript is present, then this subscript is the number of the equation where this constant appears for the first time (example: $\Const{eq:Rt_nu_expec}$). In this case, this constant is fixed after its value has been chosen in the corresponding equation. If $X$ is any mathematical expression, then the symbol $O(X)$ stands for a term whose absolute value is bounded by $C|X|$.

Recall the definition of $q(k)$, $m$ and $m_2$ from Section \ref{sec:critical_line} and the hypotheses on $m$ and $m_2$. From this section on, we further define $c_0 = \sqrt{2m}$. Furthermore, in this section, we let $a \ge \pi/c_0$ and set
\begin{equation}
 \label{eq:mu}
\mu = \sqrt{2m - \frac{\pi^2}{a^2}}.
\end{equation}
From \eqref{eq:mu}, one easily gets the basic estimate
\begin{equation}
 \label{eq:c0_mu}
 0\le c_0 - \mu \le \frac{\pi^2}{2\mu a^2}.
\end{equation}

We then denote by $\P^x$ the law of the branching Markov process where, starting with a single particle at the point $x\in\R$, particles move according to Brownian motion with variance $1$ and drift $-\mu$ and branch at rate 1 into $k$ particles according to the reproduction law $q(k)$. Expectation with respect to $\P^x$ is denoted by $\E^x$. On the space of continuous functions from $\R_+$ to $\R$, we define $H_0$ and $H_a$ to be the hitting time functionals of $0$ and $a$. We further set $H = H_0\wedge H_a$. Then note that the density kernel of the branching Brownian motion below $\mathscr L_H$, as defined in \eqref{eq:def_density}, has a density with respect to Lebesgue measure given for $t>0$ and $x,y \in (0,a)$ by
\begin{equation}
 \label{eq:density_bbm}
\p_t(x,y) = e^{\mu(x-y) + \frac{\pi^2}{2 a^2} t} p_t^a(x,y),
\end{equation}
where $p_t^a$ was defined in \eqref{eq:def_p}.

Now, let $f\in\mathscr{C}^2(\R_{\ge 0},\R_{\ge 0})$ be non-decreasing, with $f(0)=0$. Such a function will be called a \emph{barrier function}. We set
\begin{equation}
\label{eq:def_f_norm}
 ||f|| = \max\{||f||_\infty, ||f'||_\infty,||f'||_\infty^2,||f''||_\infty\}.
\end{equation}
Now define
\begin{equation}
 \label{eq:mu_t}
\mu_t = \mu + \frac{\dd}{\dd t} f(t/a^2) = \mu + \frac{1}{a^2} f'(t/a^2),
\end{equation}
such that $\mu_0 = \mu$ and $\mu_t \ge \mu$ for all $t\ge0$. We denote by $\P^x_f$ the law of the branching Brownian motion described above, but with infinitesimal drift $-\mu_t$. Expectation with respect to $\P^x_f$ is denoted by $\E^x_f$ and the density of the process is denoted by $\p^f_t(x,y)$.

The above definitions can be extended to arbitrary initial configurations of particles distributed according to a counting measure $\nu$ on $(0,a)$. In this case the superscript $x$ is replaced by $\nu$ or simply omitted if $\nu$ is known from the context.

\subsection{The processes \texorpdfstring{$Z_t$ and $Y_t$}{Z\_t and Y\_t}}

Recall from Section \ref{sec:preliminaries} that the set of particles alive at time $t$ is denoted by $\mathscr{N}(t)$. We define
\[
 \mathscr N_{0,a}(t) = \{u\in \mathscr N(t): H(X_u) > t\},
\]
where $H$ was defined in the previous subsection. Now set $w(x) = ae^{\mu (x-a)}\sin(\pi x/a)$ and define
\[
 Z_t = \sum_{u\in \mathscr N_{0,a}(t)} w(X_u(t))\quad\tand\quad Y_t = \sum_{u\in \mathscr N_{0,a}(t)} e^{\mu (X_u(t)-a)}.
\]
Then $Z_t$ is a martingale under $\P^x$, since $e^{mt} w(B_t)$ is a martingale for a Brownian motion with drift $-\mu$ killed at $0$ and $a$, which is easily seen by It\={o}'s formula, for example. Furthermore, it is easy to see as well that $Z_t$ is a supermartingale under $\P^x_f$.

The following lemma relates the density of BBM with variable drift to BBM with fixed drift:
\begin{lemma}
 \label{lem:mu_t_density}
\[
 \p^f_t(x,y) = \p_t(x,y) e^{-c_0 f(t/a^2) + \mathrm{Err}},
\]
where $|\mathrm{Err}| \le ||f||\Big(\frac{1}{a} + \frac{t}{a^3} + \frac{\pi^2}{2\mu a^2}\Big)$.
\end{lemma}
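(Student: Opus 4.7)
The plan is to reduce the identity to a single–particle computation via the Many-to-one lemma, and then to compare the two single-particle transition kernels by a Girsanov change of measure.

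\medskip

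\textbf{Step 1: Reduction to one particle.} By Lemma~\ref{lem:many_to_one_simple} applied to the space–time killed process (killed at branching rate $m$ and at the boundary of $(0,a)$), one has
\[
\p_t(x,y)\,\dd y = e^{mt}\,\widetilde P^{x,\mu}(X_t\in \dd y,\ H>t),
\qquad
\p^f_t(x,y)\,\dd y = e^{mt}\,\widetilde P^{x,f}(X_t\in \dd y,\ H>t),
\]
where $\widetilde P^{x,\mu}$ (resp. $\widetilde P^{x,f}$) is the law of a single Brownian motion from $x$ with constant drift $-\mu$ (resp. variable drift $-\mu_s$). So it suffices to compare these two probabilities.

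\medskip

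\textbf{Step 2: Girsanov.} Under $\widetilde P^{x,\mu}$, write $X_s = x-\mu s + W_s$ with $W$ a standard Brownian motion. Setting $\alpha_s := \mu_s-\mu = a^{-2} f'(s/a^2)$, which is deterministic and bounded, Girsanov's theorem gives
\[
\frac{\dd \widetilde P^{x,f}}{\dd \widetilde P^{x,\mu}}\bigg|_{\mathcal F_t} \;=\; M_t \;:=\; \exp\!\Big(-\!\int_0^t\!\alpha_s\,\dd W_s - \tfrac12\!\int_0^t\!\alpha_s^2\,\dd s\Big).
\]
Since $\{H>t\}\in\mathcal F_t$, dividing the two probabilities yields
\[
\frac{\p^f_t(x,y)}{\p_t(x,y)} \;=\; \widetilde E^{x,\mu,t,y}\big[M_t\,\big|\,H>t\big],
\]
so it remains to bound $M_t$ pointwise on $\{H>t\}$.

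\medskip

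\textbf{Step 3: Integration by parts.} Using $\dd W_s = \dd X_s + \mu\,\dd s$ and the fact that $\alpha$ is $\mathcal C^1$ and deterministic, standard integration by parts gives
\[
\int_0^t\!\alpha_s\,\dd W_s = \alpha_t X_t - \alpha_0 x - \!\int_0^t\!\alpha'_s X_s\,\dd s + \mu\!\int_0^t\!\alpha_s\,\dd s,
\]
and $\int_0^t\alpha_s\,\dd s = f(t/a^2)$. Therefore
\[
\log M_t \;=\; -\mu\,f(t/a^2) \;-\;\alpha_t X_t + \alpha_0 x + \!\int_0^t\!\alpha'_s X_s\,\dd s \;-\;\tfrac12\!\int_0^t\!\alpha_s^2\,\dd s.
\]

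\medskip

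\textbf{Step 4: Error bounds.} Split $-\mu f(t/a^2) = -c_0 f(t/a^2) + (c_0-\mu)f(t/a^2)$; by \eqref{eq:c0_mu} the correction is $O\!\left(\|f\|\,\tfrac{\pi^2}{2\mu a^2}\right)$. On $\{H>t\}$, $X_s\in(0,a)$, and using the definition of $\|f\|$ one has $|\alpha_s|\le \|f\|/a^2$, $|\alpha'_s|\le \|f\|/a^4$, $\alpha_s^2\le \|f\|/a^4$. Thus
\[
|\alpha_t X_t|+|\alpha_0 x|\le C\|f\|/a,\qquad \Big|\!\int_0^t\!\alpha'_s X_s\,\dd s\Big|+\tfrac12\!\int_0^t\!\alpha_s^2\,\dd s \le C\|f\|\,t/a^3,
\]
and all these bounds are deterministic on $\{H>t\}$, hence stable under conditional expectation. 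Combining gives $\log M_t = -c_0 f(t/a^2) + \mathrm{Err}$ with $|\mathrm{Err}|$ bounded by the stated quantity, which proves the lemma.

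\medskip

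There is no real obstacle: $\alpha$ is bounded, so Novikov's condition for Girsanov is trivial, and the killing is handled automatically since $\{H>t\}$ is $\mathcal F_t$-measurable. The only care needed is the bookkeeping in Step~4 to match the announced form $\|f\|(1/a+t/a^3+\pi^2/(2\mu a^2))$ of the error.
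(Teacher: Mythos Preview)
Your proof is correct and follows essentially the same route as the paper's: Many-to-one reduction, Girsanov change of measure between the drifts $-\mu$ and $-\mu_t$, integration by parts on the stochastic integral, and pathwise estimation of each term using $X_s\in(0,a)$ on $\{H>t\}$. The only cosmetic difference is that you integrate by parts in terms of the driving Brownian motion $W$ while the paper works directly with the position process $B$, which produces an extra harmless $\alpha_0 x$ term (bounded by $\|f\|/a$) that the paper avoids by implicitly using $\mu_0=\mu$.
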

\begin{proof}
 By the Many-to-one lemma and Girsanov's theorem, we have
\begin{equation}
\label{eq:005}
\begin{split}
 \p^f_t(x,y) &= e^{mt}W_{-\mu_t}^x\left(B_t\in \dd y,\, H > t\right)\\
&= \exp\left(mt - \int_0^t\frac{\mu_s^2-\mu^2}{2} \dd s\right)W_{-\mu}^x\left(\exp\Big(-\int_0^t \mu_s-\mu\,\dd B_s\Big),\,B_t\in\dd y, H > t\right).
\end{split}
\end{equation}
By integration by parts, we have
\begin{equation}
 \label{eq:ipp_mu_B}
 \int_0^t\mu_s\,\dd B_s = \mu_t B_t - \mu B_0 - \int_0^tB_s\,\dd \mu_s.
\end{equation}
Since $B_t \in (0,a)$ for all $t\ge 0$, we have
\begin{equation}
 \label{eq:010}
 \Big|\int_0^t B_s\,\dd \mu_s\Big| \le \int_0^t \Big|B_s \frac {f''(s/a^2)} {a^4}\Big|\,\dd s \le ||f''||_\infty \frac t {a^3}.
\end{equation}
Furthermore,
\[
 \frac{\mu_t^2}{2} = \frac{\mu^2}{2} + \frac{\mu}{a^2}f'(t/a^2)+\frac{f'(t/a^2)^2}{2a^4},
\]
such that
\begin{equation}
 \label{eq:020}
 \Big|\int_0^t \frac{\mu_s^2}{2}\,\dd s - \frac{\mu^2}{2}t - \mu f(t/a^2)\Big| \le \frac{||f'||_\infty^2 t}{2a^4}.
\end{equation}
Finally,
\begin{equation}
 \label{eq:030}
 \Big|\mu_tB_t - \mu B_t\Big| = \Big|\frac{1}{a^2}f'(t/a^2) B_t\Big| \le \frac{||f'||_\infty}{a}.
\end{equation}
Equations \eqref{eq:density_bbm}, \eqref{eq:005}, \eqref{eq:ipp_mu_B}, \eqref{eq:010}, \eqref{eq:020} and \eqref{eq:030} now give
\[
  \p^f_t(x,y) = \p_t(x,y) e^{-\mu f(t/a^2) + \mathrm{Err}},
\]
and the lemma now follows from \eqref{eq:c0_mu}.
\end{proof}

\begin{proposition}
 \label{prop:quantities}
Under any initial configuration of particles, for every $t\ge 0$, we have
\begin{equation}
 \label{eq:Zt_expectation}
\E_f[Z_t] = Z_0e^{-c_0 f(t/a^2) + \mathrm{Err}},
\end{equation}
and if in addition $\mu \ge c_0/2$, then
\begin{equation}
 \label{eq:Zt_variance} 
\Var_f(Z_t) \le C e^{3\mathrm{Err}} \Big(\frac{t}{a^3} Z_0 + Y_0 \Big).
\end{equation}
Furthermore, we have for every $t\ge 0$ (without hypothesis on $\mu$),
\begin{equation}
 \label{eq:Yt_weak}
\E_f[Y_t] \le Ce^{\mathrm{Err}}Y_0.
\end{equation}
and for $t\ge a^2$,
\begin{equation}
  \label{eq:Yt}
\E_f[Y_t] \le Ce^{\mathrm{Err}} \frac{Z_0}{a}.
\end{equation}
Here, $\mathrm{Err}$ has the same meaning as in Lemma \ref{lem:mu_t_density}.
\end{proposition}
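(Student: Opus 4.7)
The plan is to reduce each moment to an integral against the constant-drift density $\p_t(x,y)$ by factoring out the variable-drift correction through Lemma \ref{lem:mu_t_density}, and then to evaluate those integrals using the spectral formulae of Section \ref{sec:bm} and the Brownian taboo representation.

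\emph{First moments.} By linearity in the initial configuration I restrict to a single particle at $x\in(0,a)$. The Many-to-one lemma (Lemma \ref{lem:many_to_one_simple}) gives $\E_f^x[Z_t]=\int_0^a \p^f_t(x,y)w(y)\,\dd y$ and $\E_f^x[Y_t]=\int_0^a \p^f_t(x,y)e^{\mu(y-a)}\,\dd y$. Lemma \ref{lem:mu_t_density} pulls out the factor $e^{-c_0 f(t/a^2)+\mathrm{Err}}$, reducing \eqref{eq:Zt_expectation} to the martingale identity $\E^x[Z_t]=w(x)=Z_0$ and reducing $\E_f^x[Y_t]$ to $e^{\mu(x-a)+\pi^2 t/(2a^2)}\int_0^a p^a_t(x,y)\,\dd y$. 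Using \eqref{eq:p_sin} only odd modes contribute to the integral, and bounding $|\sin(\pi nx/a)|\le n\sin(\pi x/a)$ together with \eqref{eq:def_E} yields $e^{\pi^2 t/(2a^2)}\int p^a_t(x,y)\,\dd y\le C\sin(\pi x/a)$ for $t\ge a^2$, which is \eqref{eq:Yt} since $Y_0\sin(\pi x/a)=Z_0/a$. For $t\le a^2$ the trivial bounds $\int p^a_t\le 1$ and $e^{\pi^2 t/(2a^2)}\le e^{\pi^2/2}$ yield \eqref{eq:Yt_weak}; for $t\ge a^2$ it follows from $\sin(\pi x/a)\le 1$.

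\emph{Variance.} I apply Lemma \ref{lem:many_to_two_fixedtime} to the time-inhomogeneous space-time process, obtaining
\[
\E_f^x[Z_t^2]\le \int_0^a \p^f_t(x,y)w(y)^2\,\dd y+m_2\int_0^t\!\!\int_0^a \p^f_s(x,y)\bigl(\E_f^{(y,s)}[Z_t]\bigr)^2\,\dd y\,\dd s.
\]
The first-moment step applied to the process restarted at $(y,s)$ gives $\E_f^{(y,s)}[Z_t]=w(y)e^{-c_0(f(t/a^2)-f(s/a^2))+\mathrm{Err}'}$. Substituting this back, applying Lemma \ref{lem:mu_t_density} to $\p^f_s$, and discarding the non-negative $(\E_f^x[Z_t])^2$ converts the bound into
\[
\Var_f^x(Z_t)\le Ce^{3\mathrm{Err}}\Big(\int_0^a \p_t(x,y)w(y)^2\,\dd y+\int_0^t\!\!\int_0^a \p_s(x,y)w(y)^2\,\dd y\,\dd s\Big).
\]

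I then estimate the key integral via the Brownian taboo change of measure: from point 3 of Section \ref{sec:taboo} one checks that $\p_s(x,y)w(y)=Z_0\, p^{\mathrm{taboo}}_s(x,y)$, so $\int \p_s(x,y)w(y)^2\,\dd y=Z_0\cdot\E^x_{\mathrm{taboo}}[w(X_s)]$. With $v=a-y$ and the assumption $\mu\ge c_0/2$, the bound $w(a-v)=ae^{-\mu v}\sin(\pi v/a)\le C\,ve^{-\mu v}\le Ce^{-(\mu/2)v}$ combined with the $X\leftrightarrow a-X$ symmetry of the taboo process and Lemma \ref{lem:k_integral} (applied with $c=\mu/2$) yields $\E^x_{\mathrm{taboo}}[\int_0^t w(X_s)\,\dd s]\le C(t/a^3+\operatorname{err}(a-x))$. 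The main obstacle is to absorb the boundary contribution into $Y_0$: writing $u=a-x$ and using $Z_0=a\sin(\pi u/a)Y_0$, the elementary estimates $a\sin(\pi u/a)(1\wedge u^{-1})\le\pi$ and $a\sin(\pi u/a)(1+u)e^{-(\mu/2)u}\le C$ deliver $Z_0\operatorname{err}(a-x)\le CY_0$. The single-time term $\int \p_t(x,y)w(y)^2\,\dd y$ is bounded in the same way by $C(Z_0/a^3+Y_0)\le C(tZ_0/a^3+Y_0)$ (using $Z_0/a\le Y_0$ to handle the case $t\le 1$), and assembling the contributions gives \eqref{eq:Zt_variance}.
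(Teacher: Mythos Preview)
Your proof is correct and shares the paper's overall architecture: Lemma~\ref{lem:mu_t_density} for the reduction to constant drift, the martingale identity for \eqref{eq:Zt_expectation}, the sine expansion for \eqref{eq:Yt} and \eqref{eq:Yt_weak}, and Lemma~\ref{lem:many_to_two_fixedtime} for the variance. The substantive difference lies in how you bound the space--time integral $\int_0^t\!\int_0^a \p_s(x,y)w(y)^2\,\dd y\,\dd s$: the paper rewrites it as $ae^{\mu(x-a)}\int e^{\mu(y-a)}\sin^2(\pi y/a)\,J^a(x,y,t)\,\dd y$ and applies Lemma~\ref{lem:I_estimates} directly, whereas you exploit the identity $\p_s(x,y)w(y)=Z_0\,p^{\mathrm{taboo}}_s(x,y)$ and invoke Lemma~\ref{lem:k_integral} on the reflected taboo process. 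Your route is more probabilistic---the $t/a^3$ term appears as an equilibrium occupation time and the $Y_0$ term as $Z_0\operatorname{err}(a-x)$---while the paper's $J^a$ computation is purely analytic and sidesteps two small points you should make explicit: first, that the constant in Lemma~\ref{lem:k_integral} is uniform over $c=\mu/2\in[c_0/4,c_0/2]$ (it is, by inspection of the proof); second, that the single-time term $\int\p_t(x,y)w(y)^2\,\dd y$ is not actually covered by Lemma~\ref{lem:k_integral}. For the latter the paper's device is cleanest: $w(y)^2\le \pi^2(a-y)^2e^{-2\mu(a-y)}\le Ce^{\mu(y-a)}$ under $\mu\ge c_0/2$, so the single-time term is $\le C\,\E^x[Y_t]\le CY_0$ by \eqref{eq:Yt_weak}.
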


\begin{proof} 
Equation \eqref{eq:Zt_expectation} follows from Lemma \ref{lem:mu_t_density} and the fact that $Z_t$ is a martingale under $\P^x$. In order to show \eqref{eq:Yt_weak} and \eqref{eq:Yt}, it suffices by Lemma \ref{lem:mu_t_density} to consider the case without variable drift. We first suppose that $t\ge a^2$. By \eqref{eq:density_bbm} and \eqref{eq:p_estimate}, we get
\[
 \E^x[Y_t] \le e^{\mu (x-a)}\int_0^a e^{\frac{\pi^2}{2a^2}t} p_t(x,y)\,\dd y
\le C e^{\mu (x-a)} \sin(\pi x/a)\int_0^a \frac 2 a \sin(\pi y/a)\,\dd y.
\]
The last integral is independent of $a$. Summing over $x$ yields \eqref{eq:Yt} as well as \eqref{eq:Yt_weak} in the case $t\ge a^2$. Now, if $t< a^2$, by the Many-to-one lemma and Girsanov's theorem, we have
\[
 \E^x[Y_t] = e^t W^x_{-\mu}\Big[e^{\mu (X_t-a)},\ H_0\wedge H_a < t\Big] = e^{\pi^2t/(2a^2)} W^x[H_0\wedge H_a < t] e^{\mu (x-a)}.
\]
Summing over $x$ yields \eqref{eq:Yt_weak}.

Throughout the proof of \eqref{eq:Zt_variance}, we use a constant $C$, which depends only on the reproduction law $q(k)$ and which may change from line to line. By Lemma \ref{lem:many_to_two_fixedtime},
\begin{equation}
 \label{eq:bbm_2ndmoment}
 \E^x[Z_t^2] = \E_f^x\Big[\sum_{u\in \mathscr N_{0,a}(t)}w(X_u(t))^2\Big] + 2m_2\int_0^a \int_0^t \p_s^f(x,y) (\E_{f(\cdot + s)}^y[Z_{t-s}])^2\,\dd s\,\dd y.
\end{equation}
By Lemma \ref{lem:mu_t_density} and the fact that $Z_t$ is a martingale with respect to the law $\P^x$, \eqref{eq:bbm_2ndmoment} yields
\begin{equation}
 \label{eq:050}
 \E^x[Z_t^2] \le C e^{3 \mathrm{Err}} \left(\E^x\Big[\sum_{u\in \mathscr N_{0,a}(t)}w(X_u(t))^2\Big] + \int_0^a \int_0^t \p_s(x,y) w(y)^2\,\dd s\,\dd y\right).
\end{equation}
Now we have for $x\in (0,a)$,
\[
 w(x)^2 = (a\sin(\pi x/a) e^{-\mu(a-x)})^2 \le \pi^2 (a-x)^2e^{-2\mu(a-x)} \le C e^{\mu(x-a)},
\]
because $\mu \ge c_0/2$ by hypothesis. This yields
\begin{equation}
 \label{eq:S1}
 S_1 := \E^x\Big[\sum_{u\in \mathscr N_{0,a}(t)}w(X_u(t))^2\Big] \le C \E^x[Y_t] \le C e^{\mu(x-a)},
\end{equation}
by \eqref{eq:Yt_weak}. Now, by \eqref{eq:density_bbm} and \eqref{eq:Ja}, we have
\[
 S_2 := \int_0^a \int_0^t \p_s(x,y) w(y)^2\,\dd s\,\dd y = ae^{\mu (x-a)}\int_0^aae^{\mu (y-a)}\sin^2(\pi y/a) J^a(x,y,t)\,\dd y.
\]
Lemma \ref{lem:I_estimates} now gives
\begin{equation}
\label{eq:S2}
\begin{split}
 S_2 &\le C ae^{\mu (x-a)}\int_0^a e^{-\mu y}\sin^2(\pi y/a)\Big(t\sin(\pi x/a)\sin(\pi y/a) + ay\Big)\,\dd y\\
&\le C ae^{\mu (x-a)} \Big(\sin(\pi x/a)\frac t{a^3} + \frac 1 a\Big)\int_0^\infty e^{-\mu y} y^3\,\dd y,
\end{split}
\end{equation}
the last line following again from the change of variables $y\mapsto a-y$ and the inequality $\sin x\le x$. Using again the fact that $\mu \ge c_0/2$, equations \eqref{eq:050}, \eqref{eq:S1} and \eqref{eq:S2} now imply
\begin{equation}
 \label{eq:Zt_2ndmoment}
\E^x_f[Z_t^2] \le C e^{3 \mathrm{Err}} \Big(\frac{t}{a^3} w(x) + e^{\mu(x-a)} \Big).
\end{equation}
If we write the positions of the initial particles as $x_1,\ldots,x_n$, then by the independence of their contributions to $Z_t$ and by \eqref{eq:Zt_2ndmoment},
\begin{equation}
 \Var_f(Z_t) = \sum_i \Var^{x_i}_{f_i}(Z_t) \le \sum_i \E^{x_i}_{f_i}[Z_t^2] \le e^{3\mathrm{Err}} \sum_i \E^{x_i}[Z_t^2] \le C e^{3 \mathrm{Err}} \left(\frac{t}{a^3} Z_0 + Y_0\right),
\end{equation}
by \eqref{eq:Zt_2ndmoment}. This proves \eqref{eq:Zt_variance}.
\end{proof}

\subsection{The particles hitting the right border}
In this section we recall some formulas from \cite{Berestycki2010} about the number of particles hitting the right border of the interval. We reprove these formulae here for completeness and because Lemma \ref{lem:I_estimates} makes their proofs straightforward. For most formulae we will assume that $f\equiv 0$, i.e.\ that we are working under the measure $\P$. Only Lemma \ref{lem:R_f} contains an upper bound on the expected number of particles for general $f$, which will be useful in Section \ref{sec:moving_barrier}.

For a measurable subset $S\subset \R$, define $R_S$ to be the number of particles killed at the right border during the (time) interval $S$, i.e.
\[
 R_S = \#\{(u,t):u\in \mathscr N(t)\tand H_0(X_u)>H_a(X_u) = t\in S\}.
\]
The following lemma gives exact formulae of the expectation and the second moment of $R_S$.

\begin{lemma}
 \label{lem:R_moments}
For every $x\in (0,a)$, we have
 \begin{align}
  \label{eq:RS_x_expec}
  \E^x[R_S] &= e^{\mu(x-a)} I^a(x,S),\\
  \label{eq:RS_x_2ndmoment}
  \E^x[R_S^2] &= \E^x[R_S] + m_2 e^{\mu(x-a)} \int_0^a \dd y\, e^{\mu(y-a)} \int_0^\infty \dd t\,e^{\frac{\pi^2}{2a^2}t}p_t^a(x,y) I^a(y,S-t)^2
 \end{align}
\end{lemma}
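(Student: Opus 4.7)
Both formulae will be derived by applying the stopping-line versions of the Many-to-one and Many-to-few lemmas from Section \ref{sec:spine}, combined with a Girsanov argument that converts the drifting Brownian motion with branching rate $1$ into standard Brownian motion on $(0,a)$.

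For the first moment, I view the process up to the stopping line $\mathscr L_H$ with $H = H_0 \wedge H_a$ and apply Lemma \ref{lem:many_to_one_simple} to the space-time function $(y,t)\mapsto \Ind_{(y=a,\,t\in S)}$ (formally, I work with the space-time process, so that $H$ remains a finite stopping time). Since branching occurs at rate $R\equiv 1$ with mean offspring excess $m$, this gives
\[
 \E^x[R_S] \;=\; W^x_{-\mu}\Big[e^{m H_a}\,\Ind_{(H_0 > H_a,\, H_a\in S)}\Big].
\]
A Girsanov change of measure removes the drift, contributing a factor $e^{-\mu(a-x)-\mu^2 H_a/2}$, and since $m-\mu^2/2 = \pi^2/(2a^2)$ by \eqref{eq:mu}, this simplifies to
\[
 \E^x[R_S] = e^{\mu(x-a)}\, W^x\Big[e^{\frac{\pi^2}{2a^2} H_a}\,\Ind_{(H_0>H_a,\,H_a\in S)}\Big] = e^{\mu(x-a)} I^a(x,S),
\]
by the very definition of $I^a$ in \eqref{eq:Ia}, giving \eqref{eq:RS_x_expec}.

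For the second moment, I apply Lemma \ref{lem:many_to_two} with $F = \{0,a\}$ and the same function $\Ind_{(y=a,\,t\in S)}$. Since this function is $\{0,1\}$-valued, its square equals itself, and the diagonal term $\E^x\bigl[\sum_{(u,t)\in\mathscr L_H}(f(X_u(t)))^2\bigr]$ is exactly $\E^x[R_S]$. For the off-diagonal term I need the density kernel $p_H(x,\dd y,t)$ of particles still alive in $(0,a)$ at time $t$, and by \eqref{eq:density_bbm} this equals $\p_t(x,y)\,\dd y = e^{\mu(x-y)+\frac{\pi^2}{2a^2}t}\,p^a_t(x,y)\,\dd y$. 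The inner expectation at the branch point $(y,t)$ is the first-moment formula applied to the interval $S-t$, yielding $\E^y[R_{S-t}] = e^{\mu(y-a)}I^a(y,S-t)$. Plugging in and simplifying $e^{\mu(x-y)}\cdot e^{2\mu(y-a)} = e^{\mu(x-a)}\cdot e^{\mu(y-a)}$ gives \eqref{eq:RS_x_2ndmoment}.

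There is no real obstacle here beyond bookkeeping: once one recognizes that $m-\mu^2/2$ is precisely the exponential rate appearing in $I^a$, both identities follow directly from the general moment lemmas. The only point of care is to frame the condition ``$H_0>H_a$ and $H_a\in S$'' as a function of the stopped path so that Lemmas \ref{lem:many_to_one_simple} and \ref{lem:many_to_two} apply in their stated form.
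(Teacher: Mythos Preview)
Your proof is correct and follows essentially the same route as the paper: the first moment via the simple Many-to-one lemma (Lemma~\ref{lem:many_to_one_simple}) plus Girsanov and the identity $m-\mu^2/2=\pi^2/(2a^2)$, and the second moment via Lemma~\ref{lem:many_to_two} with the density kernel \eqref{eq:density_bbm}. The only cosmetic difference is that the paper first records the slightly more general Lemma~\ref{lem:R_many_to_one} (arbitrary test function $f$ on the hitting time) and then specializes to $f=\Ind_S$, whereas you go directly to the indicator.
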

We will first prove a more general result, which will be needed in Section \ref{sec:fugitive}.
\begin{lemma}
\label{lem:R_many_to_one}
 For every $x\in (0,a)$ and any measurable function $f:\R_+\to \R_+$, we have
 \[
  \E^x\Big[\sum_{(u,t)\in \mathscr L_H} f(t) \Ind_{(X_u(t) = a)}\Big] = e^{\mu(x-a)}\int_0^t f(s)I^a(x,\dd s).
 \]
\end{lemma}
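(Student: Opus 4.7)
The plan is to apply the Simple Many-to-one lemma (Lemma \ref{lem:many_to_one_simple}) and then perform a Girsanov change of measure, noting that the relation \eqref{eq:mu} between $\mu$, $c_0$ and $a$ exactly cancels the exponential correction that arises.

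First, observe that $\mathscr L_H = \mathscr L_{H_0 \wedge H_a}$ is the stopping line associated to the $X$-stopping time $H = H_0 \wedge H_a$. Take the measurable function on path space $g(X) = f(H(X))\Ind_{(X_H = a)}$ (which we view as acting on the single particle's path). Lemma \ref{lem:many_to_one_simple}, applied with branching rate $R \equiv 1$ and mean offspring excess $m$, yields
\[
 \E^x\Big[\sum_{(u,t)\in \mathscr L_H} f(t)\Ind_{(X_u(t)=a)}\Big] = E^x_{-\mu}\Big[e^{mH} f(H)\Ind_{(X_H=a)}\Big],
\]
where $E^x_{-\mu}$ denotes the law of Brownian motion started at $x$ with drift $-\mu$.

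Next, I would switch from $E^x_{-\mu}$ to the law $W^x$ of standard Brownian motion by Girsanov's theorem, applied at the stopping time $H$ (which is finite a.s.\ under $W^x$). This gives
\[
 E^x_{-\mu}\Big[e^{mH} f(H)\Ind_{(X_H=a)}\Big] = W^x\Big[\exp\bigl(-\mu(X_H - x) - \tfrac{\mu^2}{2}H + mH\bigr) f(H)\Ind_{(X_H=a)}\Big].
\]
On the event $\{X_H = a\}$, the exponent $-\mu(X_H - x)$ equals $-\mu(a-x) = \mu(x-a)$, which factors out of the expectation. Using \eqref{eq:mu} we have $m - \mu^2/2 = (c_0^2 - \mu^2)/2 = \pi^2/(2a^2)$, so the remaining exponential inside the expectation is $\exp(\pi^2 H/(2a^2))$. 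Hence
\[
 \E^x\Big[\sum_{(u,t)\in \mathscr L_H} f(t)\Ind_{(X_u(t)=a)}\Big] = e^{\mu(x-a)} W^x\Big[e^{\pi^2 H/(2a^2)} f(H)\Ind_{(X_H=a)}\Big].
\]

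Finally, on $\{X_H = a\}$ we have $H = H_a$ and $\{X_H = a\} = \{H_0 > H_a\}$, so the definition \eqref{eq:Ia} of $I^a$ identifies the Brownian expectation on the right as $\int_0^\infty f(s)\,I^a(x,\dd s)$. This proves the claim. No step is truly delicate here: the only thing to be checked carefully is that Girsanov can be applied at $H$ (which is routine because $H<\infty$ $W^x$-a.s.\ and the martingale is uniformly integrable up to $H$), and that the algebraic identity $m - \mu^2/2 = \pi^2/(2a^2)$ really does make all the $a$-dependent factors collapse into the kernel $I^a$.
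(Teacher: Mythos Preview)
Your proof is correct and follows essentially the same route as the paper: apply the simple Many-to-one lemma (Lemma~\ref{lem:many_to_one_simple}) to reduce to a single-particle expectation under $W^x_{-\mu}$, then use Girsanov to pass to driftless Brownian motion, and finally identify the result with the kernel $I^a$ via \eqref{eq:Ia}. Your write-up is in fact slightly more explicit than the paper's about the key algebraic identity $m-\mu^2/2=\pi^2/(2a^2)$ coming from \eqref{eq:mu}, which is what makes the exponential factors collapse.
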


\begin{proof}
Recall that $H_0$ and $H_a$ denote the hitting time functionals of $0$ and $a$ and $H = H_0\wedge H_a$. Then note that $W^x_{-\mu}(H<\infty) = 1$ for all $x\in [0,a]$. We then have
\begin{align*}
 \E^x\Big[\sum_{(u,t)\in \mathscr L_H} f(t) \Ind_{(X_u(t) = a)}\Big] &= W^x_{-\mu}\Big[e^{mH_a} f(H_a)\Ind_{(H_0 > H_a\le t)}\Big] && \text{by Lemma \ref{lem:many_to_one_simple}}\\
&= e^{\mu(x-a)} W^x_0\Big[e^{\frac{\pi^2}{2a^2}H_a}f(H_a)\Ind_{(H_0 > H_a\le t)}] && \text{by Girsanov's transform}\\
&= e^{\mu(x-a)} \int_0^t f(s)I^a(x,\dd s) && \text{by \eqref{eq:Ia}.}
\end{align*}
\end{proof}
\begin{proof}[Proof of Lemma \ref{lem:R_moments}]
Equation \eqref{eq:RS_x_expec} follows from Lemma \ref{lem:R_moments} and \eqref{eq:IJ_scaling} by taking $f=\Ind_S$. Equation \eqref{eq:RS_x_2ndmoment} follows from Lemma \ref{lem:many_to_two} and \eqref{eq:RS_x_expec}.
\end{proof}

\begin{lemma}
\label{lem:Rt}
 For any initial configuration $\nu$, and any $0\le s\le t$, we have
\begin{equation}
 \label{eq:Rt_nu_expec}
 |\E R_{[s,t]} - \frac{\pi (t-s)}{a^3} Z_0| \le \Const{eq:Rt_nu_expec} \Big(Y_0 \wedge E_{s/a^2} (1\wedge (t-s)/a^3) Z_0\Big),
\end{equation}
where $E_s$ is defined in \eqref{eq:def_E}. Furthermore, if $\mu \ge c_0/2$ and $0\le t\le a^3$, then for each $x\in (0,a)$,
\begin{equation}
 \label{eq:Rt_nu_variance}
 \E^x R_t^2\le \Const{eq:Rt_nu_variance} \Big(\frac{t}{a^3} w(x) + e^{\mu(x-a)}\Big),
\end{equation}
\end{lemma}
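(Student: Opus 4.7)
The plan is to apply the exact moment identities of Lemma~\ref{lem:R_moments} and then extract the dominant terms using the $I$- and $J$-estimates of Lemma~\ref{lem:I_estimates}.

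\textbf{Expectation bound.} By \eqref{eq:RS_x_expec} and linearity in the initial configuration,
\[
 \E R_{[s,t]} \;=\; \sum_{x} e^{\mu(x-a)}\,I^a(x,[s,t]),
\]
the sum ranging over the initial particles. Combining the first part of Lemma~\ref{lem:I_estimates} with the scaling \eqref{eq:IJ_scaling} gives
\[
 I^a(x,[s,t]) \;=\; \frac{\pi(t-s)}{a^2}\sin(\pi x/a) + O\!\left(1\wedge E_{s/a^2}\,(1\wedge (t-s)/a^2)\sin(\pi x/a)\right).
\]
After multiplication by $e^{\mu(x-a)}$ and summation, the main term contributes $\pi(t-s)/a^3\cdot Z_0$. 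The error admits two upper bounds: using the ``$1$'' side of the inner $\wedge$ gives $CY_0$, while the other side gives $CE_{s/a^2}(1\wedge(t-s)/a^2)\,Z_0/a$. Since $a\ge \pi/c_0$ is bounded below by a constant depending only on $q$, one has $(1\wedge u)/a \le C(1\wedge u/a)$, so the second bound simplifies to $CE_{s/a^2}(1\wedge (t-s)/a^3)\,Z_0$; taking the minimum yields \eqref{eq:Rt_nu_expec}.

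\textbf{Second-moment bound.} I apply \eqref{eq:RS_x_2ndmoment} to a single initial particle at $x$. The term $\E^x R_t$ is controlled by Part~(1), giving $C(t/a^3 w(x)+e^{\mu(x-a)})$. For the many-to-two correction
\[
 S_2 \;=\; m_2\, e^{\mu(x-a)}\int_0^a e^{\mu(y-a)}\dd y \int_0^t e^{\pi^2 r/(2a^2)} p_r^a(x,y)\,I^a(y,[0,t-r])^2\,\dd r,
\]
Lemma~\ref{lem:I_estimates} gives $I^a(y,[0,t-r])\le C(1+(t-r)/a^2\,\sin(\pi y/a))$ (the $E_{\inf S}$-factor is useless since $\inf S = 0$), hence $I^a(y,[0,t-r])^2 \le C(1+((t-r)/a^2)^2\sin^2(\pi y/a))$. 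The ``$1$''-piece reduces to $Ce^{\mu(x-a)}\int_0^a e^{\mu(y-a)} J^a(x,y,[0,t])\,\dd y$, and the $J$-estimate of Lemma~\ref{lem:I_estimates} together with Laplace-type integrals such as $\int_0^a e^{-\mu(a-y)}\sin^k(\pi y/a)\dd y \le C/a^k$ (whose computation uses $\mu\ge c_0/2$ crucially) bound it by $C(t/a^3 w(x)+e^{\mu(x-a)})$. For the remaining piece I use $((t-r)/a^2)^2 \le t(t-r)/a^4$ and Fubini in the form $\int_0^t (t-r)e^{\pi^2 r/(2a^2)} p_r^a(x,y)\,\dd r = \int_0^t J^a(x,y,[0,s])\,\dd s$; a second pass through the $J$-estimate produces a sum of $t^k/a^\ell$ monomials, each of which, under the hypothesis $t\le a^3$, is bounded by $Ct/a^3\cdot w(x)$.

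\textbf{Main difficulty.} The delicate step is the $((t-r)/a^2)^2$-piece of $S_2$: the lazy bound $((t-r)/a^2)^2\le (t/a^2)^2$ produces $t^3$-type contributions, far above the desired linear-in-$t$ scaling. The trick is to split the square as $(t/a^2)\cdot((t-r)/a^2)$, leave one factor of $(t-r)$ under the $r$-integral, convert $\int_0^t (t-r)f(r)\dd r$ into $\int_0^t\int_0^s f(r)\dd r\,\dd s$ via Fubini, and only then apply the linearity of $J^a(x,y,[0,s])$ in $s$; the restriction $t\le a^3$ is exactly what causes all excess powers of $t$ and $a$ to cancel into $t/a^3\cdot w(x)$.
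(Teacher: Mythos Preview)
Your expectation bound is correct and matches the paper's proof.

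For the second moment, your approach is correct but unnecessarily elaborate, and your diagnosis of the ``main difficulty'' is off. The paper simply bounds $I^a(y,[0,t-r])\le I^a(y,[0,t])$ by monotonicity, pulls this out of the $r$-integral to get $J^a(x,y,[0,t])$, and applies Lemma~\ref{lem:I_estimates} directly. This produces
\[
\E^x[R_t^2-R_t]\le C e^{\mu(x-a)}\Big(\sin(\pi x/a)\,t/a^2+1\Big)\Big(1+t^2/a^6\Big),
\]
and under $t\le a^3$ all cross-terms collapse into $C(t/a^3\,w(x)+e^{\mu(x-a)})$. In other words, the ``lazy bound'' $((t-r)/a^2)^2\le(t/a^2)^2$ that you rejected actually works: the monomials it produces (e.g.\ $t^3/a^9\cdot w(x)$ and $t^2/a^6\cdot e^{\mu(x-a)}$) are dominated by the target quantities precisely because of the hypothesis $t\le a^3$. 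Your Fubini trick $\int_0^t(t-r)f(r)\,\dd r=\int_0^t J^a(x,y,[0,s])\,\dd s$ is valid and leads to the same conclusion, but it buys nothing here.
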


\begin{proof}
We have $\E R_t = \int \nu(\dd x) E^x R_t$, such that \eqref{eq:Rt_nu_expec} follows from \eqref{eq:RS_x_expec} and Lemma \ref{lem:I_estimates}. For the second moment, we have by \eqref{eq:RS_x_2ndmoment},
\[
\begin{split}
 \E^x [R_t^2 - R_t] &= m_2e^{\mu(x-a)} \int_0^a \dd y\, e^{\mu(y-a)} \int_0^t \dd s\,e^{\frac{\pi^2}{2a^2}t}p_s^a(x,y)I^a(y,t-s)^2\\
 &\le m_2 e^{\mu(x-a)} \int_0^a \dd y\, e^{\mu(y-a)} I^a(y,t)^2 J^a(x,y,t)\\
 &\le C e^{\mu(x-a)} \int_0^a \dd y\, e^{\mu(y-a)} (t/a^2 \sin(\pi y/a) + 1)^2\\
&\quad\quad\quad\quad\quad\times (a t/a^2 \sin(\pi x/a)\sin(\pi y/a) + a^{-1}(x\wedge y)(a-(x\vee y)))
\end{split}
\]
Performing the change of variables $y\mapsto a-y$ in the integral and making use of the inequalities $a^{-1}(x\wedge y)(a-(x\vee y)) \le a-y$ and $\sin x \le x$, we get
\[
\begin{split}
 \E^x [R_t^2 - R_t] &\le O(1) e^{\mu(x-a)} (\sin(\pi x/a) t/a^2 + 1) \int_0^\infty \dd y\,e^{-\mu y} (y + y^2 t/a^3 + y^3 t^2/a^6)\\
 &\le C e^{\mu(x-a)} (\sin(\pi x/a) t/a^2 + 1) (1+ t^2/a^6),
\end{split}
\]
for some constant $C$, which does not depend on $\mu$ by the hypothesis $\mu \ge c_0/2$. Using the hypothesis $t\le a^3$ and \eqref{eq:Rt_nu_expec} yields \eqref{eq:Rt_nu_variance}.
\end{proof}

\begin{lemma}
 \label{lem:R_f}
Let $f$ be a function as in Section \ref{sec:interval_notation}. Then for every $x\in (0,a)$, we have
\[
 \E^x_f[R_S] \le \E^x[R_S].
\]
\end{lemma}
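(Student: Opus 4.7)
The plan is to treat first the prefix case $S=[0,T]$ by a pathwise coupling of branching Brownian motions, and then to reduce a general measurable $S$ to a pointwise comparison of single-particle hitting-time intensities through the Many-to-one Lemma.

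Couple $\P^x$ and $\P^x_f$ on a single probability space by sharing the marked tree $(\mathfrak{t},(b_u,d_u,k_u))$ and the independent Brownian drivers $(B_u)_{u\in\mathfrak{t}}$, differing only by the drift applied to each particle. Writing $X_u(t)$, $X^f_u(t)$ for the resulting positions, an induction on $\mathfrak{t}$ using $\mu_0=\mu$ and $\int_{b_u}^t(\mu_s-\mu)\,\dd s = f(t/a^2)-f(b_u/a^2)$ yields $X^f_u(t)=X_u(t)-f(t/a^2)$ on each particle's lifetime. Since $f\ge 0$, the lineage hitting times satisfy $\tau^{(a),f}_u\ge \tau^{(a)}_u$ and $\tau^{(0),f}_u\le \tau^{(0)}_u$, so any particle counted by $R^f_{[0,T]}$ is automatically counted by $R_{[0,T]}$: its ancestor trajectory under $\P^x$ reaches $a$ no later, still before reaching $0$ and still before branching. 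This gives $R^f_{[0,T]}\le R_{[0,T]}$ pathwise, and hence the lemma for $S=[0,T]$.

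For a general measurable $S$, the Many-to-one Lemma \ref{lem:R_many_to_one} (extended to the time-inhomogeneous drift $-\mu_s$ via Lemma \ref{lem:many_to_one_simple} applied to the space-time spine) gives
\[
\E^x_f[R_S] = \int_S e^{ms}\,r^{\mu_\cdot}_s(x)\,\dd s,\qquad \E^x[R_S] = \int_S e^{ms}\,r^{\mu}_s(x)\,\dd s,
\]
where $r^\mu_s(x)$ and $r^{\mu_\cdot}_s(x)$ denote the first-passage densities of $a$ before $0$ for Brownian motion with constant drift $-\mu$, respectively with time-dependent drift $-\mu_s$, started at $x$. It then suffices to prove the pointwise bound $r^{\mu_\cdot}_s(x)\le r^\mu_s(x)$ for every $s\ge 0$. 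Applying Girsanov at $H_a$, together with the pathwise Stieltjes identity
\[
\int_0^{H_a}(\mu_u-\mu)\,\dd B_u = \frac{f'(H_a/a^2)}{a}-\frac{xf'(0)}{a^2}-\int_0^{H_a}B_u\,\frac{f''(u/a^2)}{a^4}\,\dd u,
\]
valid because $\mu_u-\mu$ is deterministic, I arrive at the factorisation
\[
\frac{r^{\mu_\cdot}_s(x)}{r^\mu_s(x)} = e^{-\mu f(s/a^2)}\cdot e^{-xf'(0)/a^2}\cdot E^{\mathrm{br}_s}\!\bigl[\mathcal{E}_s\bigr],
\]
where $E^{\mathrm{br}_s}$ is expectation under the Brownian bridge from $x$ to $a$ of length $s$ conditioned to avoid $0$, and $\mathcal{E}_s$ is the Dol\'eans exponential of $-\int(f'(\cdot/a^2)/a^2)\,\dd B$ evaluated at $s$. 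The first two factors are at most $1$ by the sign conditions $f,f',x,\mu\ge 0$.

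The main obstacle is controlling the bridge expectation of $\mathcal{E}_s$. Reinterpreting it via a second Girsanov step as the ratio of hitting-time densities of $a$ before $0$ for Brownian motion with extra drift $-f'(\cdot/a^2)/a^2$ versus Brownian motion without extra drift, the problem reduces to an intensity comparison between two single-particle processes. The Brownian coupling $B^f_u = B_u-f(u/a^2)\le B_u$ yields the corresponding integrated prefix comparison $\int_0^t (r^{\mu_\cdot}_s-r^\mu_s)\,\dd s\le 0$ for all $t\ge 0$, and the desired pointwise refinement follows by differentiating the intensity ratio with respect to a scalar parameter $\lambda\in[0,1]$ interpolating the drift from $-\mu$ to $-\mu_s$, using the non-negativity of $f'$ to conclude that the derivative has the correct sign.
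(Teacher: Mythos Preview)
Your reduction via the Many-to-one lemma to the pointwise single-particle inequality
\[
W^x_{-\mu_\cdot}\bigl[e^{mH_a}\Ind_{(H_0>H_a\in \dd s)}\bigr] \le W^x_{-\mu}\bigl[e^{mH_a}\Ind_{(H_0>H_a\in \dd s)}\bigr]
\]
is correct, and indeed equivalent to the lemma for all measurable $S$. The prefix case $S=[0,T]$ via coupling is a nice warm-up but ultimately superfluous, since the general statement is strictly stronger and the coupling does not help with it.

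The gap is in your treatment of the pointwise comparison. In your factorisation, the bridge expectation $E^{\mathrm{br}_s}[\mathcal{E}_s]$ is \emph{not} bounded by $1$: a Dol\'eans exponential has expectation $1$ under the original measure, but under a conditioned bridge law there is no reason for this. (There is also a sign slip: the boundary term from your integration by parts contributes $+xf'(0)/a^2$ to the Girsanov exponent, not $-xf'(0)/a^2$.) Your final paragraph then tries to repair this by noting that the coupling gives only the \emph{integrated} inequality $\int_0^t r^{\mu_\cdot}_s\,\dd s\le\int_0^t r^\mu_s\,\dd s$, and proposes to upgrade it to a pointwise one by ``differentiating the intensity ratio with respect to a scalar parameter $\lambda$''. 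This is not an argument: integrated dominance does not imply pointwise dominance of densities, and you give no reason why the $\lambda$-derivative of the intensity has a sign at each fixed $s$.

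The paper's route avoids all of this by applying Girsanov down to \emph{driftless} Brownian motion $W^x_0$ rather than to $W^x_{-\mu}$. Under $W^x_0$ the event $\{H_0>H_a\in S\}$ is literally the same set of paths for both sides, so it suffices to compare the Girsanov densities \emph{pathwise} on that event. Integration by parts gives
\[
\int_0^{H_a}\mu_t\,\dd B_t = \mu(a-x) + \int_0^{H_a}(a-B_t)\,\dd\mu_t \ge \mu(a-x),
\]
using $B_t\in[0,a]$ on $[0,H_a]$, and trivially $\int_0^{H_a}\mu_t^2\,\dd t\ge \mu^2 H_a$ since $\mu_t\ge\mu$. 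Hence the Radon--Nikodym factor for the variable drift is pathwise $\le$ that for the constant drift, which gives the pointwise density inequality and therefore the lemma for every measurable $S$ in one stroke. The moral: change measure to a common reference law under which the conditioning event is shared, rather than trying to compare two different drifted laws directly.
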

\begin{proof}
 As in the proof of Lemma \ref{lem:R_moments}, we have
\[
 \E^x_f[R_S] = W^x_{-\mu_t}\Big[e^{mH_a} \Ind_{(H_0 > H_a\in S)}\Big] = W^x_0\Big[e^{-\int_0^{H_a} \mu_t\,\dd B_t - \int_0^{H_a} \mu_t^2/2\,\dd t + mH_a} \Ind_{(H_0 > H_a\in S)}\Big],
\]
by Girsanov's theorem. Now, we have by \eqref{eq:ipp_mu_B}, on the event $\{H_0 > H_a\}$,
\[
 \int_0^{H_a} \mu_t\,\dd B_t = \mu(a-x) + a (\mu_{H_a} - \mu) - \int_0^{H_a} B_t\,\dd \mu_t \ge \mu(a-x),
\]
since $B_t \in [0,a]$ for $t\in [0,H_a]$. This gives
\[
 \E^x_f[R_S] \le e^{\mu(x-a)}W^x_0\Big[e^{(m-\mu^2/2) H_a} \Ind_{(H_0 > H_a\in S)}\Big] = \E^x[R_S],
\]
by the proof of Lemma \ref{lem:R_moments}.
\end{proof}

We finish this section with a lemma which links BBM with absorption at a critical line to our BBM with selection model.

\begin{lemma}
\label{lem:critical_line}
 Let $\zeta \ge 1$, $y\ge 1$, $\mu \ge c_0/2$ and $f$ be a barrier function (defined in Section \ref{sec:interval_notation}). Suppose that $\sqrt a \ge y + \zeta$ and $||f||\le \sqrt a$. Let $(x_i,t_i)_{i=1}^N$ be a collection of space-time points with
\[
 x_i = a-y+(c_0-\mu)t_i-f(s/a^2),\quad i=1,\ldots,N,
\]
and $t_i \le \zeta$ for all $i$. Define $Z = \sum_i w(x_i)$, $Y = \sum_i e^{\mu (x_i-a)}$ and $W_y = c_0ye^{-c_0 y} N$. Then,
\[
 Z = \frac{\pi}{c_0} W_y\Big(1+O\Big(\frac 1 a \Big)\Big)\quad\tand\quad  Y = \frac 1 {c_0 y} W_y \Big(1+O\Big(\frac{1}{a}\Big)\Big).
\]
In particular, for large $a$, we have
\[
 Y \le Z/y.
\]

\end{lemma}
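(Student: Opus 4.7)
The plan is to observe that, under the given hypotheses, every $x_i$ is close to $a-y$ with perturbations so small that the summands in $Z$ and $Y$ are, up to a uniform multiplicative error $1+O(1/a)$, independent of $i$. Once that is established, $W_y = c_0 y e^{-c_0 y} N$ appears just by pulling out the common factor.

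Concretely, the two perturbations in $x_i-a = -y + (c_0-\mu)t_i - f(t_i/a^2)$ are controlled as follows. By \eqref{eq:c0_mu} (using $\mu \ge c_0/2$), $\mu(c_0-\mu) \le \pi^2/(2a^2)$, hence $\mu(c_0-\mu)t_i \le \pi^2 \zeta/(2a^2) = O(a^{-3/2})$ since $\zeta \le \sqrt a$. For the barrier term, $f(0)=0$ and $\|f'\|_\infty \le \|f\|\le \sqrt a$ give $f(t_i/a^2) \le \sqrt a\cdot (\sqrt a/a^2) = 1/a$, so $\mu f(t_i/a^2) = O(1/a)$. Similarly, $(c_0-\mu)y \le \pi^2 y/(2\mu a^2) = O(a^{-3/2})$. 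Combining, $e^{-\mu y} = e^{-c_0 y}(1+O(1/a))$ and
\[
e^{\mu(x_i-a)} = e^{-\mu y}\exp\bigl(\mu(c_0-\mu)t_i - \mu f(t_i/a^2)\bigr) = e^{-c_0 y}(1+O(1/a))
\]
uniformly in $i$. Summing gives $Y = N e^{-c_0 y}(1+O(1/a)) = (c_0 y)^{-1} W_y(1+O(1/a))$, which is the claim for $Y$.

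For $Z$, the only extra ingredient is the sine factor $\sin(\pi x_i/a)$. Writing $\pi x_i/a = \pi - \pi y/a + O(a^{-2})$ (the $(c_0-\mu)t_i/a$ and $f(t_i/a^2)/a$ contributions are each $O(a^{-2})$ by the bounds above), the mean value theorem yields $\sin(\pi x_i/a) = \sin(\pi y/a) + O(a^{-2})$. Since $y \ge 1$ and $y/a \le 1/\sqrt a$, one has $\sin(\pi y/a) \ge Cy/a \ge C/a$, so the absolute error $O(a^{-2})$ is a relative error of $O(1/a)$; moreover a second Taylor expansion gives $\sin(\pi y/a) = (\pi y/a)(1+O(y^2/a^2)) = (\pi y/a)(1+O(1/a))$. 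Multiplying by $a e^{\mu(x_i-a)}$ yields $w(x_i) = \pi y e^{-c_0 y}(1+O(1/a))$ uniformly, and summing over $i$ gives $Z = \pi y e^{-c_0 y} N (1+O(1/a)) = (\pi/c_0) W_y (1+O(1/a))$. The final inequality $Y \le Z/y$ for large $a$ is then immediate, since $Y/(Z/y) = \pi^{-1}(1+O(1/a))$.

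The only place where care is required is the sine estimate: the leading term $\sin(\pi y/a) \asymp y/a$ is itself small, and one must be sure that the absolute corrections of order $a^{-2}$ are dominated by it. This is exactly where the hypothesis $y \ge 1$ (forcing $\sin(\pi y/a) \gtrsim 1/a$) combines with the scale hypotheses $\zeta, \|f\| \le \sqrt a$ (forcing the corrections to $x_i$ to be $O(a^{-3/2})$) to deliver the asserted $O(1/a)$ relative error.
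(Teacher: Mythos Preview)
Your proof is correct and follows essentially the same approach as the paper's: bound the perturbation $x_i-(a-y)$ using \eqref{eq:c0_mu} and the Lipschitz bound $f(t_i/a^2)\le\|f'\|_\infty\,t_i/a^2$, deduce $e^{\mu(x_i-a)}=e^{-c_0y}(1+O(1/a))$, Taylor-expand the sine to get $\sin(\pi x_i/a)=(\pi y/a)(1+O(1/a))$, and sum. The paper's write-up is somewhat terser (it packages the perturbation as $O(\zeta(1+\|f\|)/a^2)$ and uses $x-x^3/3\le\sin x\le x$ in one line), but the argument is the same.
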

\begin{proof}
By \eqref{eq:c0_mu} and the hypotheses $\mu \ge c_0/2$ and $\zeta \ge 1$, we have for all $i$,
\[
 x_i = a-y + O\Big(\frac{\zeta(1+||f||)}{a^2}\Big).
\]
Hence, by \eqref{eq:c0_mu} and the hypotheses $\mu \ge c_0/2$, $||f||\le \sqrt a$ and $\sqrt a \ge y + \zeta$, 
\begin{equation}
 \label{eq:250}
 e^{\mu x_i} = e^{\mu a-c_0y}\Big(1+O\Big(\frac{1}{a}\Big)\Big).
\end{equation}
Furthermore, since $x-x^2/3\le \sin x \le x$ for $x\ge 0$, and by the hypotheses $y\ge 1$, and $a \ge y + \zeta$ 
\begin{equation}
 \label{eq:270}
 \sin(\pi x_i/a) =  \sin(\pi (a-x_i)/a) =\frac \pi a y\Big(1 + O\Big(\frac y a\Big)\Big).
\end{equation}
The lemma now follows by summing over \eqref{eq:250} and \eqref{eq:270}.
\end{proof}

\subsection{Penalizing the particles hitting the right border}
\label{sec:weakly_conditioned}
In this section, let $(U_u)_{u\in U}$ be iid random variables, uniformly distributed on $(0,1)$, independent of the branching Brownian motion. Furthermore, let $p(t):\R_+\in (0,1]$ be measurable and such that $p(t) = 0$ for large enough $t$. Recall that $H = H_0 \wedge H_a$. We define the event
\[
 E = \{\nexists (u,t)\in \mathscr L_H: X_u(t) = a \tand U_u \le p(t)\}.
\]
Our goal in this section is to describe the law $\Ptilde^x = \P^x(\cdot|E)$. We first note that
\begin{equation}
\label{eq:180}
 \P^x(\dd \omega, E) = \P^x(\dd \omega) \prod_{(u,t)\in L_t}\Big(\Ind_{(X_u(t) \ne a)} + p(t)\Ind_{(X_u(t) = a)}\Big).
\end{equation}
In order to apply the results from Section \ref{sec:doob}, we define
\begin{align}
\label{eq:def_gamma}
 h(x,t) &= \P^{(x,t)}(E)\\
\label{eq:def_beta}
 Q(x,t)      &= \sum_{k=0}^\infty q(k)h(x,t)^{k-1}\\
\label{eq:def_btilde}
 \widetilde q(x,t,k) &= q(k) h(x,t)^{k-1} / Q(x,t)
\end{align}
By the results from Section \ref{sec:doob}, under the law $\Ptilde^x$, the BBM stopped at $\mathscr L_H$ is the branching Markov process where
\begin{itemize}[nolistsep,label=$-$]
 \item particles move according to the Doob transform of Brownian motion with drift $-\mu$, stopped at $0$ and $a$, by the space-time harmonic function $h(x,t)$, and
 \item a particle located at the point $x$ at time $t$ branches at rate $Q(x,t)\Ind_{x\in (0,a)}$, throwing $k$ offspring with probability $\widetilde q(x,t,k)$ and
 \item a particle located at $0$ or $a$ does not branch.
\end{itemize}

We have the following useful Many-to-one lemma for the conditioned process stopped at the stopping line $\mathscr L_t = \mathscr L_H \wedge t$: Define the function
\begin{equation}
 \label{eq:def_e}
 e(x,t) = \sum_{k\ge 0} k (1-h(x,t)^{k-1}) q(k) \le m_2(1-h(x,t)).
\end{equation}
\begin{lemma}
\label{lem:many_to_one_conditioned}
 For any measurable function $f:[0,a]\to \R_+$, we have
\begin{equation}
\label{eq:many_to_one_conditioned}
 \Etilde^x\Big[\sum_{u\in \mathscr L_t} f(X_u(t))\Big] = \frac{e^{\mu x}}{h(x,0)} W^x\Big[f(X_{H\wedge t})e^{-\mu X_{H\wedge t}}h(X_{H\wedge t},H\wedge t) e^{-\frac{\pi^2}{2a^2}(H\wedge t) -\int_0^{H\wedge t} e(X_s,s) \,\dd s}\Big].
\end{equation}
In particular, if we denote by $\widetilde{\p}(x,y,t)$ the density of the $\Ptilde^x$-BBM, then
\begin{equation}
 \label{eq:ptilde_estimate}
\widetilde{\p}(x,y,t) \le \frac{h(y,t)}{h(x,0)} \p_t(x,y).
\end{equation}
\end{lemma}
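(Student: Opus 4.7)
The plan is to express $\Etilde^x$ as an expectation under $\P^x$ through the Radon--Nikodym identity \eqref{eq:180}, then apply a spine-based Many-to-one identity, and finish by Cameron--Martin--Girsanov. From \eqref{eq:180},
\[
\Etilde^x\Big[\sum_{(u,s)\in\mathscr L_t} f(X_u(s))\Big] = \frac{1}{h(x,0)}\,\E^x\Big[\sum_{(u,s)\in\mathscr L_t} f(X_u(s)) \prod_{(v,r)\in\mathscr L_H} g(X_v(r),r)\Big],
\]
with $g(y,r)=\Ind_{y\ne a}+p(r)\Ind_{y=a}$. The strong branching property at $\mathscr L_t=\mathscr L_H\wedge t$ integrates out the portion of the product coming from descendants still alive at time $t$: each subtree rooted at some $(u,t)\in\mathscr L_t$ contributes $h(X_u(t),t)$ in expectation, by the very definition of $h$, while a boundary contribution at a time $s<t$ equals $g(X_u(s),s)=h(X_u(s),s)$, using that $h$ agrees with $g$ on $\{0,a\}$ by continuity. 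Hence the weight simplifies to $\prod_{(v,r)\in\mathscr L_t} h(X_v(r),r)$.

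Now I apply the spine Many-to-one Lemma~\ref{lem:many_to_one} with the $\F_{\mathscr L_t}$-measurable integrand $Y_u = f(X_u(s))\prod_{(v,r)\in\mathscr L_t} h(X_v(r),r)$, which rewrites the inner expectation as
\[
\E_x^*\Big[f(X_\xi(T))\,e^{mT}\,\prod_{(v,r)\in\mathscr L_t} h(X_v(r),r)\Big] \quad\text{with }T=H\wedge t.
\]
Under $\P_x^*$ the spine moves as a Brownian motion with drift $-\mu$ and branches at rate $m_1=\sum_k k q(k)$ with size-biased offspring law $kq(k)/m_1$; its $K_\sigma-1$ off-spine children at each branching time $\sigma$ launch independent BBM's. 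Conditioning on the spine path, each off-spine sub-BBM started at $(X_\xi(\sigma),\sigma)$ contributes $h(X_\xi(\sigma),\sigma)$ in expectation to $\prod_{\mathscr L_t} h$ (by the tower property combined with the self-consistency $h(y,\sigma)=\E^{y,\sigma}[\prod_{\mathscr L_t} h]$, an immediate consequence of the branching property applied to $h=\E[\prod_{\mathscr L_H}g]$). Averaging over the size-biased $K_\sigma$ then yields $\sum_k(kq(k)/m_1)h^{k-1}=1-e(X_\xi(\sigma),\sigma)/m_1$ by the definition \eqref{eq:def_e}, and the compound Poisson formula for the rate-$m_1$ point process of branch times integrates out the off-spine contributions as $\exp\bigl(-\int_0^T e(X_\xi(s),s)\,\dd s\bigr)$. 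Together with the endpoint factor $h(X_\xi(T),T)$ we conclude
\[
\E^x\Big[\sum_{\mathscr L_t} f(X_u(s))\prod_{\mathscr L_t} h(X_v(r),r)\Big] = W^x_{-\mu}\Big[f(X_T)\,h(X_T,T)\,e^{mT-\int_0^T e(X_s,s)\,\dd s}\Big].
\]

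Finally, Cameron--Martin--Girsanov converts the drift-$(-\mu)$ expectation to one under the driftless $W^x$ by inserting the density $e^{\mu(X_T-x)+\mu^2 T/2}$, and, after using $m-\mu^2/2=\pi^2/(2a^2)$ and dividing by $h(x,0)$, one obtains \eqref{eq:many_to_one_conditioned}. The density bound \eqref{eq:ptilde_estimate} is immediate by specialising $f$ to a delta at $y$, restricting to $\{H>t\}$ (so $T=t$, $X_T=y$), and using $e\ge 0$ to drop the factor $\exp(-\int_0^t e(X_s,s)\,\dd s)$. The only step that is more than direct bookkeeping is the compound-Poisson integration that produces the exponential correction $-\int_0^T e$; it is what forces the quantity $e(\cdot,\cdot)$ to appear. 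Everything else reduces to the strong branching property, the spine Many-to-one Lemma~\ref{lem:many_to_one}, and Girsanov.
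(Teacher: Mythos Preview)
Your argument is correct and takes a different route from the paper. The paper's brief proof applies the simple Many-to-one (Lemma~\ref{lem:many_to_one_simple}) directly to the conditioned process $\Ptilde^x$, using the Doob-transform description of Section~\ref{sec:doob}, and lands on an exponent $\int_0^T \widetilde m\, Q\,\dd s$ before Girsanov. You instead stay under $\P^x$: you reduce the Radon--Nikodym product to $\prod_{\mathscr L_t} h$ via the strong branching property, apply the spine Many-to-one (Lemma~\ref{lem:many_to_one}) with $Y_u$ depending on the whole stopping line, and integrate out the off-spine subtrees explicitly through the compound-Poisson identity, which is exactly what produces the correction $-\int_0^T e\,\dd s$. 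Your route is more self-contained and in particular does not lean on the claim at the end of Section~\ref{sec:doob} that $h$ is harmonic for the stopped single-particle motion. Two minor points: your stated Girsanov density has the wrong sign (the correct derivative is $\dd W^x_{-\mu}/\dd W^x|_{\F_T}=e^{-\mu(X_T-x)-\mu^2 T/2}$; your subsequent use of $m-\mu^2/2=\pi^2/(2a^2)$ is nonetheless the right identity); and a correct calculation yields the factor $e^{+\frac{\pi^2}{2a^2}(H\wedge t)}$ rather than the printed $e^{-\frac{\pi^2}{2a^2}(H\wedge t)}$. The plus sign is what is actually needed downstream---for instance in the proof of Lemma~\ref{lem:conditioned_Z_lowerbound}, where it combines with $\sin(\pi X_t/a)$ to invoke the taboo Doob transform of Section~\ref{sec:taboo}---so do not try to force your computation to match the minus.
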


\begin{proof}
By Lemma \ref{lem:many_to_one_simple} and the description of the law $\Ptilde^x$ given above, we have
\[
 \Etilde^x\Big[\sum_{u\in \mathscr L_t} f(X_u(t))\Big] = W^x_{-\mu}\Big[f(X_{H\wedge t})h(X_{H\wedge t},H\wedge t) e^{\int_0^{H\wedge t} \widetilde m(X_s,s)Q(X_s,s) \,\dd s}\Big],
\]
where $\widetilde m(x,t) = \sum_k(k-1)\widetilde q(x,t,k)$. Applying Girsanov's transform yields \eqref{eq:many_to_one_conditioned}. Equation \eqref{eq:ptilde_estimate} follows from \eqref{eq:many_to_one_conditioned} applied to the Dirac Delta-function $f = \delta_y$, $y\in (0,a)$, together with \eqref{eq:density_bbm}.
\end{proof}

The previous lemma immediately gives an upper bound for the quantities we are interested in:
\begin{corollary}
For any $x\in (0,a)$ and $t\ge 0$, we have
 \begin{align}
  \label{eq:conditioned_Z_upperbound}
  \Etilde^x[Z_t] &\le (h(x,0))^{-1} \E^x[Z_t],\\
  \label{eq:conditioned_Z2}
  \Etilde^x[Z^2_t] &\le (h(x,0))^{-1} \E^x[Z^2_t],\\
  \label{eq:conditioned_Y}
  \Etilde^x[Y_t] &\le (h(x,0))^{-1} \E^x[Y_t].
 \end{align}
\end{corollary}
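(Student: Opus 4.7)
The plan is to bypass Lemma \ref{lem:many_to_one_conditioned} and the density estimate \eqref{eq:ptilde_estimate} entirely and instead appeal directly to the definition of $\Ptilde^x$ as the conditional law $\P^x(\cdot\mid E)$ introduced at the start of Section \ref{sec:weakly_conditioned}. First, I would identify $\P^x(E) = h(x,0)$; this is just the special case $t=0$ of the definition \eqref{eq:def_gamma}, since $\P^{(x,0)} = \P^x$ by the convention that $(x,0)$ denotes a single particle at position $x$ at time $0$.

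Given this identification, conditional expectation yields, for every nonnegative random variable $X$ on $\Omega$,
\[
\Etilde^x[X] \;=\; \frac{\E^x[X\Ind_E]}{\P^x(E)} \;=\; \frac{\E^x[X\Ind_E]}{h(x,0)}.
\]
Each of $Z_t$, $Z_t^2$, and $Y_t$ is nonnegative by construction (the first and third are sums of nonnegative terms, and the second is a square). Applying the crude bound $\Ind_E \le 1$ to each of the three variables in turn immediately delivers the three inequalities of the corollary.

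There is essentially no obstacle here: the content of the statement is just ``conditioning on an event of positive probability at most multiplies the expectation by the reciprocal of that probability,'' combined with the recognition that the normalizing constant coincides with $h(x,0)$. Sharper estimates are of course available through \eqref{eq:ptilde_estimate}, which carries an additional factor $h(y,t)\le 1$ under the integral, but these refinements are not needed for the bounds as stated, so I would not pursue them here.
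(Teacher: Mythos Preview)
Your argument is correct and is genuinely more elementary than the route taken in the paper. The paper proves \eqref{eq:conditioned_Z_upperbound} and \eqref{eq:conditioned_Y} via the density comparison \eqref{eq:ptilde_estimate}, integrating the pointwise bound $\widetilde{\p}(x,y,t)\le h(y,t)\,h(x,0)^{-1}\p_t(x,y)$ against $w(y)$ (respectively $e^{\mu(y-a)}$) and discarding the factor $h(y,t)\le 1$. For \eqref{eq:conditioned_Z2} the paper works harder still: it expands $\Etilde^x[Z_t^2]$ through the many-to-two formula (Lemma \ref{lem:many_to_two_fixedtime}) for the conditioned process, then compares term by term with the corresponding expansion \eqref{eq:bbm_2ndmoment} under $\P^x$, using \eqref{eq:ptilde_estimate}, the identity $\widetilde{m_2}(y,s)Q(y,s)=m_2 h(y,s)$ from \eqref{eq:def_btilde}, and the already-established first-moment bound \eqref{eq:conditioned_Z_upperbound}.

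Your observation that all three inequalities are instances of $\E^x[X\mid E]\le \E^x[X]/\P^x(E)$ for nonnegative $X$, with $\P^x(E)=h(x,0)$, short-circuits this machinery entirely. The paper's approach does retain the extra factor $h(y,t)$ under the integral, which is in principle a sharper intermediate estimate and consistent with how \eqref{eq:ptilde_estimate} is used elsewhere (e.g.\ in Lemma \ref{lem:many_to_one_fugitive}); but for the corollary as stated that refinement is discarded anyway, so your direct argument loses nothing.
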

\begin{proof}
Equations \eqref{eq:conditioned_Z_upperbound} and \eqref{eq:conditioned_Y} immediately follow from \eqref{eq:ptilde_estimate} and the fact that $\gamma(y,t)\le 1$ for all $y$ and $t$. In order to prove \eqref{eq:conditioned_Z2}, we note that by Lemma \ref{lem:many_to_two_fixedtime} and the description of the conditioned process,
\[
 \begin{split}
  \Etilde^x[Z^2_t] &= \Etilde^x\Big[\sum_{u\in N(t)} w(X_u(t))^2\Big] + \int_0^t\int_0^a\widetilde{\p}(x,y,t) \widetilde{m_2}(y,t)Q(y,t)\left(\Etilde^{(x,t)}[Z]\right)^2\,\dd y\,\dd t,
 \end{split}
\]
where $\widetilde{m_2}(x,t) = \sum_{k\ge 0} k(k-1) \widetilde{q}(x,t,k).$ Equation \eqref{eq:conditioned_Z2} now follows from \eqref{eq:bbm_2ndmoment} together with \eqref{eq:ptilde_estimate}, \eqref{eq:def_btilde} and  \eqref{eq:conditioned_Z_upperbound}.
\end{proof}

The following lemma gives a good lower bound on $\Etilde^x[Z_t]$. We define
\[
 \widebar{p}_t = \sup_{s\in[0,t]} p(s).
\] 

\begin{lemma}
\label{lem:conditioned_Z_lowerbound}
Suppose $\mu \ge c_0/2$ and $a^2\le t\le a^3$ and $p(s) = 0$ for all $s\ge t$. We have
\begin{equation}
\label{eq:conditioned_Z_lowerbound}
 \Etilde^x[Z_t] \ge w(x)\Big(1-\widebar{p}_t \Const{eq:conditioned_Z_lowerbound}(t/a^3 + (1\wedge (a-x)^{-1})\Big).
\end{equation}
\end{lemma}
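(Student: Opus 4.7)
The plan is to exploit a spine decomposition (equivalently, Lemma \ref{lem:many_to_one_conditioned}) to rewrite $\Etilde^x[Z_t]$ as an additive functional of the Brownian taboo process. Starting from $\Etilde^x[Z_t] = \E^x[Z_t\Ind_E]/h(x,0) \ge \E^x[Z_t\Ind_E]$ (since $h(x,0) \le 1$), I apply the Many-to-one identity to $Z_t = \sum_{u \in \mathscr{N}_{0,a}(t)} w(X_u(t))$; the Poissonian branching events along the spine together with the identification of $1-h(X_\xi(s), s)$ as the subtree failure probability contribute a compensator $\exp(-\int_0^t e(X_\xi(s), s)\,\dd s)$. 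After Girsanov (from drift $-\mu$ to zero drift) and the Doob transform from killed Brownian motion to the taboo process, the $e^{\pm\pi^2 t/(2a^2)}$ prefactors cancel and this simplifies to
\[
 \E^x[Z_t\Ind_E] = w(x)\, W^x_{\mathrm{taboo}}\!\Big[\exp\!\Big({-\!\int_0^t e(X_s, s)\, \dd s}\Big)\Big].
\]
Combining with $e^{-y} \ge 1-y$ reduces the lemma to the estimate
\[
 W^x_{\mathrm{taboo}}\!\Big[\int_0^t e(X_s, s)\, \dd s\Big] \le C\widebar{p}_t\Big(\frac{t}{a^3} + (1 \wedge (a-x)^{-1})\Big).
\]

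To bound the integrand pointwise, the definition \eqref{eq:def_e} gives $e(y,s) \le m_2(1-h(y,s))$, and a union bound together with the independence of the uniforms $(U_u)$ from the BBM yields $1-h(y,s) \le \widebar{p}_t\,\E^{(y,s)}[R_{[0,t]}]$. Combining Lemma \ref{lem:R_many_to_one} with Lemma \ref{lem:I_estimates} produces
\[
 e(y,s) \le C\widebar{p}_t\,e^{\mu(y-a)}\Big(\frac{t}{a^2}\sin(\pi y/a) + 1\Big).
\]
Plugging this into the taboo expectation and using $\sin(\pi y/a) \le \pi(a-y)/a$ on the first term reduces matters to two integrals of the shape $W^x_{\mathrm{taboo}}[\int_0^t k(a-X_\tau)\,\dd\tau]$, the first with $k(y) = e^{-\mu y}$ and the second carrying an additional prefactor $(t/a^3)\cdot y$.

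The interval symmetry $z \mapsto a-z$ converts the taboo process on $(0,a)$ started at $x$ into one started at $a-x$, and each of the two integrals is then handled directly by Lemma \ref{lem:k_integral}---with $c = \mu$ for the first and $c = \mu/2$ for the second (after absorbing the polynomial factor $y$ into $Ce^{-\mu y/2}$); both choices are legitimate under the standing assumption $\mu \ge c_0/2$. The lemma delivers a bound of the form $C(t/a^3 + \mathrm{err}(a-x))$ with $\mathrm{err}(u) \le C(1\wedge u^{-1})$, and the hypothesis $t \le a^3$ absorbs the extra $(t/a^3)$ factor carried by the second integral, yielding the claim. The only delicate step is the algebraic simplification of the Many-to-one formula into the clean taboo expectation above, which requires carefully tracking the cancellation of the $e^{\pm\pi^2 t/(2a^2)}$ prefactors coming from the Doob transform; after this, the remainder is a routine application of Lemma \ref{lem:k_integral}.
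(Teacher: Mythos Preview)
Your proposal is correct and follows essentially the same route as the paper's proof: reduce to a taboo-process expectation via Lemma \ref{lem:many_to_one_conditioned} and the Doob transform, use $e^{-y}\ge 1-y$, bound $e(y,s)$ via Lemma \ref{lem:h_estimate}, then reflect $z\mapsto a-z$ and apply Lemma \ref{lem:k_integral}. The only cosmetic difference is that the paper absorbs the polynomial factor $(y t/a^3 + 1)$ into a single exponential $e^{-(c_0/3)y}$ (using $t\le a^3$) and invokes Lemma \ref{lem:k_integral} once, whereas you split into two terms and apply it twice with different $c$; both are valid.
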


In order to prove it, we will need the following estimate on $h(x,0)$, which will be sharpened in Lemma \ref{lem:T0}.

\begin{lemma}
 \label{lem:h_estimate}
\[
1-h(x,0) \le \widebar{p}_t (\pi w(x)t/a^3 + Ce^{\mu(x-a)}) \le \widebar{p}_tC e^{\mu(x-a)} \Big((a-x)t/a^3 + 1\Big) .
\]
\end{lemma}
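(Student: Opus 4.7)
The plan is to bound $1-h(x,0)$ by the expected number of ``penalized'' hits at $a$ using a union bound, and then use the first-moment formula for $R_{[0,t]}$ already established.

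First, recall from the definition that $1-h(x,0)$ is the probability under $\P^{(x,0)}$ that there exists a pair $(u,s)\in\mathscr L_H$ with $X_u(s)=a$ and $U_u\le p(s)$. Since the $U_u$'s are iid uniform on $(0,1)$ and independent of the BBM, conditioning on the BBM and applying a union bound gives
\[
1-h(x,0)\ \le\ \E^x\Big[\sum_{(u,s)\in\mathscr L_H}\Ind_{(X_u(s)=a)}\,p(s)\Big].
\]
Using that $p(s)\le \widebar p_t$ for $s\in[0,t]$ and $p(s)=0$ for $s>t$ (this is how the lemma is applied, cf.\ Lemma \ref{lem:conditioned_Z_lowerbound}), this upper bound is at most $\widebar p_t\,\E^x[R_{[0,t]}]$.

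Next, by \eqref{eq:RS_x_expec} together with the scaling relation \eqref{eq:IJ_scaling},
\[
\E^x[R_{[0,t]}]\ =\ e^{\mu(x-a)}\,I^a(x,[0,t])\ =\ e^{\mu(x-a)}\,I(x/a,[0,t/a^2]).
\]
Applying Lemma \ref{lem:I_estimates} with $\inf S=0$, for which the error term is bounded by the universal constant $C$, we obtain
\[
I(x/a,[0,t/a^2])\ \le\ \pi (t/a^2)\sin(\pi x/a)+C.
\]
Since $w(x)=a\,e^{\mu(x-a)}\sin(\pi x/a)$, multiplication by $e^{\mu(x-a)}$ yields
\[
\E^x[R_{[0,t]}]\ \le\ \pi w(x)t/a^3+C\,e^{\mu(x-a)},
\]
which gives the first inequality in the lemma.

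For the second inequality, I use $\sin(\pi x/a)=\sin(\pi(a-x)/a)\le \pi(a-x)/a$, so $w(x)\le \pi(a-x)e^{\mu(x-a)}$, and then
\[
\pi w(x)t/a^3+C\,e^{\mu(x-a)}\ \le\ C\,e^{\mu(x-a)}\bigl((a-x)t/a^3+1\bigr),
\]
absorbing the constant. The only subtlety is the small $\inf S$ in the application of Lemma \ref{lem:I_estimates}, which is harmless because the error bound is truncated at the universal constant $1$; everything else is a routine assembly of the identities and estimates proved earlier in the section.
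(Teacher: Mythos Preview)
Your proof is correct and follows essentially the same approach as the paper: a union bound (equivalently, Markov's inequality) gives $1-h(x,0)\le \widebar p_t\,\E^x[R_t]$, and then the first-moment bound on $R_t$ together with $\sin(\pi x/a)\le \pi(a-x)/a$ yields both inequalities. The only cosmetic difference is that the paper cites Lemma~\ref{lem:Rt} directly for the bound on $\E^x[R_t]$, whereas you unpack that step via \eqref{eq:RS_x_expec} and Lemma~\ref{lem:I_estimates}---but that is exactly how Lemma~\ref{lem:Rt} itself is proven.
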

\begin{proof}
By Markov's inequality, we have
\[
 \begin{split}
  1-h(x,0) &= \P^x(\#\{(u,s)\in \mathscr L_t: X_u(s)=a,\ U_u \le p(t)\} \ge 1) \\
    &\le \E^x(\#\{(u,s)\in \mathscr L_t: X_u(s)=a,\ U_u \le p(t)\})\\
    &\le \widebar{p}_t \E^x(R_t),
 \end{split}
\]
The lemma now follows from Lemma \ref{lem:Rt} and the inequality $\sin x\le x$, $x\in[0,\pi]$.
\end{proof}



\begin{proof}[Proof of Lemma \ref{lem:conditioned_Z_lowerbound}]
Since $p(s) = 0$ for all $s\ge t$ by hypothesis, we have $h(y,t) = 1$ for all $y\in(0,a)$. Lemma \ref{lem:many_to_one_conditioned} and the second property of the Brownian taboo process (see Section \ref{sec:taboo}) now imply
\begin{equation}
 \label{eq:210}
 \Etilde^x[Z_t] \ge w(x)W_{\mathrm{taboo}}^x\Big[e^{-\int_0^t e(X_s,s) \,\dd s}\Big] \ge w(x)\Big(1-W_{\mathrm{taboo}}^x\Big[\int_0^t e(X_s,s) \,\dd s\Big]\Big),
\end{equation}
by the inequality $e^{-x} \ge 1-x$ for $x\ge 0$. By \eqref{eq:def_e}, Lemma \ref{lem:h_estimate} and the hypotheses, we have for every $y\in(0,a)$ and $s\ge 0$,
\[
 e(a-y,s) \le e(a-y,0) \le \widebar{p}_t C e^{-\mu y}(yt/a^3+1) \le \widebar{p}_t C e^{-(c_0/3)y}.
\]
By Lemma \ref{lem:k_integral} and the fact that the law of the Brownian taboo process is preserved under the map $y\mapsto a-y$, this gives
\begin{equation}
\label{eq:220}
 W_{\mathrm{taboo}}^x\Big[\int_0^t e(X_s,s) \,\dd s\Big] \le \widebar{p}_t C \Big(t/a^3 + \operatorname{err}(a-x)\Big).
\end{equation}
The lemma now follows from \eqref{eq:210} and \eqref{eq:220}.
\end{proof}

Finally, we study the law of $R_t$ under the new probability.
\begin{lemma}
 \label{lem:conditioned_Rt}
We have for every $x\in [0,a]$,
\begin{equation}
\label{eq:conditioned_Rt}
 \E^x[R_t] - \widebar{p}_t\E^x[R_t^2] \le \Etilde^x[R_t] \le (h(x,0))^{-1}\E^x[R_t],
\end{equation}
and if there is a $p\in[0,1]$, such that $p(s) \equiv p$ for $s\le t$, then we even have
\begin{equation}
\label{eq:conditioned_Rt_constant}
 \Etilde^x[R_t] \le \E[R_t].
\end{equation}
\end{lemma}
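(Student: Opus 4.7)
The proof naturally breaks into the three inequalities. For each, the starting point is the identity
\[
\widetilde{\E}^x[R_t] = \frac{\E^x[R_t \Ind_E]}{\P^x(E)} = \frac{\E^x[R_t \Ind_E]}{h(x,0)}.
\]

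For the upper bound in \eqref{eq:conditioned_Rt}, I would simply bound $\Ind_E \le 1$ in the numerator, giving $\widetilde{\E}^x[R_t] \le \E^x[R_t]/h(x,0)$. For the lower bound, I would drop the denominator using $\P^x(E) \le 1$ to get $\widetilde{\E}^x[R_t] \ge \E^x[R_t \Ind_E] = \E^x[R_t] - \E^x[R_t \Ind_{E^c}]$, so it suffices to show $\E^x[R_t \Ind_{E^c}] \le \widebar{p}_t \E^x[R_t^2]$. The plan is to use the union bound
\[
\Ind_{E^c} \le \sum_{(u,s)\in \mathscr L_H,\,X_u(s)=a,\,s\le t} \Ind_{(U_u \le p(s))},
\]
which is valid under the (implicit) assumption from the surrounding setup that $p(s) = 0$ for $s > t$, as in Lemma \ref{lem:conditioned_Z_lowerbound}. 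Then, conditioning on the BBM and integrating out the independent uniform variables $U_u$, each indicator $\Ind_{(U_u \le p(s))}$ is replaced by $p(s) \le \widebar{p}_t$. The sum thus becomes $\widebar{p}_t R_t$, and multiplying by $R_t$ and taking expectation yields the desired bound $\E^x[R_t \Ind_{E^c}] \le \widebar{p}_t \E^x[R_t^2]$.

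For \eqref{eq:conditioned_Rt_constant}, I would exploit the fact that when $p(s)\equiv p$ on $[0,t]$ (and $p(s)=0$ elsewhere, by the ambient hypothesis), the $U_u$ attached to the $R_t$ particles absorbed at $a$ are iid uniform and independent of the BBM. Hence conditional on $R_t = n$, $\P^x(E \mid R_t = n) = (1-p)^n$, so
\[
\P^x(E) = \E^x[(1-p)^{R_t}] \quad\text{and}\quad \E^x[R_t \Ind_E] = \E^x[R_t (1-p)^{R_t}].
\]
The desired inequality $\widetilde{\E}^x[R_t] \le \E^x[R_t]$ thus reduces to $\E^x[R_t (1-p)^{R_t}] \le \E^x[R_t] \cdot \E^x[(1-p)^{R_t}]$, which is an instance of Chebyshev's sum (FKG for a single variable) inequality: the functions $r \mapsto r$ and $r \mapsto (1-p)^r$ are monotone in opposite directions, so $\mathrm{Cov}(R_t, (1-p)^{R_t}) \le 0$. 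This can be made explicit via the standard symmetrization $\mathrm{Cov}(f(R_t),g(R_t)) = \tfrac12 \E[(f(R_t)-f(R_t'))(g(R_t)-g(R_t'))]$ with $R_t'$ an independent copy.

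The only real subtlety is the lower bound in \eqref{eq:conditioned_Rt}: the union bound on $\Ind_{E^c}$ only produces $R_t$ (rather than $R_\infty$) on the right-hand side because absorptions after time $t$ do not contribute to $E^c$, which relies on the penalization function vanishing past $t$. I would flag this as the point where the convention $p(s)=0$ for $s>t$, inherited from the preceding Lemma \ref{lem:conditioned_Z_lowerbound}, is essential; once that is in place, everything else is a one-line calculation.
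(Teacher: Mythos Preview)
Your proposal is correct and matches the paper's proof essentially step for step. The paper writes the conditioned expectation as a ratio $\E^x[R_t\prod_{(u,s)\in\mathscr R_t}(1-p(s))]/h(x,0)$, bounds the product by $1$ for the upper bound, and for the lower bound uses $\prod(1-p(s))\ge(1-\widebar p_t)^{R_t}$ together with $R_t(1-\widebar p_t)^{R_t}\ge R_t-\widebar p_t R_t^2$; your union-bound argument on $\Ind_{E^c}$ is the same inequality in complementary form. For \eqref{eq:conditioned_Rt_constant} the paper also reduces to $\E^x[R_t(1-p)^{R_t}]/\E^x[(1-p)^{R_t}]\le\E^x[R_t]$ and invokes the monotonicity of $k\mapsto(1-p)^k$, exactly your FKG/Chebyshev step. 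Your observation that restricting attention to absorptions before time $t$ requires $p(s)=0$ for $s>t$ is well taken; the paper uses this implicitly by summing only over $\mathscr R_t$, and in all applications (see \eqref{eq:def_ps}) this condition indeed holds.
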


\begin{proof}
Let $\mathscr R_t$ be the stopping line
\[
 \mathscr R_t = \{(u,s)\in \mathscr L_{H_a}: s\le t\}.
\]
We have by definition of the law $\Ptilde$,
\begin{equation}
 \label{eq:230}
 \Etilde^x[R_t] = \frac{\E^x\Big[R_t \prod_{(u,s)\in\mathscr R_t} (1-p(X_u(s)))\Big]}{\E^x\Big[\prod_{(u,s)\in\mathscr R_t} (1-p(X_u(s)))\Big]}.
\end{equation}
Now the denominator is $h(x,0)$ by \eqref{eq:def_gamma}, which yields the right-hand side of \eqref{eq:conditioned_Rt}. The left-hand side follows by noticing that
\[
 \E^x\Big[R_t \prod_{(u,s)\in\mathscr R_t} (1-p(X_u(s)))\Big] \ge \E^x[R_t(1-\widebar{p}_t)^{R_t}] \ge \E^x[R_t] - \widebar{p}_t\E^x[R_t^2].
\]
For \eqref{eq:conditioned_Rt_constant}, we note that if $p(s) \equiv p$ for $s\le t$, then by \eqref{eq:230},
\[
 \Etilde^x[R_t] = \frac{\E^x[R_t(1-p)^{R_t}]}{\E^x[(1-p)^{R_t}]}.
\]
Since $(1-p)^k$ is decreasing in $k$, this yields \eqref{eq:conditioned_Rt_constant}.
\end{proof}

\section{The system before a breakout}
\label{sec:before_breakout}

In this section, we are studying branching Brownian motion with drift $-\mu$ and absorption at $0$ until a \emph{breakout} occurs, an event which will be defined in Section \ref{sec:before_breakout_definitions} and which corresponds to a particle going far to the right and spawning a big number of descendants. In \eqref{eq:decomposition}, we decompose the system into a particle conditioned to break out at a specific time $T$ (this particle will be called the \emph{fugitive}) and the remaining particles, which are conditioned not to break out before time $T$. These two parts will be studied seperately, the former in Section \ref{sec:fugitive} and the latter in Section \ref{sec:before_breakout_particles}. Before that, in Section \ref{sec:time_breakout}, we study the law of the time of the first breakout, showing that it is approximately exponentially distributed. First of all, however, we start with the necessary definitions:

\subsection{Definitions}
\label{sec:before_breakout_definitions}
We will introduce several parameters which will be used during the rest of the paper. The two most important parameters are $a$ and $A$, which are both large positive constants. The meaning of $a$ is as in the previous sections: It is the right border of an interval in which the particles are staying most of the time, and a breakout will be defined below as the event that a particle hits $a$ and then spawns many descendants. The parameter $A$ has a more subtle meaning and controls the number of particles of the system and with it the intensity at which particles hit the point $a$. In Section \ref{sec:moving_barrier}, we will indeed choose the initial conditions such that $Z_0 \approx \kappa e^A$, where $\kappa$ is a fixed constant.

In \cite{Berestycki2010}, the parameter $a$ was called $L_A$ (which we changed for typographical reasons) and $a$ and $A$ were related by the equation
\begin{equation*}
 a = \frac 1 {c_0}\Big(\log N + 3\log \log N - A\Big),
\end{equation*}
where $N$ was a parameter representing the approximate number of individuals in the system. The parameter $A$ then represented a shift of the right barrier. Although this choice of parameters may be more intuitive then ours, we found it technically more convenient to drop the parameter $N$ altogether, and work only with $a$ and $A$ instead.

As in \cite{Berestycki2010}, when we study the system when $a$ and $A$ are large, we will first let $a$ go to infinity, then $A$. Thus, the statement ``For large $A$ and $a$ we have\ldots'' means: ``There exists $A_0$ and a function $a_0(A)$, such that for $A \ge A_0$ and $a \ge a_0(A)$ we have\ldots''. Likewise, the statement ``As $A$ and $a$ go to infinity\ldots'' means ``For all $A$ there exists $a_0(A)$ such that as $A$ goes to infinity and $a\ge a_0(A)$\ldots''. We further introduce the notation $o_A(1)$, which stands for a deterministic term independent of the initial conditions of the process and which goes to $0$ as $A$ and $a$ go to infinity. Furthermore, $o(1)$ will denote a term which goes to $0$ as $a$ goes to infinity (with $A$ fixed).

The remaining parameters we introduce are all going to depend on $A$, but not on $a$. First of all, there is the small parameter $\ep$, which controls the intensity of the breakouts. Indeed, when $Z_0 \approx \kappa e^A$, the mean time one has to wait for a breakout will be approximately proportional in $\ep$. Morally, one could choose $\ep$ such that $e^{-A/2} \ll \ep \ll A^{-1}$, but for technical reasons we will require that
\begin{align}
 \label{eq:ep_upper}
 \ep &\le \Const{eq:ep_upper} A^{-17},\quad \tand\\
 \label{eq:ep_lower}
 \ep &\ge \Const{eq:ep_lower} e^{-A/6}.
\end{align}
Another protagonist is $\eta$, which we will choose as small as we need and which will be used to bound the probability of very improbable events, as well as the contribution of the variable $Y$. It will be enough to require that
\begin{equation}
 \label{eq:eta}
 \eta \le e^{-2A},
\end{equation}
which, by \eqref{eq:ep_lower}, implies
\begin{equation}
 \label{eq:eta_ep}
\eta \le C \ep^{12}.
\end{equation}
The last parameters are $y$ and $\zeta$, which are defined as in Lemma \ref{lem:y_zeta}, with $\eta$ there being the $\eta$ defined above. Note that the parameters $\eta$, $y$ and $\zeta$ appeared already in \cite{Berestycki2010} and had the same meaning there.

We can now proceed to the definition of the process. Recall the definition of $\mu$ in \eqref{eq:mu}. We will always suppose that $a$ is large enough, such that
\begin{equation}
 \label{eq:mu_c0}
 \mu \ge c_0/2.
\end{equation}
As in Section \ref{sec:interval_notation}, we denote by $\P^x$ and $\E^x$ the law and expectation of branching Brownian motion with drift $-\mu$, starting from a particle at the point $x\in\R$, and extend this defintion to general initial distributions of particles according to a counting measure $\nu$. Recall from Section \ref{sec:preliminaries_definition} that $\mathscr N(t)$ denotes the set of individuals alive at time $t$. We want to absorb the particles at 0 and do this formally by setting
\[
 \mathscr N_0(t) = \{u\in \mathscr N(t): H_0(X_u) > t\},
\]
where $H_0$ is again the hitting time functional of 0.

Instead of absorbing particles at $a$, we are now going to classify them into \emph{tiers} in the following way: Particles that have never hit the point $a$ form the particles of tier 0. As soon as a particle hits $a$ it advances to tier 1. Say this happens at a time $\tau_0$. In order to advance to tier 2, a particle has to come back to the critical line $a-y+(c_0-\mu)(t-\tau_0)$ and then back to $a$ again. Here, $y$ is a large positive constant to be defined later.

Formally, let $u\in U$, $t\ge 0$. We define two sequences of random times $(\tau_n(u))_{n\ge-1}$ and $(\sigma_n(u))_{n\ge0}$ by $\tau_{-1}(u) = 0$, $\sigma_0(u) = 0$ and for $n\ge 0$:
\begin{equation}
 \label{eq:def_tau_sigma}
\begin{split}
 \tau_n(u) &= \inf\{s\ge \sigma_n(u):X_u(s) = a\},\\
 \sigma_{n+1}(u) &= \inf\{s\ge \tau_n(u):X_u(s)=a-y+(c_0-\mu)(s-\tau_n(u))\},
\end{split}
\end{equation}
where we set $\inf \emptyset = \infty$. We now define for $t\ge 0$ the stopping lines
\begin{align}
 \label{eq:def_Rline}
\mathscr{R}^{(l)}_t &= \{(u,s)\in U\times[0,t]: s=\tau_l(u)\tand u\in\mathscr{N}_0(s)\},\ l \ge -1,\tand\\
 \label{eq:def_Lline}
\mathscr{S}^{(l)}_t &= \{(u,s)\in U\times[0,t]: s=\sigma_l(u)\tand u\in\mathscr{N}_0(s)\},\ l\ge 0.
\end{align}
That means, $\mathscr{R}^{(l)}_t$ contains the particles of tier $l$ at the moment at which they touch the right barrier and $\mathscr{S}^{(l)}_t$ contains the particles of tier $l$ at the moment at which they come back to the critical line. Note that the sets $\mathscr{R}^{(l)}_t$ and $\mathscr{S}^{(l)}_t$ are increasing in $t$ and $\mathscr{R}^{(l-1)}_t \preceq \mathscr{S}^{(l)}_t \preceq \mathscr{R}^{(l)}_t$ for every $l\ge0$. We also set
\[
R^{(l)}_t = \#\mathscr{R}^{(l)}_t.
\]
In order to extend the definitions of the variables $Z_t$ and $Y_t$ to the current setup, we could simply replace $\mathscr N_{0,a}(t)$ by $\mathscr N_0(t)$ in their definition (see Section \ref{sec:interval_notation}). However, it will be more useful to take special care of the individuals $u$ for which $\tau_l(u) \le t < \sigma_{l+1}(u)$ for some $l\ge 0$, since these are in some kind of ``intermediary'' state which is difficult to analyse. We therefore define the stopping lines
\begin{multline}
 \label{eq:def_Nline_l}
 \mathscr{N}^{(l)}_t = \{(u,s)\in U\times \R_+: u\in\mathscr{N}_0(s),\ \tau_{l-1}(u) \le s < \tau_{l}(u),\\
\text{ and either }t<\sigma_l(u) = s\tor \sigma_l(u) \le t = s\},\ l\ge 0,
\end{multline}
and
\begin{equation}
\label{eq:def_Nline}
 \mathscr{N}_t = \bigcup_{l\ge 0}  \mathscr{N}^{(l)}_t,
\end{equation}
such that $\mathscr{N}^{(l)}_t$ contains the particles of tier $l$ that have already come back to the critical line at time $t$, as well as the descendants of those that haven't, at the moment at which they hit the critical line. We then define for $l\ge 0$ (recall that $w(x) = a\sin(\pi x/a)e^{\mu(x-a)}$),
\[
 Z_t^{(l)} = \sum_{u\in\mathscr{N}^{(l)}_t} w(X_u(t)),\quad Y_t^{(l)} = \sum_{u\in\mathscr{N}^{(l)}_t} e^{\mu(X_u(t)-a)},
\]
For any symbol $S$ and $0\le k\le l$, we define,
\[
 S^{(k;l)} = \sum_{i=k}^l S^{(i)},\quad S^{(l+)} = S^{(l;\infty)},\quad S = S^{(0+)}.
\]

For a particle $(u,s) \in \mathscr{R}^{(l)}_t$, we define the stopping line
\begin{equation*}
  \mathscr S^{(u,s)} = \{(v,r)\in U\times \R_+: v\in \mathscr N_0(r),\ (u,t)\preceq (v,r) \tand r = \sigma_{l+1}(v)\}.
\end{equation*}
This stopping line yields a collection $(X_v(r),r-s)_{(v,r)\in \mathscr S^{(u,s)}}$ of space-time points, and we denote by $Z^{(u,s)}$, $Y^{(u,s)}$ and $W_y^{(u,s)}$ the corresponding quantities from Lemma \ref{lem:critical_line}. Of course, we have chosen the stopping line in such a way that the variable $W_y^{(u,s)}$ follows the same law as the variable $W_y$ defined in \eqref{eq:def_Wy}. We also define $\tau_{\mathrm{max}}^{(u,s)} = \max_{(v,r)\in \mathscr S^{(u,s)}} (r-s)$. We then define the event
\begin{equation}
 \label{eq:def_B}
B^{(u,s)} = \{Z^{(u,s)} > \ep e^A\}\cup \{\tau_{\mathrm{max}}^{(u,s)} > \zeta\},
\end{equation}
which is called the event of a \emph{breakout}, since $e^{-A} Z^{(u,s)}$ measures the number of descendants of the particle $(u,s)$ (the inclusion of the ``bad'' event $\{\tau_{\mathrm{max}}^{(u,s)} > \zeta\}$ is for technical reasons). The particle $u$ is then also called the \emph{fugitive}. We set
\begin{equation}
 \label{eq:def_pB}
p_B = \P^a(B^{(\emptyset,0)}),
\end{equation}
and define the law of BBM started at $a$ with the first particle conditioned not to break out:
\[
 \Q^a(\cdot) = \P^a(\cdot\,|\, B^c) = \frac{\P^a(\cdot,\ B^c)}{1-p_B},
\]
where we set $B = B^{(\emptyset,0)}$. We further set $Z = Z^{(\emptyset,0)}$ and $W_y = W_y^{(\emptyset,0)}$ and note that by Lemmas \ref{lem:critical_line} and \ref{lem:y_zeta}, we have for large $a$,
\begin{equation}
 \label{eq:Z_W}
\P^a(|Z-\frac{\pi}{c_0}W| > 2\eta) + \P^a(\tau_{\mathrm{max}}^{(u,s)} \le \zeta) < \eta,
\end{equation}
where $W$ is defined as in \eqref{eq:def_Wy}. Hence, by \eqref{eq:eta} and \eqref{eq:W_tail}, we get
\begin{equation}
 \label{eq:pB}
p_B = \Big(\frac{\pi}{c_0} + o_A(1) + o(1)\Big)\frac 1 {\ep e^A},
\end{equation}
which goes to 0 as $A$ and $a$ go to infinity, by \eqref{eq:ep_lower}. Furthermore, \eqref{eq:Z_W} yields for large $A$ and $a$,
\begin{equation}
\begin{split}
\label{eq:QaZ}
 \Q^a[Z] &= (\E[\frac \pi {c_0}W\Ind_{(\frac \pi {c_0}W\le \ep e^A(1+o(1)) + O(\eta))}] + O(\eta \ep e^A))(1+O(p_B)) \\
&= \frac{\pi}{c_0}(A+\log\ep+\const{eq:QaZ}+ o_A(1) + o(1)),
\end{split}
\end{equation}
by \eqref{eq:W_expec}, \eqref{eq:ep_upper}, \eqref{eq:eta} and \eqref{eq:pB}. In particular, we have for $A\ge 1$ and large $a$,
\begin{equation}
 \label{eq:QaZC}
\Q^a[Z] \le CA.
\end{equation}
Moreover, by \eqref{eq:W_tail}, \eqref{eq:Z_W} and \eqref{eq:eta}, we have for $A\ge 1$ and large $a$,
\begin{equation}
 \label{eq:QaZsquared}
\Q^a[Z^2] \le C\ep e^A.
\end{equation}

We now define for every $l\in\Znn$ the time of the first breakout of a particle of tier $l$,
\begin{equation}
 \label{eq:def_Tl}
T^{(l)}(\omega) = \inf\{t\ge0: \omega \in \bigcup_{(u,s)\in\mathscr{R}^{(l)}_t} B^{(u,s)}\},
\end{equation}
and set
\begin{equation}
\label{eq:def_T}
 T = \min_l T^{(l)}.
\end{equation}
We denote by $\mathscr U$ the fugitive of the breakout that happened at time $T$.

Now fix $t > 0$. We want to describe the system conditioned on $T=t$. For this, suppose that at time $0$ the particles are distributed according to a counting measure $\nu = \sum_{i=1}^n \delta_{x_i}$. Define $p_i = \P^\nu(i \preceq \mathscr U\,|\,T = t)$, which yields a law $(p_i)_{i=1}^n$ on the initial particles, depending on $\nu$ and $t$. Since the variable $T$, the time of the first breakout, is the minimum of the variables $T_i$, $i=1,\ldots,n$, the times of the first breakout of the BBM descending from the particle $i$, we can decompose the process into
\begin{equation}
\label{eq:decomposition}
 \P^\nu\Big(\prod_{i=1}^n \dd \omega^{(i)}\,\Big|\,T=t\Big) = \sum_{i=1}^n p_i \Big(\P^{x_i}(\dd \omega^{(i)}\,|\,T=t) \times \prod_{j\ne i} \P^{x_j}(\dd \omega^{(j)}\,|\,T>t)\Big).
\end{equation}
That is, we first choose according to the law $(p_i)_{i=1}^n$ the initial particle that is going to cause the breakout. This particle spawns a BBM conditioned to break out at time $t$. The remaining particles spawn independent BBM conditioned not to break out before time $t$.


\subsection{The time of the first breakout}
\label{sec:time_breakout}

We want to prove that the random variable $T$ defined above is approximately exponentially distributed with parameter $p_B\pi Z_0/a^3$, which is the statement of the following proposition:

\begin{proposition}
\label{prop:T}
 Let $0\le t\le a^3/(3\Const{eq:375}A)$ and suppose that $Y_0\le e^{-1}$. Define $\theta = p_B\pi Z_0$. Then, for $A$ and $a$ large enough, we have
\begin{equation}
\label{eq:T}
\P(T> t) = \exp\Big(-\theta t/a^3\Big(1 + O(At/a^3 + p_B)\Big)+O(p_B Y_0)\Big).
\end{equation}
\end{proposition}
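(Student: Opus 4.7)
The plan is to sandwich $\P(T>t)$ between the probability that no tier-$0$ breakout occurs by time $t$ and that same probability minus the total probability of a higher-tier breakout. The inclusions $\{T>t\}\subseteq\{\text{no tier-$0$ breakout by }t\}$ and $\{T>t\}\supseteq\{\text{no tier-$0$ breakout by }t\}\cap\{\text{no higher-tier breakout by }t\}$ supply the upper and lower bounds. The crucial observation is that $\mathscr{R}^{(0)}_t$ is a stopping line of pairwise incomparable particles, each indicator $\Ind_{B^{(u,s)}}$ for $(u,s)\in\mathscr{R}^{(0)}_t$ is measurable with respect to the subtree rooted at $(u,s)$, and by the strong branching property these subtrees are independent. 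Hence, conditional on $\mathscr{R}^{(0)}_t$, the indicators are iid Bernoulli$(p_B)$, so $\P(\text{no tier-$0$ breakout by }t)=\E\bigl[(1-p_B)^{R^{(0)}_t}\bigr]$.

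To evaluate this expectation I would exploit the independence of the BBMs emerging from distinct initial particles: writing $R^{(0)}_t=\sum_i R^{(0),i}_t$, the expectation factors as a product over initial particles. Taking logarithms and applying the cumulant expansion $\log\E[(1-p_B)^X]=-p_B\E[X]+O(p_B^2\E[X^2])$ termwise gives
\[
\log\E\bigl[(1-p_B)^{R^{(0)}_t}\bigr]=-p_B\E[R^{(0)}_t]+O\Bigl(p_B^2\sum_i\E[(R^{(0),i}_t)^2]\Bigr).
\]
Lemma \ref{lem:Rt} supplies both $\E[R^{(0)}_t]=\pi tZ_0/a^3+O(Y_0)$ and the second-moment bound $\sum_i\E[(R^{(0),i}_t)^2]\le C(tZ_0/a^3+Y_0)$ (applied to each initial particle separately and then summed). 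Substituting and recalling $\theta=p_B\pi Z_0$ turns the right-hand side into $-\theta t/a^3(1+O(p_B))+O(p_B Y_0)$, which is the main exponent in \eqref{eq:T}.

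The higher-tier probability is handled by an analogous strong-branching argument at each $\mathscr{R}^{(l)}_t$, yielding the Markov-inequality bound $\P(\exists\text{ tier-}l\text{ breakout by }t)\le p_B\E[R^{(l)}_t]$. The unconditional quantity $\E[R^{(l)}_t]$ is infinite in general, since $\E[Z]=\infty$ (the tail $\P(W>x)\sim 1/x$ from Proposition \ref{prop:W}); to sidestep this I would restrict attention to tier-$l$ particles descending only from non-broken-out ancestors, noting that any excluded configuration is already counted as a strictly lower-tier breakout on the left-hand side. On this restricted event, each non-broken-out ancestor contributes at most $\Q^a[Z]\le CA$ to the $Z$-value of the next critical-line crossing (equation \eqref{eq:QaZC}), and Lemma \ref{lem:Rt} applied recursively produces an extra multiplicative factor $\le CAt/a^3$ per tier. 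Under the hypothesis $t\le a^3/(3\Const{eq:375}A)$ this ratio is bounded by $1/3$, so the geometric sum converges and yields $\sum_{l\ge 1}p_B\E[R^{(l)}_t]\le (\theta t/a^3)\cdot O(At/a^3)$. Combining with the main term and exponentiating gives \eqref{eq:T}.

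The principal technical obstacle is making this recursive bound on the higher-tier expectations fully rigorous: the non-breakout conditioning must be propagated tier by tier while the subdominant $Y$-type errors thrown off by each application of Lemma \ref{lem:Rt} have to be kept small enough not to spoil the geometric decay, which in practice forces one to track both $Z$- and $Y$-style functionals of the effective initial configuration at every level.
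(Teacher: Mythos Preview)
Your tier-$0$ analysis and the upper bound are fine and match the paper's Lemma~\ref{lem:T0}. The gap is in the lower bound: your union-bound argument produces an \emph{additive} estimate
\[
\P(T>t)\ \ge\ \P(T^{(0)}>t)\;-\;\theta\,\tfrac{t}{a^3}\cdot O\!\big(A\,\tfrac{t}{a^3}\big),
\]
whereas the proposition asserts a \emph{multiplicative} correction on the exponent. These agree only when $\theta t/a^3=p_B\pi Z_0\, t/a^3$ is bounded, but nothing in the hypotheses forces this: in the paper's applications $Z_0\approx\kappa e^A$, $p_B\approx(\ep e^A)^{-1}$ and $t$ ranges up to order $a^3/A$, so $\theta t/a^3$ is of order $(\ep A)^{-1}\gg 1$. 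In that regime $\P(T^{(0)}>t)\approx e^{-\theta t/a^3}$ is exponentially small while your subtracted term is only polynomially small in $\theta t/a^3$, so the right-hand side is negative and the inequality is vacuous. ``Exponentiating'' an additive bound does not recover the statement.

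The paper avoids this by never leaving the exponential scale. It first proves Lemma~\ref{lem:T_a}, a lower bound on $\Q^a(T>t)$ for a \emph{single} initial particle at $a$; there the relevant $Z$-value is $\Q^a[Z]\le CA$, so the tier recursion closes as an inequality $\Q^a(T>t)^{1-\delta}\ge\exp(\cdots)$ with $\delta\le 1/2$, which one then raises to the power $(1-\delta)^{-1}$. With this in hand, the main lower bound comes from Jensen's inequality and the monotonicity~\eqref{eq:conditioned_Rt_constant}:
\[
\P(T>t\mid T^{(0)}>t)\ \ge\ \Q^a(T>t)^{\,\E[R^{(0)}_t]}\ \ge\ \exp\!\Big(-\theta\,\tfrac{t}{a^3}\cdot O\big(A\,\tfrac{t}{a^3}\big)\Big),
\]
which multiplies cleanly with $\P(T^{(0)}>t)$. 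Your recursive control of the higher tiers is the right intuition behind Lemma~\ref{lem:T_a}, but it has to be packaged as an exponential bound on $\Q^a(T>t)$ first, not as a sum of Markov-type probabilities subtracted from $\P(T^{(0)}>t)$.
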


The proof proceeds by a sequence of lemmas. Lemma \ref{lem:T0} gives a estimate on $\P(T^{(0)} > t)$. This is used in Lemma \ref{lem:T_a}, in order to obtain an estimate on $\P^a(T > t)$, using a recursive argument. Finally, Proposition \ref{prop:T} is proven by combining Lemmas \ref{lem:T0} and \ref{lem:T_a}.

\begin{lemma}
 \label{lem:T0}
Let $0\le t\le a^3$. Define $\theta$ as in Proposition \ref{prop:T}. Suppose that $p_B \le 1/2$ and $Y_0\le e^{-1}$. Then,
\begin{equation}
\label{eq:T0}
\P(T^{(0)}> t) = \exp\Big(-\theta t/a^3\Big(1 + O(p_B)\Big)+O(p_BY_0)\Big).
\end{equation}
\end{lemma}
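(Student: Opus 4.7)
The plan is to compute $\P^\nu(T^{(0)}>t)$ by exploiting the independence built into the branching structure. First, applying the strong branching property along the stopping line $\mathscr R^{(0)}_t$, the descendant subtrees rooted at its nodes are independent BBMs started from $a$, and each breakout event $B^{(u,s)}$ is measurable with respect to its own subtree with probability $p_B$. Since $\{T^{(0)}>t\}$ is exactly the event that none of these breakouts occurs, $\P^\nu(T^{(0)}>t\mid\mathscr R^{(0)}_t)=(1-p_B)^{R^{(0)}_t}$. Using also the independence of the subtrees rooted at the distinct initial particles $x_1,\dots,x_n$, I factor
\[
\P^\nu(T^{(0)}>t)=\prod_{i=1}^n\Phi^{x_i},\qquad \Phi^{x_i}:=\E^{x_i}\bigl[(1-p_B)^{R_t}\bigr],
\]
noting that $R^{(0)}_t$ has the same law as $R_t$ of Section~\ref{sec:interval}, since the first-hit counts are unaffected by whether one absorbs at $a$.

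For the per-particle estimate, I combine Bernoulli's inequality $(1-p_B)^k\ge 1-p_Bk$ with $(1-p_B)^k\le e^{-p_Bk}\le 1-p_Bk+(p_Bk)^2/2$ inside the expectation, and, writing $m_i:=\E^{x_i}R_t$, obtain
\[
1-p_Bm_i\le\Phi^{x_i}\le 1-p_Bm_i+\tfrac{1}{2}p_B^2\,\E^{x_i}[R_t^2].
\]
The necessary moment estimates come from Lemmas~\ref{lem:R_moments} and \ref{lem:Rt}: globally, $\sum_i m_i=\pi Z_0 t/a^3+O(Y_0)$ and $\sum_i\E^{x_i}[R_t^2]\le C(Z_0 t/a^3+Y_0)$; per particle, $m_i\le C(w(x_i)t/a^3+e^{\mu(x_i-a)})$. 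The crucial structural observation is that the weight $w(x)=a\sin(\pi x/a)e^{\mu(x-a)}$ is \emph{uniformly bounded in $x$}: setting $y=a-x$ and using $\sin z\le z$ for $z\ge 0$ gives $w(x)\le\pi y\,e^{-\mu y}\le\pi/(\mu e)$, so that $m_i\le C_0$ and $\E^{x_i}[R_t^2]\le C$ uniformly in $i$ whenever $t\le a^3$.

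To assemble the estimate, I choose $A$ large enough that $p_B\le 1/(4C_0)$ (possible by \eqref{eq:pB}), so that $p_Bm_i\le 1/4$ for every $i$. The Taylor expansion of $\log(1-\cdot)$ then gives $\log\Phi^{x_i}=-p_Bm_i+O((p_Bm_i)^2)+O(p_B^2\,\E^{x_i}[R_t^2])$. Summing over $i$, the main contribution is $-p_B\sum_i m_i=-\theta t/a^3+O(p_BY_0)$, while the error contributions satisfy
\[
\sum_i(p_Bm_i)^2\le p_BC_0\cdot p_B\sum_im_i=O(p_B\theta t/a^3)+O(p_B^2Y_0),\qquad p_B^2\sum_i\E^{x_i}[R_t^2]=O(p_B\theta t/a^3)+O(p_B^2Y_0),
\]
where I used $\theta=p_B\pi Z_0$. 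Exponentiating yields $\P^\nu(T^{(0)}>t)=\exp(-(\theta t/a^3)(1+O(p_B))+O(p_BY_0))$. The main technical obstacle is controlling the per-particle Taylor expansion when the configuration contains particles close to $a$; the uniform bound on $w$ (rather than the trivial $w(x)\le a$) is precisely what keeps $p_Bm_i$ small and makes the expansion valid in the regime $a\to\infty$ at fixed $A$.
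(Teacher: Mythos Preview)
Your argument is correct and follows essentially the same skeleton as the paper's proof: factor over initial particles, control the per-particle generating function $\E^{x_i}[(1-p_B)^{R_t}]$ via first and second moment bounds on $R_t$, and sum the logarithms. The organization differs slightly, however. The paper first proves an intermediate result (Lemma~\ref{lem:T0_small}) for an arbitrary initial configuration, which carries an extra error $O(p_B Z_0 t/a^3)$; it then applies this per particle and kills the extra term by the bound $w(x_i)\le \pi\mu^{-1}Y_0|\log Y_0|$, which uses the hypothesis $Y_0\le e^{-1}$ via the monotonicity of $y\mapsto ye^{-\mu y}$ on $[1/\mu,\infty)$. You bypass the intermediate lemma entirely and use instead the cruder but uniform bound $w(x)\le \pi/(\mu e)$, which already suffices and does not even invoke the hypothesis on $Y_0$ at that step. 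This is a genuine (if modest) simplification.

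One small caveat: your Taylor expansion of $\log\Phi^{x_i}$ needs $p_B m_i$ bounded away from $1$, and you ensure this by taking $p_B\le 1/(4C_0)$, which is stronger than the stated hypothesis $p_B\le 1/2$. This is harmless in context, since the lemma is only ever applied with $A$ and $a$ large (so $p_B\to 0$ by \eqref{eq:pB}), and in any case the implicit constant in the $O(p_B)$ of \eqref{eq:T0} can absorb the range $p_B\in[1/(4C_0),1/2]$. If you want the proof to match the stated hypothesis exactly, you can replace Bernoulli's inequality by Jensen's inequality $\Phi^{x_i}\ge(1-p_B)^{m_i}$ for the lower bound (as the paper does in Lemma~\ref{lem:T0_small}), which is valid for all $p_B\le 1/2$.
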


Before proving Lemma \ref{lem:T0}, we establish a weaker estimate on $\P(T^{(0)}> t))$.

\begin{lemma}
\label{lem:T0_small}
 Let $0\le t\le a^3$. Define $\theta$ as in Proposition \ref{prop:T}. Suppose that $p_B \le 1/2$ and $Y_0\le 1$. Then
\begin{equation}
\label{eq:T0_small}
 \P(T^{(0)}> t) = \exp\Big(-\theta t/a^3\big(1 + O(p_B(1 + Z_0 t/a^3))\big)+O(p_B Y_0))\Big).
\end{equation}
\end{lemma}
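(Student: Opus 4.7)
The plan is to use the strong branching property to reduce $\P(T^{(0)} > t)$ to a Laplace transform of $R_t^{(0)}$, then estimate this using the first- and second-moment bounds from Lemma~\ref{lem:Rt}. By \eqref{eq:def_Tl}, $\{T^{(0)} > t\} = \bigcap_{(u,s)\in\mathscr{R}^{(0)}_t}(B^{(u,s)})^c$. The stopping line $\mathscr{R}^{(0)}_t$ is dissecting, so by the strong branching property of Section~\ref{sec:stopping_lines} the subtrees rooted at its elements are conditionally independent given $\F_{\mathscr{R}^{(0)}_t}$, each distributed as $\P^a$; hence the events $B^{(u,s)}$ are i.i.d.\ Bernoulli$(p_B)$ conditionally on $R_t^{(0)}$, and
\[
\P(T^{(0)} > t) = \E\bigl[(1-p_B)^{R_t^{(0)}}\bigr].
\]
Moreover, tier-$0$ particles behave exactly as BBM with absorption at $\{0,a\}$ up to their first hit of $a$, so $R_t^{(0)}$ has the same law as the $R_t$ of Section~\ref{sec:interval} and Lemma~\ref{lem:Rt} applies directly.

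Next, setting $\alpha := -\log(1-p_B) = p_B(1+O(p_B))$, which is valid since $p_B \le 1/2$, I would combine Jensen's inequality $\E[e^{-\alpha R}] \ge e^{-\alpha \E R}$ with the pointwise bound $e^{-\alpha R} \le 1 - \alpha R + \tfrac12\alpha^2 R^2$ (for $\alpha R \ge 0$) and $\log(1+y)\le y$ to obtain
\[
-\alpha\,\E[R_t^{(0)}] \;\le\; \log\P(T^{(0)} > t) \;\le\; -\alpha\,\E[R_t^{(0)}] + \tfrac{\alpha^2}{2}\E[(R_t^{(0)})^2].
\]
From \eqref{eq:Rt_nu_expec}, taking the $Y_0$ side of the minimum, $\E[R_t^{(0)}] = \pi Z_0 t/a^3 + O(Y_0)$; summing \eqref{eq:Rt_nu_variance} over the independent initial particles (legitimate since $\mu \ge c_0/2$ by \eqref{eq:mu_c0}) gives $\Var(R_t^{(0)}) \le C(tZ_0/a^3 + Y_0)$, whence $\E[(R_t^{(0)})^2] \le C(tZ_0/a^3 + Y_0)(1 + tZ_0/a^3)$ after using $Y_0\le 1$ to absorb $Y_0^2 \le Y_0$ and an AM--GM bound on the cross term.

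Writing $\theta = p_B\pi Z_0$, the main term becomes $\alpha\,\E[R_t^{(0)}] = \theta t/a^3 + O(p_B\theta t/a^3) + O(p_B Y_0)$ and the error $\alpha^2\E[(R_t^{(0)})^2] = O\bigl(p_B\theta t/a^3 \cdot(1 + Z_0 t/a^3)\bigr) + O(p_B Y_0)$, which together match \eqref{eq:T0_small}. The only non-routine step is the bookkeeping here: cross terms such as $p_B^2 Y_0 Z_0 t/a^3$ arising from expanding $\alpha^2\E[(R_t^{(0)})^2]$ must be absorbed into the allowed errors via $2p_B^2 Y_0 Z_0 t/a^3 \le p_B^2 Y_0^2 + p_B^2(Z_0 t/a^3)^2 \le p_B Y_0 + p_B^2(Z_0 t/a^3)^2$, using $Y_0\le 1$ and $p_B \le 1$.
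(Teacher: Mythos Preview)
Your proposal is correct and follows essentially the same route as the paper: reduce to $\E[(1-p_B)^{R_t^{(0)}}]$ via the strong branching property, bound below by Jensen and above by a second-order Taylor/binomial inequality, and plug in the first- and second-moment estimates of Lemma~\ref{lem:Rt}. The only cosmetic difference is that you work with $\alpha=-\log(1-p_B)$ and the inequality $e^{-\alpha R}\le 1-\alpha R+\tfrac{\alpha^2}{2}R^2$, whereas the paper uses $(1-p_B)^n\le 1-np_B+\tfrac{n(n-1)}{2}p_B^2$ directly; your handling of the cross term $p_B^2 Y_0 Z_0 t/a^3$ via AM--GM is a clean way to close the bookkeeping.
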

\begin{proof}
We have for $t\ge0$,
\begin{equation}
 \label{eq:308}
  \P(T^{(0)} > t) = \E\Big[\prod_{(u,s)\in\mathscr{R}^{(0)}_t} \Ind_{B^{(u,s)}}\Big] = \E\Big[(1-p_B)^{R^{(0)}_t}\Big],
\end{equation}
since by the strong branching property, the random variables $Z^{(u,s)}$ are independent conditioned on $\mathscr{R}^{(0)}_t$. By Lemma \ref{lem:Rt}, and the assumption $t\le a^3$, we have
\begin{equation}
 \label{eq:310}
|\E[R^{(0)}_t] - \pi Z_0 t/a^3| \le \Const{eq:Rt_nu_expec} Y_0,\quad\tand
\end{equation}
and
\begin{equation}
 \label{eq:315}
\E[(R^{(0)}_t)^2] \le 2 (\pi Z_0 t/a^3)^2 + C (\pi Z_0 t/a^3+Y_0),
\end{equation}
where the last line follows from the inequality $(x+y)^2 \le 2(x^2+y^2)$ and the assumption $Y_0\le 1$. By Jensen's inequality and \eqref{eq:310}, we have
\begin{equation}
 \label{eq:317}
 \E\Big[(1-p_B)^{R^{(0)}_t}\Big] \ge \E\Big[e^{\log(1-p_B)\E[R^{(0)}_t]}\Big] \ge \exp\Big(-\theta t/a^3 + O\left(p_B(\theta t/a^3+Y_0)\right)\Big),
\end{equation}
since $|\log(1-x)| \le x+x^2$ for $x\le 1/2$. This gives the lower bound in \eqref{eq:T0_small}. For the upper bound, equations \eqref{eq:310} and \eqref{eq:315} together with the inequality $(1-p)^n \le 1-np+n(n-1)p^2/2$ give
\begin{equation}
\label{eq:325}
 \E\Big[(1-p_B)^{R^{(0)}_t}\Big] \le 1- \theta t/a^3 + p_BC(\theta t/a^3 + Y_0) + (\theta t/a^3)^2,
\end{equation}
The lemma now follows from \eqref{eq:308}, \eqref{eq:317} and \eqref{eq:325} together with the inequality $1-x \le e^{-x}$.
\end{proof}

\begin{proof}[Proof of Lemma \ref{lem:T0}]
 Let $x_1,\ldots,x_n$ be the positions of the initial particles. Since the initial particles spawn independent branching Brownian motions, we have
\begin{equation}
\label{eq:340}
 \P(T^{(0)} > t) = \prod_i \P^{x_i}(T^{(0)}>t).
\end{equation}
Define $z_i = w(x_i)$ and $y_i = e^{\mu(x_i-a)}$. Then trivially $y_i \le Y_0$ for all $i$, and therefore, since $Y_0 \le e$ by assumption,
\[
 \mu(a-x_i) = |\log y_i| \ge |\log Y_0| \ge 1,\quad\text{ for all $i$}.
\]
As a consequence, by the inequality $\sin x\le x$ for $x\ge0$, we have
\begin{equation}
 \label{eq:345}
z_i = a y_i \sin(\pi x_i/a) \le \pi \mu^{-1} e^{-\mu(a-x_i)} \mu(a-x_i) \le \pi \mu^{-1}Y_0|\log Y_0|,
\end{equation}
since the function $x\mapsto xe^{-x}$ is decreasing for $x\ge 1$. By Lemma \ref{lem:T0_small}, \eqref{eq:mu_c0}, \eqref{eq:340} and \eqref{eq:345} and the hypothesis $t\le a^3$, we now have,
\[
 \P(T^{(0)} > t) = \prod_i\exp\Big(-p_B\pi z_i t/a^3\Big(1 + O(p_B(1+Y_0|\log Y_0|))\Big) + O(p_B y_i)\Big).
\]
Since $Y_0|\log Y_0| \le 1$ by hypothesis, this proves the lemma.
\end{proof}

In the following lemma, note that according to the definition of the tiers, a particle starting at $a$ starts immediately in tier 1.
\begin{lemma}
 \label{lem:T_a}
Let $0\le t\le a^3/(3\Const{eq:375}A)$. Then, for large $A$ and $a$,
\begin{equation}
 \label{eq:T_a}
\Q^a(T > t) \ge \exp\Big(-p_B\pi \tfrac t {a^3} \Q^a[Z]\left(1+O(A\tfrac t {a^3} + p_B)\right) + O(\eta)\Big)
\end{equation}
\end{lemma}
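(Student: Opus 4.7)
The plan is to exploit the strong branching property at the stopping line $\mathscr S := \mathscr S^{(\emptyset,0)}$ and reduce the problem to applying Lemma~\ref{lem:T0} inside each subtree.

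A structural observation drives the decomposition: since the initial particle starts at $a$, every descendant $u$ inherits $X_u(0)=a$, so $\tau_0(u)=0$ identically, and the unique tier-$0$ breakout attempt is $B^{(\emptyset,0)}$, which is excluded under $\Q^a$. Hence $T = T^{(1+)}$ under $\Q^a$. Moreover, the $\Q^a$-conditioning $\tau_{\max}^{(\emptyset,0)} \le \zeta$ forces $r_v\le\zeta$ for every $(v,r_v)\in\mathscr S$. Conditioning on $\F_{\mathscr S}$, the strong branching property makes the subtrees rooted at each $(v,r_v)$ into independent BBMs starting at $x_v<a$ at time $r_v$. In the restart, these are tier-$0$ particles, and each original tier-$1$ breakout within $v$'s subtree is precisely a tier-$0$ breakout of the restart.

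Lemma~\ref{lem:T0} applied to each restart has its hypotheses satisfied ($p_B\le 1/2$ by \eqref{eq:pB} and \eqref{eq:ep_lower}; $Y_0^v := e^{\mu(x_v-a)} \le e^{-\mu y/2} \le e^{-1}$ by the choice $y\ge\eta^{-1}\ge e^{2A}$ from \eqref{eq:eta}; and $t-r_v\le a^3$). It yields
\[
-\log \P^{x_v}\bigl(T^{(0)}>t-r_v\bigr) \le p_B\pi w(x_v)(t-r_v)/a^3\bigl(1+O(p_B)\bigr) + O\bigl(p_B\,e^{\mu(x_v-a)}\bigr).
\]
Summing over $v\in\mathscr S$, using $\sum_v w(x_v)=Z^{(\emptyset,0)}$, $\sum_v e^{\mu(x_v-a)}=Y^{(\emptyset,0)}$ and $r_v\le\zeta$, then taking $\Q^a$-expectation and applying Jensen's inequality ($\log\Q^a[e^X]\ge\Q^a[X]$), gives a lower bound for $\Q^a(T^{(1)}>t)$. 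The residual error terms $p_B\zeta\Q^a[Z]/a^3$ and $p_B\Q^a[Y]$ are absorbed into $O(\eta)$ by \eqref{eq:QaZC} ($\Q^a[Z]\le CA$), Lemma~\ref{lem:critical_line} ($Y^{(\emptyset,0)}\le Z^{(\emptyset,0)}/y$), and the decays $p_BA\to 0$ and $\zeta/a^3\to 0$. The leading multiplicative factor $(1+O(p_B))$ one obtains is even tighter than the claimed $(1+O(At/a^3+p_B))$.

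The main obstacle is promoting the bound from $T^{(1)}$ to $T=T^{(1+)}$. Writing
\[
\Q^a(T>t) \ge \Q^a(T^{(1)}>t) - \Q^a(T^{(2+)}\le t),
\]
I must show $\Q^a(T^{(2+)}\le t)=O(\eta)$. Since tier-$\ge 2$ breakouts of the original correspond to tier-$\ge 1$ breakouts of the restart (each requiring an additional round-trip $\mathscr L\to a$ of length $\Theta(a^2)$), the expected number of tier-$\ge 2$ hits of $a$ in $[0,t]$ is controlled by iterating the first-moment estimate of Lemma~\ref{lem:Rt} through the restart subtrees, using the conditional bound $Z^{(\emptyset,0)}\le\epsilon e^A$ on the restart's initial weight. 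Multiplying by $p_B\le C/(\epsilon e^A)$ and using the hypothesis on $t$ together with \eqref{eq:ep_lower}, the union bound yields $\Q^a(T^{(2+)}\le t)=O(\eta)$ for large $A$ and $a$, completing the argument.
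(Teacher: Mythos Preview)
Your decomposition $\Q^a(T>t)\ge \Q^a(T^{(1)}>t)-\Q^a(T^{(2+)}\le t)$ and your bound on $\Q^a(T^{(1)}>t)$ via Jensen and Lemma~\ref{lem:T0} are correct. The gap is in the final step: the claim $\Q^a(T^{(2+)}\le t)=O(\eta)$ via the union bound $p_B\,\Q^a[R^{(2+)}_t]$ does not go through. The reason is that your iteration of the first-moment estimate passes through the tier-$1$ hits, and those hits are \emph{not} conditioned on non-breakout under $\Q^a$; only the tier-$0$ hit $(\emptyset,0)$ is. Hence each tier-$1$ hit contributes a fresh copy of $Z$ under $\P^a$, and $\E^a[Z]=\tfrac{\pi}{c_0}\E[W_y](1+O(1/a))=\pi y(1+O(1/a))$, which is of order $1/\eta\ge e^{2A}$ (recall $\E[W_y]=c_0y$, since $e^{-c_0y}Z_y$ is the additive martingale). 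Plugging this in, $p_B\,\Q^a[R^{(2)}_t]$ is of order $e^{2A}$, not $O(\eta)$. Your ``conditional bound $Z^{(\emptyset,0)}\le\ep e^A$'' only controls the \emph{first} level of the iteration.

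The paper avoids this obstruction by a self-referential (bootstrap) inequality rather than a direct tier-by-tier bound. Starting as you do from the stopping line $\mathscr S^{(1)}$, it writes $\P^\nu(T>t)=\P^\nu(T>t\mid T^{(0)}>t)\,\P^\nu(T^{(0)}>t)$ and bounds the first factor from below by $\Q^a(T>t)^{\E^\nu[R^{(0)}_t]}$ (Jensen plus the fact that, conditioned on $T^{(0)}>t$, every tier-$0$ hit restarts a process with law $\Q^a$). Taking $\Q^a$-expectation and using $\E^\nu[R^{(0)}_t]\le\Q^a[Z](\pi t/a^3+\eta)$ gives
\[
\Q^a(T>t)^{1-\delta}\ \ge\ \exp\Big(-p_B\pi\,\Q^a[Z]\,t/a^3\,(1+O(p_B))+O(\eta)\Big),\qquad \delta=\Const{eq:375}\big(At/a^3+\eta(1+A)\big).
\]
Raising to the power $(1-\delta)^{-1}\le 1+2\delta$ yields the lemma, and the factor $O(At/a^3)$ in the statement is precisely the price of unwinding this recursion. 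If you want to keep your direct strategy, you would need to work on the event $\{T^{(1)}>t\}$ throughout the tier-$2$ estimate (so that the tier-$1$ $Z$'s are bounded by $\ep e^A$), but making this rigorous essentially reconstructs the bootstrap.
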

\begin{proof}
We have
\begin{equation}
 \label{eq:360}
\begin{split}
 \Q^a(T > t) &= \Q^a\Big[\prod_{(u,s)\in\mathscr{S}^{(1)}_t}\P^{X_u(s)}(T > t-s)\Big] \ge \Q^a[\P^\nu(T > t)],
\end{split}
\end{equation}
where $\nu = \sum_{(u,s)\in\mathscr{S}^{(1)}_t} \delta_{X_u(s)}$. Since $T>t$ implies $T^{(0)}>t$, we have
\begin{equation}
 \label{eq:362}
\P^\nu(T > t) = \P^\nu(T > t|T^{(0)}>t) \P^\nu(T^{(0)} > t).
\end{equation}
Let $Z$ and $Y$ be as in Lemma \ref{lem:critical_line}. By the definition of $\Q^a$ and Lemma \ref{lem:critical_line}, we have $\Q^a$-almost surely $Y \le \eta \ep e^A$, such that for large $A$, $Y\le e^{-1}$ and $p_BY \le \eta C$ by \eqref{eq:eta} and \eqref{eq:pB}. By Lemma \ref{lem:T0}, we now have for large $A$,
 \begin{equation}
\label{eq:363}
\begin{split}
\P^\nu(T^{(0)} > t) &\ge \exp\Big(-\theta Z\left(1+O(p_B)\right) + O(\eta)\Big).                                                                                                                                                                                                                                                                                                                       \end{split}
\end{equation} 
As for the first factor in \eqref{eq:362}, we have, with the notation from Section \ref{sec:weakly_conditioned}, with $p(s) \equiv p_B$,
\begin{align*}
 \P^\nu(T > t|T^{(0)}>t) &= \widetilde{\P}^\nu(T > t) = \widetilde{\P}^\nu\Big[\prod_{(u,s)\in\mathscr{R}^{(0)}_t}\Q^a(T>t-s)\Big] \ge \widetilde{\P}^\nu\Big[\Q^a(T>t)^{R^{(0)}_t}\Big].
\end{align*}
By Jensen's inequality and \eqref{eq:conditioned_Rt_constant}, this implies
\begin{equation}
\label{eq:364}
 \P^\nu(T > t|T^{(0)}>t) \ge \Q^a(T>t)^{\widetilde{\E}^\nu[R^{(0)}_t]} \ge \Q^a(T>t)^{\E^\nu[R^{(0)}_t]}
\end{equation}
Now, by \eqref{eq:Rt_nu_expec}, we have, by Lemma \ref{lem:critical_line} and $y^{-1} \le \eta$,
\begin{equation}
\label{eq:365}
 \E^\nu[R^{(0)}_t] \le Z \Big(\pi \frac t {a^3} + \eta\Big).
\end{equation} 
Equations \eqref{eq:362}, \eqref{eq:363}, \eqref{eq:364} and \eqref{eq:365}, together with Jensen's inequality, now imply
\begin{multline}
\label{eq:370}
 \Q^a[\P^\nu(T > t)] \ge \Q^a(T>t)^{\Q^a[Z] (\pi t/a^3 + \eta)}\times\exp\Big(-\theta \Q^a[Z]\left(1+O(p_B)\right) + O(\eta)\Big).
\end{multline}
Now, by \eqref{eq:QaZC}, \eqref{eq:360} and \eqref{eq:370}, we have,
\begin{equation}
\label{eq:375}
 \Q^a(T>t)^{1-\delta} \ge \exp\Big(-\theta \Q^a[Z] \left(1+O(p_B)\right) + O(\eta)\Big),
\end{equation} 
with $\delta = \Const{eq:375}(At/a^3 + \eta(1 + A))$. By \eqref{eq:eta} and the hypothesis on $t$, we have $\delta \le 1/2$ for large $A$, whence $(1-\delta)^{-1} \le 1+2\delta.$ Raising both sides in \eqref{eq:375} to the power $(1-\delta)^{-1}$ yields the lemma.
\end{proof}

\begin{proof}[Proof of Proposition \ref{prop:T}]
We have the trivial upper bound $\P(T > t) \le \P(T^{(0)} > t)$, and Lemma \ref{lem:T0} now implies the upper bound in \eqref{eq:T}. For the lower bound, we note that as in the proof of Lemma \ref{lem:T_a}, we have by Jensen's inequality and \eqref{eq:conditioned_Rt_constant},
\begin{equation}
\label{eq:380}
 \P(T > t) = \P(T > t\,|\,T^{(0)} > t)\P(T^{(0)} > t) \ge \Q^a(T > t)^{\E[R_t^{(0)}]}\P(T^{(0)} > t).
\end{equation}
By Lemma \ref{lem:T_a}, and since $At/a^3 = O(1)$ by hypothesis, we have
\[
 \Q^a(T > t) \ge \exp\Big(O(p_B At/a^3 + \eta)\Big), 
\]
and by \eqref{eq:Rt_nu_expec}, we have
\begin{equation*}
 \E[R_t^{(0)}] = \pi t/a^3 Z_0 + O(Y_0),
\end{equation*}
such that, since $At/a^3 = O(1)$ by hypothesis,
\begin{equation}
\label{eq:382}
 \Q^a(T > t)^{\E[R_t^{(0)}]} \ge \exp\Big(O\left(\theta t/a^3(At/a^3 + \eta/p_B) + \eta + Y_0(p_B +\eta)\right)\Big)
\end{equation}
The lower bound in \eqref{eq:T} now follows from \eqref{eq:380}, \eqref{eq:382} and \eqref{eq:T0}, together with the fact that $\eta \le Cp_B^2$ by \eqref{eq:eta} and \eqref{eq:pB}.
\end{proof}

\begin{lemma}
\label{lem:moments_T}
Define $\theta$ as in Proposition \ref{prop:T}. Suppose that $Y_0\le e^{-1}$ and let $\alpha \ge 0$ and $n\in\N$. Then, for large $A$,
\begin{equation}
\label{eq:T_moments}
 \E[(T/a^3 + \alpha)^n\Ind_{(T\le a^3)}] \le \E[(T^{(0)}/a^3 + \alpha)^n\Ind_{(T^{(0)}\le a^3)}] \le C\sum_{k=0}^n\frac{n!\alpha^k}{k!(\theta/2)^{n-k}}
\end{equation}
Furthermore, if $0\ge \beta = o_A(A^{-1})$, then for large $A$ and $a$,
\begin{equation}
 \label{eq:T_expec}
 \E[(T/a^3)\Ind_{(T/a^3) \le \beta}] = \theta^{-1}(1+O(A\beta + p_B)) + O((\beta+\theta^{-1})e^{-O(\beta\theta)})
\end{equation}
\end{lemma}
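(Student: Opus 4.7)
The plan is to turn both parts into integrals of tail probabilities and then invoke Lemma~\ref{lem:T0} and Proposition~\ref{prop:T} respectively.

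For the first inequality in \eqref{eq:T_moments}: since $T = \min_l T^{(l)} \le T^{(0)}$, the event $\{T \le a^3\}$ is contained in $\{T^{(0)} \le a^3\}$ and on these events $T \le T^{(0)}$; the map $t \mapsto (t/a^3+\alpha)^n$ is non-decreasing because $\alpha \ge 0$, so the inequality is immediate. For the second inequality, Lemma~\ref{lem:T0} together with the hypotheses $Y_0 \le e^{-1}$ and (for $A$ large) $p_B \le 1/2$ yields the crude tail bound
\[
\P(T^{(0)} > t) \le C\exp\!\bigl(-\theta t/(2a^3)\bigr), \qquad 0 \le t \le a^3.
\]
Using the identity $(x+\alpha)^n = \alpha^n + n\int_0^x (s+\alpha)^{n-1}\,ds$, multiplying by $\Ind_{T^{(0)}\le a^3}$, taking expectations, and applying Fubini gives
\[
\E\bigl[(T^{(0)}/a^3+\alpha)^n\Ind_{T^{(0)}\le a^3}\bigr] \le \alpha^n + n\int_0^1 (s+\alpha)^{n-1}\P(T^{(0)}>sa^3)\,ds \le \alpha^n + Cn\int_0^\infty (s+\alpha)^{n-1}e^{-\theta s/2}\,ds.
\]
Substituting $u=\theta s/2$, expanding $(2u/\theta+\alpha)^{n-1}$ by the binomial theorem, and evaluating the resulting integrals via $\int_0^\infty u^j e^{-u}\,du = j!$ yields $n!\sum_{j=0}^{n-1}\alpha^j/(j!(\theta/2)^{n-j})$; absorbing the leading $\alpha^n$ as the $k=n$ term produces the claimed sum.

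For \eqref{eq:T_expec}, a direct Fubini (or integration by parts) argument gives
\[
\E\bigl[(T/a^3)\Ind_{T/a^3\le\beta}\bigr] = \int_0^\beta \P(T > ya^3)\,dy \;-\; \beta\,\P(T > \beta a^3).
\]
Since $\beta = o_A(A^{-1})$, for $A$ large every $t=ya^3$ with $y\in[0,\beta]$ lies in the range where Proposition~\ref{prop:T} applies; together with $Y_0\le e^{-1}$ this gives, uniformly in $y\in[0,\beta]$,
\[
\P(T > ya^3) = \exp\!\bigl(-\theta y(1+\delta_1(y)) + \delta_2\bigr), \qquad |\delta_1(y)| \le C\delta,\quad |\delta_2|\le Cp_B,
\]
with $\delta := A\beta + p_B$. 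Bracketing with the envelopes $e^{-\theta y(1\pm C\delta)}$ and integrating directly (\emph{without} expanding pointwise) produces
\[
\int_0^\beta \P(T>ya^3)\,dy = \theta^{-1}(1+O(\delta))\bigl(1 - e^{-\theta\beta(1+O(\delta))}\bigr)(1+O(p_B)),
\]
which splits into a leading $\theta^{-1}(1+O(A\beta+p_B))$ and a remainder $O(\theta^{-1}e^{-c\theta\beta})$; the subtracted term satisfies $\beta\P(T>\beta a^3) = O(\beta e^{-c\theta\beta})$, and both exponentially small contributions combine into $O((\theta^{-1}+\beta)e^{-O(\theta\beta)})$.

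The main technical subtlety is that in the exponent of $\P(T>ya^3)$ the relative error $O(Ay+p_B)$ is multiplied by $\theta y$, which can itself be large, so a naive Taylor expansion of $e^{-\theta y(1+\delta)}$ around $e^{-\theta y}$ is not legitimate. The trick is to keep the bracket $e^{-\theta y(1\pm C\delta)}$ intact under the integral: then $\delta$ only enters through the harmless prefactor $1/(\theta(1\pm C\delta)) = \theta^{-1}(1+O(\delta))$, while the distorted tail $e^{-\theta\beta(1\pm C\delta)}$ is (since $C\delta \le 1/2$ for $A$ large) still bounded by $e^{-c\theta\beta}$ and thus gets absorbed into the second, exponentially small error term in the statement.
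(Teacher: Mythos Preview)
Your argument for the first inequality in \eqref{eq:T_moments} is wrong. You write that ``the event $\{T\le a^3\}$ is contained in $\{T^{(0)}\le a^3\}$'', but since $T\le T^{(0)}$ the containment goes the \emph{other} way: $\{T^{(0)}\le a^3\}\subset\{T\le a^3\}$. On the event $\{T\le a^3<T^{(0)}\}$ the integrand on the left is $(T/a^3+\alpha)^n>0$ while the integrand on the right vanishes, so the pathwise comparison you propose simply fails. The paper does not argue pathwise either: it uses the same integral representation you derived,
\[
\E\bigl[(S+\alpha)^n\Ind_{(S\le 1)}\bigr]\ \le\ \alpha^n + n\int_0^1(s+\alpha)^{n-1}\,\P(S>s)\,\dd s,
\]
applied to $S=T/a^3$, and then invokes $\P(T>ta^3)\le\P(T^{(0)}>ta^3)$ inside the integral to get the same upper bound as for $S=T^{(0)}/a^3$. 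So the content of the ``first inequality'' is really just that both expectations are bounded by the right-hand side of \eqref{eq:T_moments}; replace your flawed pathwise sentence by this tail-comparison remark and you are done.

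The remainder of your proposal---the second inequality in \eqref{eq:T_moments} via Lemma~\ref{lem:T0} and the binomial/Gamma computation, and all of \eqref{eq:T_expec} via Proposition~\ref{prop:T}---is correct and follows the paper's route; your treatment of \eqref{eq:T_expec} is in fact more explicit than the paper's about why one should keep the envelope $e^{-\theta y(1\pm C\delta)}$ intact rather than Taylor-expand.
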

\begin{proof}
We first note that we have, for $n\ge 0$ and $\gamma > 0$,
\begin{equation}
 \label{eq:398}
\int_0^\infty (t+\alpha)^n e^{-\gamma t}\,\dd t = \sum_{k=0}^n \frac{n!\alpha^k}{k!\gamma^{n+1-k}}.
\end{equation}
Now, we have
\[
\begin{split}
\E[(T^{(0)}/a^3 + \alpha)^n\Ind_{(T^{(0)}\le a^3)}] &= \int_0^1 (t+\alpha)^n\,\P(T^{(0)}/a^3 \in \dd t) \\
&\le n\int_0^1 (t+\alpha)^{n-1}\,\P(T^{(0)}>ta^3)\,\dd t + \alpha^n.
\end{split}
\]
The second inequality of \eqref{eq:T_moments} now follows from Lemma \ref{lem:T0} and \eqref{eq:398}, since $p_B \to 0$ as $A$ goes to infinity. The first inequality follows in the same way, using the trivial fact that $\P(T>ta^3) \le \P(T^{(0)}>ta^3)$. For the second part, we note that
\[
 \E[(T/a^3)\Ind_{(T/a^3) \le \beta}] = \int_0^{\beta} \P(T > ta^3)\,\dd t - \beta \P(T > \beta a^3),
\]
and by Proposition \ref{prop:T} and the hypothesis on $Y_0$, we have for $t\le \beta$ and large $A$ and $a$,
\[
 \P(T > ta^3) = (1+O(p_B))\exp(-\theta t(1+O(A\beta + p_B))).
\]
Equation \eqref{eq:T_expec} now follows from the last two equations.
\end{proof}

We now show how we can couple the variable $T$ with an exponentially distributed variable:
\begin{lemma}
\label{lem:coupling_T}
 Suppose there exists a universal constant $\kappa$, such that $e^{-A}Z_0 = \kappa + O(\ep^{3/2})$ and that $Y_0 \le \eta Z_0$. Then there exists a coupling $(T,V)$, such that $T$ is $\sigma(V)$-measurable and the random variable $V$ is exponentially distributed with parameter $p_Be^A\pi\kappa$ and such that for large $A$ and large $a$, we have $\P(B_{\mathrm{coupl}}) \le C\ep^2$, where $B_{\mathrm{coupl}}$ is the event
\[
 B_{\mathrm{coupl}} = \{|T/a^3 - V| > \ep^{3/2}\} \cup \{|T/(a^3V) - 1| > \sqrt{\ep}) \le \ep^2\}.
\]
\end{lemma}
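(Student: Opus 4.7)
The plan is to build the coupling by the quantile transform. Let $U$ be uniform on $(0,1)$ and set
\[
V = -\frac{\log U}{\lambda}, \qquad T = a^3\, \bar G^{-1}(U),
\]
with $\lambda = p_B e^A \pi\kappa$, where $\bar G(s) = \P(T/a^3 > s)$ is the survival function of $T/a^3$ under $\P$ and $\bar G^{-1}$ denotes its right-continuous inverse. By construction, $V \sim \mathrm{Exp}(\lambda)$, $T$ has the correct marginal, and $T$ is $\sigma(V)$-measurable, so the structural requirement of the lemma is automatic. The work reduces to comparing $\bar G$ to $\bar F(s) = e^{-\lambda s}$.

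Feeding the hypotheses into Proposition~\ref{prop:T}, one gets $\theta = p_B \pi Z_0 = \lambda(1 + O(\ep^{3/2}))$ from $e^{-A}Z_0 = \kappa + O(\ep^{3/2})$; $p_B Y_0 \le \eta\theta/\pi \le C\ep^{11}$ from $Y_0 \le \eta Z_0$ together with \eqref{eq:pB} and \eqref{eq:eta_ep}; and $Y_0 \le e^{-1}$ for large $A$ by \eqref{eq:eta}. Proposition~\ref{prop:T} then yields, in its range of validity,
\[
\bar G(s) = \exp\bigl(-\lambda s(1+\delta(s)) + r(s)\bigr), \qquad |\delta(s)| \le C(\ep^{3/2} + As + p_B),\quad |r(s)| \le C\ep^{11}.
\]
Next I would introduce the window $s_* = \ep^{23/2}$, $s^* = \ep^{3/4}/(4\sqrt A)$ and verify that $V \in [s_*, s^*]$ with probability $1-O(\ep^2)$: from $\lambda \ge c/\ep$ one has $\P(V > s^*) \le e^{-c\ep^{-1/4}/\sqrt A} \le e^{-cA^{15/4}}$ via \eqref{eq:ep_upper}, which by \eqref{eq:ep_lower} is $o(\ep^2)$; and $\P(V < s_*) \le \lambda s_* \le C\ep^{21/2}$.

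On $\{s_* \le V \le s^*\}$, a direct monotonicity argument based on $\bar G(2s^*) < e^{-\lambda s^*}$ yields $s_T := T/a^3 \le 2s^*$, so that $|\delta(s_T)| \le C\sqrt\ep$, the leading contribution $As_T \le C\sqrt A\, \ep^{3/4}$ being bounded by $\sqrt\ep$ through \eqref{eq:ep_upper} and $p_B$ being dominated using \eqref{eq:ep_lower}. Writing $s_V = V$, the identity $\bar G(s_T) = e^{-\lambda s_V}$ reads $\lambda s_V = \lambda s_T(1+\delta(s_T)) - r(s_T)$, from which
\[
|s_V - s_T| \le s_T|\delta(s_T)| + C\ep^{11}/\lambda \le C(s_T \ep^{3/2} + A s_T^2 + s_T p_B) + C\ep^{12} \le C\ep^{3/2},
\]
where the dominant term satisfies $As_T^2 \le A(2s^*)^2 = \ep^{3/2}/4$; dividing by $s_V\ge s_*$ and using that $s_T/s_V$ is bounded then gives $|s_T/s_V - 1| \le C\sqrt\ep$. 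Absorbing the remaining constants into the ``for large $A$ and $a$'' clause produces $\P(B_{\mathrm{coupl}}) \le C\ep^2$.

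The hard part is the tight balance of scales: the Proposition~\ref{prop:T} error terms must be controlled simultaneously in absolute form ($\le \ep^{3/2}$) and in relative form ($\le \sqrt\ep$) across the whole window, which is precisely what forces the stringent bounds \eqref{eq:ep_upper}--\eqref{eq:eta} on the parameters. A secondary technical point is the monotonicity step transferring control from $V$ to $T/a^3$ in the presence of the $O(\ep^{11})$ additive error in $\log\bar G$, which is what makes the lower cutoff $s_* = \ep^{23/2}$ the right choice.
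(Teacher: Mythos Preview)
Your proof is correct and follows essentially the same route as the paper: construct the quantile coupling between $T/a^3$ and an $\mathrm{Exp}(\lambda)$ variable, then compare the two survival functions via Proposition~\ref{prop:T} on a window whose endpoints are dictated by the parameter constraints \eqref{eq:ep_upper}--\eqref{eq:eta}. The only cosmetic differences are that the paper builds $U$ from the given $T$ (handling the atom at $\infty$ with an auxiliary uniform) rather than starting from $U$, and that it bounds $\P(T/a^3\notin[\sqrt\ep\,\eta,\ \ep^{3/4}/\sqrt A])$ directly using Lemma~\ref{lem:T0} and Proposition~\ref{prop:T}, whereas you bound $\P(V\notin[s_*,s^*])$ and then transfer to $T/a^3$ by the monotonicity of $\bar G$; both lead to the same estimates.
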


\begin{proof}
For brevity, we define $\gamma := p_Be^A\pi\kappa$. Let $F$ be the tail distribution function of $T$, i.e. $F(t) := \P(T \ge t)$. It is clear that $T$ has no atoms except $\infty$. We can therefore define a random variable $U$ which is uniformly distributed on $(0,1)$ by setting
\[
  U = F(T)\Ind_{(T<\infty)} + U' F(\infty)\Ind_{(T=\infty)},
\]
where $U'$ is a uniformly distributed random variable on $(0,1)$, independent of $T$. Now we define $V = -\gamma^{-1} \log U$. Then $V$ is exponentially distributed with parameter $\gamma$ and $T = F^{-1}(e^{-\gamma V})$, where $F^{-1}$ denotes the generalized right-continuous inverse of $F$. Hence, $T$ is $\sigma(V)$-measurable. On $\{T<\infty\}$, we have by Proposition \ref{prop:T}, for $a$ large enough,
\begin{equation}
\label{eq:390}
\begin{split}
 V &= -\gamma^{-1} (p_Be^A\pi e^{-A} Z_0 T/a^3(1+ O(AT/a^3 + p_B))) + O(p_BY_0)\\
&= T/a^3(1 + O(\ep^{3/2} + AT/a^3 + p_B)) + O(\kappa p_B e^A\eta),
\end{split}
\end{equation}
by the hypotheses on $Z_0$ and $Y_0$. Hence, by \eqref{eq:ep_lower}, \eqref{eq:eta_ep} and \eqref{eq:pB}, we have for $a$ large enough,
\[
 |T/a^3 - V| = O(\ep^{3/2}T/a^3 + A(T/a^3)^2) + O(\ep^{3/2}).
\]
But now we have by Lemma \ref{lem:T0}, for large $A$ and $a$,
\begin{equation}
\label{eq:392}
 \P(T/a^3 > \ep^{3/4}/\sqrt{A}) \le \P(T^{(0)}/a^3 > \ep^{3/4}/\sqrt{A}) \le Ce^{-O(\ep^{-1/4}/\sqrt{A})} \le \ep^2/2,
\end{equation}
by \eqref{eq:ep_lower}. Furthermore, we get from \eqref{eq:390},
\begin{equation}
 \label{eq:394}
V/(T/a^3) = 1 + O(\ep^{3/2} + AT/a^3) + (O(\ep\eta) + o(1))/(T/a^3),
\end{equation}
and by \eqref{eq:eta_ep}, we have by Proposition \ref{prop:T},
\begin{equation}
 \label{eq:396}
\P(T/a^3 \le \sqrt{\ep}\eta) = O(\eta/\sqrt{\ep}) \le \ep^2/2,
\end{equation}
for large $A$ and $a$. Equations \eqref{eq:390}, \eqref{eq:392}, \eqref{eq:394} and \eqref{eq:396} now prove the lemma.
\end{proof}

\subsection{The particles that do not participate in the breakout}
\label{sec:before_breakout_particles}

In this section, we fix $t\le a^3/(3\Const{eq:375}A)$. We are going to study the system conditioned not to break out until time $t$, the law and expectation of which are denoted as in Section \ref{sec:weakly_conditioned} by $\Phat$ and $\Ehat$, respectively, hence
\[
 \Phat(\cdot) = \P(\cdot\,|\,T > t).
\]
Under the law $\Phat$, the process stopped at $\mathscr L_H \wedge t$ then follows the law $\Ptilde$ from Section \ref{sec:weakly_conditioned}, with
\begin{equation}
 \label{eq:def_ps}
p(s) = p_B\Ind_{(s\le t)} + (1-p_B)\Q^a(T \le t-s),
\end{equation}
such that by Lemma \ref{lem:T_a}, \eqref{eq:eta} and \eqref{eq:QaZC}, for large $A$ and $a$,
\begin{equation}
 \label{eq:pbar_estimate}
\widebar{p}_t \le C p_B.
\end{equation}
As in the proof of Lemma \ref{lem:conditioned_Rt}, one can then show that $\Qhat^a[Z] = (1+O(p_B)) \Q^a[Z]$ and $\Qhat^a[Z^2] \le (1+O(p_B)) \Q^a[Z^2]$, such that by \eqref{eq:QaZ}, \eqref{eq:QaZC} and \eqref{eq:QaZsquared}, we have for large $A$ and $a$,
\begin{equation}
 \label{eq:QhatZ}
 \Qhat^a[Z] = \frac{\pi}{c_0}(A+\log\ep+\const{eq:QaZ}+ o_A(1)),\quad \Qhat^a[Z] \le CA,\quad\tand\quad \Qhat^a[Z^2] \le C\ep e^A
\end{equation}

We define two filtrations $(\G_l)_{l\ge 0}$ and $(\H_l)_{l\ge 0}$ by
\[
 \G_l = \F_{\mathscr{S}^{(l)}_t \wedge t},\quad \H_l = \F_{\mathscr{R}^{(l)}_t \wedge t},
\]
such that  and $\G_l \subset \H_l \subset \G_{l+1}$ for every $l$. Now define
\[
 Z_\emptyset^{(l)} = \sum_{(u,s)\in\mathscr{S}^{(l)}_t} Z^{(u,s)},\quad Y_\emptyset^{(l)} = \sum_{(u,s)\in\mathscr{S}^{(l)}_t} Y^{(u,s)},
\]
and recall from Section \ref{sec:before_breakout_definitions} the definition of $Z_\emptyset^{(k;l)}$, $Z_\emptyset^{(l+)}$ and $Z_\emptyset$ and the corresponding quantities for $Y$.
\begin{lemma}
 \label{lem:Zl_exp_upperbound}
 Suppose $Y_0 \le \eta Z_0$. We have for all $l\ge 1$, and large $A$ and $a$,
\begin{equation}
 \label{eq:Zl_exp_upperbound}
 \Ehat[Z_\emptyset^{(l)}\,|\,\G_{l-1}] \le (\pi+\Const{eq:Zl_exp_upperbound}p_B)\Qhat^a[Z](\tfrac t {a^3} + \Const{eq:Zl_exp_upperbound}\eta) Z_\emptyset^{(l-1)}.
\end{equation}
In particular,
\begin{equation}
 \label{eq:Zl_exp_upperbound_2}
 \Ehat[Z_\emptyset^{(l)}] \le \Big((\pi+\Const{eq:Zl_exp_upperbound}p_B)\Qhat^a[Z](\tfrac t {a^3} + \Const{eq:Zl_exp_upperbound}\eta)\Big)^l Z_0.
\end{equation}
In the case $l=1$, we also have for large $A$ and $a$,
\begin{equation}
 \label{eq:Z1_exp_lowerbound}
 \Ehat[Z_\emptyset^{(1)}] \ge (\pi-\Const{eq:Z1_exp_lowerbound} p_B)\Qhat^a[Z](\tfrac t {a^3} - \Const{eq:Z1_exp_lowerbound}\eta) Z_0.
\end{equation}
\end{lemma}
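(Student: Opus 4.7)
My plan is to apply the strong branching property at $\mathscr S^{(l-1)}_t$, reducing the estimate to an independent per-lineage calculation. Conditional on $\G_{l-1}$, each $(u,s)\in \mathscr S^{(l-1)}_t$ initiates an independent BBM subtree under $\P$; because the event $\{T>t\}$ factors over these subtrees (once past $\mathscr S^{(l-1)}_t$ the only remaining breakouts to forbid are those of the lineages rooted at the tier-$(l-1)$ particles), I expect them to remain conditionally independent under $\Phat$, with each evolving as a weakly-conditioned BBM in the sense of Section \ref{sec:weakly_conditioned} and penalty $p(r)$ from \eqref{eq:def_ps}. Since $\widebar p_t \le Cp_B$ by \eqref{eq:pbar_estimate}, the machinery of that section applies uniformly. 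A second application of strong branching at $\mathscr R^{(l-1)}_t$ then makes the $Z$-contributions $Z^{(v,r)}$ of the tier-$l$ hits within a single subtree conditionally i.i.d., each with mean equal to $\Qhat^a[Z]$ up to a $(1+O(p_B))$ factor arising from the remaining-time conditioning.

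For the upper bound \eqref{eq:Zl_exp_upperbound}, I would first apply Wald's identity to the $u$-subtree contribution $Z_\emptyset^{(l),u}$ to obtain
\[
\Ehat[Z_\emptyset^{(l),u} \mid \G_{l-1}] \le \Qhat^a[Z]\cdot \Etilde^{X_u(s)}\bigl[R^{(0)}_{[0,t-s]}\bigr].
\]
The right-hand inequality of \eqref{eq:conditioned_Rt} combined with Lemma \ref{lem:h_estimate} and $\widebar p_t\le Cp_B$ gives $h(X_u(s),0)^{-1}\le 1+Cp_B$, and then \eqref{eq:Rt_nu_expec} yields
\[
\Etilde^{X_u(s)}\bigl[R^{(0)}_{[0,t-s]}\bigr] \le (1+Cp_B)\Bigl(\pi\tfrac{t-s}{a^3}\,w(X_u(s))+C\,e^{\mu(X_u(s)-a)}\Bigr).
\]
Summing over $(u,s)\in \mathscr S^{(l-1)}_t$: the $w$-sum equals $Z_\emptyset^{(l-1)}$; the $e^{\mu(\cdot-a)}$-sum $Y_\emptyset^{(l-1)}$ is bounded by $C\eta\,Z_\emptyset^{(l-1)}$, since for $l \ge 2$ each tier-$(l-1)$ particle lies within $O(\zeta/a^2)$ of the critical line $a-y$ with $y\ge 1/\eta$ (Lemma \ref{lem:y_zeta}), while for $l=1$ the same bound follows from the hypothesis $Y_0 \le \eta Z_0$. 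This produces \eqref{eq:Zl_exp_upperbound}. Iterating via the tower property on $\G_{l-1}\subset \G_l$ with base case $Z_\emptyset^{(0)}=Z_0$ then yields \eqref{eq:Zl_exp_upperbound_2}.

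For the lower bound \eqref{eq:Z1_exp_lowerbound}, I would redo the $l=1$ decomposition but use the left-hand inequality of \eqref{eq:conditioned_Rt}, namely $\Etilde^x[R_t]\ge \E^x[R_t]-\widebar p_t\E^x[R_t^2]$. The second-moment estimate \eqref{eq:Rt_nu_variance} together with $\widebar p_t\le Cp_B$ bounds the subtracted term by $Cp_B(\pi t/a^3\,Z_0 + Y_0)$, and the per-hit factor $\Qhat^a[Z]$ is bounded below by $(1-Cp_B)\Q^a[Z]$ via \eqref{eq:QhatZ}; combining these and absorbing all $O(p_B)$ and $O(\eta)$ corrections into $\Const{eq:Z1_exp_lowerbound}$ delivers \eqref{eq:Z1_exp_lowerbound}. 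The hard part will be making the decomposition in the first paragraph fully rigorous: one must verify that, after conditioning on $\G_{l-1}$, the event $\{T>t\}$ truly factors over the tier-$(l-1)$ lineages (despite $T$ being determined only after the descendants return to the critical line, potentially past $\G_{l-1}$-measurable times) so that the weakly-conditioned machinery of Section \ref{sec:weakly_conditioned} applies per lineage with exactly the penalty function \eqref{eq:def_ps}. Once this reduction is in hand, all remaining estimates are routine combinations of Lemmas \ref{lem:conditioned_Rt}, \ref{lem:h_estimate}, \ref{lem:Rt}, and the control \eqref{eq:QhatZ} on $\Qhat^a[Z]$.
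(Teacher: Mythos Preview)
Your proposal is correct and follows essentially the same approach as the paper: condition first on $\H_{l-1}$ to extract the factor $\Qhat^a[Z]\cdot R_t^{(l-1)}$ via the strong branching property, then bound $\Ehat[R_t^{(l-1)}\mid\G_{l-1}]$ using Lemma~\ref{lem:conditioned_Rt}, \eqref{eq:Rt_nu_expec}, Lemma~\ref{lem:h_estimate}, and the bound $Y_\emptyset^{(l-1)}\le\eta Z_\emptyset^{(l-1)}$ (from Lemma~\ref{lem:critical_line} for $l\ge2$, from the hypothesis for $l=1$). The factorization you flag as ``the hard part'' is in fact routine: once you condition on $\G_{l-1}$, all breakouts from tiers $\le l-2$ are already determined, so the remaining event $\{T>t\}$ concerns only the subtrees rooted at $\mathscr S^{(l-1)}_t$ and factors over them by the strong branching property.
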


\begin{proof}
We have
\begin{equation}
\label{eq:Zl_Hl}
 \Ehat[Z_\emptyset^{(l)}\,|\,\H_{l-1}] = \Ehat\Big[\sum_{(u,s)\in\mathscr R^{(l-1)}_t} Z^{(u,s)}\,\Big|\,\H_{l-1}\Big] = \Qhat^a[Z] R_t^{(l-1)},
\end{equation}
since conditioned on $\H_{\mathscr R_t^{(l-1)}\wedge t}$, the random variables $Z^{(u,s)}$ are iid under $\Phat$ of the same law as $Z$ under $\Qhat^a$ and independent of $\H_{\mathscr R_t^{(l-1)}\wedge t}$, by the strong branching property. Now, we have
\begin{equation}
\label{eq:400}
 \Ehat[R_t^{(l-1)}\,|\,\G_{l-1}] = \sum_{(u,s)\in\mathscr S^{(l-1)}_t} \E^{X_u(s)}[R_{t-s}^{(0)}\,|\,T > t-s].
\end{equation}
By \eqref{eq:conditioned_Rt} and \eqref{eq:Rt_nu_expec}, the right-hand side of \eqref{eq:400} is less than or equal to
\[
 \Big(\pi \frac t {a^3} Z_\emptyset^{(l-1)} + \Const{eq:Rt_nu_expec} Y_\emptyset^{(l-1)}\Big)\max_{(u,s)\in \mathscr S^{(l-1)}_t} h(X_u(s),s)^{-1}
\]
and we have $Y_\emptyset^{(l-1)} \le Z_\emptyset^{(l-1)}/y \le Z_\emptyset^{(l-1)} \eta$, $\Phat$-almost surely, by the definition of the event $\Gamma$ in Lemma \ref{lem:critical_line} for $l\ge 2$ and by hypothesis for $l=1$. Furthermore, by Lemma \ref{lem:h_estimate}, \eqref{eq:mu_c0} and \eqref{eq:pbar_estimate}, we have $(h(x,0))^{-1} \le 1+Cp_B$, as soon as $p_B$ is small enough. Combining these inequalities with \eqref{eq:Zl_Hl} and \eqref{eq:400} gives \eqref{eq:Zl_exp_upperbound}. Equation \eqref{eq:Zl_exp_upperbound_2} follows easily from \eqref{eq:Zl_exp_upperbound}. Now, in the case $l=1$, we have $\G_0 = \F_0$ by definition. Let $\nu=\sum_{i=1}^n$ denote the initial configuration. By \eqref{eq:conditioned_Rt} and \eqref{eq:Rt_nu_expec} and \eqref{eq:Rt_nu_variance}, we have
\[
\begin{split}
 \Ehat[R_t^{(0)}] = \sum_{i=1}^n \Ehat^{x_i}[R_t^{(0)}] &\ge \sum_{i=1}^n \E^{x_i}[R_t^{(0)}] - \widebar{p}_t \E^{x_i}[(R_t^{(0)})^2] \\
&\ge \pi \frac t {a^3} Z_0 - \Const{eq:Rt_nu_expec} Y_0 - \widebar{p}_t \Const{eq:Rt_nu_variance}(\frac t {a^3} Z_0 +Y_0)
\end{split}
\]
This yields \eqref{eq:Z1_exp_lowerbound}, since $Y_0 \le \eta Z_0$ by hypothesis.
\end{proof}

In applications of Lemma \ref{lem:Zl_exp_upperbound}, we will often sum the right-hand side of \eqref{eq:Zl_exp_upperbound_2} over all $l\ge 0$. We therefore define $\tconst{eq:def_tcrit}$ to be the solution of
\begin{equation}
 \label{eq:def_tcrit}
 (\pi+\Const{eq:Zl_exp_upperbound}p_B)\Qhat^a[Z](\tconst{eq:def_tcrit}/{a^3} + \Const{eq:Zl_exp_upperbound}\eta) = 1/2.
\end{equation}

We now turn to the variance of $Z^{(l)}_\emptyset$ in the cases that are of interest to us, namely, for $l=1,2$.

\begin{lemma}
 \label{lem:Zl_variance}
 We have for $l\ge 1$ and large $A$ and $a$,
\[
 \Varhat(Z_\emptyset^{(l)}\,|\,\G_{l-1}) \le C\ep e^AZ^{(l-1)}_\emptyset(t/a^3+\eta).
\]
In particular, we have for $l=1$,
\[
  \Varhat(Z_\emptyset^{(1)}) \le C\ep e^AZ_0(t/a^3+\eta).
\]
\end{lemma}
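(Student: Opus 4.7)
The plan is to apply the conditional variance formula with respect to the finer filtration $\H_{l-1}\supset\G_{l-1}$,
\[
\Varhat(Z_\emptyset^{(l)}\,|\,\G_{l-1}) = \Ehat\bigl[\Varhat(Z_\emptyset^{(l)}\,|\,\H_{l-1})\,\big|\,\G_{l-1}\bigr] + \Varhat\bigl(\Ehat[Z_\emptyset^{(l)}\,|\,\H_{l-1}]\,\big|\,\G_{l-1}\bigr),
\]
and bound each term separately, noting that $Z_\emptyset^{(l)} = \sum_{(u,s)\in\mathscr R^{(l-1)}_t} Z^{(u,s)}$ and that, conditionally on $\H_{l-1}$, the $Z^{(u,s)}$ are i.i.d.\ with law $Z$ under $\Qhat^a$ (this is the same conditional independence property already used in the proof of Lemma \ref{lem:Zl_exp_upperbound}).

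For the first term I would use
\[
\Varhat(Z_\emptyset^{(l)}\,|\,\H_{l-1}) = R_t^{(l-1)}\,\Varhat^{\Qhat^a}(Z) \le \Qhat^a[Z^2]\,R_t^{(l-1)} \le C\ep e^A R_t^{(l-1)}
\]
by \eqref{eq:QhatZ}. Then I would take $\Ehat[\,\cdot\,|\G_{l-1}]$ and plug in the bound
\[
\Ehat[R_t^{(l-1)}\,|\,\G_{l-1}] \le C(t/a^3+\eta)Z_\emptyset^{(l-1)},
\]
which is exactly what was proved for \eqref{eq:Zl_exp_upperbound} (via \eqref{eq:conditioned_Rt}, \eqref{eq:Rt_nu_expec}, Lemma \ref{lem:h_estimate} and the inequality $Y_\emptyset^{(l-1)}\le\eta Z_\emptyset^{(l-1)}$). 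This gives the first term $\le C\ep e^A Z_\emptyset^{(l-1)}(t/a^3+\eta)$.

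For the second term, since $\Ehat[Z_\emptyset^{(l)}\,|\,\H_{l-1}] = \Qhat^a[Z]\,R_t^{(l-1)}$, I get
\[
\Varhat\bigl(\Ehat[Z_\emptyset^{(l)}\,|\,\H_{l-1}]\,\big|\,\G_{l-1}\bigr) = (\Qhat^a[Z])^2 \Varhat(R_t^{(l-1)}\,|\,\G_{l-1}) \le CA^2\,\Varhat(R_t^{(l-1)}\,|\,\G_{l-1}),
\]
by \eqref{eq:QhatZ}. To estimate $\Varhat(R_t^{(l-1)}\,|\,\G_{l-1})$ I would use that, conditionally on $\G_{l-1}$, the subtrees descending from the distinct points of $\mathscr S^{(l-1)}_t$ are independent even under $\Phat$: by the strong branching property they are independent under $\P$, and the conditioning event $\{T>t\}$ factorises across them (no breakout before $\mathscr S^{(l-1)}_t$ is $\G_{l-1}$-measurable, and each subtree contributes independently to breakouts after that). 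Hence
\[
\Varhat(R_t^{(l-1)}\,|\,\G_{l-1}) = \sum_{(u,s)\in\mathscr S^{(l-1)}_t} \Varhat^{X_u(s)}(R_{t-s}^{(0)}\mid T>t-s) \le \sum_{(u,s)} \Ehat^{X_u(s)}\bigl[(R_{t-s}^{(0)})^2\bigr].
\]
Bounding each term by $h(X_u(s),s)^{-1}\E^{X_u(s)}[R_{t-s}^2]$ (definition of the weak conditioning) and applying Lemma \ref{lem:h_estimate} together with \eqref{eq:Rt_nu_variance} gives each summand $\le C((t/a^3)w(X_u(s))+e^{\mu(X_u(s)-a)})$; summing yields $\Varhat(R_t^{(l-1)}\,|\,\G_{l-1}) \le C(Z_\emptyset^{(l-1)}t/a^3+Y_\emptyset^{(l-1)}) \le CZ_\emptyset^{(l-1)}(t/a^3+\eta)$, using once more $Y_\emptyset^{(l-1)}\le\eta Z_\emptyset^{(l-1)}$.

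Putting the two pieces together I obtain $\Varhat(Z_\emptyset^{(l)}\,|\,\G_{l-1}) \le C(\ep e^A + A^2)Z_\emptyset^{(l-1)}(t/a^3+\eta)$. Since \eqref{eq:ep_lower} gives $\ep e^A\ge\Const{eq:ep_lower}e^{5A/6}$, the term $A^2$ is absorbed into $C\ep e^A$ for large $A$, yielding the claimed bound. The case $l=1$ then follows with the convention $Z_\emptyset^{(0)}=Z_0$, $Y_\emptyset^{(0)}=Y_0$, invoking the standing hypothesis $Y_0\le\eta Z_0$ precisely where $Y_\emptyset^{(l-1)}\le\eta Z_\emptyset^{(l-1)}$ was used. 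The main obstacle is verifying the conditional independence of the subtrees rooted at $\mathscr S^{(l-1)}_t$ under $\Phat$; everything else is a bookkeeping exercise using the estimates already proved.
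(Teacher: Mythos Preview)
Your proposal is correct and follows essentially the same route as the paper: the conditional variance decomposition through $\H_{l-1}$, the identification $\Varhat(Z_\emptyset^{(l)}\mid\H_{l-1})=R_t^{(l-1)}\Var_{\Qhat^a}(Z)$ and $\Ehat[Z_\emptyset^{(l)}\mid\H_{l-1}]=\Qhat^a[Z]\,R_t^{(l-1)}$, the bounds $\Ehat[R_t^{(l-1)}\mid\G_{l-1}],\ \Varhat(R_t^{(l-1)}\mid\G_{l-1})\le C(t/a^3+\eta)Z_\emptyset^{(l-1)}$, and the absorption of $A^2$ into $\ep e^A$ via \eqref{eq:ep_lower}. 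Your write-up is in fact more careful than the paper's, which has two typos (it writes $\H_l$ for $\H_{l-1}$ and $\Qhat^a[Z]$ for $(\Qhat^a[Z])^2$ in \eqref{eq:430}) and simply asserts the $\Varhat(R_t^{(l-1)}\mid\G_{l-1})$ bound ``by Lemma \ref{lem:Rt}'' without spelling out the conditional-independence factorisation you provide.
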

\begin{proof}
 We have 
\begin{equation}
\label{eq:430} 
\begin{split}
\Varhat(Z_\emptyset^{(l)}\,|\,\G_{l-1}) &= \Ehat[\Varhat(Z_\emptyset^{(l)}\,|\,\H_l)\,|\,\G_{l-1}] + \Varhat(\Ehat[Z_\emptyset^{(l)}\,|\,\H_l]\,|\,\G_{l-1})\\
&= \Var_{\Qhat^a}(Z)\Ehat[R^{({l-1})}_t\,|\,\G_{l-1}] + \Qhat^a[Z]\Varhat(R^{({l-1})}_t\,|\,\G_{l-1})
\end{split}
\end{equation}
By Lemma \ref{lem:Rt}, the assumption $t\le a^3$ and the fact that $Y_0 \le Z_0/y$ on $G_0$ (in the case $l=1$), we have
\[
 \Ehat[R^{(l-1)}_t\,|\,\G_{l-1}] \le C(t/a^3+\eta)Z^{(l-1)}_\emptyset,\quad\tand\quad \Varhat(R^{({l-1})}_t\,|\,\G_{l-1}) \le C(t/a^3+\eta)Z^{(l-1)}_\emptyset.
\]
The lemma now follows from these equations, together with \eqref{eq:ep_lower} and \eqref{eq:QhatZ}.
\end{proof}

\begin{lemma}
 \label{lem:Z2_variance}
 Suppose that $Z_0 \le Ce^A$. Then we have for large $A$ and $a$,
\[
 \Varhat(Z_\emptyset^{(2)}) \le Ce^{A}Z_0\big(\ep A (t/a^3+\eta)^2 + \ep A^2 (t/a^3+\eta)^3 + A^4(t/a^3+\eta)^4\big).
\]
\end{lemma}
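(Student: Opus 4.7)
The plan is to proceed by the law of total variance, conditioning on $\G_1$:
\[
\Varhat(Z_\emptyset^{(2)}) = \Ehat\bigl[\Varhat(Z_\emptyset^{(2)}\mid \G_1)\bigr] + \Varhat\bigl(\Ehat[Z_\emptyset^{(2)}\mid \G_1]\bigr).
\]
Both terms can be attacked with the ingredients already established for tier~$1$, namely Lemma~\ref{lem:Zl_exp_upperbound} (conditional mean) and Lemma~\ref{lem:Zl_variance} (conditional variance, and also variance of $Z_\emptyset^{(1)}$ itself).

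For the first, conditional-variance term, I would apply Lemma~\ref{lem:Zl_variance} with $l=2$ to get $\Varhat(Z_\emptyset^{(2)}\mid\G_1) \le C\ep e^A (t/a^3+\eta) Z_\emptyset^{(1)}$, and then take an outer expectation using $\Ehat[Z_\emptyset^{(1)}] \le CA(t/a^3+\eta)Z_0$ (which follows from Lemma~\ref{lem:Zl_exp_upperbound} combined with $\Qhat^a[Z]\le CA$ from \eqref{eq:QhatZ}). This produces the $\ep A (t/a^3+\eta)^2$ contribution.

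For the second, variance-of-conditional-mean term, the key observation is that Lemma~\ref{lem:Zl_exp_upperbound} provides the pointwise bound $\Ehat[Z_\emptyset^{(2)}\mid\G_1] \le CA(t/a^3+\eta) Z_\emptyset^{(1)}$, so the variance is bounded by $C^2 A^2(t/a^3+\eta)^2 \,\Ehat[(Z_\emptyset^{(1)})^2]$. I would then split the second moment as $\Ehat[(Z_\emptyset^{(1)})^2] = \Varhat(Z_\emptyset^{(1)}) + \Ehat[Z_\emptyset^{(1)}]^2$. The variance part contributes $C\ep A^2(t/a^3+\eta)^3 e^A Z_0$ via Lemma~\ref{lem:Zl_variance}. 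The squared-mean part is $C A^2(t/a^3+\eta)^2 Z_0^2$, which after multiplication produces $C A^4(t/a^3+\eta)^4 Z_0^2$; here the hypothesis $Z_0\le Ce^A$ is invoked once to replace one factor of $Z_0$ by $Ce^A$, yielding the desired $A^4(t/a^3+\eta)^4 Z_0 e^A$.

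Summing the three contributions gives the stated bound. There is no real obstacle: the proof is a bookkeeping exercise once the law of total variance is combined with the tier-$1$ estimates already in hand. The only subtle point worth flagging is that the hypothesis $Z_0 \le Ce^A$ is used precisely to control the squared-mean term, which otherwise would scale like $Z_0^2$ rather than $Z_0 e^A$; this is why that hypothesis appears in the statement but not in Lemma~\ref{lem:Zl_variance}.
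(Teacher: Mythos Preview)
Your proposal is correct and follows essentially the same approach as the paper: law of total variance conditioning on $\G_1$, then Lemma~\ref{lem:Zl_variance} for the inner variance, Lemma~\ref{lem:Zl_exp_upperbound} for the conditional mean, and the split $\Ehat[(Z_\emptyset^{(1)})^2]=\Varhat(Z_\emptyset^{(1)})+\Ehat[Z_\emptyset^{(1)}]^2$ with the hypothesis $Z_0\le Ce^A$ applied exactly where you indicate. The paper's proof is line-for-line the same computation.
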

\begin{proof}
 We have by repeated application of Lemmas \ref{lem:Zl_exp_upperbound} and \ref{lem:Zl_variance},
\begin{align*}
 \Varhat(Z_\emptyset^{(2)}) &= \Varhat(\Ehat[Z_\emptyset^{(2)}\,|\,\G_1]) + \Ehat[\Varhat(Z_\emptyset^{(2)}\,|\,\G_1)] \\
&\le \Ehat[(CA(t/a^3+\eta)Z_\emptyset^{(1)})^2] + \Ehat[C\ep e^A Z_\emptyset^{(1)}(t/a^3+\eta)]\\
&\le CA^2(t/a^3+\eta)^2(\Varhat(Z_\emptyset^{(1)}) + \Ehat[Z_\emptyset^{(1)}]^2) + C\ep e^A (t/a^3+\eta)\Ehat[Z_\emptyset^{(1)}]\\
&\le C(A^2\ep e^A (t/a^3+\eta)^3 Z_0 + A^4(t/a^3+\eta)^4Z_0^2 + \ep e^A A (t/a^3+\eta)^2 Z_0).
\end{align*}
The hypothesis on $Z_0$ now proves the lemma.
\end{proof}

\begin{lemma}
 \label{lem:Zl_t}
We have for all $l\ge 0$ and for large $A$ and $a$,
\[
 \Phat(|Z_t^{(l)} - Z_\emptyset^{(l)}| > K\,|\,\G_l) \le CK^{-2} Z_\emptyset^{(l)} + \Ind_{(Z_\emptyset^{(l)} > CK/p_B)}.
\]
In particular, suppose that $t\le \tconst{eq:def_tcrit}$, then
\[
 \Phat(|Z_t^{(0;2)} - Z_\emptyset^{(0;2)}| > K) \le C K^{-2} Z_0 + Ce^{-A}Z_0/(K\ep).
\]

\end{lemma}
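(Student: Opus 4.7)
My plan is to prove the first bound by a conditional Chebyshev argument and then deduce the ``in particular'' bound via a union bound combined with Markov's inequality and Lemma~\ref{lem:Zl_exp_upperbound}. For the first bound, by the strong branching property (in its Doob-transformed version under $\Phat$, as provided by Section~\ref{sec:weakly_conditioned}), conditional on $\G_l$ the descendants of the particles $(u,s)\in\mathscr S^{(l)}_t$ evolve as independent branching Brownian motions started at $X_u(s)$ at time $s$, and the contribution of each to $Z_t^{(l)}$ is the $w$-mass of those of its descendants that remain in $(0,a)$ at time $t$ without advancing to tier $l+1$. Since $w$ vanishes at $0$ and $a$, this is exactly the $Z$-process of Proposition~\ref{prop:quantities} with $f\equiv 0$; summing the single-particle variance bound~\eqref{eq:Zt_variance} and using $Y_\emptyset^{(l)}\le Z_\emptyset^{(l)}/y\le\eta Z_\emptyset^{(l)}$ from Lemma~\ref{lem:critical_line} together with $t/a^3\le 1$ gives $\Var(Z_t^{(l)}\mid\G_l)\le C Z_\emptyset^{(l)}$ under $\P$.

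To transfer this to $\Phat$ I use~\eqref{eq:conditioned_Z_upperbound},~\eqref{eq:conditioned_Z2} and Lemma~\ref{lem:conditioned_Z_lowerbound}: because the particles of $\mathscr S^{(l)}_t$ lie on the critical line and are hence close to $a$, Lemma~\ref{lem:h_estimate} together with~\eqref{eq:pbar_estimate} gives $(h(X_u(s),s))^{-1}\le 1+Cp_B$ for each of them, so that the bias satisfies $|\Ehat[Z_t^{(l)}\mid\G_l]-Z_\emptyset^{(l)}|\le C p_B Z_\emptyset^{(l)}$ and $\Varhat(Z_t^{(l)}\mid\G_l)\le C Z_\emptyset^{(l)}$. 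On the event $\{Z_\emptyset^{(l)}\le K/(2C p_B)\}$ the bias is at most $K/2$ and Chebyshev yields the conditional bound $C Z_\emptyset^{(l)}/K^2$; on the complementary event the indicator in the statement makes the bound trivially $1$.

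For the ``in particular'' statement, I use $|Z_t^{(0;2)}-Z_\emptyset^{(0;2)}|\le\sum_{l=0}^{2}|Z_t^{(l)}-Z_\emptyset^{(l)}|$ and apply the first bound with $K/3$ to each $l$, so that after taking $\Phat$-expectation
\[
\Phat(|Z_t^{(0;2)}-Z_\emptyset^{(0;2)}|>K)\le\sum_{l=0}^{2}\Big(CK^{-2}\Ehat[Z_\emptyset^{(l)}]+\Phat(Z_\emptyset^{(l)}>CK/p_B)\Big).
\]
Markov's inequality bounds the second term by $Cp_B\Ehat[Z_\emptyset^{(l)}]/K$, and Lemma~\ref{lem:Zl_exp_upperbound} with $t\le\tconst{eq:def_tcrit}$ (making the common ratio at most $1/2$) gives $\Ehat[Z_\emptyset^{(l)}]\le 2^{-l}Z_0$; summing and substituting $p_B=O(e^{-A}/\ep)$ from~\eqref{eq:pB} yields the claimed $CZ_0/K^2+Ce^{-A}Z_0/(K\ep)$. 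The main obstacle throughout is the conditioning on $\{T>t\}$, which under $\P$ destroys independence between descendants of different initial particles; this is precisely what Section~\ref{sec:weakly_conditioned} was designed for, and because all particles of interest live near $a$ their individual survival factors $(h(x,s))^{-1}$ are close to $1$, so the Doob-transform corrections cost only a multiplicative factor $1+O(p_B)$.
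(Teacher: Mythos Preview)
Your proof is correct and follows essentially the same approach as the paper: split into bias plus fluctuation, control the bias by the $h$-estimates of Lemmas~\ref{lem:conditioned_Z_lowerbound}--\ref{lem:h_estimate} together with~\eqref{eq:pbar_estimate}, control the fluctuation by conditional Chebyshev using the second-moment bound~\eqref{eq:Zt_2ndmoment} transferred to $\Phat$ via~\eqref{eq:conditioned_Z2}, and then deduce the second inequality by taking expectations, Markov's inequality and Lemma~\ref{lem:Zl_exp_upperbound}. One minor caveat: your appeal to Lemma~\ref{lem:critical_line} for $Y_\emptyset^{(l)}\le\eta Z_\emptyset^{(l)}$ is valid only for $l\ge 1$ (where the particles sit on the critical line); for $l=0$ one instead uses the standing hypothesis $Y_0\le\eta Z_0$ of Section~\ref{sec:before_breakout_particles}.
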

\begin{proof}
We decompose 
\begin{multline}
\label{eq:460}
 \Phat(|Z_t^{(l)} - Z_\emptyset^{(l)}| > 2K\,|\,\G_l) \le \Phat(|Z_t^{(l)} - \Ehat[Z_t^{(l)}\,|\,\G_l]| > K\,|\,\G_l) + \Ind_{(|\Ehat[Z_t^{(l)}\,|\,\G_l] - Z_\emptyset^{(l)}| > K)}.
\end{multline} 
Now, we have by the conditional Chebychev inequality:
\begin{equation}
 \label{eq:462}
\Phat(|Z_t^{(l)} - \Ehat[Z_t^{(l)}\,|\,\G_l]| > K\,|\,\G_l) \le \frac{\Varhat(Z_t^{(l)}\,|\,\G_l)}{K^2}.
\end{equation} 
By \eqref{eq:conditioned_Z2} and \eqref{eq:Zt_2ndmoment},
\begin{equation}
 \label{eq:463}
 \Varhat(Z_t^{(l)}\,|\,\G_l) \le \sum_{(u,s)\in \mathscr S^{(l)}_t} \Ehat^{(X_u(s),s)}[Z^{(0)}_t] \le C Z_\emptyset^{(l)}.
\end{equation}
As for the second term in \eqref{eq:460}, we have by \eqref{eq:Zt_expectation}, \eqref{eq:conditioned_Z_upperbound}, \eqref{eq:pbar_estimate} and Lemmas \ref{lem:conditioned_Z_lowerbound} and \ref{lem:h_estimate},
\begin{equation}
 \label{eq:465}
 |\Ehat[Z_t^{(l)}\,|\,\G_l] - Z_\emptyset^{(l)}| \le Cp_B Z_\emptyset^{(l)},
\end{equation}
since $x e^{-\mu x} \le C$ by \eqref{eq:mu_c0}. Equations \eqref{eq:460}, \eqref{eq:463} and \eqref{eq:465} now finish the proof of the first inequality. The second inequality follows readily by taking expectations and using Lemma \ref{lem:Zl_exp_upperbound} and Markov's inequality.
\end{proof}

\begin{lemma}
\label{lem:Z3_and_Y}
Suppose that $t\le \tconst{eq:def_tcrit}$. Then for large $A$ and $a$,
\begin{equation}
\label{eq:Z3_estimate}
 \Phat(Z^{(3+)}_t > K) \le C K^{-1} A^3(t/a^3+\eta)^3 Z_0,\quad \tand
\end{equation} 
\begin{equation}
 \label{eq:Y_estimate}
 \Phat(Y_t > K) \le CK^{-1}\eta Z_0.
\end{equation} 
\end{lemma}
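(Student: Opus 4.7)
Both parts reduce by Markov's inequality to first-moment bounds on $Z^{(3+)}_t$ and $Y_t$ under $\Phat$. These are then assembled from the tier-expectation estimate (Lemma \ref{lem:Zl_exp_upperbound}), the near-martingale identity established inside the proof of Lemma \ref{lem:Zl_t}, and the $Y$-bounds coming from Proposition \ref{prop:quantities} and Lemma \ref{lem:critical_line}.

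For (a), write $\Ehat[Z^{(3+)}_t] = \sum_{l\ge 3}\Ehat[Z^{(l)}_t]$. The bound $\bigl|\Ehat[Z^{(l)}_t\mid\G_l] - Z^{(l)}_\emptyset\bigr| \le Cp_B Z^{(l)}_\emptyset$ proved in the course of Lemma \ref{lem:Zl_t} (equation \eqref{eq:465}) gives, after taking expectations, $\Ehat[Z^{(l)}_t] \le (1+Cp_B)\Ehat[Z^{(l)}_\emptyset]$. Plugging \eqref{eq:Zl_exp_upperbound_2}, using $\Qhat^a[Z]\le CA$ from \eqref{eq:QhatZ}, and factoring through the defining relation \eqref{eq:def_tcrit} of $\tconst{eq:def_tcrit}$, yields $\Ehat[Z^{(l)}_\emptyset] \le CA^3(t/a^3+\eta)^3\,(1/2)^{l-3} Z_0$ for every $l\ge 3$. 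Summing the geometric series and applying Markov's inequality gives \eqref{eq:Z3_estimate}.

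For (b), decompose $Y_t = Y^{(0)}_t + \sum_{l\ge 1} Y^{(l)}_t$. The tier-$0$ piece is $\F_{\mathscr L_H\wedge t}$-measurable, so \eqref{eq:conditioned_Y} combined with $h(x,0)^{-1} \le 1+Cp_B$ (coming from Lemma \ref{lem:h_estimate} and \eqref{eq:pbar_estimate}) gives $\Ehat[Y^{(0)}_t] \le (1+Cp_B)\E[Y^{(0)}_t]$, and Proposition \ref{prop:quantities} then yields $\E[Y^{(0)}_t] \le C(Y_0 + Z_0/a)\le C\eta Z_0$ under the standing hypothesis $Y_0 \le \eta Z_0$ (as in Lemma \ref{lem:Zl_exp_upperbound}) together with $1/a\le \eta$ for large $a$. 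For $l\ge 1$, the strong branching property at $\mathscr S^{(l)}_t$, Proposition \ref{prop:quantities} applied to each descendant sub-BBM starting on the critical line, and the inequality $\sum_{(u,s)\in\mathscr S^{(l)}_t}e^{\mu(X_u(s)-a)} \le Z^{(l)}_\emptyset/y \le \eta Z^{(l)}_\emptyset$ coming from Lemma \ref{lem:critical_line} yield $\Ehat[Y^{(l)}_t\mid\G_l] \le C\eta Z^{(l)}_\emptyset$. Summing over $l\ge 1$ and using $\sum_{l\ge 1}\Ehat[Z^{(l)}_\emptyset]\le CZ_0$ from \eqref{eq:Zl_exp_upperbound_2} with $t\le \tconst{eq:def_tcrit}$ delivers $\Ehat[Y_t] \le C\eta Z_0$, and Markov gives \eqref{eq:Y_estimate}.

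The main technical subtlety is the presence of ``Case~A'' particles in $\mathscr N^{(l)}_t$, i.e.\ those traced forward to the future stopping time $\sigma_l(u)>t$: these are not captured by the up-to-time-$t$ stopping-line quantities $Z^{(l)}_\emptyset$ or $\sum_{\mathscr S^{(l)}_t}e^{\mu(X_u(s)-a)}$. In (a) they are absorbed into the $O(p_B)Z^{(l)}_\emptyset$ perturbation of \eqref{eq:465} provided by the near-martingale property of $Z$ under $\Phat$, which is used as a black box; in (b) they are handled by the critical-line trade $e^{\mu(x-a)} \lesssim w(x)/y$ from Lemma \ref{lem:critical_line}, which supplies the extra factor $1/y\le\eta$ that makes the per-tier contributions summable to $\eta Z_0$. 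Summability across tiers is in both parts guaranteed by the sub-half geometric decay enforced by the condition $t\le\tconst{eq:def_tcrit}$ through \eqref{eq:def_tcrit}.
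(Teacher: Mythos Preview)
Your proof is correct and follows essentially the same route as the paper: bound $\Ehat[Z^{(l)}_t]$ by a constant times $\Ehat[Z^{(l)}_\emptyset]$ (the paper uses $h(x,0)\ge 1/2$ with \eqref{eq:conditioned_Z_upperbound}, \eqref{eq:Zt_expectation} to get a factor $2$; you use \eqref{eq:465} to get $1+Cp_B$), then feed in \eqref{eq:Zl_exp_upperbound_2} and sum the geometric series via \eqref{eq:def_tcrit}; likewise for $Y$ via \eqref{eq:conditioned_Y}, \eqref{eq:Yt_weak}, and $Y^{(l)}_\emptyset\le\eta Z^{(l)}_\emptyset$. One remark on your closing paragraph: the ``Case~A'' particles (those with $\sigma_l>t$) are already included in $Z^{(l)}_\emptyset$ by its definition, so they contribute identically to $Z^{(l)}_t$ and $Z^{(l)}_\emptyset$ and are not what the $O(p_B)$ perturbation in \eqref{eq:465} accounts for---that bound is about the evolution of the Case~B particles from $\sigma_l$ to $t$; this is only an expository slip and does not affect your argument.
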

\begin{proof}
First note that we have $h(x,0) \ge 1/2$ for large $A$. We now have by \eqref{eq:conditioned_Z_upperbound} and \eqref{eq:Zt_expectation},
\[
 \Ehat[Z^{(l)}_t] \le 2 \Ehat[Z_\emptyset^{(l)}] \le C\eta Z_0 (\tfrac t {a^3} + \Const{eq:Zl_exp_upperbound}\eta)^l.
\]
Using the hypothesis, summing over $l\ge 3$ and applying Markov's inequality yields \eqref{eq:Z3_estimate}. For \eqref{eq:Y_estimate}, we note that by \eqref{eq:conditioned_Y} and \eqref{eq:Yt_weak} and $h(x,0) \ge 1/2$
\[
 \Ehat[Y^{(l)}_t] \le C\Ehat[Y_\emptyset^{(l)}] \le C K^{-1}\eta \Ehat[Z_\emptyset^{(l)}] \le C\eta Z_0 2^{-l},
\]
by Lemma \ref{lem:Zl_exp_upperbound} and the hypothesis. Summing over $l\ge0$ and using Markov's inequality finishes the proof.
\end{proof}

\subsection{The fugitive and its family}
\label{sec:fugitive}
We now describe the BBM starting from a single particle and conditioned to break out at a fixed time $t$. We could describe this system by similar methods as those employed in Section \ref{sec:doob}, but since we are only interested in first moment estimates, it is faster to use the Many-to-one lemma instead, which is the method of the proof of the following lemma:

\begin{lemma}
 \label{lem:many_to_one_fugitive}
Let $f:[0,a]\to \R_+$ be measurable, $a^2 \le \tau \le a^3$ and $t \ge \tau$. Let $p:\R_+\to[0,1]$ be measurable with $p(s) = 0$ for $s\ge t$ and $\Const{eq:610}\widebar{p}_t \le 1/4$. Denote by $\Ptilde$ the law associated to $p(s)$ as in Section \ref{sec:weakly_conditioned}. Then,
\begin{multline}
\label{eq:many_to_one_fugitive}
 \Etilde^x\Big[\sum_{(u,s)\in \mathscr L_t}f(X_u(s))\Ind_{(\mathscr U \ne u)}\,\Big|\,\mathscr R_t^{(0)} \cap U\times\{\tau\} \ne 0\Big]\\
 \le \Const{eq:many_to_one_fugitive} W^{x,\tau,a}_{\mathrm{taboo}}\left[\int_0^\tau \E^{X_s}\Big[\sum_{(v,r)\in \mathscr L_{t-s}} f(X_v(r-s))\Big]\,\dd s\right].
\end{multline}
\end{lemma}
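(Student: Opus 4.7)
The plan is to prove \eqref{eq:many_to_one_fugitive} by a spine decomposition along the fugitive $\mathscr U$. Under $\Ptilde^x$ conditioned on the event $\{\mathscr R_t^{(0)}\cap U\times\{\tau\}\ne 0\}$, the fugitive is the tier-$0$ particle whose trajectory reaches $a$ at time $\tau$ without touching $0$. I would single this particle out as a distinguished spine and then express the contribution of the non-fugitive individuals as a sum of contributions from independent BBMs spawned at each of the branching events along $\mathscr U$.

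First I would identify the law of the spine trajectory $(X_{\mathscr U}(s))_{0\le s\le\tau}$. Applying the many-to-one identity of Lemma \ref{lem:R_many_to_one} to the conditional expectation (equivalently, size-biasing by the number of tier-$0$ particles hitting $a$ in an infinitesimal interval around $\tau$) and then removing the drift $-\mu$ by Girsanov's transform as in the proof of Lemma \ref{lem:R_moments}, the resulting law is that of Brownian motion from $x$ weighted by $e^{\pi^2\tau/(2a^2)}$ and conditioned on $\{H_a=\tau<H_0\}$. By item~2 of Section \ref{sec:taboo}, this coincides up to normalisation with the $\tau$-disintegration at the endpoint $a$ of the Doob transform by $(y,r)\mapsto \sin(\pi y/a)\exp(\pi^2 r/(2a^2))$ of killed Brownian motion, i.e.\ with the Brownian taboo bridge $W^{x,\tau,a}_{\mathrm{taboo}}$. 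The normalising factors $e^{\mu(x-a)}$, $e^{\pi^2\tau/(2a^2)}$ and $r_\tau^a(x)$ in the numerator cancel against the probability of the conditioning event in the denominator up to a bounded ratio, so the spine law admits a Radon--Nikodym density with respect to $W^{x,\tau,a}_{\mathrm{taboo}}$ bounded above by a universal constant.

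Next I would apply the strong branching property. Non-fugitive individuals are exactly the descendants of offspring branching from $\mathscr U$; by the spine framework of Section \ref{sec:spine}, these arrive along $\mathscr U$ at rate $m_1$ with a size-biased offspring distribution, and each non-spine child initiates an independent BBM under the conditional law $\Ptilde^{X_{\mathscr U}(s)}$. A first-moment argument analogous to \eqref{eq:conditioned_Z_upperbound} applied to the functional $\sum_{(v,r)\in \mathscr L_{t-s}} f(X_v(r-s))$ bounds the expected contribution of each such subtree by $h(X_{\mathscr U}(s),s)^{-1}\E^{X_{\mathscr U}(s)}\big[\sum_{(v,r)\in \mathscr L_{t-s}} f(X_v(r-s))\big]$. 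The hypothesis $\Const{eq:610}\widebar p_t\le 1/4$ together with Lemma \ref{lem:h_estimate} yields $h(y,s)\ge 1/2$ uniformly, so the Doob factor is bounded by $2$; the branching rate and size-bias factor contribute only a constant depending on $q$. Summing over branching times via the compensation formula and then taking expectation over the spine path using the dominator from Step~1, Fubini--Tonelli (valid since $f\ge 0$) yields the claimed bound.

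The main technical obstacle is the Step~1 identification of the spine-trajectory law with a bounded-density perturbation of $W^{x,\tau,a}_{\mathrm{taboo}}$: the taboo bridge to the boundary point $a$ is a singular object, and the argument must be made rigorous either by approximating with interior endpoints $a-\delta$ and passing to the limit, or by working directly through the explicit densities \eqref{eq:p_sin} and \eqref{eq:r_theta} and invoking Lemma \ref{lem:I_estimates} to obtain uniform control of the Radon--Nikodym ratios as $\tau$ varies over $[a^2,a^3]$. A secondary subtlety is that the global conditioning modifies both the offspring distribution (to $\widetilde q$ from \eqref{eq:def_btilde}) and the branching rate (to $Q$ from \eqref{eq:def_beta}); both deviations from their unconditioned values are of order $1-h=O(\widebar p_t)$ by Lemma \ref{lem:h_estimate} and are therefore absorbed into the overall constant under the hypothesis.
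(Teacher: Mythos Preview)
Your approach is essentially the paper's: write the conditional expectation as a ratio, apply the spine many-to-one to the numerator, bound the spawning rate along the spine, and identify the spine bridge law. Two remarks on your presentation.

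First, your ``main technical obstacle'' is not an obstacle. Since the taboo process is a space-time Doob $h$-transform of killed Brownian motion (item~2 in Section~\ref{sec:taboo}), their bridge laws coincide: $W^{x,\tau,a}_{\mathrm{taboo}} = W^{x,\tau,a}_{\mathrm{killed}}$ as measures on paths. The latter is unambiguously defined via the density $p_s^a(x,y)\,r^a_{\tau-s}(y)/r^a_\tau(x)$, so no limiting procedure in the endpoint is needed. The paper simply writes the ratio with $W^{x,\tau,a}_{\mathrm{killed}}$ and then remarks that this equals $W^{x,\tau,a}_{\mathrm{taboo}}$.

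Second, the hypothesis $\Const{eq:610}\widebar p_t\le 1/4$ enters not through a uniform bound $h\ge 1/2$ on the subtree factors (though that is also true), but through the denominator of the ratio. After applying Lemma~\ref{lem:many_to_one_conditioned} to the denominator, the normalising factors you allude to in Step~1 leave exactly $W^{x,\tau,a}_{\mathrm{taboo}}\big[e^{-\int_0^\tau e(X_s,s)\,\dd s}\big]$, and it is this quantity that is bounded below by $1-\Const{eq:610}\widebar p_t\ge 3/4$, via the argument of Lemma~\ref{lem:conditioned_Z_lowerbound} (this is \eqref{eq:610}). The $h$-factors along the spine are handled more tightly in the paper: the spawning rate under $\Ptilde^*$ is $Q\sum_k k(k-1)\widetilde q(\cdot,k)=\sum_k k(k-1)q(k)h^{k-1}\le m_2 h$, and this $h$ cancels against the $h^{-1}$ coming from \eqref{eq:ptilde_estimate} for the subtree first moment, leaving only the endpoint factor $h(a,\tau)/h(x,0)$, which is absorbed in the ratio.
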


\begin{proof}
The left-hand side in \eqref{eq:many_to_one_fugitive} equals
\begin{equation}
 \label{eq:600}
 LHS = \frac{\Etilde^x\Big[\sum_{(u,s)\in \mathscr L_t}\Ind_{(H_a(X_u) \in \dd \tau)} \sum_{(v,r)\in \mathscr L_t} f(X_v(r))\Ind_{(v\ne u)}\Big]}{\Etilde^x_t\Big[\sum_{(u,s)\in \mathscr L_t}\Ind_{(H_a(X_u) \in \dd \tau)}\Big]}
\end{equation}
By Lemma \ref{lem:many_to_one}, the numerator of the right-hand side of \eqref{eq:600} equals
\[
 NUM = \Etilde^{*}_x\Big[\sum_{(v,r)\in \mathscr L_t} f(X_v(r))\Ind_{(v\ne \xi_r)}e^{\int_0^\tau \widetilde m(\xi_s,s)Q(\xi_s,s)\,\dd s} \Ind_{(H_0(\xi) > H_a(\xi)\in\dd \tau)}\Big].
\]
According to the description of the conditioned process in Section \ref{sec:weakly_conditioned} and the description of the spine in Section \ref{sec:spine}, the particles on the spine spawn on average $Q(x,s)\sum_k k(k-1)\widetilde q(x,s,k)\,\dd s$ particles during an interval $[s,s+\dd s]$, which is less than or equal to $m_2 h(x,s)\,\dd s$. Conditioning on the trajectory of the spine and using \eqref{eq:ptilde_estimate} now yields
\[
 NUM \le m_2 e^{m\tau}W^x_{-\mu}\left[\frac{h(a,\tau)}{h(x,0)}\int_0^\tau \E^{X_s}\Big[\sum_{(v,r)\in \mathscr L_{t-s}} f(X_v(r-s))\Big]\,\dd s\,\Ind_{(H_0 > H_a\in\dd \tau)}\right]
\]
Applying Girsanov's theorem to this expression and Lemma \ref{lem:many_to_one_conditioned} to the denumerator in \eqref{eq:600}, we get
\[
 LHS \le \frac{m_2 W^{x,\tau,a}_{\mathrm{killed}}\Big[\int_0^\tau \E^{X_s}\big[\sum_{(v,r)\in \mathscr L_{t-s}} f(X_v(r-s))\big]\,\dd s\Big]}{W^{x,\tau,a}_{\mathrm{killed}}[e^{-\int_0^\tau e(X_s,s)\,\dd s}]},
\]
where $W^{x,\tau,a}_{\mathrm{killed}}$ is the law of a bridge from $x$ to $a$ of length $\tau$ of a Brownian motion killed at $0$ and $a$. But since the taboo process is obtained from the killed BM by a space-time Doob transform, this is the same as $W^{x,\tau,a}_{\mathrm{taboo}}$. As in the proof of Lemma \ref{lem:conditioned_Z_lowerbound}, we have, by the hypotheses on $\tau$,
\begin{equation}
\label{eq:610}
 W^{x,\tau,a}_{\mathrm{taboo}}[e^{-\int_0^\tau e(X_s,s)\,\dd s}] \ge 1-\Const{eq:610}\widebar{p}_t.
\end{equation}
This implies the lemma, by \eqref{eq:pbar_estimate} and the hypothesis on $\widebar{p}_t$.
\end{proof}

We now set up the important definitions. Recall that $\mathscr U$ denotes the fugitive. Define
\[
 \mathscr{\widebar N}(t) = \{u\in \mathscr N_0(t):(u,t)\wedge (\mathscr U,T) \in U\times \bigcup_{l\ge 0}[\sigma_l(\mathscr U),\tau_l(\mathscr U))\},
\]
\[
  \mathscr{\widebar R}(t) = \{(u,s)\in \bigcup_{l\ge 0}\mathscr R^{(l)}(t):(u,s)\wedge (\mathscr U,T) \in U\times \bigcup_{l\ge 0}[\sigma_l(\mathscr U),\tau_l(\mathscr U)))\}
\]
and
\[
 \mathscr{\widecheck N}(t) = \{u\in \mathscr N_0(t):(u,t)\wedge (\mathscr U,T) \in U\times \bigcup_{l\ge 1}[\tau_{l-1}(\mathscr U),\sigma_l(\mathscr U))\}.
\]
We then define
\[
 \widebar Z_t = \sum_{u\in \mathscr{\widebar N}(t)}w(X_u(t)),\quad \widebar Y_t = \sum_{u\in \mathscr{\widebar N}(t)}e^{\mu(X_u(t)-a)},\quad \widebar R_t = \#\mathscr{\widebar R}(t),
\]
and
\[
 \widecheck Z_t = \sum_{u\in \mathscr{\widecheck N}(t)}w(X_u(t)),\quad \widecheck Y_t = \sum_{u\in \mathscr{\widecheck N}(t)}e^{\mu(X_u(t)-a)}.
\]
Note that on the event $T=T^{(0)}$, we have $\mathscr{\widecheck N}(T) = \emptyset$ by definition. For the other particles, we have:
\begin{lemma}
\label{lem:cousins_T0}
Suppose that $t\le \tconst{eq:def_tcrit}$ and $\Const{eq:610}\widebar{p}_t\le 1/4$. Then,
\[ 
\E^x[\widebar Z_t\,|\,T = T^{(0)} = t] \le CA,\quad\E^x[\widebar Y_t\,|\,T = T^{(0)} = t] \le C,\quad \E^x[\widebar R_t\,|\,T = T^{(0)} = t] \le C
\]
\end{lemma}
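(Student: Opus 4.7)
All three bounds follow from applying Lemma \ref{lem:many_to_one_fugitive} with $\tau = t$, which exactly matches the conditioning $T = T^{(0)} = t$: under this event the fugitive first hits $a$ at time $t$, and the pre-$t$ dynamics (before stripping out the spine) are governed by the law $\Ptilde$ of Section \ref{sec:weakly_conditioned} with $p$ given by \eqref{eq:def_ps}. Since $\widebar{p}_t \le C p_B = o_A(1)$ by \eqref{eq:pbar_estimate} and \eqref{eq:pB}, the hypothesis $\Const{eq:610}\widebar{p}_t \le 1/4$ is satisfied for large $A$. Moreover, the breakout event $B^{(\mathscr U,t)}$ concerns only descendants of $\mathscr U$ after time $t$ and is independent of the cousins given $\H_t$, so it contributes only a harmless multiplicative factor. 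On $\{T = T^{(0)}\}$, the cousin sets $\mathscr{\widebar N}(t)$ and $\mathscr{\widebar R}(t)$ reduce to all non-fugitive particles (i.e.\ whose last common ancestor with $\mathscr U$ lies in $[\sigma_0,\tau_0) = [0,t)$), so $\widebar Z_t, \widebar Y_t, \widebar R_t$ correspond respectively to sums over $\mathscr L_t \setminus \{\mathscr U\}$ of $w$, $e^{\mu(\cdot - a)}$, and an indicator of hitting $a$ (across all tiers).

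For $\widebar Z_t$, take $f = w$. By Proposition \ref{prop:quantities} the inner expectation $\E^{X_s}[Z_{t-s}] = w(X_s)$, since $Z_{t-s}$ is a martingale. Using the symmetry $x \mapsto a-x$ of the Brownian taboo process and the estimate $w(a-z) = a\sin(\pi z/a)e^{-\mu z} \le \pi z e^{-\mu z} \le C e^{-(c_0/4) z}$ (valid because $\mu \ge c_0/2$), Lemma \ref{lem:k_integral} gives
\[
W^{x,t,a}_{\mathrm{taboo}}\Bigl[\int_0^t w(X_s)\,\dd s\Bigr] \le C\bigl(t/a^3 + \operatorname{err}(a-x) + \operatorname{err}(0)\bigr) \le C,
\]
so $\E^x[\widebar Z_t \mid T = T^{(0)} = t] \le C \le CA$ for $A \ge 1$. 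For $\widebar Y_t$, take $f(y) = e^{\mu(y-a)}$: by \eqref{eq:Yt_weak} the inner expectation is $\le C e^{\mu(X_s-a)}$, which is already of the form $k(a-X_s)$ with $c = \mu \ge c_0/2$, and Lemma \ref{lem:k_integral} gives directly the bound $C$.

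For $\widebar R_t$ one adapts the proof of Lemma \ref{lem:many_to_one_fugitive} by replacing $\mathscr L_{t-s}$ in the inner sum by the stopping line $\bigcup_{l\ge 0} \mathscr R^{(l)}_{t-s}$; the argument extends verbatim because the Many-to-one lemma of Section \ref{sec:spine} is already formulated for general stopping lines. The inner quantity is then the expected total number of hits to $a$ by a BBM started at $X_s$ during time $t-s$, which decomposes by tiers: the tier-$0$ contribution is bounded via Lemma \ref{lem:Rt} by $C\bigl((t-s)/a^3\, w(X_s) + e^{\mu(X_s-a)}\bigr)$, and integrating against the taboo bridge as above gives $O(1)$; tier-$l$ contributions for $l \ge 1$ form a geometric series since, by the choice of $\tconst{eq:def_tcrit}$ in \eqref{eq:def_tcrit} and Lemma \ref{lem:Zl_exp_upperbound}, each tier-$(l-1)$ hit produces on average at most $1/2$ subsequent tier-$l$ hits. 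Summing yields $\E^x[\widebar R_t \mid T = T^{(0)} = t] \le C$.

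The main obstacle is the $\widebar R_t$ case: Lemma \ref{lem:many_to_one_fugitive} as stated handles only sums over the fixed-time stopping line $\mathscr L_t$, so one must either generalize it to an arbitrary stopping line or, equivalently, perform a tier-by-tier decomposition and sum geometrically. Either way, the key point is that the tier-$0$ contribution dominates and is already of constant order. The bounds for $\widebar Z_t$ and $\widebar Y_t$ are essentially mechanical given Lemma \ref{lem:many_to_one_fugitive} combined with the taboo-bridge integral estimate of Lemma \ref{lem:k_integral}.
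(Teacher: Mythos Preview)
Your argument for $\widebar Z_t$ and $\widebar Y_t$ has a genuine gap. The application of Lemma \ref{lem:many_to_one_fugitive} with $f=w$ controls only the sum over the stopping line $\mathscr L_t = \mathscr L_H \wedge t$, which freezes each particle the moment it reaches $a$; since $w(a)=0$, this yields precisely the tier-$0$ quantity $\E^x[Z^{(0)}_t\mid T=T^{(0)}=t]\le C$, not $\widebar Z_t$. But $\widebar Z_t$ sums over $\mathscr{\widebar N}(t)\subset \mathscr N_0(t)$ and therefore also contains the descendants of cousin particles that have themselves hit $a$ and returned. Your appeal to the martingale property is only valid for the Section~\ref{sec:interval} process killed at both endpoints; the full multi-tier $Z_t$ of Section~\ref{sec:before_breakout} is \emph{not} a $\P^x$-martingale (its unconditional expectation is infinite, since $\E^a[Z^{(u,s)}]=\infty$), so Proposition~\ref{prop:quantities} cannot be invoked to collapse the inner expectation to $w(X_s)$ across all tiers.

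This omission is exactly what produces the factor $A$ in the bound $CA$: the paper first obtains the tier-$0$ bounds $Z^{(0)}_t,Y^{(0)}_t,R^{(0)}_t\le C$ via Lemma \ref{lem:many_to_one_fugitive} and Lemma \ref{lem:k_integral} (as you do), then uses $R^{(0)}_t\le C$ to get $\E^x[Z^{(1)}_\emptyset\mid T=T^{(0)}=t]\le C\,\Qhat^a[Z]\le CA$, and finally sums geometrically over tiers via Lemma \ref{lem:Zl_exp_upperbound} and the hypothesis $t\le \tconst{eq:def_tcrit}$ to conclude $Z^{(1+)}_t\le CA$ and $Y^{(1+)}_t\le C\eta A$. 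You carry out precisely this tier decomposition for $\widebar R_t$, but label the $\widebar Z_t,\widebar Y_t$ cases ``essentially mechanical'' and skip it there. The fix is immediate: apply your own $\widebar R_t$ argument to $\widebar Z_t$ and $\widebar Y_t$ as well, exactly as the paper does.
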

\begin{proof}
 We have for every $s\ge 0$, $\E^x[Z^{(0)}_s] = w(x) \le \pi(a-x)e^{-\mu(a-x)}$ by \eqref{eq:Zt_expectation}. Furthermore, $\E^x[Y^{(0)}_s] \le Cw(x)/a \le Ce^{-\mu(a-x)}$ by \eqref{eq:Yt_weak}. Finally, we have by Lemma \ref{lem:Rt}, $\E^x[R^{(0)}_s] \le C((a-x)+1)e^{-\mu(a-x)}$ for $s\le a^3$. By Lemmas \ref{lem:many_to_one_fugitive} and \ref{lem:k_integral}, we now have
\[
 \E^x[Z^{(0)}_t\,|\,T = T^{(0)} = t] \le C,\quad \E^x[Y^{(0)}_t\,|\,T = T^{(0)} = t]\le C,\quad \E^x[R^{(0)}_t\,|\,T = T^{(0)} = t] \le C.
\]
From the estimate on $R^{(0)}_t$, it follows that $\E^x[Z^{(1)}_\emptyset\,|\,T = T^{(0)} = t] \le C\Qhat^a[Z] \le CA$, by \eqref{eq:QhatZ}. Hence, by Lemma \ref{lem:Zl_exp_upperbound} and the hypothesis, we have $\E^x[Z^{(1+)}_\emptyset\,|\,T = T^{(0)} = t] \le CA$. By  \eqref{eq:conditioned_Z_upperbound} and \eqref{eq:Zt_expectation}, we now have
\[
 \E^x[Z^{(1+)}_t\,|\,T = T^{(0)} = t] \le CA,
\]
and by \eqref{eq:conditioned_Y} and \eqref{eq:Yt_weak}, we have
\[
 \E^x[Y^{(1+)}_t\,|\,T = T^{(0)} = t] \le C\eta A.
\]
Since $\widebar Z_t = Z^{(0)}_t + Z^{(1+)}_t$, $\widebar Y_t = Y^{(0)}_t + Y^{(1+)}_t$ and $\eta \le A^{-1}$, this implies the lemma.
\end{proof}

On the event $T=T^{(1)}$, the situation is more complex, as shown by the following lemma.
\begin{lemma}
\label{lem:cousins_T1}
Suppose that $t\le \tconst{eq:def_tcrit}$ and $\Const{eq:610}\widebar{p}_t\le 1/4$. Then,
\[ 
\E^x[\widebar Z_t\,|\,T = T^{(1)} = t] \le CA,\quad \E^x[\widebar Y_t\,|\,T = T^{(1)} = t] \le C,\quad\E^x[\widebar R_t\,|\,T = T^{(1)} = t] \le C.
\]
Moreover, on the event $T = T^{(1+)}$, we have $\widecheck Z^{(1)}_\emptyset \le \ep e^A$, and
\begin{align*}
 \E^x[\widecheck Z^{(2+)}_t\,|\,T = T^{(1)} = t] &\le C\ep e^A A(t/a^3+\eta),\\
  \E^x[\widecheck Y_t\,|\,T = T^{(1)} = t] &\le C \ep \eta e^A,\\
 \P^x[|\widecheck Z^{(1)}_t - \widecheck Z^{(1)}_\emptyset| > K\,|\,T = T^{(1)} = t] &\le C \ep e^A K^{-2}(t/a^3+\eta) + C(t/a^3 + \eta)/K.
\end{align*}
\end{lemma}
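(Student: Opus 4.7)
The strategy mirrors that of Lemma~\ref{lem:cousins_T0}, but the spine of the fugitive now has two up-swings (where $\widebar{}$-cousins are produced) and one down-swing (where the $\widecheck{}$-family lives). Decompose $\widebar{Z}_t = \widebar{Z}_t^{[0]} + \widebar{Z}_t^{[1]}$ according to whether the split from the spine occurred during $[0,\tau_0(\mathscr{U}))$ or during $[\sigma_1(\mathscr{U}),\tau_1(\mathscr{U}))$, and likewise for $\widebar{Y}$, $\widebar{R}$. The $[0]$ contribution is bounded by repeating the argument of Lemma~\ref{lem:cousins_T0} verbatim, since the relevant tier-$0$ hit at $\tau_0(\mathscr{U})$ falls under the scope of Lemma~\ref{lem:many_to_one_fugitive}; this gives the single-tier expectations $\leq C$ and then, after propagating through higher tiers via Lemma~\ref{lem:Zl_exp_upperbound} (using \eqref{eq:conditioned_Z_upperbound}, \eqref{eq:conditioned_Y}, \eqref{eq:Zt_expectation}, \eqref{eq:Yt_weak} to move from $\emptyset$-quantities to time-$t$ quantities), the claimed $CA$ and $C$ bounds.

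For the $[1]$ piece, use the strong branching property at the stopping line consisting of $(\mathscr{U},\sigma_1(\mathscr{U}))$: conditionally on $\F_{\sigma_1(\mathscr{U})}$, the subtree rooted at that space-time point is a fresh BBM starting from position $X_{\mathscr{U}}(\sigma_1(\mathscr{U})) = a - y + (c_0-\mu)(\sigma_1(\mathscr{U}) - \tau_0(\mathscr{U})) = a - y + O(1)$ (using \eqref{eq:c0_mu} and $t \le a^3$), conditioned to have a spine particle hit $a$ at the deterministic time $t - \sigma_1(\mathscr{U})$ and trigger a breakout there. Apply Lemma~\ref{lem:many_to_one_fugitive} to this restarted BBM with starting point $x = a-y+O(1)$ and $\tau' = t - \sigma_1(\mathscr{U})$, after checking that $\Const{eq:610}\widebar{p}_t \le 1/4$ holds via \eqref{eq:pbar_estimate}; the taboo-integral bound of Lemma~\ref{lem:k_integral} applied with $k(z) = w(z)$, $e^{-\mu(a-z)}$, and $((a-z)+1)e^{-\mu(a-z)}$ respectively delivers the single-tier expectations (when $\tau' \geq a^2$; otherwise use \eqref{eq:taboo_integral_error_3}), and another round of Lemma~\ref{lem:Zl_exp_upperbound} lifts them to $CA$ and $C$. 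Summing the $[0]$ and $[1]$ contributions gives the three $\widebar{}$-estimates.

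For the $\widecheck{}$-estimates: note first that on $\{T=T^{(1)}\}$ the particles of $\mathscr{S}_t^{(1)}$ splitting off during $[\tau_0(\mathscr{U}),\sigma_1(\mathscr{U}))$ are exactly the elements of $\mathscr{S}^{(\mathscr{U},\tau_0(\mathscr{U}))}$, so $\widecheck{Z}_\emptyset^{(1)} = Z^{(\mathscr{U},\tau_0(\mathscr{U}))}$; since $T=T^{(1+)}$ excludes a tier-$0$ breakout at $(\mathscr{U},\tau_0(\mathscr{U}))$, the event $B^{(\mathscr{U},\tau_0(\mathscr{U}))}$ fails, yielding $\widecheck{Z}_\emptyset^{(1)} \le \ep e^A$ by \eqref{eq:def_B}. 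Iterating Lemma~\ref{lem:Zl_exp_upperbound} once from this tier-$1$ mass gives $\E^x[\widecheck{Z}_\emptyset^{(2+)} \mid T=T^{(1)}=t] \le C\ep e^A \cdot A(t/a^3 + \eta)$, and \eqref{eq:conditioned_Z_upperbound}--\eqref{eq:Zt_expectation} transfer this bound to $\widecheck{Z}_t^{(2+)}$. The bound $\widecheck{Y}_t \le C \ep\eta e^A$ follows from $\widecheck{Y}_\emptyset^{(1)} \le \widecheck{Z}_\emptyset^{(1)}/y \le \ep e^A \eta$ (Lemma~\ref{lem:critical_line} combined with $y \ge 1/\eta$, from Lemma~\ref{lem:y_zeta} and \eqref{eq:eta}), then propagating via \eqref{eq:conditioned_Y} and \eqref{eq:Yt_weak}. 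Finally, the probability bound on $|\widecheck{Z}_t^{(1)} - \widecheck{Z}_\emptyset^{(1)}|$ is the exact analog of Lemma~\ref{lem:Zl_t}: a conditional Chebychev inequality combined with \eqref{eq:conditioned_Z2} and \eqref{eq:Zt_2ndmoment} (plugging in $\widecheck{Z}_\emptyset^{(1)} \le \ep e^A$) produces the $C\ep e^A K^{-2}(t/a^3+\eta)$ term, while Lemmas~\ref{lem:conditioned_Z_lowerbound} and \ref{lem:h_estimate} together with \eqref{eq:pbar_estimate} give the additive $C(t/a^3+\eta)/K$ term. The main obstacle is the first step of the $[1]$-bound: one must carefully describe the conditional law of the fugitive's post-$\sigma_1$ subtree and verify that Lemma~\ref{lem:many_to_one_fugitive} applies, in particular handling separately the (unlikely) case $\tau' < a^2$ using \eqref{eq:taboo_integral_error_3}.
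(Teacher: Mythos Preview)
Your proposal is correct and follows essentially the same route as the paper. The only notable difference is the choice of decomposition point for the $\widebar{}$-estimates: you cut the spine at $\sigma_1(\mathscr U)$ and reapply Lemma~\ref{lem:many_to_one_fugitive} to the restarted piece, while the paper cuts at $\tau_0(\mathscr U)$, obtaining two pieces (the first from $x$ to $a$, the second starting at $a$ under $\Q^a$), and then observes that the particle on $\mathscr S^{(1)}$ that carries the spine is in exactly the setup of Lemma~\ref{lem:cousins_T0}, which can therefore be quoted directly. This makes the paper's argument marginally shorter, but the underlying computation is identical, since Lemma~\ref{lem:cousins_T0} itself rests on Lemma~\ref{lem:many_to_one_fugitive}. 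Your explicit treatment of the case $\tau'<a^2$ via \eqref{eq:taboo_integral_error_3} is a detail the paper passes over; note that after the reflection $z\mapsto a-z$ the bridge endpoints become $y+O(1)$ and $0$, both $\le a/2$ for large $a$, so \eqref{eq:taboo_integral_error_3} does apply. For the $\widecheck{}$-estimates your argument coincides with the paper's: bound $\widecheck Z^{(1)}_\emptyset\le \ep e^A$ from the absence of a tier-$0$ breakout, iterate Lemma~\ref{lem:Zl_exp_upperbound}, and finish as in Lemmas~\ref{lem:Zl_t} and \ref{lem:Z3_and_Y}.
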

\begin{proof}
On the event $T = T^{(1)}$, conditioning on $\tau_1(\mathscr U)$, we get two independent pieces of the process, one starting at $x$ conditioned to hit $a$ at $\tau_1(\mathscr U)$, the second starting at $a$, following the law of $\Q^a$ conditioned on $T = T^{(1)} = t - \tau_1(\mathscr U)=: t'$. Stopping this process at the line $\mathscr S^{(1)}_{t'}$, one of those particles then spawns BBM conditioned on $T = T^{(0)} = t'$ and the others spawn BBM conditioned on $T > t'$. Now, $\widebar Z_t$ and $\widebar Y_t$ are the sums of the respective variables corresponding to the two pieces and the inequalities on their expectations now follows from Lemma \ref{lem:cousins_T0}.

On the event $T = T^{(l)}$, for $l\ge 1$, we can generalize the above decomposition and conditioning on $\tau_1(\mathscr U),\dots,\tau_{l-1}(\mathscr U)$ we get $l$ independent pieces of the process. On this event, we note that $\widecheck Z^{(1)}_\emptyset \le \ep e^A$, since no breakout occurred before the time $t=\tau_l(\mathscr U)$. This immediately gives the estimates on the first and second moment of $\widecheck Z^{(1)}_\emptyset$.

For the proof of the remaining inequalities, we note that we have by Lemma \ref{lem:Zl_exp_upperbound},
\[
\E^x[\widecheck Z^{(2+)}_\emptyset] \le \sum_{l=1}^\infty C(\Qhat^a[Z](\tfrac t {a^3} + \Const{eq:Zl_exp_upperbound}\eta))^l \widecheck Z^{(1)}_\emptyset \le C\ep e^A A(\tfrac t {a^3} + \eta),
\]
by the hypothesis on $t$. The last three equations now follow from these results as in Lemmas \ref{lem:Zl_t} and \ref{lem:Z3_and_Y}.
\end{proof}

Define $T^{(l;m)} = \min_{l\le i\le m} T^{(i)}$ and $T^{(l+)} = \min_{i\ge l} T^{(i)}$.
\begin{lemma}
\label{lem:T1_T2}
Suppose that $C_1 e^A \le Z_0 \le C_2e^A$ and $Y_0 \le \eta Z_0$. Then for large $A$,
 \[
  \P(T^{(1+)} < T^{(0)}) \le C\ep A,
 \]
and
\[
 \P(T^{(2+)} < T^{(0;1)}) \le C(\ep A)^2.
\]
\end{lemma}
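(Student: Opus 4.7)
The plan is to bound $\P(T^{(1+)} < T^{(0)})$ by the expected number of tier-$\ge 1$ breakouts that occur before $T^{(0)}$, and then control that expectation by integrating an intensity bound against the approximately-exponential tail of $T^{(0)}$ furnished by Lemma \ref{lem:T0}. For the reduction step, I observe that for $(u,s)\in\mathscr{R}^{(l)}$ with $l\ge 1$, the event $B^{(u,s)}$ depends only on the subtree of $u$ after time $s$ (namely, on $\mathscr{S}^{(u,s)}$), so by strong branching it has probability exactly $p_B$ independently of $\F^{\mathrm{pre}}_u(s)$. Crucially, $\{T^{(0)} > s\}$ is $\F^{\mathrm{pre}}_u(s)$-measurable: for any ancestor tier-0 hit $(u',s')\in\mathscr{R}^{(0)}$, the line $\mathscr{S}^{(u',s')}$ is reached at time $\sigma^{(u',s')} \le \sigma_1(u) \le s$, and since $u$ is still alive at time $s$ no strict descendants of $u$ yet exist, so every position contributing to $Z^{(u',s')}$ lies in the pre-$u$ sigma-algebra; non-ancestor tier-0 hits sit in disjoint subtrees and are trivially measurable there. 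This factorization yields
\[
\P(T^{(1+)}<T^{(0)}) \;\le\; p_B \sum_{l\ge 1} \E[R^{(l)}_{T^{(0)}}] \;=:\; p_B\,\E[R^{(1+)}_{T^{(0)}}].
\]

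Next, summing the geometric estimate of Lemma \ref{lem:Zl_exp_upperbound} over $l\ge 1$ and using $\Qhat^a[Z] \le CA$ from \eqref{eq:QhatZ}, I get for $s \le \tconst{eq:def_tcrit}/2$,
\[
\sum_{l\ge 1}\Ehat^{(s)}[R^{(l)}_s] \;\le\; \frac{CA(s/a^3+\eta)^2 Z_0}{1-CA(s/a^3+\eta)} \;\le\; CA(s/a^3)^2 Z_0,
\]
so the effective tier-$\ge 1$ hit intensity is bounded by $CA(s/a^3) Z_0/a^3$. Combining this with $\P(T^{(0)}>s) \le \exp(-\theta s/a^3(1+O(p_B))+O(p_BY_0))$ from Lemma \ref{lem:T0} via a Fubini/compensator argument, and truncating at $\tconst{eq:def_tcrit}$ (whose tail contribution $\P(T^{(0)}>\tconst{eq:def_tcrit}) \le e^{-C/(A\ep)}$ is negligible), I obtain
\[
\E[R^{(1+)}_{T^{(0)}}] \;\le\; C\int_0^\infty A(s/a^3) Z_0/a^3 \cdot e^{-\theta s/a^3}\,ds \;\le\; CAZ_0/\theta^2.
\]
Plugging in $\theta = p_B\pi Z_0$ and using $\theta \ge C/\ep$ (from \eqref{eq:pB} and $Z_0 \ge C_1 e^A$) gives $p_B\,\E[R^{(1+)}_{T^{(0)}}] \le CA/(\pi\theta) \le CA\ep$, which is the first bound.

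The second bound is entirely analogous: restricting Lemma \ref{lem:Zl_exp_upperbound} to $l\ge 2$ produces $\sum_{l\ge 2}\Ehat^{(s)}[R^{(l)}_s] \le CA^2(s/a^3)^3 Z_0$, hence an intensity of at most $CA^2(s/a^3)^2 Z_0/a^3$; since $\P(T^{(0;1)}>s) \le \P(T^{(0)}>s)$, the same integration produces an extra factor of $A/\theta \sim A\ep$ and thus the bound $C(A\ep)^2$. The principal technical obstacle is the rigorous justification of the Fubini/compensator step, because $\{T^{(0)}>s\}$ is not a stopping-time event for the natural filtration---its status depends on tier-0 breakouts that are resolved only at the later times $\sigma^{(u',s')}$---but the measurability analysis of the first paragraph isolates precisely the independence needed to pull $p_B$ out of each term in the sum, after which truncation at $\tconst{eq:def_tcrit}$ handles the tail where Lemma \ref{lem:Zl_exp_upperbound}'s bound is no longer useful.
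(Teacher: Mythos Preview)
Your reduction $\P(T^{(1+)}<T^{(0)}) \le p_B\,\E[R^{(1+)}_{T^{(0)}}]$ via the $\F^{\mathrm{pre}}_u(s)$-measurability of $\{T^{(0)}>s\}$ is correct, but the quantity $\E[R^{(1+)}_{T^{(0)}}]$ is in fact infinite, so the bound is vacuous. The problem is the tier-$\ge 2$ contribution: for a tier-1 hit $(u_1,s_1)$ with $s_1<T^{(0)}$, the random variable $Z^{(u_1,s_1)}$ is \emph{not} constrained by the event $\{s_1<T^{(0)}\}$ (the subtree of $u_1$ after $s_1$ contains no tier-0 hits, hence is independent of $T^{(0)}$), so it follows the unconditioned law $\P^a$ with $\E^a[Z]=\infty$. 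Conditionally on such a hit existing and on $T^{(0)}-s_1$ being bounded away from zero (a positive-probability event), the expected number of tier-2 hits before $T^{(0)}$ descending from $(u_1,s_1)$ is of order $\E^a[Z]\cdot(T^{(0)}-s_1)/a^3=\infty$. Your attempt to circumvent this by borrowing the bound $\sum_{l\ge 1}\Ehat^{(s)}[R^{(l)}_s]\le CA(s/a^3)^2 Z_0$ from Lemma~\ref{lem:Zl_exp_upperbound} does not help: that estimate lives under $\Phat=\P(\cdot\mid T>s)$, which conditions on no breakout in \emph{any} tier and thus caps every ancestral $Z$-value at $\ep e^A$; under $\P(\cdot\mid T^{(0)}>s)$ no such cap exists for tier-$\ge 1$ ancestors. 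The ``Fubini/compensator'' step you flag as delicate is therefore not merely a technicality---there is no filtration making both $\{T^{(0)}>s\}$ predictable and the $\Ehat$-intensity the correct compensator.

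The paper's route avoids counting tier-$\ge 1$ hits altogether. It conditions on $T^{(0)}=t$, decomposes via \eqref{eq:decomposition}, and then for each tier-0 hit $(u',s')$ with $s'<t$ (necessarily a non-breakout) observes that the entire subtree above $(u',s')$ follows the law $\Q^a$; the event ``no breakout of any tier $\ge 1$ in this subtree before $t$'' is exactly $\{T>t-s'\}$ under $\Q^a$, bounded below by $\Q^a(T>t)$. Jensen's inequality then gives $\P(T^{(1+)}>t\mid T^{(0)}=t)\ge \Q^a(T>t)^{\E[R^{(0)}_t]+O(1)}$, and integrating $1-\P(T^{(1+)}>t\mid T^{(0)}=t)$ against the law of $T^{(0)}$ using Lemmas~\ref{lem:T_a} and~\ref{lem:moments_T} yields the bound. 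The recursion built into $\Q^a(T>t)$ is what handles all higher tiers simultaneously without ever confronting an infinite first moment. A salvageable variant of your approach would replace $T^{(0)}$ by $T$ in the truncation (so that \emph{all} ancestral $Z$-values are capped), but working this out essentially reproduces the paper's conditioning.
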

\begin{proof}
Let $t_0:= \tconst{eq:def_tcrit} \wedge a^3/(3\Const{eq:375}A)$. By \eqref{eq:ep_lower}, \eqref{eq:pB} and \eqref{eq:QhatZ}, we have for large $A$,
\begin{equation}
 \label{eq:777}
\P(T^{(0)} > t_0) \le \exp(-C A/\ep) \le \ep.
\end{equation}
Now, for the rest of the proof, let $t\le t_0$. We have by the decomposition \eqref{eq:decomposition} of the process conditioned on $T=t$,
\[
\begin{split}
 \P^\nu(T^{(1+)} > t\,|\,T^{(0)} = t) &= \sum_{i=1}^n p_i \P^{\nu-\delta_{x_i}}(T^{(1+)} > t\,|\,T^{(0)} > t)\,\P^{x_i}(T^{(1+)} > t\,|\,T^{(0)} = t)\\
&\ge \sum_{i=1}^n p_i \Q^a(T > t)^{\E^{\nu-\delta_{x_i}}[R^{(0)}_t] + \E^{x_i}[R^{(0)}_t\,|\,T^{(0)} = t]},
\end{split}
\]
by Jensen's inequality. By Lemma \ref{lem:Rt} and the hypothesis on $Y_0$, we have $\E^{\nu-\delta_{x_i}}[R^{(0)}_t]\le \E^\nu[R^{(0)}_t]\le C(t/a^3 + \eta)Z_0$. By Lemma \ref{lem:cousins_T0}, we have for large $A$, $\E^{x_i}[R^{(0)}_t\,|\,T^{(0)} = t]\le C$. In total, we get by Lemma \ref{lem:T_a}, for $t\le t_0$,
\begin{equation}
\label{eq:620}
 \P^\nu(T^{(1+)} > t\,|\,T^{(0)} = t) \ge \exp\Big(-C\big(Ap_B(t/a^3 + \eta)^2Z_0 + t/a^3(\eta Z_0 +Ap_B) + \eta + \eta^2 Z_0\big)\Big).
\end{equation}
By \eqref{eq:777}, \eqref{eq:620}, and the inequality $1-e^{-x}\le x$, we get
\begin{multline}
  \P^\nu(T^{(1+)} < T^{(0)}) \le C(\E[(T/a^3 + \eta)^2\Ind_{(T\le t_0a^3)}]Ap_BZ_0\\
 + \E[T/a^3\Ind_{(T\le t_0a^3)}](\eta Z_0 +Ap_B) + \eta + \eta^2 Z_0 + \ep).
\end{multline}
The result now follows by Lemma \ref{lem:moments_T}, together with the hypothesis on $Z_0$, \eqref{eq:ep_lower}, \eqref{eq:eta} and \eqref{eq:pB}. For the second part of the lemma, we first note that by \eqref{eq:decomposition} and the union bound,
\[
\begin{split}
 \P^\nu(T^{(2+)} < t\,|\,T^{(1)} > T^{(0)} = t) &= \sum_{i=1}^n p_i\Big( \P^{\nu-\delta_{x_i}}(T^{(2+)} < t\,|\,T^{(0;1)} > t)\\
&+\P^{x_i}(T^{(2+)} < t\,|\,T^{(1)} > T^{(0)} = t)\Big).
\end{split}
\]
Now we have
\[
\begin{split}
 \P^{\nu-\delta_{x_i}}(T^{(2+)} < t\,|\,T^{(0;1)} > t) &\le \P^\nu(T^{(2+)} < t\,|\,T^{(0;1)} > t)\\
 &\le \widebar{p}_t \E^\nu[R^{(2)}_t\,|\,T^{(0;1)} > t],
\end{split}
\]
by Markov's inequality. As in the proof of Lemma \ref{lem:Zl_exp_upperbound}, we can show that for $l\ge 1$,
\begin{align}
 \label{eq:631}
 \Q^a[R^{(l+1)}_t\,|\,T^{(1;l-1)} > t\,|\,\mathscr \H_1] &\le C A^l(t/a^3+\eta)^l R^{(0)}_t,
\end{align}
since we have, as in $\eqref{eq:QhatZ}$, for every $l\ge 0$, $\Q^a[Z\,|\,T^{(1;l)} > t] = (1+O(p_B)) \Q^a[Z] \le CA$, by \eqref{eq:QaZ}. With \eqref{eq:Rt_nu_expec} and \eqref{eq:pbar_estimate}, this gives
\begin{equation}
 \label{eq:630}
\P^{\nu-\delta_{x_i}}(T^{(2+)} < t\,|\,T^{(0;1)} > t) \le \Const{eq:630} p_B A^2(t/a^3+\eta)^3 Z_0.
\end{equation}
Moreover, we have
\begin{align*}
 \P^x(T^{(2+)} < t\,|\,T^{(1)} > T^{(0)} = t) &\le \widebar{p}_t\E^x[R^{(2)}_t\,|\,T^{(1)} > T^{(0)} = t]\\
&\le p_B C\Q^a[Z]^2(t/a^3+\eta)^2\E^x[R^{(0)}_t\,|\,T^{(0)} = t] && \text{by \eqref{eq:631}, \eqref{eq:pbar_estimate}}\\
&\le p_B CA^2(t/a^3+\eta)^2 && \text{by Lemma \ref{lem:cousins_T0}.}
\end{align*}
In total, this gives
\begin{equation}
\label{eq:660}
\P^\nu(T^{(2+)} < t\,|\,T^{(1)} > T^{(0)} = t) \le Cp_B (A^2 (t/a^3+\eta)^3Z_0 + A^2(t/a^3+\eta)^2)
\end{equation}
Moreover, we have
 \begin{multline}
\label{eq:661}
\P^\nu(T^{(2+)} < t\,|\,T^{(0)} > T^{(1)} = t) \le \sum_{i=1}^n p_i\Big(\P^{\nu-\delta_{x_i}}(T^{(2+)} < t\,|\,T^{(0;1)} > t)\\
+\P^{x_i}(T^{(2+)} < t\,|\,T^{(0)} > T^{(1)} = t)\Big).
\end{multline}
The first term in \eqref{eq:661} has been bounded in \eqref{eq:630}. For the second term, we note that we have
\begin{multline*}
 \P^x(T^{(2+)} < t\,|\, T^{(0)} > T^{(1)} = t) \le \widebar{p}_tC \Big(\Q^a[R^{(2)}_t\,|\,T^{(1)} > t]\E^x[\widebar R^{(0)}_t\,|\,T^{(0)} > T^{(1)} = t]\\
+ \Q^a[R^{(1)}_t] \E^x[\widebar R^{(1)}_t\,|\,T^{(0)} > T^{(1)} = t]\Big),
\end{multline*}
and by Lemma \ref{lem:cousins_T1}, together with \eqref{eq:ep_lower}, \eqref{eq:pB}, \eqref{eq:pbar_estimate} and the hypotheses on $Z_0$ and $Y_0$, we get
\begin{equation}
\label{eq:663}
 \P^x(T^{(2+)} < t\,|\, T^{(0)} > T^{(1)} = t) \le  Cp_B\Big(A^2(t/a^3+\eta)^2 + \ep e^A A(t/a^3+\eta)^2\Big) \le C A (t/a^3+\eta)^2.
\end{equation}
Equations \eqref{eq:630}, \eqref{eq:661} and \eqref{eq:663} now yield
\begin{equation}
\label{eq:666}
\P^\nu(T^{(2+)} < t\,|\,T^{(0)} > T^{(1)} = t) \le Cp_BA^2 (t/a^3+\eta)^3Z_0 + C A (t/a^3+\eta)^2,
\end{equation}
and \eqref{eq:660} and \eqref{eq:666} then yield
\begin{equation}
\label{eq:668}
 \P^\nu(T^{(2+)} < t\,|\,T^{(0;1)} = t) \le Cp_BA^2 (t/a^3+\eta)^3Z_0 + C A (t/a^3+\eta)^2.
\end{equation}
The second part of the lemma now follows from \eqref{eq:668}, by integrating over $t$ from $0$ to $t_0$ and using Lemma \ref{lem:moments_T} and \eqref{eq:777}.
\end{proof}

\section{The system with the moving barrier}
\label{sec:moving_barrier}

We will now define properly the BBM with the moving barrier. We will still use all the definitions from Section \ref{sec:before_breakout_definitions}, with one notational change: Recall that by \eqref{eq:decomposition}, we can decompose the process into two parts: the first part consisting of the particles spawned by the ancestor of the fugitive, and the second part consisting of the remaining particles. As in Section \ref{sec:fugitive}, the quantities which refer to the particles of the first part will be denoted by a bar (e.g.\ $\widebar Z$) or check (e.g. $\widecheck Z$). The quantities of the second part will be denoted with a hat in this section (e.g.\ $\widehat Z$), in reference to the law $\Phat$ from Section \ref{sec:before_breakout_particles}. Furthermore assume from now on that there is a constant $\kappa$, such that for each $A$ and $a$ large enough the initial distribution satisfies $|e^{-A}Z_0-\kappa|\le \ep^{3/2}$ and $Y_0\le \eta Z_0$. The constant $\kappa$ will be regarded as universal, in the sense that the terms denoted by $O()$, $o_A()$ and $o(1)$ may depend on $\kappa$.

Suppose further that we are given a family $(f_x)_{x\ge 0}$ of non-decreasing functions $f_x \in \mathscr C^2(\R,\R_+)$, such that for each $x\ge 0$, $f_x(t) = 0$ for $t\le 0$, $f_x(+\infty) = x$ and for each $\delta > 0$ there exist $M_x = M_x(\delta)$, $M_t = M_t(\delta)$, such that
\begin{itemize}
 \item $M_x(\delta) \to \infty$ as $\delta \to 0$,
 \item $||f_x|| \le \delta^{-1}$ for all $x\in [0,M_x]$, and
 \item $f_x(t) \ge x-\delta$ for all $t\ge M_t$,
\end{itemize}
where $||f||$ is defined in \eqref{eq:def_f_norm}. It is easy to construct such a family: Take any non-decreasing function $f\in \mathscr C^2(\R_+,\R_+)$, such that $f(t)=0$ for all $t\le 0$ and $f(t) = 1$ for all $t\ge 1$ and define $f_x = xf$ for $x\ge 0$. Then $||f_x|| \le ||f|| (x\vee x^2)$, whence this family satisfies the above conditions with $M_x(\delta) = (||f||\delta)^{-1/2} \wedge (||f||\delta)^{-1}$ and $M_t(\delta) \equiv 1$.

Now suppose we are given a BBM with constant drift $-\mu$ starting from the initial configuration $\nu_0$. We are going to define for each $n\in\N$ define a stopping time $T_n$ and a barrier process $(X^{(n)}_t)_{t\in [T_{n-1},T_n]}$ as follows:
\begin{enumerate}
 \item We set $T_0 = 0$ and $X^{(1)}_0 = 0$.
 \item Denote by $T$ the time of the first breakout of the BBM absorbed at 0 and by $\mathscr U$ the fugitive, as in Section \ref{sec:before_breakout_definitions}. We set $X^{(1)}_t = 0$ for $t\in [0,T]$.
\item
Define
\begin{equation}
\label{eq:Delta_def}
\Delta = \frac{1}{c_0} \log\Big(\frac{Z_T}{\kappa e^A}\vee 1\Big),
\end{equation}
where $Z_T$ is defined in Section \ref{sec:before_breakout_definitions}.
 \item Define $T_1' = T+\tau^{(\mathscr U,T)}_{\mathrm{max}}$ and $T_1 = (T + a^{5/2})\vee T_1'$. Note that $T_1'$ and therefore also $T_1$ is a stopping time for the BBM. Now define
\[
X^{(1)}_t = f_{\Delta}((t-T_1')/a^2), t\in[T,T_1].
\]
We then give the particles an additional drift $-(\dd/\dd t)X^{(1)}_t$ for $t\in [T_1',T_1]$, in the meaning of Section \ref{sec:interval_notation}.
 \item We have now defined $T_1$ and $X^{(1)}$. We further define $\nu_1$ to be the measure formed by the particles at time $T_1$, which have never hit $0$. To define $T_2$ and $X^{(2)}$, we repeat the above steps with the process formed by the BBM started from those particles, with the definitions changed such that the barrier process starts at $X^{(2)}_{T_1} = X^{(1)}_{T_1}$, time starts at $T_1$ etc.
\item We now construct the barrier process $X^{(\infty)}_t$ from the pieces by $X^{(\infty)}_t = X^{(n)}_t$, if $t\in [T_{n-1},T_n]$.
\end{enumerate}
\begin{remark}
 The random line formed by the particles at time $T_1$ which have never crossed the barrier $X^{(1)}$ is not a stopping line in the sense that we have defined it, but in Jagers' sense (see Section \ref{sec:stopping_lines}), such that the strong branching property applies here as well. It is even a simple stopping line in the terminology of Biggins and Kyprianou \cite{Biggins2004}.
\end{remark}

Recall the definition of the phrase ``As $A$ and $a$ go to infinity'' from Section \ref{sec:before_breakout_definitions}. Our main theorem is the following:

\begin{theorem}
 \label{th:barrier}
As $A$ and $a$ go to infinity, the process $(X_t)_{t\ge 0} = (X^{(\infty)}_{ta^3c_0^2/\pi^2} - A t)_{t\ge 0}$ converges in the sense of finite-dimensional distributions to the L\'evy process $(L_t)_{t\ge0}$ with $L_0 = 0$ and cumulant $K_\kappa(\lambda)$ given by
\begin{equation}
\label{eq:cumulant_levy}
 K_\kappa(\lambda) = \log E[e^{i\lambda L_1}] = i\lambda (\log \kappa + c) + c_0\int_0^\infty e^{i\lambda x} - 1 - i\lambda x\Ind_{(x\le 1)}\,\Lambda(\dd x),
\end{equation}
where $\Lambda(\dd x)$ is the image of the measure $x^{-2}\dd x$ by the map $x\mapsto c_0^{-1}\log(1+x)$ and $c\in\R$ is a constant depending only on the reproduction law $q(k)$.
\end{theorem}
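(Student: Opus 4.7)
The plan is to express the barrier process $X^{(\infty)}$ as a pure-jump process whose jumps $\Delta_n$ occur at the successive breakout times $T_n$, and, using the quasi-Markov structure of the system after each relaxation, reduce the convergence of finite-dimensional distributions to the convergence of the characteristic function of a single increment to $\exp(K_\kappa)$. Between consecutive breakouts the continuous interpolation by $f_{\Delta_n}$ lives on a timescale of order $a^{5/2}$, which is negligible compared to the $a^3$ timescale, so the barrier is essentially piecewise constant and the process is determined by the pairs $(T_n - T_{n-1},\Delta_n)$.

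First I would carry out a relaxation analysis: after the $n$-th breakout, the barrier shifts by $\Delta_n = c_0^{-1}\log(Z_{T_n}/(\kappa e^A)\vee 1)$ via the function $f_{\Delta_n}$, and Proposition~\ref{prop:quantities} applied to this barrier function gives $\E_f[Z] \approx Z_{T_n} e^{-c_0\Delta_n} = \kappa e^A$. The variance estimate of the same proposition, together with Lemma~\ref{lem:Z3_and_Y} for the higher-tier contributions and the $Y$-control, show that the post-relaxation configuration $\nu_{n+1}$ satisfies $|e^{-A}Z_0 - \kappa| \le \ep^{3/2}$ and $Y_0 \le \eta Z_0$ with probability $1-o_A(1)$. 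Lemma~\ref{lem:coupling_T} can then be applied iteratively, coupling each inter-breakout time with an exponential of rate $p_B\pi\kappa e^A$, i.e.\ $c_0\kappa/\ep$ per unit rescaled time. The jump distribution comes from the strong branching property and Lemma~\ref{lem:critical_line}: on a breakout at $T_n$, $Z^{(\mathscr U,T_n)} = (\pi/c_0)W_n(1+o(1))$, so by~\eqref{eq:W_tail} the conditional limit density of $Y_n := e^{-A}Z^{(\mathscr U,T_n)}/\kappa$ is $(\ep/\kappa)y^{-2}\Ind_{(y>\ep/\kappa)}$, and $\Delta_n = c_0^{-1}\log(1+Y_n)$.

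With this structure in place, the characteristic function of $X_t$ takes the approximate compound-Poisson form
\[
\log \E[e^{i\lambda X_t}] = t\Big(-i\lambda A + \int_{\Delta_{\min}}^\infty (e^{i\lambda u} - 1)\,c_0\Lambda(\dd u)\Big) + o_A(1),
\]
with $\Delta_{\min} \approx \ep/(c_0\kappa)$. Adding and subtracting the compensator $i\lambda u\,\Ind_{(u\le 1)}$ under the integral splits the right-hand side into the compensated Lévy integral of \eqref{eq:cumulant_levy} plus a drift term $i\lambda\big[\int_{\Delta_{\min}}^1 u\,c_0\Lambda(\dd u) - A\big]$, and the Tauberian estimate~\eqref{eq:W_expec} identifies the bracketed quantity with $\log\kappa + c$ in the limit, pinning down the constant $c$ as depending only on $q$. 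Convergence of finite-dimensional distributions follows by applying the same argument to each successive increment and using the quasi-Markov property inherited from the relaxation step.

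The main obstacle is sustaining the relaxation analysis across the full rescaled horizon. Since $Z_t$ is only a supermartingale under the moving barrier and the number of breakouts per unit rescaled time is of order $\ep^{-1}$, the per-step coupling error of Lemma~\ref{lem:coupling_T} must be propagated over an unusually large number of iterations. The residual $\widecheck Z$ left behind by tier-$1$ fugitives (Lemma~\ref{lem:cousins_T1}) and the possibility of spurious tier-$\ge 2$ breakouts (Lemma~\ref{lem:T1_T2}) must be shown not to spoil the approximation; this is what ultimately forces the stringent constraints~\eqref{eq:ep_upper}--\eqref{eq:eta_ep} on the auxiliary parameters. Matching the logarithmic drift $\int_{\Delta_{\min}}^1 u\,c_0\Lambda(\dd u)$ against the macroscopic $At$ centering with only an $O(1)$ error, and extracting the precise value of $c$, is the delicate point where the refined asymptotic~\eqref{eq:W_expec} of the Seneta--Heyde limit $W$ becomes essential.
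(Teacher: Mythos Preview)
Your overall architecture is right, and the paper follows the same skeleton: establish a ``good event'' $G_n$ on which the post-relaxation configuration again satisfies $|e^{-A}Z-\kappa|\le\ep^{3/2}$ and $Y\le\eta Z$, compute the Fourier transform of one increment (Proposition~\ref{prop:piece}), and then iterate. The passage from the jump process $J_t$ to the continuous process $X^{(\infty)}_t$ at fixed times is indeed handled by showing that the relaxation window $[T^{\mathrm{BO}}_n,T_n]$ almost surely misses a given deterministic time in the $a^3$ scaling.

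There is, however, a genuine accounting gap in your identification of $\Delta_n$. You write $\Delta_n=c_0^{-1}\log(1+Y_n)$ with $Y_n=e^{-A}Z^{(\mathscr U,T_n)}/\kappa$, i.e.\ you attribute the entire jump to the fugitive. But $\Delta_n$ is defined through the \emph{full} $Z_{T_n}$, and between breakouts the bulk particles that hit $a$ without breaking out accumulate a contribution $\widehat Z^{(1;2)}_\emptyset$ whose mean is governed by $\Q^a[Z]$. By \eqref{eq:QaZ} this mean is $(\pi/c_0)(A+\log\ep+\const{eq:QaZ}+o_A(1))$, and integrating over the expected number $R_t^{(0)}$ of such hits up to $T$ yields a per-breakout drift of order $\kappa^{-1}c_0\gamma_0(A+\log\ep+c)$; see \eqref{eq:714} and the definition of $\Delta_{\mathrm{drift}}$ in the paper. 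This is not a residual to be controlled---it is exactly the term that cancels the $-A$ centering \emph{and} the $-\log\ep$ coming from the lower cutoff in your jump integral. With only the fugitive's contribution, your bracket $\int_{\Delta_{\min}}^1 u\,c_0\,\Lambda(\dd u)-A$ evaluates to $-\log\ep+\log\kappa+O(1)-A$, which diverges instead of tending to $\log\kappa+c$. The refined asymptotic \eqref{eq:W_expec} does enter, but through $\Q^a[Z]$ (the conditioned, non-breakout hits), not through the jump law; the jump law only needs the tail \eqref{eq:W_tail}. So your decomposition must be $\Delta_n=c_0^{-1}\log(1+\Delta_{\mathrm{drift}}+\Delta_{\mathrm{jump}})$ with $\Delta_{\mathrm{drift}}$ carrying the $A+\log\ep$ piece, and the two must be analysed separately before recombining in the characteristic function.
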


A stronger convergence than convergence in the sense of finite-dimensional distributions is convergence in law with respect to Skorokhod's topology (see \cite{Ethier1986}, Chapter 3). Obviously, the convergence in Theorem \ref{th:barrier} does not hold in this stronger sense, because the barrier is continuous but the L\'evy process is not, and the set of continuous functions is closed in Skorokhod's topology. However, if we create artificial jumps, we can rectify this:

\begin{theorem}
 \label{th:barrier2}
Define $J_t = X^{(\infty)}_{T_n}$, if $t\in [T_n,T_{n+1})$, for $n\in\N$. Then as $A$ and $a$ go to infinity, the process $(X'_t)_{t\ge 0} = (J_{ta^3c_0^2/\pi^2}-At)_{t\ge 0}$ converges in law with respect to Skorokhod's topology to the L\'evy process defined in the statement of Theorem \ref{th:barrier}.
\end{theorem}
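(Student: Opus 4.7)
The finite-dimensional convergence for $(X'_t)$ follows from Theorem \ref{th:barrier} together with the observation that $X^{(\infty)}$ and $J$ differ only on the relaxation windows $[T_n, T_n']$, which have length at most $a^{5/2}$ by construction; in rescaled time this is $O(a^{-1/2})$, vanishing as $a\to\infty$, so at each fixed rescaled time $t > 0$ the difference $X'_t - X_t$ tends to $0$ in probability. The remaining task is therefore to establish tightness of $(X'_t)_{t\in[0,T]}$ in the Skorokhod space $D([0,T],\R)$.

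The plan is to couple $X'$ with a genuine compound Poisson process on the same probability space. Iterating the coupling of Lemma \ref{lem:coupling_T}, I would construct iid exponential variables $(V_n)_{n\ge 1}$ of rate $c_0\kappa/\ep$ in rescaled time, so that the rescaled breakout times are close to $V_1 + \cdots + V_n$ with overwhelming probability. Using \eqref{eq:Z_W}, \eqref{eq:W_tail} and the definition \eqref{eq:Delta_def}, I would simultaneously couple the jumps $\Delta_n$ with iid variables $\tilde\Delta_n = c_0^{-1}\log(1+\widetilde W_n)$, where $\widetilde W_n$ has density $\ep\, x^{-2}\Ind_{(x\ge\ep)}\,\dd x$, i.e.\ the $\ep$-truncated jump law of the target L\'evy process. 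The resulting compound Poisson process $\tilde L^{A}_t$, with an appropriate drift compensating the mean of small jumps, converges in Skorokhod topology to $L_t$ as $A,a\to\infty$ (with $\ep\to 0$ along the way) by the classical L\'evy--Khintchine convergence of compound Poisson approximations. A uniform coupling bound $\sup_{t\in[0,T]}|X'_t - \tilde L^A_t| \to 0$ in probability then yields both tightness of $(X'_t)$ and identification of the limit.

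The main obstacle is the \emph{regeneration estimate}: to re-apply Lemma \ref{lem:coupling_T} at each $T_n'$, one must show that the state of the system after the relaxation window again satisfies $|e^{-A}Z_{T_n'}-\kappa|\le \ep^{3/2}$ and $Y_{T_n'}\le \eta Z_{T_n'}$ with probability $1-o(\ep^2)$, uniformly in $n$ over a horizon of $O(\ep^{-1})$ breakouts. The choice \eqref{eq:Delta_def} of $\Delta_n$ is engineered precisely so that $\E[Z_{T_n'}]\approx \kappa e^A$ by Proposition \ref{prop:quantities} applied to the BBM with moving barrier $f_{\Delta_n}$ on $[T_n,T_n']$; the variance bound \eqref{eq:Zt_variance}, combined with $T_n'-T_n$ being of order $a^2$, shows fluctuations of order $\sqrt{\ep}\,\kappa e^A$, which are dominated by the required tolerance $\ep^{3/2}\kappa e^A$ up to rare events. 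The analogous control of $Y_{T_n'}$ uses \eqref{eq:Yt} together with Lemma \ref{lem:critical_line} to convert the fugitive's contribution at the critical line into an estimate on $Y/Z$. Summing errors over the $O(\ep^{-1})$ breakouts per unit of rescaled time and using the per-step coupling error $O(\ep^2)$ from Lemma \ref{lem:coupling_T}, the overall coupling error is $o(1)$, yielding the desired tightness.
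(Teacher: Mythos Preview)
Your proposal has a circularity problem and a structural gap.

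\textbf{Circularity.} You invoke Theorem~\ref{th:barrier} to obtain finite-dimensional convergence of $X'$. In the paper, however, Theorem~\ref{th:barrier} is deduced \emph{from} Theorem~\ref{th:barrier2}: one first proves Skorokhod convergence of the jump process $X'$, and then shows that at each fixed rescaled time the probability of landing in a relaxation window vanishes (equation~\eqref{eq:950}). So your first paragraph runs the logic backwards.

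\textbf{The paper's route.} The paper does not couple the jump sizes $\Delta_n$ with an external iid sequence at all. Instead it introduces an auxiliary process $X''_t = X_{T_{\lfloor t\gamma^{-1}\rfloor}} - At$, indexed by the \emph{number} of breakouts rather than real time, and proves its Skorokhod convergence directly (Proposition~\ref{prop:Xt_second}): finite-dimensional convergence comes from the characteristic function computation in Proposition~\ref{prop:piece}, and tightness from Aldous' criterion. Only the \emph{waiting times} $T_n - T_{n-1}$ are coupled with exponentials $V_n$ (Lemma~\ref{lem:coupling_T}), and the passage from $X''$ to $X'$ is then a random time change $\varphi_{A,a}$ satisfying \eqref{eq:940}--\eqref{eq:942}, handled via Skorokhod's representation theorem.

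\textbf{The gap in your jump coupling.} Your plan to couple $\Delta_n$ with $c_0^{-1}\log(1+\widetilde W_n)$ treats the increment as coming solely from the fugitive's contribution $Z^{(\mathscr U,T)}$. But by \eqref{eq:Delta_def} and \eqref{eq:778}, $\Delta$ also contains the bulk term $\Delta_{\mathrm{drift}}$, whose expectation \eqref{eq:Delta_drift} is of order $\gamma_0 A \sim \ep A$ per step; summed over $O(\ep^{-1})$ steps this produces the entire linear drift $At$ that is being subtracted, plus the $\log\ep$ cancellation between \eqref{eq:Delta_drift} and \eqref{eq:789}. A pure compound-Poisson coupling with $\widetilde\Delta_n$ as you describe would miss this, and the resulting process would have the wrong drift. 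The paper circumvents this by working at the level of characteristic functions in Proposition~\ref{prop:piece}, where the two contributions are multiplied and the cancellation is explicit.

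Finally, two minor points: the relaxation window has length $a^{5/2}$, not $a^2$; and the regeneration probability achieved in the paper is $1-O(\ep^{9/8})$ per step (see \eqref{eq:776}), not $1-o(\ep^2)$, but this still sums to $o(1)$ over $O(\ep^{-1})$ steps.
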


Define the sequence $(G_n)_{n\ge -1}$ of ``good events'' by $G_{-1} = \Omega$ and $G_n$ to be the intersection of $G_{n-1}$ with the following events:
\begin{itemize}
 \item $\nu_n$ has support in $(0,a)$,
 \item $\mathscr N_{T_n} \subset U\times \{T_n\}$ and $T_n > T_n'$ (for $n>0$).
 \item $|e^{-A}Z_{T_n} -\kappa| \le \ep^{3/2}$ and $Y_{T_n} \le \eta Z_{T_n}$. 
\end{itemize}
The core of the proof of Theorems \ref{th:barrier} and \ref{th:barrier2} will be the following proposition:
\begin{proposition}
\label{prop:piece}
Fix $\lambda \in \R$. Suppose that $\P(G_0) = 1$. Define $\gamma_0 = \pi/(c_0^2p_Be^A)$.
Then there exists $\delta > 0$, such that for $n\le \ep^{-1-\delta/2}$ and large $A$ and $a$, we have $\P(G_n) \ge 1-n\ep^{1+\delta}$ and
\begin{equation}
\label{eq:piece_fourier}
 \log \E[e^{i\lambda X_{T_n}}] = n \kappa^{-1}\gamma_0(K_\kappa(\lambda) + i\lambda A + o_A(1) + O(\ep^\delta)),
\end{equation} 
with $K_\kappa(\lambda)$ defined as in Theorem \ref{th:barrier} and where $o_A(1)$ and $O(\ep^\delta)$ may depend on $\lambda$.
\end{proposition}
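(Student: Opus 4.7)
The plan is to establish Proposition \ref{prop:piece} by induction on $n$, using the strong Markov (and strong branching) property at each $T_{n-1}$ to reduce the problem to a single-cycle analysis. The base case $n=0$ is trivial. For the inductive step, I would decompose
\[
\E[e^{i\lambda X_{T_n}}] = \E\bigl[e^{i\lambda X_{T_{n-1}}}\,\E[e^{i\lambda (X_{T_n}-X_{T_{n-1}})}\mid \F_{T_{n-1}}]\bigr],
\]
and note that on $G_{n-1}$ the post-$T_{n-1}$ process is a fresh copy of the $n=1$ process started from a configuration $\nu_{n-1}$ satisfying the hypotheses of Section \ref{sec:before_breakout_definitions} (namely $|e^{-A}Z_{T_{n-1}}-\kappa|\le\ep^{3/2}$ and $Y_{T_{n-1}}\le\eta Z_{T_{n-1}}$). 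The induction then reduces to the two single-cycle statements: $\P(G_1^c)\le\ep^{1+\delta}$, and $\log\E[e^{i\lambda X_{T_1}}] = \kappa^{-1}\gamma_0(K_\kappa(\lambda)+i\lambda A+o_A(1)+O(\ep^\delta))$, together with careful control of the mass lost on $G_n^c$ so that the multiplicative structure of the characteristic function is preserved up to $n\le \ep^{-1-\delta/2}$ cycles.

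For the propagation of the good event, each cycle splits into a pre-breakout phase $[T_{n-1},T_n']$ and a relaxation phase $[T_n',T_n]$ of length at least $a^{5/2}\gg a^2$. In the first phase, Lemmas \ref{lem:Zl_exp_upperbound}, \ref{lem:Zl_t}, \ref{lem:Z3_and_Y} (together with \ref{lem:T1_T2} and the fugitive estimates of Lemmas \ref{lem:cousins_T0}, \ref{lem:cousins_T1}) show that $Z_T = \kappa e^A + Z' + O_P(\sqrt\ep\, e^A)$ with $Z'=Z^{(\mathscr U,T)}$, and that the contribution of higher tiers and of $Y$ is negligible. The barrier jump $\Delta = c_0^{-1}\log(Z_T/(\kappa e^A)\vee 1)$ is calibrated so that immediately after the jump the martingale $Z$ is returned to a value $\approx\kappa e^A$; applying then \eqref{eq:Zt_expectation} and the variance bound \eqref{eq:Zt_variance} of Proposition \ref{prop:quantities} over the relaxation interval yields $|e^{-A}Z_{T_n}-\kappa|\le\ep^{3/2}$ with the required probability, while \eqref{eq:Yt} forces $Y_{T_n}\le\eta Z_{T_n}$ and the particles to be supported in $(0,a)$ relative to the new barrier.

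For the single-cycle Fourier transform, I would use Lemma \ref{lem:coupling_T} to replace the rescaled waiting time $T_1\pi^2/(a^3c_0^2)$ by an independent $\mathrm{Exp}(\kappa/\gamma_0)$ variable up to an $O(\ep^2)$ coupling error, and use \eqref{eq:Z_W} together with the tail \eqref{eq:W_tail} of $W$ to derive the conditional law of $Z'$ given a breakout, from which one gets that the law of $\Delta$ has density (to leading order) $c_0\ep e^{c_0\xi}/(\kappa(e^{c_0\xi}-1)^2)\,\dd\xi$ on $(\xi_0,\infty)$, $\xi_0 = c_0^{-1}\log(1+\ep/\kappa)$. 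Since the breakout rate in rescaled time is $\kappa/\gamma_0$, one has $\int(e^{i\lambda\xi}-1)\mu(\dd\xi) = (\gamma_0/\kappa)\,c_0\int_{\xi_0}^\infty(e^{i\lambda\xi}-1)\Lambda(\dd\xi)$. Combining this with the Taylor expansion of $\log\E[e^{-i\lambda AV}] = -\log(1+i\lambda A\gamma_0/\kappa)$ and an integration-by-parts identity that invokes the asymptotic \eqref{eq:W_expec} to fix the additive constant $\log\kappa + c$, one matches the compound-Poisson expression to $\kappa^{-1}\gamma_0(K_\kappa(\lambda)+i\lambda A+o_A(1)+O(\ep^\delta))$.

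The main obstacle is the characteristic-function matching step: the integral $c_0\int_{\xi_0}^\infty(e^{i\lambda\xi}-1)\Lambda(\dd\xi)$ has a contribution from small $\xi$ of size $i\lambda \cdot O(\log(1/\ep))$, which must be cancelled exactly by the Lévy compensator $-i\lambda x\Ind_{x\le 1}$ together with the drift $i\lambda(\log\kappa+c+A)$ on the target side, leaving only an $o_A(1)+O(\ep^\delta)$ remainder; tracking all constants through the substitution $w=e^{c_0\xi}-1$ and the identification of $c$ via \eqref{eq:W_expec} is the delicate computational heart of the argument. A secondary difficulty is ensuring that every conditional bound (notably $\P(G_n^c\mid G_{n-1})\le\ep^{1+\delta}$ and the single-cycle characteristic function approximation) is uniform over admissible $\nu_{n-1}$ on $G_{n-1}$, so that accumulating $n\le\ep^{-1-\delta/2}$ cycles yields a total error $O(n\ep^{1+\delta})=O(\ep^{\delta/2})$ rather than something that grows geometrically.
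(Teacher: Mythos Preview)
Your overall architecture—induction on $n$ via the strong Markov property at $T_{n-1}$, reducing to a single-cycle bound $\P(G_1^c)\le\ep^{1+\delta}$ and a single-cycle characteristic function—matches the paper's, and your list of lemmas for the good-event propagation is the right one.

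The gap is in the single-cycle Fourier computation, specifically in where the $i\lambda A$ term in \eqref{eq:piece_fourier} comes from. You write $Z_T = \kappa e^A + Z' + O_P(\sqrt\ep\, e^A)$ and then try to recover the $A$ via a factor $\E[e^{-i\lambda AV}]$ coming from the waiting time. But $X_{T_1}$ in \eqref{eq:piece_fourier} is simply $X^{(\infty)}_{T_1}\approx\Delta$; there is no $-At$ subtraction here (that appears only later, in $X''_t$). The $i\lambda A$ term is produced by the \emph{drift of $Z_T$ itself}: during the random waiting time before the breakout, non-breakout tier-1 (and tier-2) particles accumulate, and integrating Lemma \ref{lem:Zl_exp_upperbound} against the law of $T$ together with $\Qhat^a[Z]=\frac{\pi}{c_0}(A+\log\ep+c+o_A(1))$ gives
\[
\E\bigl[e^{-A}\widehat Z^{(1;2)}_\emptyset\bigr]=\kappa^{-1}c_0\gamma_0\bigl(A+\log\ep+c+o_A(1)\bigr).
\]
The paper therefore writes $\Delta\approx c_0^{-1}\log(1+\Delta_{\mathrm{drift}}+\Delta_{\mathrm{jump}})$ with $\Delta_{\mathrm{drift}}$ and $\Delta_{\mathrm{jump}}=(\kappa e^A)^{-1}Z'$ \emph{independent}, expands $\log(1+a+b)=\log(1+a)+\log\bigl(1+\tfrac{b}{1+a}\bigr)$, and treats the two factors separately. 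Your $O_P(\sqrt\ep\,e^A)$ bound is correct but too coarse: it absorbs precisely this $O(\ep A\,e^A)$ drift, and once lost it cannot be recovered from the waiting time (which, incidentally, would contribute $-i\lambda A\gamma_0/\kappa$, the wrong sign).

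The cancellation you flag as the ``main obstacle'' is real, and in the paper it happens as follows: the jump part contributes $i\lambda\kappa^{-1}\gamma_0(-\log\ep+\log\kappa+c')$ plus the compensated L\'evy integral, while $\Delta_{\mathrm{drift}}$ contributes $i\lambda\kappa^{-1}\gamma_0(A+\log\ep+c)$; the two $\log\ep$'s cancel and what remains is $i\lambda\kappa^{-1}\gamma_0(A+\log\kappa+c+c')$, matching $K_\kappa(\lambda)+i\lambda A$. So the missing ingredient is to keep $\Delta_{\mathrm{drift}}$ explicit—with first moment coming from \eqref{eq:QhatZ} and second moment $O(\ep^2A^2)$ via Lemmas \ref{lem:Zl_variance} and \ref{lem:Z2_variance}—rather than burying it in an error term.
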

\begin{remark}
 The process $Z_t$ approximately measures the population size, in the sense that the number of particles at the time $t + \delta a^3$, where $\delta$ is a small constant, is approximately $2\pi c_0^{-2} e^{c_0a}a^{-3}\times Z_t$ \cite{Berestycki2010}. That is why we stated in the introduction that the number of particles stays almost constant in our model. We could indeed show for example that the process of the number of particles converges to a constant in the sense of finite-dimensional distributions. Since we plan to show much stronger results in a second work, we will not prove this here, however.
\end{remark}

\subsection{Proof of Proposition \ref{prop:piece}}

In this subsection, we are under the hypotheses of Proposition \ref{prop:piece}, i.e.\ we suppose that $\nu_0$ has support in $(0,a)$, $|e^{-A}Z_0 -\kappa| \le \ep^{3/2}$ and $Y_0 \le \eta Z_0$.

\paragraph{The particles on the stopping line $\mathscr N_T$.} In a first step, we will describe the state of the system at the stopping line $\mathscr N_T$, defined in \eqref{eq:def_Nline}. Recall that this stopping line consists of those particles, for which $\sigma_l \le T < \tau_l$ for some $l$, and of the descendants of those for which $\tau_l \le T < \sigma_{l+1}$ for some $l$, as soon as they hit the critical line. This latter case applies in particular to the fugitive $\mathscr U$, for which $T = \tau_l(\mathscr U)$ for some $l$. We will show in this paragraph that the following events happen with high probability and give first and second moment estimates of the quantities appearing in the definitions:
\begin{align*}
 G_{\mathrm{bulk}} &= \{|Z_T - Z^{(\mathscr U,T)} - (\widehat Z^{(0;2)}_\emptyset + \widecheck Z^{(1)}_\emptyset)| \le \ep^{3/2}/4|\} \cap \{Z_T - Z^{(\mathscr U,T)} \le e^A/\ep\} \cap \{Y_T \le \ep^{-3/2}\},\\
 G_{\mathrm{fug}} &= \{Z^{(\mathscr U,T)} \le e^A/\ep,\ \tau_{\max}^{(\mathscr U,T)} \le \zeta\}.
\end{align*}

First of all, since the process of the descendants of the fugitive after time $T$ follows the law $\P^a(\cdot\,|\,(\Gamma^{(\emptyset,0)})^c)$, we have for $A$ and $a$ large enough,
\begin{equation}
 \label{eq:745}
\P(G_{\mathrm{fug}}^c) \le p_B^{-1}(\P^a(Z^{(\emptyset,0)} > e^A/\ep) + \P^a(\tau_{\max}^{(\emptyset,0)} >  \zeta)) \le C\ep^2,
\end{equation}
by \eqref{eq:ep_lower} and \eqref{eq:eta}.

As for $G_{\mathrm{bulk}}$, let $A$ be large enough, such that $\sqrt \ep \le \tconst{eq:def_tcrit}/a^3$. Recall the decomposition of the BBM conditioned on $T=t$ given by \eqref{eq:decomposition} and denote by $x_0$ the position of the particle that is the ancestor of the fugitive. We have as in the proof of Lemma \ref{lem:T0}, for large $A$,
\[
w(x_0) \le \pi \mu^{-1} Y_0|\log Y_0| \le C \eta |\log \eta| e^A \le \ep^{3/2}e^A,
\]
by \eqref{eq:eta}, whence $|e^{-A}\widehat Z_0 -\kappa| \le 2\ep^{3/2}$ and $\widehat Y_0 \le \eta\widehat Z_0$. We then have by Lemma \ref{lem:Zl_exp_upperbound}, for $t\le \sqrt{\ep} a^3$,
\begin{equation}
  \label{eq:710}
 \E[e^{-A}\widehat Z^{(1;2)}_\emptyset\,|\,T=t] = (\pi + O(p_B))e^{-A}Z_0\Qhat^a[Z](\tfrac t {a^3} + O(\eta)) + O(A^2 (\tfrac t {a^3} + \eta)^2).
\end{equation}
Furthermore, by Lemmas \ref{lem:Zl_variance} and \ref{lem:Z2_variance} and the inequality $(x+y)^2 \le 2(x^2+y^2)$, we have
\begin{equation}
\label{eq:712}
 \Var(e^{-A}\widehat Z^{(1;2)}_\emptyset\,|\,T=t) \le C\big(\ep (\tfrac t {a^3} + O(\eta)) + \ep A (\tfrac t {a^3} + O(\eta))^2 + \ep A^2 (\tfrac t {a^3} + O(\eta))^3 + A^4(\tfrac t {a^3} + O(\eta))^4\big)
\end{equation}
Lemma \ref{lem:moments_T} and \eqref{eq:710} now give for large $A$,
\begin{equation}
 \label{eq:714}
\begin{split}
 \E[e^{-A}\widehat Z^{(1;2)}_\emptyset\Ind_{(T\le \sqrt{\ep}a^3)}] &= e^{-A}/p_B(1+O(A\sqrt{\ep}))\Qhat^a[Z] + O(A^2\ep^2)\\
&= c_0\gamma_0 (A+\log \ep+c+o_A(1)),
\end{split}
\end{equation}
by \eqref{eq:ep_upper}, \eqref{eq:eta_ep} and \eqref{eq:QhatZ}. Note that by \eqref{eq:pB}, $\gamma_0 = \ep(1+o_A(1)+o(1))$.
Similarly, \eqref{eq:710} and \eqref{eq:712} and Lemma \ref{lem:moments_T} give
\begin{equation}
 \label{eq:716}
 \E((e^{-A}\widehat Z^{(1;2)}_\emptyset)^2\Ind_{(T\le \sqrt{\ep}a^3)}] = O(A^2\ep^2).
\end{equation}
Likewise, if we define
\[
 \widehat G = \{|\widehat Z_T^{(0;2)}-\widehat Z_\emptyset^{(0;2)}| \le \ep^2 e^A,\ \widehat Z_T^{(3+)} \le \ep^{3/2}/10,\ \widehat Z_\emptyset^{(0;2)} < e^A/(4\ep),\ \widehat Y_T \le \ep^2 \},
\]
then we have by \eqref{eq:714}, Lemmas \ref{lem:moments_T}, \ref{lem:Zl_t} and \ref{lem:Z3_and_Y}, the hypotheses on $Z_0$ and $Y_0$ and the union bound,
\begin{equation}
 \label{eq:717}
 \P(\widehat G^c,\ T\le \sqrt{\ep}a^3) \le C (\ep^{-2}e^{-A} + A^3 \ep^{3/2} + e^{-A} + \eta e^A) \le C A^3 \ep^{3/2},
\end{equation}
by \eqref{eq:ep_lower} and \eqref{eq:eta}. As for the particles from the family of the fugitive, note first that we have by Lemma \ref{lem:T1_T2},
\begin{equation}
 \label{eq:720}
 \P(T^{(2+)} = T) \le \P(T^{(2+)} < T^{(0;1)}) \le C A^2 \ep^2.
\end{equation}
Furthermore, by Lemmas \ref{lem:cousins_T1} and \ref{lem:T1_T2} and the fact that $\widecheck Z^{(1)}_\emptyset = 0$ on the event $\{T=T^{(0)}\}$,
\begin{equation}
 \label{eq:721}
 \E[e^{-A}\widecheck Z^{(1)}_\emptyset,\ T\le \sqrt{\ep}a^3] \le C \sqrt{\ep} \P(T^{(1+)} = T) \le CA\ep^{3/2},
\end{equation}
and likewise
\begin{equation}
 \label{eq:722}
 \E[(e^{-A}\widecheck Z^{(1)}_\emptyset)^2,\ T\le \sqrt{\ep}a^3] \le C \sqrt{\ep} \P(T^{(1+)} = T) \le CA\ep^{5/2}.
\end{equation}
Likewise, if we define
\[
 \widecheck G = \{|\widecheck Z_T^{(1)}-\widecheck Z_\emptyset^{(1)}| \le \ep^2 e^A,\ \widecheck Z_T^{(2+)} \le \ep^{3/2}/10,\ \widecheck Z_\emptyset^{(1)} < e^A/(4\ep),\ \widecheck Y_T \le \ep^2 \},
\]
then we have by \eqref{eq:720}, \eqref{eq:721}, Lemmas \ref{lem:cousins_T1} and \ref{lem:T1_T2} and Markov's inequality,
\begin{equation}
 \label{eq:727}
\begin{split}
 \P(\widecheck G^c,\ T\le \ep^{3/4}a^3) &\le \P(\widecheck G^c,\ T\le \ep^{3/4}a^3\,|\,T = T^{(1)})\P(T = T^{(1)}) + \P(T = T^{(2+)})\\
 &\le C\ep A (\ep^{-3}e^{-A} + A \ep^{1/4} + e^{-A} + \eta e^A /\ep) + A^2 \ep^2\\
 &\le C A^2 \ep^{5/4},
\end{split}
\end{equation}
by \eqref{eq:ep_lower} and \eqref{eq:eta}. Finally, defining
\[
 \widebar G = \{\widebar Z_T \le \ep^{-3/2},\ \widebar Y_T \le \ep^{-3/2}/2\},
\]
we get by Lemmas \ref{lem:cousins_T0} and \ref{lem:cousins_T1} and Markov's inequality,
\begin{equation}
 \label{eq:737}
 \P(\widebar G^c) \le C A \ep^{3/2}.
\end{equation}
Altogether, since $Z_T = Z^{(\mathscr U,T)} + \widehat Z^{(0+)}_T + \widecheck Z^{(1+)}_T + \widebar Z_T$, we have $\widehat G \cap \widecheck G \cap \widebar G \subset G_{\mathrm{bulk}}$ for large $A$, and thus, by \eqref{eq:ep_upper}, \eqref{eq:717}, \eqref{eq:727} and \eqref{eq:737},
\begin{equation}
 \label{eq:740}
 \P(G_{\mathrm{bulk}}^c,\ T\le \ep^{3/4}a^3) \le C A^2 \ep^{5/4}.
\end{equation}

\paragraph{The particles touching the right barrier after the breakout.} Now, from the time $T+\zeta$ on, we are moving the barrier according to the function $f_\Delta$, which is equivalent to having the variable drift $-\mu_t = -\mu-f_\Delta(t/a^2)/a^2$. Note that on $G_{\mathrm{fug}}$, the variable $\Delta$ is $\F_{T+\zeta}$-measurable and that $T_1 = T+a^{5/2}$. Since $Z_T \le 2e^A/\ep$ on $G_{\mathrm{bulk}}\cap G_{\mathrm{fug}}$, we now have for large $a$, by the hypotheses on the functions $(f_x)$,
\begin{equation}
 \label{eq:Delta_f}
\ton G_{\mathrm{bulk}}\cap G_{\mathrm{fug}}: ||f_\Delta|| \le \sqrt a\quad\tand\quad \Delta - f_\Delta((a^{5/2}-\zeta)/a^2) = o(1).
\end{equation}

We now show that on the good events defined above, with high probability there is no particle hitting the right barrier between the times $T+a^2$ and $T+a^{5/2}$ and the descendants of the particles that hit the right barrier between $T$ and $T+a^2$ are negligible. For this, \emph{we start afresh the notation of the tiers from the stopping line $\mathscr N_T$ on}, indicating this change of notation by a prime ('), i.e. for all particles $u$, such that $\mathscr N_T \preceq (u,t)$ for some $t$, we set $\sigma'_0(u)$ to be the second coordinate of $\mathscr N_T \wedge (u,t)$ and define $\sigma'_n$ and  $\tau'_n$ by
\[
\begin{split}
 \tau_n'(u) &= \inf\{s\ge \sigma_n:X_u(s) = a\},\\
 \sigma_{n+1}'(u) &= \inf\{s\ge \tau_n:X_u(s)=a-y+(c_0-\mu)(s-\tau_n)-f_\Delta((s-(T+\zeta))/a^2)\}.
\end{split}
\]
The stopping lines $\mathscr R'^{(l)}_t$ etc. are then defined as in Section \ref{sec:before_breakout_definitions}, adding $f_\Delta$ to the definition. Note that we assumed there that $f\equiv 0$, but we will not use the results from that section, such that there is no conflict. We then define
\[
\begin{split}
 G_{\mathrm{barrier}} &= \bigcap_{(u,s)\in \mathscr R'^{(0)}_{T+a^2}} \Gamma^{(u,s)} \cap \{R'^{(0)}_{T_1} - R'^{(0)}_{T+a^2} = 0\}\\
& \cap \{Z'^{(1)}_{T_1} \le \ep^2 e^A\} \cap \{Y'^{(1)}_{T_1} \le \ep\eta e^A\} \cap \{R'^{(1)}_{T_1} =0\}.
\end{split}
\]
Now, first note that on $G_{\mathrm{bulk}} \cap G_{\mathrm{fug}}$, we have by \eqref{eq:eta},
\begin{equation}
\label{eq:755} 
Z_T \le 2\ep^{-1}e^A,\quad Y_T \le 2\ep^{-3/2},\quad\tand \quad \forall (u,s)\in \mathscr N_T: s\le T+\zeta.
\end{equation}
It follows from Lemmas \ref{lem:Rt} and \ref{lem:R_f}, \eqref{eq:pB} and \eqref{eq:755} that for large $a$,
\begin{equation}
\label{eq:756}
 \P\Big(G_{\mathrm{bulk}} \cap G_{\mathrm{fug}} \cap \bigcup_{(u,s)\in \mathscr R'^{(0)}_{T+a^2}} (\Gamma^{(u,s)})^c\Big) \le p_B \E[R'^{(0)}_{T+a^2}]
\le C\ep^{-5/2}e^{-A},
\end{equation}
and
\begin{equation}
 \label{eq:757}
\P(G_{\mathrm{bulk}} \cap G_{\mathrm{fug}},\ R'^{(0)}_{T_1} - R'^{(0)}_{T+a^2} \ge 1) = o(1).
\end{equation}
As for the tier 1 particles, if we set
\[
 G' = G_{\mathrm{bulk}} \cap G_{\mathrm{fug}} \cap \bigcap_{(u,s)\in \mathscr R'^{(0)}_{T+a^2}} \Gamma^{(u,s)},
\]
we have by Lemmas \ref{lem:Rt} and \ref{lem:R_f},
\begin{equation}
\label{eq:758}
\begin{split}
 \E\Big[\Ind_{G'}\sum_{(u,s)\in \mathscr R'^{(0)}_{T+a^2}} Z^{(u,s)}\Big] &\le CA\E\Big[\Ind_{G'}R'^{(0)}_{T+a^2}\Big] \le C A (\ep^{-3/2} + o(1)) \le C\ep^{2+5/4}e^A,
\end{split}
\end{equation}
by \eqref{eq:ep_lower}. Equations \eqref{eq:757} and \eqref{eq:758} together with Proposition \ref{prop:quantities}, Lemmas \ref{lem:R_f} and \ref{lem:Rt} and Markov's inequality now give for large $a$,
\begin{equation}
 \label{eq:760}
 \P\Big(G'\cap (\{Z'^{(1)}_{T_1}>\ep^2e^A\}\cup\{Y'^{(1)}_{T_1} > a^{-1/2} e^A\} \cup \{R'^{(1)}_{T_1}\ge 1\})\Big) \le C\ep^{5/4}.
\end{equation}
Equations \eqref{eq:756}, \eqref{eq:757} and \eqref{eq:760}, together with \eqref{eq:ep_lower}, \eqref{eq:740} and \eqref{eq:745} now prove that
\begin{equation}
 \label{eq:750}
\P(G_{\mathrm{bulk}} \cap G_{\mathrm{fug}} \cap G_{\mathrm{barrier}}^c) \le C\ep^{3/2}.
\end{equation}

\paragraph{The particles that stay in the interval $(0,a)$ after the breakout.} On $G_{\mathrm{bulk}} \cap G_{\mathrm{fug}}$, we have $Z_T \ge \kappa e^A$ for large $A$, since $Z^{(\mathscr U,T)}\ge \ep e^A$ on $G_{\mathrm{fug}}$ and $Z_T - Z^{(\mathscr U,T)} \ge \kappa e^A - \ep^{3/2} (e^A + 1/4)$ on $G_{\mathrm{bulk}}$ by the hypothesis on $Z_0$. It follows that
\begin{equation}
\label{eq:767}
\ton G_{\mathrm{bulk}} \cap G_{\mathrm{fug}} : \Delta = \frac 1 {c_0} \log\Big(\frac{Z_T}{\kappa e^A}\Big) \ge 0.
\end{equation}
By \eqref{eq:Delta_f} and Proposition \ref{prop:quantities}, we now have for large $a$,
\begin{equation}
 \label{eq:770}
\ton G_{\mathrm{bulk}}\cap G_{\mathrm{fug}}: |\E[Z'^{(0)}_{T_1}\,|\,\F_{\mathscr N_T}] - \kappa e^A| \le \ep^{3/2}e^A/3,
\end{equation}
as well as
\begin{equation}
 \label{eq:771}
\ton G_{\mathrm{bulk}}\cap G_{\mathrm{fug}}: |\Var[Z'^{(0)}_{T_1}\,|\,\F_{\mathscr N_T}]| \le C\ep^{-3/2} + o(1).
\end{equation}
Equations \eqref{eq:770} and \eqref{eq:771} and the conditional Chebychev inequality now give for large $a$:
\begin{equation}
 \label{eq:772}
\P(G_{\mathrm{bulk}}\cap G_{\mathrm{fug}},\ |Z'^{(0)}_{T_1} - \kappa e^A| > \ep^{3/2}e^A/2) \le C \ep^{-9/2}e^{-2A} \le C\ep^{3/2},
\end{equation}
by \eqref{eq:ep_lower}. Hence, for large $a$, we have by \eqref{eq:772} and \eqref{eq:Yt},
\begin{equation}
 \label{eq:775}
\P(G_{\mathrm{bulk}}\cap G_{\mathrm{fug}} \cap G_{\mathrm{int}}^c) \le C\ep^{3/2},
\end{equation}
where
\[
 G_{\mathrm{int}} = \{|e^{-A}Z'^{(0)}_{T_1} - \kappa| \le \ep^{3/2}/2\}\cap\{Y_{T_1} \le a^{-1/2} e^A\}.
\]

\paragraph{The probability of $G_n$.}
Equations \eqref{eq:745}, \eqref{eq:740}, \eqref{eq:750} and \eqref{eq:775} now give for large $A$ and $a$,
\begin{equation}
\label{eq:776}
 \P\Big((G_{\mathrm{bulk}}\cap G_{\mathrm{fug}} \cap G_{\mathrm{barrier}} \cap G_{\mathrm{int}})^c\Big) \le C A^2\ep^{5/4} + \P(T > \ep^{3/4}e^A) \le \ep^{9/8}/2,
\end{equation}
by \eqref{eq:ep_lower} and Lemma \ref{lem:T0}. Now note that on $G_{\mathrm{fug}}\cap G_{\mathrm{barrier}}$, the first and second points in the definition of $G_1$ from the statement of Proposition \ref{prop:piece} are verified for large $a$, and the third point is verified on $G_{\mathrm{barrier}} \cap G_{\mathrm{int}}$ for large $a$.
In total, we have for large $A$ and $a$,
\begin{equation*}
\P(G_1) \ge 1-\ep^{9/8}.
\end{equation*}
The statement for the probability of $G_n$ now follows readily by induction, since conditioned on the event $G_n$, the process started at the stopping time $T_n$ satisfies the hypotheses of the proposition.

\paragraph{The Fourier transform of the barrier process.}
Define
\begin{align*}
 \Delta_{\mathrm{drift}} &= (\kappa e^A)^{-1} Z_0 - 1 + (\kappa e^A)^{-1} (Z^{(1)}_\emptyset + Z^{(2)}_\emptyset + \widecheck Z^{(1)}_\emptyset),\tand\\
 \Delta_{\mathrm{jump}} &= (\kappa e^A)^{-1} Z^{(\mathscr U,T)},
\end{align*}
and note that $\Delta_{\mathrm{drift}}$ and $\Delta_{\mathrm{jump}}$ are independent random variables.
By the definition of $G_{\mathrm{bulk}}$ and the hypothesis on $Z_0$, we have for large $A$,
\begin{equation}
 \label{eq:778}
\ton G_{\mathrm{bulk}}: \Delta = \frac 1 {c_0} \log\Big(1+\Delta_{\mathrm{drift}}+\Delta_{\mathrm{jump}}\Big) + O(\ep^{3/2})
\end{equation}
Note that by \eqref{eq:714} and \eqref{eq:721} and the hypothesis on $Z_0$, we have
\begin{equation}
 \label{eq:Delta_drift}
\E\left[\Delta_{\mathrm{drift}}\Ind_{(T\le \sqrt\ep a^3)}\right] = \kappa^{-1}c_0\gamma_0(A+\log\ep+c+o_A(1)),
\end{equation}
and by \eqref{eq:716} and \eqref{eq:722} and the inequality $(x+y+z)^2 \le 3(x^2+y^2+z^2)$
\begin{equation}
 \label{eq:Delta_drift_2}
\E\left[(\Delta_{\mathrm{drift}})^2\Ind_{(T\le \sqrt\ep a^3)}\right] = O(\ep^2A^2).
\end{equation}
Note that \eqref{eq:Delta_drift_2} implies
\begin{equation}
 \label{eq:Delta_drift_bound}
\P\left(|\Delta_{\mathrm{drift}}| \ge \ep^{1/3},\ T\le \sqrt\ep a^3\right) = O(\ep^{4/3}A^2) = O(\ep^{7/6}),
\end{equation}
by \eqref{eq:ep_upper}.
Now, since $\log(1+a+b) = \log(1+a)+\log((1+b)/(1+a))$, we have by \eqref{eq:Delta_f}, \eqref{eq:776}, \eqref{eq:778} and \eqref{eq:Delta_drift_bound}, for large $A$ and $a$,
\begin{equation}
\label{eq:782}
\begin{split}
 \E[e^{i\lambda X_{T_1}}] &= \E[e^{i\lambda X_{T_1}}\Ind_{(T\le \sqrt\ep a^3,\ |\Delta_{\mathrm{drift}}| < \ep^{1/3})}\Ind_{G_{\mathrm{bulk}}}] + O(\ep^{9/8})\\
&= \E[e^{(i\lambda /c_0) \log(1+\Delta_{\mathrm{drift}})}\Ind_{(T\le \sqrt\ep a^3,\ |\Delta_{\mathrm{drift}}| < \ep^{1/3})}\Ind_{G_{\mathrm{bulk}}}e^{(i\lambda /c_0)\log(1+\frac{\Delta_{\mathrm{jump}}}{1+\Delta_{\mathrm{drift}}})}] + O(\ep^{9/8}),
\end{split}
\end{equation}
for any $\lambda \in \R$. We will first study the term concerning $\Delta_{\mathrm{jump}}$. Write $Z = Z^{(\mathscr U,T)}$ and let $\rho$ be a real-valued constant with $|\rho| < \ep^{1/3}$. Then,
\begin{equation}
 \label{eq:785}
\begin{split}
\E[e^{(i\lambda /c_0)\log(1+\frac{\Delta_{\mathrm{jump}}}{1+ \rho})}] & = \E[e^{(i\lambda /c_0)\log(1+\frac{\kappa^{-1}e^{-A}Z}{1+ \rho})}\,|\,Z > \ep e^A] + O(\P^a(\tau_{\max}^{(\emptyset,0)} >  \zeta))\\
& = \int_\ep^\infty g(x)\,\P(e^{-A}Z \in \dd x\,|\,Z>\ep e^A) + O(\ep^2),
\end{split}
\end{equation}
where
\[
 g(x) = \exp\Big(\frac {i\lambda}{c_0} \log\Big(1+ \frac{\kappa^{-1}x}{1+ \rho}\Big)\Big).
\]
By definition of $p_B$ and $\eta$, we have for large $A$ and $a$,
\[
\begin{split}
 \int_\ep^\kappa x \P(e^{-A}Z \in \dd x\,|\,Z>\ep e^A) &= (p_B+O(\eta))^{-1}\E[Z\Ind_{(\ep e^A<Z\le \kappa e^A)}]\\
 &= c_0\gamma_0(-\log\ep +\log \kappa + o_A(1)),
\end{split}
\]
by \eqref{eq:W_expec}, \eqref{eq:ep_lower}, \eqref{eq:eta}, \eqref{eq:pB} and \eqref{eq:Z_W}. It follows that
\begin{multline}
\label{eq:786}
\int_\ep^\infty g(x)\,\P(e^{-A}Z \in \dd x\,|\,Z>\ep e^A) = 1 + i\lambda\frac{\kappa^{-1}\gamma_0}{1+ \rho}(-\log\ep+\log \kappa + o_A(1))\\
 + \int_\ep^\infty g(x) - 1 - i\lambda \frac{\kappa^{-1}\gamma_0}{1+ \rho} x \Ind_{(x\le \kappa)}\P(e^{-A}Z \in \dd x\,|\,Z>\ep e^A).
\end{multline}
Now define $h(x) =  g(x) - 1 - i\lambda (\kappa^{-1}\gamma_0/(1+ \rho))x \Ind_{(x\le \kappa)}$ for $x\ge 0$ and denote by $h^-(x)$ its left-hand derivative. Note that  $|h(x)|\le C(1\wedge x^2)$ and $|h^-(x)| \le C(x^{-1}\wedge x^2)$ for $x\ge 0$. Now, by integration by parts,  \eqref{eq:W_tail} and \eqref{eq:Z_W}, we have for large $a$,
\begin{equation}
\label{eq:786a}
\begin{split}
  &\int_\ep^\infty h(x)\P(e^{-A}Z \in \dd x\,|\,Z>\ep e^A)\\
&= h(\ep) + p_B^{-1} (1+o_A(1))\Big(\int_\ep^\infty h^-(x)\P(Z > xe^A)\,\dd x + (h(1)-h(1-)) \P(Z > e^A)\Big)\\
&= c_0\gamma_0 \Big(\int_0^\infty h^-(x)\frac 1 x\,\dd x + (h(1)-h(1-)) + o_A(1)\Big)\\
&= c_0\gamma_0 (1+o_A(1)) \int_0^\infty h(x)\frac 1 {x^2}\,\dd x.
\end{split}
\end{equation}
Now, one readily sees that
\begin{equation}
\label{eq:786b}
 \int_0^\infty h(x)\frac 1 {x^2}\,\dd x = \kappa^{-1} \Big(\frac{i\lambda}{c_0} (c' + o_A(1)+O(\rho))\\
 + \int_0^\infty e^{i\lambda x}-1-i\lambda x\Ind_{(x\le 1)}\,\Lambda(\dd x)\Big),
\end{equation}
where $\Lambda(\dd x)$ is as in the statement of Proposition \ref{prop:piece} and $c'$ is a constant depending only on $c_0$. Equations \eqref{eq:785}, \eqref{eq:786}, \eqref{eq:786a} and \eqref{eq:786b} and the Taylor expansion of $e^{-x}$ at $x=0$ now yield
\begin{multline}
 \label{eq:789}
\E[e^{(i\lambda /c_0)\log(1+\frac{\Delta_{\mathrm{jump}}}{1+ \rho})}] \\
= \exp \Big[\frac {\gamma_0} \kappa \Big(i\lambda (-\log\ep + \log \kappa + c' + o_A(1) + c_0\int_0^\infty e^{i\lambda x}-1-i\lambda x\Ind_{(x\le 1)}\,\Lambda(\dd x)\Big) +O(\ep |\log\ep|\rho)\Big].
\end{multline}
Coming back to \eqref{eq:782}, we have by the Taylor expansion of $(1+x)^{i\lambda/c_0}$ at $x=0$,
\begin{align*}
&\E[e^{(i\lambda /c_0) (\log(1+\Delta_{\mathrm{drift}})+O(\ep |\log\ep| \Delta_{\mathrm{drift}}))}\Ind_{(T\le \sqrt\ep a^3,\ |\Delta_{\mathrm{drift}}| < \ep^{1/3})}\Ind_{G_\mathrm{bulk}}]\\
& = \E[(1+\frac {i\lambda}{c_0} \Delta_{\mathrm{drift}} + O(\Delta_{\mathrm{drift}}^2))\Ind_{(T\le \sqrt\ep a^3)}] + O(\ep^{9/8}) && \text{by \eqref{eq:776} and \eqref{eq:Delta_drift_bound}}\\
& = 1 + i\lambda \kappa^{-1}\gamma_0(A+\log\ep+c+o_A(1)) + \P(T> \sqrt\ep a^3) + O(\ep^{9/8}) && \text{by \eqref{eq:Delta_drift} and \eqref{eq:Delta_drift_2}}\\
& = \exp i\lambda\kappa^{-1}\gamma_0(A+\log\ep+c+o_A(1)+O(\ep^{1/8})),
\end{align*}
where the last equation follows from Lemma \ref{lem:T0} and the Taylor expansion of $e^x$ at $x=0$. This equation, together with
\eqref{eq:782} and \eqref{eq:789} and the fact that $\Delta_{\mathrm{jump}}$ is independent from $\Delta_{\mathrm{drift}}$, $T$ and $G_{\mathrm{bulk}}$, yields \eqref{eq:piece_fourier} in the case $n=1$. For general $n$, we note that
\begin{equation*}
\E[e^{i\lambda X_{T_n}}\Ind_{G_n}] = \E[e^{i\lambda X_{T_n}}\Ind_{G_{n-1}}] - \E[e^{i\lambda X_{T_n}}\Ind_{G_{n-1}\backslash G_n}].
\end{equation*}
Now, by \eqref{eq:piece_fourier} in the case $n=1$, we have
\[
\begin{split}
 \E[e^{i\lambda X_{T_n}}\Ind_{G_{n-1}}] &= \E[\E[e^{i\lambda (X_{T_n}-X_{T_{n-1}})}\,|\,\F_{T_{n-1}}]e^{i\lambda X_{T_{n-1}}}\Ind_{G_{n-1}}]\\
&= \E[e^{i\lambda X_{T_{n-1}}+iO(\ep^{1+\delta})}\Ind_{G_{n-1}}]e^{\ep c_0 \kappa^{-1}(K_\kappa(\lambda) + i\lambda A + o_A(1))}\\
&= \E[e^{i\lambda X_{T_{n-1}}}\Ind_{G_{n-1}}]e^{\ep c_0 \kappa^{-1}(K_\kappa(\lambda) + i\lambda A + o_A(1))} + O(\ep^{1+\delta})
\end{split}
\]
and
\[
 |\E[e^{i\lambda X_{T_n}}\Ind_{G_{n-1}\backslash G_n}]| \le \P(G_{n-1}\backslash G_n).
\]
The statement now follows easily by induction over $n$ and the previously established fact that $\P(G_n) \ge 1-nO(\ep^{1+\delta})$.

\subsection{Proof of Theorems \ref{th:barrier} and \ref{th:barrier2}}
We set $\gamma = \gamma_0/(\kappa c_0)$ and define the process $(X_t'')_{t\ge 0}$ by
\[
X_t'' = X_{T_{\lfloor t\gamma^{-1}a^3\rfloor}} - A t.
\] 
\begin{proposition}
 \label{prop:Xt_second}
The process $(X_t'')_{t\ge 0}$ converges in law (with respect to Skorokhod's topology) to the L\'evy process $(L_t)_{t\ge 0}$ defined in Theorem \ref{th:barrier}.
\end{proposition}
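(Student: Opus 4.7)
The plan is to combine Proposition 7.1 with the strong branching property at the stopping times $T_n$ to establish convergence of finite-dimensional distributions of $X''_t$, then to promote this to functional convergence in Skorokhod's topology by invoking a standard triangular-array limit theorem for processes with independent increments.

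First, for the finite-dimensional convergence, I would fix times $0 = s_0 < s_1 < \cdots < s_k$, real parameters $\lambda_1,\ldots,\lambda_k$, and set $n_j = \lfloor s_j\gamma^{-1}a^3\rfloor$. By the strong branching property applied at $T_{n_{j-1}}$, the post-$T_{n_{j-1}}$ process is conditionally independent of $\F_{T_{n_{j-1}}}$ given the configuration $\nu_{n_{j-1}}$ at time $T_{n_{j-1}}$; on the good event $G_{n_{j-1}}$, this configuration satisfies the hypotheses of Proposition~\ref{prop:piece}. Iterating that proposition over the successive blocks conditionally on $\F_{T_{n_{j-1}}}$ yields
\begin{equation*}
\log\E\Big[e^{i\sum_j \lambda_j(X_{T_{n_j}}-X_{T_{n_{j-1}}})}\Ind_{G_{n_k}}\Big] = \sum_{j=1}^k (n_j-n_{j-1})\kappa^{-1}\gamma_0\bigl(K_\kappa(\lambda_j)+i\lambda_j A + o_A(1)+O(\ep^\delta)\bigr).
\end{equation*}
The normalisation $\gamma$ is chosen so that $(n_j-n_{j-1})\kappa^{-1}\gamma_0 \to s_j-s_{j-1}$, while the drift $-At$ in the definition of $X''_t$ absorbs the spurious $i\lambda_j A(s_j-s_{j-1})$ terms, producing the joint characteristic function of $k$ independent L\'evy increments $L_{s_j}-L_{s_{j-1}}$.

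To upgrade to Skorokhod convergence, observe that $X''_t$ is piecewise linear with slope $-A$ between its deterministic jump times $t_n = n\gamma a^3$, and on $\bigcap_n G_n$ the successive jumps $X_{T_n}-X_{T_{n-1}}$ are independent and identically distributed up to asymptotically negligible errors. Skorokhod convergence of a triangular array of processes with row-independent increments to a L\'evy process follows from finite-dimensional convergence plus a negligibility condition on the maximum jump (see e.g.\ \cite{Kallenberg1997}, Ch.~15). The negligibility is supplied by the tail analysis already performed inside the proof of Proposition~\ref{prop:piece}: the L\'evy-measure asymptotics \eqref{eq:786}--\eqref{eq:786b} control large jumps arising from $\Delta_{\mathrm{jump}}$, while \eqref{eq:Delta_drift_2} controls the contribution of $\Delta_{\mathrm{drift}}$, and a union bound over the $O(\ep^{-1})$ jumps in any fixed interval finishes the check.

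The main obstacle is error accumulation across the $n = O(\ep^{-1})$ iterations. The union bound $\P\bigl(\bigcup_{n\le\ep^{-1-\delta/2}}G_n^c\bigr) \le \ep^{-1-\delta/2}\cdot\ep^{1+\delta} = \ep^{\delta/2}$ handles the failure of the good events, while the additive $O(\ep^\delta)$ term in each single-step log-characteristic function, summed against $n\kappa^{-1}\gamma_0 = O(1)$, remains $O(\ep^\delta)$ and hence vanishes as $A$ and $a$ tend to infinity. A Slutsky-type argument then removes the indicator $\Ind_{G_{n_k}}$ from the characteristic function, and a small amount of care on the deterministic grid $\{t_n\}$ (using that the limit L\'evy process is stochastically continuous) rules out any pathological alignment issues in Skorokhod's topology.
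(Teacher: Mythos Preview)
Your finite-dimensional argument is essentially what the paper does: iterate Proposition~\ref{prop:piece} conditionally on $\F_{T_{n_{j-1}}}$ restricted to $G_{n_{j-1}}$, and use the bound on $\P(G_{n_k}^c)$ to remove the indicator. The paper packages this via the criterion $\E\big[\big|\E[e^{i\lambda X''_{t+s}}\mid\F''_t]-e^{i\lambda X''_t}e^{sK_\kappa(\lambda)}\big|\big]\to 0$ from \cite{Kurtz1975}/\cite{Ethier1986}, but the content is the same conditional computation you outline.

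The gap is in your tightness step. The triangular-array limit theorems in \cite{Kallenberg1997}, Chapter~15, require the summands in each row to be \emph{independent}. Here the increments $X_{T_n}-X_{T_{n-1}}$ are not independent: the law of the $n$-th increment depends on the random configuration $\nu_{n-1}$, and even on $G_{n-1}$ this law is only approximately a fixed one (the hypothesis is $|e^{-A}Z_{T_{n-1}}-\kappa|\le\ep^{3/2}$, not equality). Saying ``independent and identically distributed up to asymptotically negligible errors'' hides the work: to invoke Kallenberg you would need an explicit coupling of the whole sequence $(X_{T_n}-X_{T_{n-1}})_{n\le\ep^{-1}}$ with an i.i.d.\ array, together with a bound showing the coupling error is $o(1)$ uniformly over $O(\ep^{-1})$ steps in a metric strong enough to transfer Skorokhod convergence. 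That coupling is not available from Proposition~\ref{prop:piece}, which only controls characteristic functions.

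The paper avoids this by proving tightness directly via Aldous' criterion \cite{Aldous1978}: for any bounded $(\F''_t)$-stopping time $\tau$ and any $h\to 0$, show $X''_{\tau+h}-X''_\tau\to 0$ in probability. This needs only the \emph{conditional} characteristic-function estimate \eqref{eq:912} on $G_n$, which is exactly what Proposition~\ref{prop:piece} provides, and sidesteps independence entirely. Your jump-negligibility observations are not wasted---they would feed into such an argument---but the black-box reduction to row-independent arrays does not go through as stated.
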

\begin{proof}
Denote by $(\F_t'')_{t\ge0}$ the natural filtration of the process $X_t''$, and note that $\F_t'' = \F_{\gamma \lfloor t\gamma^{-1}\rfloor}'' \subset \F_{T_{\lfloor t\gamma^{-1}\rfloor}}$. In order to show convergence of the finite-dimensional distributions, it is enough to show (see Proposition 3.1 in \cite{Kurtz1975} or Lemma 8.1 in \cite{Ethier1986}, p.\ 225), that for every $\lambda\in\R$ and $t,s\ge 0$,
\begin{equation}
\begin{split}
\label{eq:910}
 \E\Big[\Big|\E[e^{i\lambda X_{t+s}''}\,|\,\F_t''] - e^{i\lambda X_t''}e^{sK_\kappa(\lambda)}\Big|\Big] \to 0,
\end{split}
\end{equation} as $A$ and $a$ go to infinity. Now, define $n := \lfloor t\gamma^{-1}\rfloor$ and $m := \lfloor (t+s)\gamma^{-1}\rfloor$. Then we have by Proposition \ref{prop:piece},
\begin{equation}
\label{eq:912}
 \begin{split}
  \E[e^{i\lambda (X_{t+s}''-X_t'')}\,|\,\F_{T_m}]\Ind_{G_m} &= e^{-i\lambda As} \E[e^{i\lambda (X_{T_m} - X_{T_n})}\,|\,\F_{T_m}]\Ind_{G_n}\\
&= \exp\Big((m-n)\gamma\big(K_\kappa(A) + i\lambda A + o_A(1) + O(\ep^\delta)\big) - i\lambda As\Big)\Ind_{G_n}.\\
&= \exp\Big(s\big(K_\kappa(A) + o_A(1) + O(\ep^\delta)\big)\Big)\Ind_{G_n},
 \end{split}
\end{equation}
because we have $|(m-n)\gamma - s|\le \gamma = A^{-1}o_A(1)+o(1)$, by \eqref{eq:ep_upper} and \eqref{eq:pB}. In total, we get for $A$ and $a$ large enough,
\[
 \E\Big[\Big|\E[e^{i\lambda (X_{t+s}''-X_t'')}\,|\,\F_t''] - e^{sK_\kappa(\lambda)}\Big|\Big] \le e^{s K_\kappa(\lambda)}\E[|e^{s(o_A(1) + O(\ep^\delta))} - 1|] + \P(G_m^c).
\]
By Proposition \ref{prop:piece}, this goes to $0$ as $A$ and $a$ go to infinity, which proves \eqref{eq:910}.

In order to show tightness in Skorokhod's topology, we use Aldous' famous criterion \cite{Aldous1978} (see also \cite{Billingsley1999}, Theorem 16.10): If for every $M > 0$, every family of $(\F_t'')$-stopping times $\tau = \tau(A,a)$ taking only finitely many values, all of which in $[0,M]$ and every $h = h(A,a)\ge 0$ with $h(A,a) \to 0$ as $A$ and $a$ go to infinity, we have
\begin{equation}
\label{eq:920}
 X_{\tau+h}'' - X_{\tau}'' \to 0,\quad\text{ in probability as $A$ and $a$ go to infinity},
\end{equation}
then tightness follows for the processes $X_t''$ (note that the second point in the criterion, namely tightness of $X_t''$ for every fixed $t$, follows from the convergence in finite-dimensional distributions proved above). Now let $\tau$ be such a stopping time and let $V_\tau$ be the (finite) set of values it takes. We first note that since $G_n \supset G_{n+1}$ for every $n\in \N$, we have for every $t\in V_\tau$ and every $A$ and $a$ large enough,
\begin{equation}
\label{eq:922}
\P(G^c_{\lfloor t \gamma^{-1}\rfloor}) \le \P(G^c_{\lfloor M \gamma^{-1}\rfloor}) = O(M \ep^\delta).
\end{equation}
by Proposition \ref{prop:piece}. Moreover, since $\F_t'' \subset \F_{T_{\lfloor t\gamma^{-1}\rfloor}}$ for every $t\ge 0$, we have for every $\lambda > 0$,
\begin{align*}
 \E[e^{i\lambda(X_{\tau+h}'' - X_{\tau}'')}] &= \sum_{t\in V_\tau}\E\Big[e^{i\lambda(X_{t+h}'' - X_t'')}\Ind_{(\tau = t)}\Big]\\
&= \sum_{t\in V_\tau}\E\Big[\E[e^{i\lambda(X_{t+h}'' - X_t'')}\,|\,\F_{T_{\lfloor t\gamma^{-1}\rfloor}}]\Ind_{(\tau = t)}\Ind_{G_{\lfloor t\gamma^{-1}\rfloor}}\Big] + O(M \ep^\delta) && \text{by \eqref{eq:922}}\\
&= e^{h (K_\kappa(\lambda)+o_A(1) + O(\ep^\delta))}(1-O(M\ep^\delta)) + O(M \ep^\delta), && \text{by \eqref{eq:912}},
\end{align*}
which converges to $1$ as $A$ and $a$ go to infinity. This implies \eqref{eq:920} and therefore proves tightness in Skorokhod's topology, since $M$ was arbitrary. Together with the convergence in finite-dimensional distributions proved above, the lemma follows.
\end{proof}

\paragraph{A coupling with a Poisson process.} Let $(V_n)_{n\ge 0}$ be a sequence of independent exponentially distributed random variable with parameter $p_B e^A \pi \kappa$. In order to prove convergence of the processes $X_t'$ and $X_t$, we are going to couple the BBM with the sequence $(V_n)$ in the following way: Suppose we have constructed the BBM until time $T_{n-1}$. Now, on the event $G_{n-1}$, by Lemma \ref{lem:coupling_T}, the strong Markov property of BBM and the transfer theorem (\cite{Kallenberg1997}, Theorem 5.10), we can construct the BBM up to time $T_n$ such that
\begin{equation}
\label{eq:929}
\P(G_{\mathrm{coupl},n}) \ge 1-O(\ep^2),
\end{equation}
where
\[
G_{\mathrm{coupl},n} = \{|(T^{\mathrm{BO}}_n-T_{n-1})/a^3 - V_n| \le \ep^{3/2}\} \cap \{|(T^{\mathrm{BO}}_n-T_{n-1})/(a^3V_n) - 1| \le \sqrt{\ep}) \le \ep^2\},
\]
where $T^{\mathrm{BO}}_n$ is here the time of the first breakout after $T_{n-1}$. On the event $G_{n-1}^c$, we simply let the BBM evolve independently of $(V_j)_{j\ge n}$. Now, define
\[
 G_n' = G_n \cap \bigcap_{1\le j\le n} G_{\mathrm{coupl},j}.
\]
Then, on $G_n'$, we have $T_n = T^{\mathrm{BO}}_n + a^{5/2}$, whence for large $A$ and $a$,
\begin{equation}
\label{eq:930}
 \ton G_n': |(T_n-T_{n-1})/a^3 - V_n| \le 2\ep^{3/2}\quad\tand\quad|(T_n-T_{n-1})/(a^3V_n) - 1|\le 2 \sqrt{\ep}.
\end{equation} 
Furthermore, by \eqref{eq:929} and Proposition \ref{prop:piece}, there is a $\delta > 0$, such that for large $A$ and $a$,
\begin{equation}
 \label{eq:prob_Gnprime}
\P(G_n') \ge 1 - nO(\ep^{1+\delta})
\end{equation}

\begin{proof}[Proof of Theorem \ref{th:barrier2}]
Let $d$ denote the Skorokhod metric on $D([0,\infty))$ (see \cite{Ethier1986}, Section 3.5). Let $\Phi$ be the space of strictly increasing, continuous, maps of $[0,\infty)$ onto itself. Let $x,x_1,x_2,\ldots$ be elements of $D([0,\infty))$. Then (\cite{Ethier1986}, Proposition 3.5.3), $d(x_n,x) \to 0$ as $n\to\infty$ if and only if for every $M > 0$ there exists a sequence $(\varphi_n)$ in $\Phi$, such that
\begin{equation}
 \label{eq:940}
\sup_{t\in [0,M]}|\varphi_n(t) - t| \to 0,
\end{equation}
and
\begin{equation}
 \label{eq:941}
\sup_{t\in [0,M]} |x_n(\varphi_n(t)) - x(t)| \to 0.
\end{equation}
If $(x_n')_{n\in \N}$ is another sequence of functions in $D([0,\infty))$, with $d(x_n',x)\to 0$, then by the triangle inequality and the fact that $\Phi$ is stable under the operations of inverse and convolution, we have $d(x_n,x)\to 0$ if and only if there exists a sequence $(\varphi_n)$ in $\Phi$, such that \eqref{eq:940} holds and
\begin{equation}
 \label{eq:942}
\sup_{t\in [0,M]} |x_n(\varphi_n(t)) - x_n'(t)| \to 0.
\end{equation}

For every $A$ and $a$, we define the (random) map $\varphi_{A,a} \in \Phi$ by
\[
 \varphi_{A,a}(t) = ((1-r) T_n + r T_{n+1})\frac{\pi^2}{c_0^2a^3}, \tif t = \gamma (n+r),\text{ with } n\in\N,\ r\in [0,1].
\]
Let $M > 0$ and define $n_M = \lceil M\gamma \rceil$.
Then we have
\begin{equation}
 \label{eq:944}
\sup_{t\in [0,M]} |\varphi_{A,a}(t) - t| \le \max_{n\in\{0,\ldots,n_M\}} \left|\frac{\pi^2}{c_0^2a^3}T_n - \gamma n\right|,
\end{equation}
and
\begin{equation}
 \label{eq:945}
\sup_{t\in [0,M]}|X''_t - X'_{\varphi_{A,a}(t)}| \le \max_{n\in\{0,\ldots,n_M\}} A \left|\frac{\pi^2}{c_0^2a^3}T_n - \gamma n\right|.
\end{equation}
Now note that $\gamma = \frac{\pi^2}{c_0^2} E[V_1]$, and by Doob's $L^2$ inequality we get
\[
 \P\Big(\max_{n\in\{0,\ldots,n_M\}} \left|\sum_{i=1}^n V_i - nE[V_1]\right| > \ep^{1/3}\Big) \le 4 \ep^{-2/3} n_M \Var(V_i) = O(\ep^{1/3}).
\]
Furthermore, on the set $G_{n_M}'$, we have for every $n\le n_M$,
\[
\left|T_n - \sum_{i=0}^n V_i\right| \le O(n_M \ep^{3/2}) = O(\ep^{1/2}).
\]
In total, we get with \eqref{eq:944} and \eqref{eq:945}, as $A$ and $a$ go to infinity,
\begin{equation}
\label{eq:948}
 \forall M > 0:\sup_{t\in [0,M]} |\varphi_{A,a}(t) - t| \vee |X''_t - X'_{\varphi_{A,a}(t)}| \to 0,\quad\text{in probability},
\end{equation}
which is equivalent to 
\begin{equation}
\label{eq:949}
 \sum_{M\ge 1} 2^{-M}\Big[1\wedge\Big(\sup_{t\in [0,M]} |\varphi_{A,a}(t) - t| \vee |X''_t - X'_{\varphi_{A,a}(t)}|\Big)\Big] \to 0,\quad\text{in probability}.
\end{equation}
Now, suppose that $A$ and $a$ go to infinity along a sequence $(A_n,a_n)_{n\in\N}$ and denote by $X'_{A_n,a_n}$, $X''_{A_n,a_n}$ and $\varphi_{A_n,a_n}$ the processes corresponding to these parameters. By Proposition \ref{prop:Xt_second}, and Skorokhod's representation theorem (\cite{Billingsley1999}, Theorem 6.7), there exists a probability space, on which the sequence $(X''_{A_n,a_n})$ converges almost surely as $n\to\infty$ to the limiting L\'evy process $L = (L_t)_{t\ge 0}$ stated in the theorem. Applying again the representation theorem as well as the transfer theorem, we can transfer the processes $X'_{A_n,a_n}$ and $\varphi_{A_n,a_n}$ to this probability space in such a way that the convergence in \eqref{eq:949} holds almost surely, which implies that the convergence in \eqref{eq:948} holds almost surely as well. By the remarks at the beginning of the proof, it follows that on this new probability space,
\[
 d(X'_{A_n,a_n},L) \le d(X'_{A_n,a_n},X''_{A_n,a_n}) + d(X''_{A_n,a_n},L)\to 0,
\]
almost surely, as $n\to\infty$. This proves the theorem.
\end{proof}

\begin{proof}[Proof of Theorem \ref{th:barrier}]
 By the virtue of Theorem \ref{th:barrier2}, it suffices to show that for every $0 < t_1<t_2<\ldots<t_k$ we have
\begin{equation}
\label{eq:950}
 \P\Big(\forall i: X^{(\infty)}_{t_ia^3} = J_{t_ia^3}\Big) \to 1.
\end{equation}
Let $n := \lceil 2(t_k+2)/E[V_1]\rceil$, such that $n = O(\ep^{-1})$, by \eqref{eq:pB}. By Chebychev's inequality, we then have
\begin{equation}
 \label{eq:952}
\P(\sum_{i=1}^n V_i \le t_k+2) \le \P(\sum_{i=1}^n (V_i - E[V_i]) \le -\frac n 2 E[V_1]) = O(n\Var(V_i)) = O(\ep).
\end{equation}
Furthermore, define the intervals $I_i = t_i + [-2n\ep^{3/2}-a^{-1/2},2n\ep^{3/2}]$, $i=1,\ldots,k$ and denote by $\mathscr P$ the point process on the real line with points at the positions $V_1,V_1+V_2,V_1+V_2+V_3,\ldots$. Then $\mathscr P$ is a Poisson process with intensity $1/E[V_1] = O(\ep^{-1})$ and thus,
\begin{equation}
 \label{eq:954}
\P\Big(\mathscr P \cap \bigcup_{i=1}^k I_i \ne \emptyset\Big) = O(\ep^{1/2}) + o(1).
\end{equation}
We now have
\begin{align*}
 \P\Big(\forall i: X^{(\infty)}_{t_ia^3} = J_{t_ia^3}\Big) &\ge \P\Big(\nexists (i,j): t_ia^3\in[T_j-T^{\mathrm{BO}}_{j-1},T_j]\Big) && \text{by definition}\\
&\ge \P\Big(G_{n}',\ \sum_{i=1}^{n}V_i > t_k+2,\ \mathscr P \cap \bigcup_{i=1}^k I_i = \emptyset\Big) && \text{by definition of $G_n'$}\\
&\ge 1 - O(\ep^\delta) - o(1) && \text{by \eqref{eq:prob_Gnprime}, \eqref{eq:952}, \eqref{eq:954}}.
\end{align*}
Letting $A$ and $a$ go to infinity yields \eqref{eq:950} and thus proves the theorem.
\end{proof}

\section*{Acknowledgments}
I wish to thank my PhD advisor Zhan Shi for his continuous support and encouragements and Julien Berestycki for numerous discussions and his interest in this work.


\begin{thebibliography}{10}

\bibitem{Addario-Berry2009}
{\sc Addario-Berry, L., and Reed, B.}
\newblock {Minima in branching random walks}.
\newblock {\em The Annals of Probability 37}, 3 (May 2009), 1044--1079.

\bibitem{Aidekon2011b}
{\sc A\"{\i}d\'{e}kon, E.}
\newblock {Convergence in law of the minimum of a branching random walk}.
\newblock arXiv:1101.1810, Jan. 2011.

\bibitem{Aidekon2011c}
{\sc A\"{\i}d\'{e}kon, E., Berestycki, J., Brunet, E., and Shi, Z.}
\newblock {The branching Brownian motion seen from its tip}.

\bibitem{Aldous1978}
{\sc Aldous, D.}
\newblock {Stopping Times and Tightness}.
\newblock {\em The Annals of Probability 6}, 2 (Nov. 1978), 335--340.

\bibitem{Arguin2011}
{\sc Arguin, L.-P., Bovier, A., and Kistler, N.}
\newblock {Genealogy of extremal particles of branching Brownian motion}.
\newblock {\em Communications on Pure and Applied Mathematics 64}, 12 (Dec.
  2011), 1647--1676.

\bibitem{Arguin2011a}
{\sc Arguin, L.-P., Bovier, A., and Kistler, N.}
\newblock {The extremal process of branching Brownian motion}.
\newblock arXiv:1103.2322, Mar. 2011.

\bibitem{Athreya1972}
{\sc Athreya, K.~B., and Ney, P.~E.}
\newblock {\em {Branching processes}}.
\newblock Die Grundlehren der mathematischen Wissenschaften, Band 196.
  Springer-Verlag, New York, 1972.

\bibitem{Bellman}
{\sc Bellman, R.}
\newblock {\em {A brief introduction to theta functions}}.
\newblock Athena Series: Selected Topics in Mathematics. Holt, Rinehart and
  Winston, New York, 1961.

\bibitem{Berard2010}
{\sc B\'{e}rard, J., and Gou\'{e}r\'{e}, J.-B.}
\newblock {Brunet-Derrida behavior of branching-selection particle systems on
  the line}.
\newblock {\em Communications in Mathematical Physics 298}, 2 (June 2010),
  323--342.

\bibitem{Berestycki2010}
{\sc Berestycki, J., Berestycki, N., and Schweinsberg, J.}
\newblock {The genealogy of branching Brownian motion with absorption}.
\newblock arXiv:1001.2337, Jan. 2010.

\bibitem{Biggins2004}
{\sc Biggins, J.~D., and Kyprianou, A.~E.}
\newblock {Measure change in multitype branching.}
\newblock {\em Adv. Appl. Probab. 36}, 2 (2004), 544--581.

\bibitem{Billingsley1999}
{\sc Billingsley, P.}
\newblock {\em {Convergence of probability measures}}, second~ed.
\newblock Wiley Series in Probability and Statistics: Probability and
  Statistics. John Wiley \& Sons Inc., New York, 1999.

\bibitem{Bingham1974}
{\sc Bingham, N.~H., and Doney, R.~A.}
\newblock {Asymptotic Properties of Supercritical Branching Processes I: The
  Galton-Watson Process}.
\newblock {\em Advances in Applied Probability 6}, 4 (Jan. 1974), 711--731.

\bibitem{Bingham1987}
{\sc Bingham, N.~H., Goldie, C.~M., and Teugels, J.~L.}
\newblock {\em {Regular variation}}, vol.~27 of {\em Encyclopedia of
  Mathematics and its Applications}.
\newblock Cambridge University Press, Cambridge, 1987.

\bibitem{Bolthausen2001}
{\sc Bolthausen, E., Deuschel, J.-D., and Giacomin, G.}
\newblock {Entropic Repulsion and the Maximum of the two-dimensional harmonic
  crystal}.
\newblock {\em The Annals of Probability 29}, 4 (Oct. 2001), 1670--1692.

\bibitem{Bolthausen2010}
{\sc Bolthausen, E., Deuschel, J.-D., and Zeitouni, O.}
\newblock {Recursions and tightness for the maximum of the discrete, two
  dimensional Gaussian Free Field}.
\newblock arXiv:1005.5417, May 2010.

\bibitem{Bolthausen1998}
{\sc Bolthausen, E., and Sznitman, A.-S.}
\newblock {On Ruelle's Probability Cascades and an Abstract Cavity Method}.
\newblock {\em Communications in Mathematical Physics 197}, 2 (Oct. 1998),
  247--276.

\bibitem{BorodinSalminen}
{\sc Borodin, A.~N., and Salminen, P.}
\newblock {\em {Handbook of Brownian motion---facts and formulae}}, second~ed.
\newblock Probability and its Applications. Birkh\"{a}user Verlag, Basel, 2002.

\bibitem{Bramson2010}
{\sc Bramson, M.~D., and Zeitouni, O.}
\newblock {Tightness of the recentered maximum of the two-dimensional discrete
  Gaussian Free Field}.
\newblock arXiv:1009.3443, Sept. 2010.

\bibitem{Brunet1997}
{\sc Brunet, E., and Derrida, B.}
\newblock {Shift in the velocity of a front due to a cutoff}.
\newblock {\em Physical Review E 56}, 3 (Sept. 1997), 2597--2604.

\bibitem{Brunet2001}
{\sc Brunet, E., and Derrida, B.}
\newblock {Effect of Microscopic Noise on Front Propagation}.
\newblock {\em Journal of Statistical Physics 103}, 1 (2001), 269--282.

\bibitem{Brunet2006}
{\sc Brunet, E., Derrida, B., Mueller, A., and Munier, S.}
\newblock {Phenomenological theory giving the full statistics of the position
  of fluctuating pulled fronts}.
\newblock {\em Physical Review E 73}, 5 (May 2006), 056126.

\bibitem{Brunet2006a}
{\sc Brunet, E., Derrida, B., Mueller, A.~H., and Munier, S.}
\newblock {Noisy traveling waves: Effect of selection on genealogies}.
\newblock {\em Europhysics Letters (EPL) 76}, 1 (Oct. 2006), 1--7.

\bibitem{Buraczewski2009}
{\sc Buraczewski, D.}
\newblock {On tails of fixed points of the smoothing transform in the boundary
  case}.
\newblock {\em Stochastic Processes and their Applications 119}, 11 (Nov.
  2009), 3955--3961.

\bibitem{Chauvin1991}
{\sc Chauvin, B.}
\newblock {Product martingales and stopping lines for branching Brownian
  motion}.
\newblock {\em Ann. Probab. 19}, 3 (1991), 1195--1205.

\bibitem{Chauvin1988}
{\sc Chauvin, B., and Rouault, A.}
\newblock {KPP equation and supercritical branching Brownian motion in the
  subcritical speed area. Application to spatial trees}.
\newblock {\em Probability Theory and Related Fields 80}, 2 (Dec. 1988),
  299--314.

\bibitem{Derrida2007}
{\sc Derrida, B., and Simon, D.}
\newblock {The survival probability of a branching random walk in presence of
  an absorbing wall}.
\newblock {\em Europhysics Letters (EPL) 78}, 6 (June 2007), 60006.

\bibitem{Derrida1988}
{\sc Derrida, B., and Spohn, H.}
\newblock {Polymers on disordered trees, spin glasses, and traveling waves}.
\newblock {\em J. Statist. Phys. 51\/} (1988), 817--840.

\bibitem{Ding2011b}
{\sc Ding, J.}
\newblock {On cover times for 2D lattices}.
\newblock arXiv:1110.3367, Oct. 2011.

\bibitem{Durrett2009}
{\sc Durrett, R., and Remenik, D.}
\newblock {Brunet-Derrida particle systems, free boundary problems and
  Wiener-Hopf equations}.

\bibitem{BatemanII}
{\sc Erd\'{e}lyi, A., Magnus, W., Oberhettinger, F., and Tricomi, F.~G.}
\newblock {\em {Higher transcendental functions. {V}ol. {II}}}.
\newblock The Bateman Manuscript Project. McGraw-Hill Book Company, Inc., New
  York-Toronto-London, 1953.

\bibitem{Ethier1986}
{\sc Ethier, S.~N., and Kurtz, T.~G.}
\newblock {\em {Markov processes}}.
\newblock Wiley Series in Probability and Mathematical Statistics: Probability
  and Mathematical Statistics. John Wiley \& Sons Inc., New York, 1986.

\bibitem{Feller1971}
{\sc Feller, W.}
\newblock {\em {An introduction to probability theory and its applications. Vol
  II. 2nd ed.}}, second~ed.
\newblock John Wiley \& Sons Inc., New York, 1971.

\bibitem{Fisher1937}
{\sc Fisher, R.~A.}
\newblock {The wave of advance of advantageous genes}.
\newblock {\em Annals of Human Genetics 7}, 4 (June 1937), 355--369.

\bibitem{Gantert2011}
{\sc Gantert, N., Hu, Y., and Shi, Z.}
\newblock {Asymptotics for the survival probability in a killed branching
  random walk}.
\newblock {\em Annales de l'Institut Henri Poincar\'{e}, Probabilit\'{e}s et
  Statistiques 47}, 1 (Feb. 2011), 111--129.

\bibitem{Hardy2006}
{\sc Hardy, R., and Harris, S.~C.}
\newblock {A new formulation of the spine approach to branching diffusions}.
\newblock arXiv:math/0611054, Nov. 2006.

\bibitem{HHK2006}
{\sc Harris, J.~W., Harris, S.~C., and Kyprianou, A.~E.}
\newblock {Further probabilistic analysis of the
  Fisher-Kolmogorov-Petrovskii-Piscounov equation: one sided travelling-waves}.
\newblock {\em Ann. Inst. H. Poincar\'{e} Probab. Statist. 42}, 1 (2006),
  125--145.

\bibitem{Harris2011}
{\sc Harris, S.~C., and Roberts, M.~I.}
\newblock {The many-to-few lemma and multiple spines}.
\newblock arXiv:1106.4761, June 2011.

\bibitem{Harris1963}
{\sc Harris, T.~E.}
\newblock {\em {The theory of branching processes}}.
\newblock Die Grundlehren der Mathematischen Wissenschaften, Bd. 119.
  Springer-Verlag, Berlin, 1963.

\bibitem{Hu2009}
{\sc Hu, Y., and Shi, Z.}
\newblock {Minimal position and critical martingale convergence in branching
  random walks, and directed polymers on disordered trees}.
\newblock {\em The Annals of Probability 37}, 2 (Mar. 2009), 742--789.

\bibitem{ItoMcKean}
{\sc It\^{o}, K., and {McKean Jr.}, H.~P.}
\newblock {\em {Diffusion processes and their sample paths}}.
\newblock Second printing, corrected. Die Grundlehren der mathematischen
  Wissenschaften, Band 125. Springer-Verlag, Berlin, 1974.

\bibitem{Jagers1989}
{\sc Jagers, P.}
\newblock {General branching processes as Markov fields.}
\newblock {\em Stochastic Processes Appl. 32}, 2 (1989), 183--212.

\bibitem{Kallenberg1997}
{\sc Kallenberg, O.}
\newblock {\em {Foundations of modern probability}}.
\newblock Probability and its Applications (New York). Springer-Verlag, New
  York, 1997.

\bibitem{Kesten1978}
{\sc Kesten, H.}
\newblock {Branching brownian motion with absorption}.
\newblock {\em Stochastic Processes and their Applications 7}, 1 (Mar. 1978),
  9--47.

\bibitem{Knight1969}
{\sc Knight, F.~B.}
\newblock {Brownian local times and taboo processes}.
\newblock {\em Trans. Amer. Math. Soc 143\/} (1969), 173--185.

\bibitem{KPPshort}
{\sc Kolmogorov, A.~N., Petrovskii, I., and Piskunov, N.}
\newblock {\'{E}tude de l'\'{e}quation de la diffusion avec croissance de la
  quantite de mati\`{e}re et son application \`{a} un probl\`{e}me biologique}.
\newblock {\em Bull. Univ. \'{E}tat Moscou, S\'{e}r. Int., Sect. A: Math. et
  M\'{e}can. 1}, 6 (1937), 1--25.

\bibitem{Kurtz1975}
{\sc Kurtz, T.~G.}
\newblock {Semigroups of Conditioned Shifts and Approximation of Markov
  Processes}.
\newblock {\em The Annals of Probability 3}, 4 (Nov. 1975), 618--642.

\bibitem{Kyprianou2000}
{\sc Kyprianou, A.~E.}
\newblock {Martingale convergence and the stopped branching random walk.}
\newblock {\em Probab. Theory Relat. Fields 116}, 3 (2000), 405--419.

\bibitem{Kyprianou2004}
{\sc Kyprianou, A.~E.}
\newblock {Travelling wave solutions to the K-P-P equation: alternatives to
  Simon Harris' probabilistic analysis}.
\newblock {\em Ann. Inst. H. Poincar\'{e} Probab. Statist. 40}, 1 (2004),
  53--72.

\bibitem{Lambert2000}
{\sc Lambert, A.}
\newblock {Completely asymmetric L\'{e}vy processes confined in a finite
  interval}.
\newblock {\em Annales de l'Institut Henri Poincare (B) Probability and
  Statistics 36}, 2 (Mar. 2000), 251--274.

\bibitem{Liu2000}
{\sc Liu, Q.}
\newblock {On generalized multiplicative cascades}.
\newblock {\em Stochastic Processes and their Applications 86}, 2 (Apr. 2000),
  263--286.

\bibitem{LPP1995}
{\sc Lyons, R., Pemantle, R., and Peres, Y.}
\newblock {Conceptual proofs of $L\log L$ criteria for mean behavior of
  branching processes.}
\newblock {\em Ann. Probab. 23}, 3 (1995), 1125--1138.

\bibitem{Madaule2011}
{\sc Madaule, T.}
\newblock {Convergence in law for the branching random walk seen from its tip}.
\newblock arXiv:1107.2543, July 2011.

\bibitem{Maillard2010}
{\sc Maillard, P.}
\newblock {The number of absorbed individuals in branching Brownian motion with
  a barrier}.
\newblock arXiv:1004.1426v3, to be published in Ann. Inst. H. Poincar\'{e}
  Probab. Statist., Apr. 2010.

\bibitem{McKean1975}
{\sc McKean, H.~P.}
\newblock {Application of Brownian motion to the equation of
  Kolmogorov-Petrovskii-Piskunov}.
\newblock {\em Comm. Pure Appl. Math. 28}, 3 (1975), 323--331.

\bibitem{Mollison1977}
{\sc Mollison, D.}
\newblock {Spatial Contact Models for Ecological and Epidemic Spread}.
\newblock {\em Journal of the Royal Statistical Society. Series B
  (Methodological) 39}, 3 (Oct. 1977), 283--326.

\bibitem{Mueller2010}
{\sc Mueller, C., Mytnik, L., and Quastel, J.}
\newblock {Effect of noise on front propagation in reaction-diffusion equations
  of KPP type}.
\newblock {\em Inventiones mathematicae 184}, 2 (Nov. 2010), 405--453.

\bibitem{Neveu1986}
{\sc Neveu, J.}
\newblock {Arbres et processus de Galton-Watson}.
\newblock {\em Annales de l'Institut Henri Poincar\'{e} (B) Probabilit\'{e}s et
  Statistiques 22}, 2 (1986), 199--207.

\bibitem{Neveu1988}
{\sc Neveu, J.}
\newblock {Multiplicative martingales for spatial branching processes}.
\newblock In {\em Seminar on Stochastic Processes, 1987 (Princeton, NJ, 1987)},
  vol.~15 of {\em Progr. Probab. Statist.} Birkh\"{a}user Boston, Boston, MA,
  1988, pp.~223--242.

\bibitem{Panja2004}
{\sc Panja, D.}
\newblock {Effects of fluctuations on propagating fronts}.
\newblock {\em Physics Reports 393}, 2 (Mar. 2004), 87--174.

\bibitem{Revuz1999}
{\sc Revuz, D., and Yor, M.}
\newblock {\em {Continuous martingales and {B}rownian motion}}, third~ed.,
  vol.~293 of {\em Grundlehren der Mathematischen Wissenschaften [Fundamental
  Principles of Mathematical Sciences]}.
\newblock Springer-Verlag, Berlin, 1999.

\bibitem{Sawyer1976}
{\sc Sawyer, S.}
\newblock {Branching Diffusion Processes in Population Genetics}.
\newblock {\em Advances in Applied Probability 8}, 4 (Jan. 1976), 659--689.

\bibitem{Sevastyanov1958}
{\sc Sevast'yanov, B.~A.}
\newblock {Branching stochastic processes for particles diffusing in a bounded
  domain with absorbing boundaries}.
\newblock {\em Theory Probab. Appl. 3\/} (1958), 111--126.

\bibitem{Shiga1988}
{\sc Shiga, T.}
\newblock {Stepping stone models in population genetics and population
  dynamics}.
\newblock In {\em Stochastic processes in physics and engineering ({B}ielefeld,
  1986)}, vol.~42 of {\em Math. Appl.} Reidel, Dordrecht, 1988, pp.~345--355.

\bibitem{VanSaarloos2003}
{\sc {Van Saarloos}, W.}
\newblock {Front propagation into unstable states}.
\newblock {\em Physics Reports 386}, 2-6 (Nov. 2003), 29--222.

\bibitem{Watanabe1967}
{\sc Watanabe, S.}
\newblock {Limit theorem for a class of branching processes}.
\newblock In {\em Markov {P}rocesses and {P}otential {T}heory ({P}roc.
  {S}ympos. {M}ath. {R}es. {C}enter, {M}adison, {W}is., 1967)}. Wiley, New
  York, 1967, pp.~205--232.

\bibitem{Yang2011}
{\sc Yang, T., and Ren, Y.-X.}
\newblock {Limit theorem for derivative martingale at criticality w.r.t
  branching Brownian motion}.
\newblock {\em Statistics \& Probability Letters 81}, 2 (Feb. 2011), 195--200.

\end{thebibliography}
\end{document}